



\documentclass[french,11pt]{smfartVSautodual}
\usepackage[latin1]{inputenc}
\usepackage[T1]{fontenc}
\usepackage{lmodern}
\usepackage{smfenum,smfthm,amssymb,mathrsfs,euscript,color}
\usepackage{stmaryrd,mathabx,upgreek,mathdots,mathtools,amscd}
\input{xypic}
 
\setlength{\textheight}{21cm} 
\setlength{\textwidth}{16cm} 
\setlength{\oddsidemargin}{0cm} 
\setlength{\evensidemargin}{0cm}
\setlength{\topmargin}{1cm}

\numberwithin{equation}{section} 
\bibliographystyle{smfplain}
\theoremstyle{plain}


\def\CC{\mathbb{C}}

\def\QQ{\mathbb{Q}}

\def\ZZ{\mathbb{Z}} 
\def\FF{\mathbb{F}} 


\def\A{{ A}}
\def\B{{ B}}
\def\C{{ C}}
\def\D{{ D}}
\def\E{{ E}}
\def\F{{ F}}
\def\G{{ G}}
\def\H{{ H}}

\def\J{{\rm J}}
\def\K{{ K}}
\def\L{{ L}}

\def\N{{\rm N}}

\def\SS{{ S}}
\def\T{{ T}}
\def\U{\boldsymbol{\rm U}}
\def\V{{ V}}
\def\W{{ W}}


\def\Cc{\EuScript{C}}

\def\Ee{\EuScript{E}}

\def\Ll{\mathscr{L}}

\def\Oo{\EuScript{O}}
\def\Pp{\EuScript{P}}

\def\Ww{\EuScript{W}}


\def\Om{\boldsymbol{\Omega}}


\def\a{\alpha} 
\def\b{\beta}
\def\cc{c}
\def\d{\delta}
\def\e{\varepsilon}

\def\g{\gamma}
\def\h{\varphi}
\def\k{\kappa}
\def\l{\lambda}
\def\n{\eta}

\def\p{\mathfrak{p}}
\def\s{\sigma}
\def\t{\theta}
\def\v{\upsilon}
\def\w{\varpi}


\def\ie{c'est-à-dire }

\def\>{\geqslant}
\def\<{\leqslant}


\def\Hom{{\rm Hom}}
\def\End{{\rm End}}

\def\Mat{\boldsymbol{{\sf M}}}
\def\GL{{\rm GL}}
\def\Gal{{\rm Gal}}
\def\Ker{{\rm Ker}}

\def\tr{{\rm tr}}

\def\Ind{{\rm Ind}}
\def\ind{{\rm ind}}
\def\mult#1{{#1}^{\times}}


\def\kk{\boldsymbol{k}}

\def\ee{\boldsymbol{l}}

\def\tt{\boldsymbol{t}}

\def\bk{\boldsymbol{\k}}
\def\bl{\boldsymbol{\l}}
\def\bt{\boldsymbol{\rho}}
\def\bm{\boldsymbol{\mu}}
\def\bx{\boldsymbol{\xi}}

\def\BJ{\boldsymbol{\J}}
\def\BU{\boldsymbol{\rm U}}
\def\TT{\boldsymbol{\Theta}}
\def\0{\boldsymbol{0}}

\def\aa{\mathfrak{a}}

\def\bb{\mathfrak{b}}

\def\qq{\boldsymbol{\omega}}
\def\pp{\mathfrak{p}}

\def\tdt{\times\dots\times}

\def\GG{\EuScript{J}}

\def\Ad{{\rm Ad}}
\def\dd{\kappa}

\def\ss{\sigma}

\def\FC{\CC}
\def\ep{{\sf e}}
\def\EP{{\sf w}}

\def\bo{{\omega}}
\def\iso{\EuScript{P}}
\def\BJI{\BJ}

\def\Nrd{{\rm Nrd}}

\def\AA{\boldsymbol{\sf A}}

\def\SD{\boldsymbol{\Delta}}


\long\def\MSC#1\EndMSC{\def\arg{#1}\ifx\arg\empty\relax\else
  {\par\narrower\noindent%
     2010 Mathematics Subject Classification: #1\par}\fi}

\long\def\KEY#1\EndKEY{\def\arg{#1}\ifx\arg\empty\relax\else
	{\par\narrower\noindent Keywords and Phrases: #1\par}\fi}


\linespread{1.1}

\title[Représentations cuspidales distinguées de $\GL_r(\D)$]
{Représentations cuspidales de $\GL_r(\D)$
distinguées par une involution intérieure}

\author{Vincent Sécherre}
\address{Laboratoire de Mathémati\-ques de Versailles\\
UVSQ\\
CNRS\\
Université Paris-Saclay\\
78035, Versailles, France\\
Institut Universitaire de France}

\email{vincent.secherre@uvsq.fr}

\begin{abstract}
Soit un entier $n\>1$,
soit $F$ un corps localement compact non archimédien de~carac\-téristique 
résiduelle $p\neq2$ et soit $G$ une forme intérieure de $\GL_{2n}(F)$. 
C'est un groupe de~la for\-me $\GL_r(D)$ pour un entier $r\>1$ et une
$F$-algèbre à division $D$ de degré réduit $d$ tel 
que~$rd=2n$.~Soit 
$K$~une exten\-sion qua\-dra\-ti\-que de $F$ dans
l'algèbre des matrices de taille $r$ à coefficients 
dans~$D$,~et soit $H$ son centralisateur dans $G$.
Nous étudions les représentations~cuspi\-dales autoduales 
comple\-xes~de~$G$ 
et leur distinction par $H$
du point de vue de la théorie des types.
Si~$\pi$ est une telle~re\-pré\-sentation et
si $\phi$ est son paramètre de Langlands,
nous calculons la valeur~en~$1/2$ du~facteur~ep\-silon de
la restric\-tion de $\phi$ au groupe de Weil-Deligne de $K$,
notée $\ep_K(\phi)$.
Lorsque $F$ est~de caractéristique nulle, 
nous en déduisons que $\pi$ est distinguée par~$H$
si et seulement si $\phi$ est de parité sym\-plec\-tique et 
$\ep_K(\phi)=(-1)^r$.
Ceci prouve dans ce cas 
une conjecture de Prasad et Takloo-Big\-hash.
\end{abstract}

\begin{altabstract}
Let $n$ be a positive integer,
$F$ be a non-Archimedean locally compact field of residue characteristic 
$p\neq2$ and $G$ be an inner form of $\GL_{2n}(F)$. 
This is a group of the form $\GL_r(D)$ for a positive integer $r$ and division
$F$-algebra $D$ of reduced degree $d$ such that $rd=2n$.
Let $K$ be~a quadratic exten\-sion of $F$ 
in the algebra of matrices of size $r$ with coefficients in~$D$,
and $H$ be its centralizer in $G$.
We study selfdual cuspidal complex 
representations of $G$ and their distinction by $H$ 
from the point of view of type theory. 
Given such a representation $\pi$, 
we compute the value at $1/2$ of the epsilon factor of the restriction of the
Langlands parameter $\phi$ of $\pi$ to the Weil-Deligne group of 
$K$,~denoted $\ep_K(\phi)$.
When $F$ has characteristic $0$,
we deduce that $\pi$ is $H$-distinguished 
if and only if~$\phi$~is symplectic and $\ep_K(\phi)=(-1)^r$.
This proves in this case a conjecture by Prasad--Tak\-loo-Big\-hash.
\end{altabstract}


\begin{document} 

\maketitle

{\footnotesize
\MSC 
22E50 
\EndMSC
\KEY 
Cuspidal representation, Distinguished representation,
Endo-class,
Root number,
Symplectic parameter, Type theory 
\EndKEY
}

\bigskip

\hfill{\textit{\`A la mémoire de Colin Bushnell}}

\thispagestyle{empty}

\section{Introduction}

\subsection{}
\label{intro1}

Soit un entier $n\>1$,
soit $F$ un corps localement compact non archimédien de carac\-téris\-ti\-que 
résiduelle $p$
et soit $A$ une $F$-algèbre centrale simple de degré
réduit $2n$.
C'est une algèbre de la forme $\Mat_r(D)$ pour un entier $r\>1$ et une
$F$-algèbre à division $D$ de degré réduit $d\>1$ tels que
$rd=2n$.
Notons $G=A^\times=\GL_r(D)$ le groupe des éléments inversibles de $A$.
Un tel groupe est une forme intérieure de $\GL_{2n}(F)$, 
et la théorie des représentations (lisses, complexes)~de~$G$~est liée à celle de 
$\GL_{2n}(F)$ par 
la correspondance de Jacquet-Langlands \cite{JL,Rog,DKV,Badu}, 
qui est une bijection entre clas\-ses~d'isomorphisme de
représentations essentiellement de carré intégrable~de $G$ et $\GL_{2n}(F)$, 
caractérisée par une identité de caractères sur les classes de
conjugaison~el\-lip\-ti\-ques~régulières.  

Soit $K$ une extension quadratique de $F$ incluse dans $A$,
soit $H$ son centralisateur dans $G$
et soit $\mu$ un carac\-tère de $K^\times$.
On note $\Nrd_{H}$ la norme réduite de $H$ dans $K^\times$.
Une représentation irréductible $\pi$ de $G$
sur un espace vectoriel (complexe) $V$ est dite 
$\mu$-distinguée si $V$ admet une forme linéaire non nulle $\Ll$ telle que :
\begin{equation*}
\Ll(\pi(h)v)=\mu(\Nrd_{H}(h)) \cdot v,
\quad
h\in H,
\quad
v\in V,
\end{equation*} 
\ie si l'espace $\Hom_H(\pi,\mu\circ\Nrd_{H})$ est non nul.  
Dans~\cite{PTB}
Prasad et Takloo-Bighash~ont formulé la conjecture suivante.
On verra $\mu$ indifféremment com\-me un caractère de $K^\times$ ou du 
groupe de Weil~de $K$ \textit{via} l'homomorphisme de réciprocité de la
théorie du corps de classes local. 

\begin{conj}
\label{CONJPTB}
Soit $\pi$ une représentation irréductible de $G$ dont le transfert
à $\GL_{2n}(F)$ soit générique.
Supposons que la restriction de $\mu^{n}$ à $F^\times$
soit égale au ca\-ractère central de $\pi$.~No\-tons
$\phi$ le paramètre de Langlands de $\pi$
et $\phi_K$ sa~res\-triction au groupe de Weil-Deligne~de $K$.
\begin{enumerate}
\item 
Si la représentation $\pi$ est $\mu$-distinguée, alors~:
\begin{enumerate}
\item 
le paramètre de Langlands $\phi$ est à valeurs dans le groupe
${\rm GSp}_{2n}(\CC)$ des similitudes symplectiques 
et son facteur de similitude est égal à la restriction de $\mu$ à $\F^\times$, 
\item
la valeur en $1/2$ du facteur epsilon de
$\phi_K\cdot\mu^{-1}$ est égale à $(-1)^r\mu(-1)^{n}$.
\end{enumerate}
\item
Si la représentation $\pi$ est essentiellement de carré intégrable,
alors elle est $\mu$-distinguée~si et seulement si les con\-ditions
{\rm (1.a)} et {\rm (1.b)} sont satisfaites. 
\end{enumerate}
\end{conj}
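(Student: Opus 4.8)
The plan is to establish the statement \textit{via} the theory of types for inner forms of $\GL_{2n}(F)$, reducing $H$-distinction of $\pi$ to a question about a finite reductive group on the one hand, and matching the resulting combinatorial invariant with an explicit root-number computation on the other. I would first treat the selfdual cuspidal case, in which $\mu$ is trivial and the assertion reads: $\pi$ is $H$-distinguished if and only if its Langlands parameter $\phi$ is symplectic and $\ep_K(\phi)=(-1)^r$. By the construction of Sécherre--Stevens, a cuspidal $\pi$ is compactly induced, $\pi=\ind_{\mathbf J}^{G}\boldsymbol\Lambda$, from an extended maximal simple type $\boldsymbol\Lambda$ built out of a simple stratum in $\Mat_r(D)$, a simple character $\theta$ of endo-class $\TT$, and a cuspidal representation of an associated finite general linear group; autoduality of $\pi$ forces selfduality of $\TT$ and of the finite datum. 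The first task is to translate the hypotheses --- the central-character condition and ``$\phi$ symplectic'' --- into intrinsic conditions on the triple $(\TT,\theta,\text{finite datum})$, a selfdual $\TT$ corresponding to a tamely ramified extension $T/F$ carrying an involution with fixed field of index two. Four cases must be kept apart throughout: whether the relevant embedding of $K$ is unramified or ramified over $T$, crossed with $D$ split or not.

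Next I would run a Mackey argument. Since $\mathbf J$ is open and compact modulo the centre, $\Hom_{H}(\pi,\mathbf 1)$ is computed by a double-coset decomposition along $H\backslash G/\mathbf J$: the representation $\pi$ is $H$-distinguished if and only if, for some $g$, the type $\boldsymbol\Lambda$ is distinguished by $\mathbf J\cap g^{-1}Hg$ against the appropriate character. The geometric core is to classify the contributing cosets, using that intertwining of simple characters forces conjugacy: the relevant $g$ are those making $\theta$ conjugate to its $\sigma$-twist, which confines $g$ to a small explicit set and forces the stratum to be $\sigma$-compatible. This reduces $H$-distinction of $\pi$ to distinction of a cuspidal representation of a finite group $\prod_i\GL_{m_i}(\FF_{q_i})$ by a symmetric subgroup --- a product of general linear groups over quadratic residue extensions, or of unitary groups, according to the ramification --- for which the criteria are classical: a cuspidal representation of $\GL_{m}(\FF_{q})$, parametrized by a regular character of $\FF_{q^{m}}^{\times}$, is distinguished by $\mathrm{U}_{m}(\FF_{q})$ exactly when that character is conjugate-selfdual of the correct sign, and by $\GL_{m/2}(\FF_{q^{2}})$ exactly when it is $\sigma$-fixed.

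In parallel I would compute $\ep_{K}(\phi)$ at $1/2$. Through the explicit local Langlands correspondence for $\GL_{n}$ together with the endo-class correspondence, $\phi$ is induced from the Weil group $W_{T}$ of the tamely ramified extension attached to $\TT$ (from a character of $W_T$ in the essentially tame case); restricting to $W_{K}$ and forming the epsilon factor then breaks into a tame contribution given by a quadratic Gauss sum over the residue field of $T$ (or of $TK$), a depth contribution which is a sign governed by parities of conductors, and a product of signs indexed by the orbits occurring in the induction. The goal is to show that, once $\phi$ is symplectic --- a condition itself translated into selfduality of $\TT$ plus a sign condition on the finite datum --- this product equals $(-1)^{r}$ precisely when the finite-field distinction criterion of the previous step is met.

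The ``only if'' direction then chains the last two steps, and the ``if'' direction runs them backwards, the delicate point being that no information is lost at any reduction; this rests on the one-dimensionality of the relevant $\Hom$-spaces and on exhibiting an explicit distinguished vector. I expect the main obstacle to be the sign bookkeeping in the root-number computation: reconciling the geometric $(-1)^{r}$ coming from the number of distinguished double cosets with the arithmetic sign produced by the Gauss-sum and conductor terms, \emph{uniformly} across the four cases above, with its subtle dependence on $r$ versus $d=\deg_{\mathrm{red}}D$ (and, in the full conjecture of \cite{PTB}, on the extra factor $\mu(-1)^{n}$). A secondary difficulty appears in the finite-field step when the cuspidal datum is a genuine product, where one must control how $\sigma$ permutes the factors and carry the distinction criterion through that permutation. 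Finally, the hypothesis that $F$ has characteristic $0$ is needed only to have the local Langlands correspondence available in the strong form --- compatible with Jacquet--Langlands and with the epsilon-factor and endo-class formalism --- required even to state, let alone verify, conditions (1.a)--(1.b) intrinsically; the type-theoretic part of the argument is insensitive to the characteristic.
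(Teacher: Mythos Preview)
The statement is a \emph{conjecture}; the paper proves only the special case where $\pi$ is cuspidal, $\mu$ is trivial, and $F$ has characteristic zero (Theorem~\ref{PTB}). Your outline addresses that case, and the broad strategy --- type theory, Mackey, reduction to a finite-field question, explicit computation of $\ep_K(\phi)$ --- is indeed the paper's. But two essential ingredients are missing from your plan.

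First, the implication ``$\pi$ distinguished $\Rightarrow$ $\phi$ symplectic'' is \emph{not} obtained by type-theoretic means. The paper imports it as Theorem~\ref{argumentnadir}, which rests on Xue's argument together with \cite{NadirJNT14,Jo,HenniartIMRN10,KR} (Shalika models and the exterior-square $L$-function). Your plan to translate ``$\phi$ symplectic'' into an intrinsic condition on the type datum and verify it directly from the finite-field side is not what the paper does, and no purely local route to this implication is provided. Relatedly, your diagnosis of the characteristic-zero hypothesis is off: local Langlands for $\GL_n$ is available in positive characteristic; the constraint enters solely through Theorem~\ref{argumentnadir}.

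Second, the ``if'' direction is not proved by ``running the argument backwards'' to exhibit a distinguished vector whenever the epsilon condition holds. The paper instead uses a \emph{counting argument}: for a fixed endo-class $\TT$ it shows $|\AA^{+}(G,\TT)|\geq\frac12|\AA(G,\TT)|$ whenever the epsilon condition is satisfied (Proposition~\ref{OlivierdeNoyen}), independently computes $|\AA^{\rm sp}(G,\TT)|=\frac12|\AA(G,\TT)|$ (Lemma~\ref{comptageAsp}), and combines these with the chain $\AA^{+}(G,\TT)\subseteq\AA^{\rm ptb}(G,\TT)\subseteq\AA^{\rm sp}(G,\TT)$ --- the first inclusion supplied by Theorem~\ref{argumentnadir} --- to force equality. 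The statement that a single ``generic'' double coset carries all the distinction (Theorem~\ref{MAIN2}) is established \emph{a posteriori}, as a corollary of the count, not as its input; the full double-coset analysis you envisage is not carried out.
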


Expliquons ce que signifie la condition de généricité portant sur $\pi$.
D'après \cite{ZelAENS,Tadic,BHLS},
il~y~a des entiers $r_1,\dots,r_k\>1$ 
et des représentations essentiellement de carré intégrable 
$\d_1,\dots,\d_k$~de $\GL_{r_1}(D),\dots,\GL_{r_k}(D)$ 
tels que $\pi$ soit isomorphe à l'unique quotient irréductible de
l'induite~pa\-rabolique normalisée de $\d_1\otimes\dots\otimes\d_k$
prise par rapport au sous-groupe parabolique
triangulai\-re~supérieur par blocs.
Chaque $\d_i$ a un transfert de Jacquet-Langlands noté $\s_i$, 
qui est une repré\-sen\-ta\-tion~essentiellement de carré intégrable 
de $\GL_{r_id}(F)$. 
L'induite parabolique normalisée de $\s_1\otimes\dots\otimes\s_k$
admet un unique quotient irréductible,
qui est le transfert de $\pi$,
et ce transfert est générique si cette induite est irréductible.

\subsection{}

Cette conjecture est inspirée de résultats dus à Tunnel \cite{Tunnel} et à Saito 
\cite{Saito} dans le cas où $n$ et $d$ sont égaux à $1$,
et où $F$ est soit de caractéristique nulle, soit de caractéristique $p$ impaire.
Dans le cas où $\mu$ est trivial,
Guo \cite{GuoPJM97} prouve que, si
$F$ est de ca\-ractéristique nulle et si $d\<2$,
l'es\-pa\-ce 
$\Hom_H(\pi,\CC)$ est de dimension au plus $1$
pour toute représentation irréductible $\pi$ de $G$
et que, si cette dimension est non nulle,
\ie si $\pi$ est $H$-distinguée,
alors $\pi$ est autoduale. 
Dans~\cite{PTB}, Prasad et Takloo-Bighash prouvent~leur conjecture
lorsque $n=2$.
Leur preuve repose
en partie sur des résultats de Gan-Takeda \cite{GanTakedaANNALS}, 
qui supposent que $F$ est de caracté\-ris\-ti\-que nulle.
Il y a eu récemment plusieurs résultats en direction de la preuve de la
conjecture \ref{CONJPTB}. 

Quand $\mu$ est trivial et $F$ de caractéristique nulle,
Feigon-Martin-Whitehouse \cite{FMW} prouvent la con\-jec\-ture \ref{CONJPTB}(1)
pour les représentations cuspidales de $\GL_{2n}(F)$.

Chommaux \cite{Chommaux} prouve la~con\-jecture \ref{CONJPTB}(2)
lorsque $\pi$ est la représentation de Steinberg de $G$ tordue
par un caractère et $F$ est de ca\-ractéristique différente de $2$.
Puis Chommaux et Matringe \cite{ChommauxMatringe}
prouvent la même conjecture 
dans le cas où $\pi$ est une~re\-pré\-sentation cuspidale
de niveau $0$ de $\GL_{2n}(F)$ et où $p\neq2$.

Broussous et Matringe \cite{BroussousMatringe}
étendent le théorème~de multiplicité $1$ de Guo
au cas où l'entier~$d$ est quel\-conque et $F$ est de caractéristique
différente de~$2$.~Ils
étendent aussi
au cas d'une forme intérieure quelconque
le théorème d'autodualité de Guo~; 
leur argument repose sur des résultats 
(\cite{GuoPJM97,JR}) supposant que $F$ est de ca\-ractéristique nulle,
mais on trouve dans 
\cite{ChommauxMatringe}
un ar\-gu\-ment valable dès que $F$ est de caractéristique
différente de $2$.

Dans le cas où $\mu$ est trivial et $F$ de caractéristique différente de $2$, 
Suzuki \cite{Suzuki} étend la~con\-jecture \ref{CONJPTB}(1) à toutes les
représentations irréductibles, 
sans hypothèse de généricité,~et
ramène la preuve de cette conjecture 
au cas des représentations essentiellement de carré intégrable.

Enfin, dans le cas où $\mu$ est trivial et $F$ de ca\-ractéristique nulle,
Xue \cite{Xue} prouve d'une part~la con\-jec\-ture \ref{CONJPTB}(1),
d'autre part la con\-jecture \ref{CONJPTB}(2) dans les cas suivants~:
(i) pour les représenta\-tions~cus\-pidales
dont le transfert de Jacquet-Langlands à $\GL_{2n}(F)$ est cuspidal,
(ii) pour toutes les~re\-pré\-senta\-tions cuspidales si $d\<2$~;
et Suzuki et Xue \cite{SuzukiXue} ont annoncé que
la preuve de la con\-jectu\-re~\ref{CONJPTB}(2) 
se ramène au cas des représentations cuspidales.

\subsection{}
\label{chap13}

Dorénavant,
nous supposerons que le caractère $\mu$ de la conjecture
\ref{CONJPTB} est trivial.
Dans~cet ar\-ticle, 
nous prouvons le théorème suivant (voir le théorème \ref{PTB}). 

\begin{theo}
\label{theo12intro}
Supposons que $p\neq2$,
et que la condition (1.a) de la conjecture \ref{CONJPTB}
soit~vé\-ri\-fiée par toute re\-présentation cuspidale $H$-distinguée de $G$. 
Alors la conjecture \ref{CONJPTB}(2) est vraie
pour~toute~re\-présentation cuspidale de $G$.
\end{theo}

Dans le cas où $F$ est de caractéristique nulle et de caractéristique 
résiduelle $p\neq2$,
on en~dé\-duit~grâce à \cite{Xue}
le résultat suivant (voir le corollaire \ref{PTBcoro}). 

\begin{coro}
\label{biper}
Supposons que $F$ soit de caractéristique nulle et que $p\neq2$. 
Alors la conjecture \ref{CONJPTB}(2) est vraie
pour~toute~re\-présentation cuspidale de $G$.
\end{coro}

Par conséquent,
compte tenu des résultats présentés dans le paragraphe précédent,
le corollai\-re~\ref{biper} complète la preuve de la conjecture \ref{CONJPTB} 
dans le cas où $F$ est de caractéristique nulle et de caractéristique
résiduelle $p\neq2$ (et où le caractère $\mu$ est trivial).

Notre approche, complètement différente de celle de \cite{Xue},
repose sur la description des~repré\-sen\-ta\-tions cuspidales des formes 
intérieures de $\GL_{2n}(F)$ par la théorie des types.
On renvoie au paragraphe \ref{intro15intro} pour une discussion de
l'hypothèse ``$p\neq2$'' dans ce théorème et son corollaire.

\subsection{}

Bushnell et Kutzko ont montré dans \cite{BK} que,
quels que soient l'entier $n$ et le corps loca\-le\-ment compact non archimédien 
$F$,
toute représentation cuspidale de $\GL_n(F)$ 
s'obtient par~in\-duction compacte
d'une représentation d'un sous-groupe ouvert et compact modulo le
centre de $\GL_n(F)$.
Ce résultat a ensuite été étendu aux représentations 
cuspidales de n'importe quelle forme inté\-rieure de $\GL_n(F)$
(voir \cite{BroussousGL1,VS1,VS2,SeStJIMJ}). 
Plus précisément,
notant temporairement $G$ une~for\-me~intérieure de $\GL_n(F)$
(et non pas de $\GL_{2n}(F)$ comme dans les paragraphes
précédents),~il~y a
une famille de paires $(\BJ,\bl)$ formées d'un sous-groupe
$\BJ$ ouvert compact modulo le centre de $\G$ et d'une
représentation $\bl$ de $\BJ$ telle que~:
\begin{itemize}
\item 
pour toute paire $(\BJ,\bl)$, 
l'induite compacte de $\bl$ à $\G$ soit irréductible et cuspidale~;
\item
toute représentation (irréductible) cuspidale de $\G$ s'obtienne ainsi
pour une paire $(\BJ,\bl)$~uni\-que à $\G$-conjugaison près. 
\end{itemize}
Ces paires $(\BJ,\bl)$ sont appelées des
\textit{types simples maximaux étendus} de $G$,
ce que nous abrégerons en \textit{types} dans cet article. 
\'Etant donné une représentation cuspidale $\pi$ de $\G$ et un type
$(\BJ,\bl)$ lui correspondant,
ainsi qu'un sous-groupe fermé $H$ de $G$, 
une simple application de la formule de Mackey montre qu'on a un
isomorphisme d'espaces vectoriels~:
\begin{equation}
\label{girafeintro}
\Hom_H(\pi,\CC) \simeq\prod\limits_{g}\Hom_{\BJ^g\cap H}(\bl^g,\CC)
\end{equation}
où $g$ décrit un système de représentants de $(\BJ,H)$-doubles classes de 
$G$.
Ainsi, pour étudier la distinction de $\pi$ par $H$,
il suffit d'étudier la distinction de $\bl^g$ par $\BJ^g\cap H$ pour chaque 
$g$. 

\subsection{}
\label{intro15intro}

Cette approche a été utilisée par Hakim et Murnaghan
(voir par exemple \cite{HMIMRN,HMCMAT02,HakimRT})~pour
étudier la distinction par divers sous-groupes
d'une certaine classe de représentations cuspidales de $\GL_n(F)$, 
dites \textit{essentiellement modérées}.
Il s'agit des représentations cuspidales dont le para\-mè\-tre de Langlands
contient un caractère du sous-groupe d'inertie sauvage de $F$.
La construction de ces re\-présentations par induction compacte est plus 
simple que dans le cas non~es\-sen\-tiellement modéré,
et peut être décrite au moyen des paires admissibles
(\cite{Howe,BHETLC1}) de Howe. 

Si l'on s'intéresse aux représentations cuspidales générales de $\GL_n(F)$ 
et non pas seulement à celles qui sont essentiellement modérées,
les paires admissibles ne suffisent pas~:
il faut consi\-dé\-rer direc\-tement les types $(\BJ,\bl)$ mentionnés au 
paragraphe précédent.
Dans cette situation,~\cite{VSANT19} fournit une analyse com\-plète
de la distinction
par $\GL_n(F_0)$ des représentations irréductibles~cus\-pi\-dales de
$\GL_n(F)$
pour une extension quadratique $F/F_0$ de corps localement compacts
non~ar\-ch\-imédiens, \textit{lorsque $p\neq2$}.
Les mé\-tho\-des développées dans \cite{VSANT19} 
ont été adaptées par Jiandi Zou
(\cite{ZouU,ZouO}) 
aux involutions uni\-taires et orthogonales de $\GL_n(F)$,
toujours lorsque $p\neq2$. 

Une restriction inhérente à la méthode générale employée pour aborder
tous ces résultats
est que la ca\-rac\-téristique résiduelle $p$ doit être supposée impaire,
notamment parce qu'on utilise~le
fait que le premier ensemble de cohomologie de $\ZZ/2\ZZ$ à valeurs
dans~un pro-$p$-groupe est trivial.
(Voir aussi le paragraphe \ref{voiraussi110}
pour une utilisation de l'hypothèse $p\neq2$
dans le cas de ce travail.)

\begin{center}
$\bullet$
\end{center}

\textit{Dans cette introduction,
nous~sup\-po\-se\-rons dorénavant que $p\neq2$.}

\subsection{}
\label{intro8}

Revenons maintenant à la situation introduite au paragraphe \ref{intro1},
en ayant supposé que~$p$ est impair et que $\mu$ est trivial.
En particulier, $G$ est une forme intérieure de
$\GL_{2n}(F)$.~Rappe\-lons~que,
selon \cite{GuoPJM97,BroussousMatringe},
toute représentation cus\-pidale $H$-distinguée de $G$ est
autoduale,~c'est-à-dire isomorphe à sa contragrédiente,
de sorte que nous ne nous intéresserons par la suite qu'aux
représentations cus\-pidales de $G$ qui sont autoduales.

Notre preuve du théorème \ref{theo12intro} repose sur
un argument de compta\-ge~:
sous l'hypothèse que la condition (1.a) de la conjecture \ref{CONJPTB}
soit~vé\-ri\-fiée par toute re\-présentation cuspidale $H$-distinguée de $G$, 
nous prouvons qu'il existe pour $G$ autant de représentations cus\-pidales
$H$-distinguées~que de repré\-sen\-tations cuspidales autoduales
satisfaisant aux conditions (1.a) et (1.b) de la conjec\-tu\-re~\ref{CONJPTB}.
Bien sûr, 
les représentations cus\-pidales au\-to\-duales de $G$ sont en nombre
infini.\footnote{Par exem\-ple,~si $P_0$
est une extension modérément ramifiée de degré $n$ de $F$
et $P$ une extension~qua\-dratique~de $P_0$, les
caractères de $P^\times$ qui sont 
admissibles au sens du paragraphe \ref{intro15intro}
et triviaux sur $\N_{P/P_0}(P^\times)$
para\-mè\-trent une infinité de représentations cus\-pidales
au\-to\-duales de $\GL_{2n}(F)$~: voir \cite{HMIMRN}.}~Pour
se ra\-me\-ner à des ensembles finis,
on peut borner le niveau~des~re\-pré\-sentations~;
mais il~est~beau\-coup~plus éclairant d'introduire la notion d'endo-classe.

\subsection{}
\label{endoc}

Une endo-classe (sur $F$)
est un invariant associé, par la théorie des types,
à toute représentation essentiellement de carré intégrable d'une forme
intérieure d'un groupe linéaire général~sur $F$
(\cite{BHLTL1,BSS}).
La définition générale de cet invariant requiert une machinerie
considérable.
Ce\-pen\-dant,
il a une interprétation arithmétique simple \textit{via} la correspondance de
Langlands locale (\cite{BHLTL4,SeStLinked,Dotto})~:
deux représentations essentiellement de carré intégrable
de formes intérieures de groupes linéaires généraux sur $F$
ont la même endo-classe si et seulement si leurs paramètres~de Langlands,
une fois restreints au sous-groupe d'inertie sauvage $\Pp_F$
du groupe de Weil $\Ww_F$ de $F$ 
relati\-vement à une clôture séparable $\overline{F}$ de $F$,
ont un facteur en commun.
La
correspondance de Langlands locale induit alors une bijection entre classes
de $\Ww_F$-conjugaison de re\-présentations ir\-réductibles de $\Pp_F$
et $F$-endo-classes.
L'endo-classe est un invariant plus fin que le niveau~nor\-ma\-li\-sé,
et il n'y a qu'un nombre fini d'endo-classes de niveau normalisé
fixé. 

\subsection{}
\label{p19intro}

Reprenons la suite du paragraphe \ref{intro8},
et fixons une endo-classe $\TT$,
à laquelle on peut~pen\-ser pour le moment comme à une classe 
de $\Ww_F$-conjugaison de représentations irréductibles de~$\Pp_F$.
Supposons dans cette introduction que $\TT$ soit non nulle,
\ie qu'elle ne corresponde pas au caractère trivial de $\Pp_F$.
(Le cas de l'endo-classe nul\-le
est un peu moins direct mais se
traite es\-sen\-tiellement de la même façon.)
Notons $\AA(G,\TT)$ l'ensemble des classes
d'isomorphisme
de représentations cuspidales autoduales de $G$ d'endo-classe
$\TT$, 
et notons $\AA^{+}(G,\TT)$ le sous-en\-sem\-ble de celles qui sont
distinguées par $H$. 
Notons également~:
\begin{itemize}
\item
$\AA^{{\rm sp}}(G,\TT)$ le sous-ensemble de $\AA(G,\TT)$ formé des 
classes de représentations dont le~para\-mè\-tre de Langlands est 
symplectique (voir le paragraphe \ref{banquets}),
\item
$\AA^{{\rm ptb}}(G,\TT)$ le sous-ensemble de $\AA^{{\rm sp}}(G,\TT)$
formé des classes de représentations satisfaisant aux conditions
(1.a) et (1.b) de la conjecture \ref{CONJPTB}. 
\end{itemize}
L'ensemble $\AA(G,\TT)$ est fini,
et nous prouvons en nous inspirant de \cite{BHS}
qu'il est formé d'autant de représentations de parité symplectique~que de 
re\-présentations de parité orthogonale
(voir les lemmes
\ref{lemmedeparitegeneralise} et
\ref{lemmedeparitegeneraliseinnerform}).
Le cardinal de $\AA^{{\rm sp}}(G,\TT)$ est donc la moitié de celui de
$\AA(G,\TT)$.~Par~hy\-po\-thèse,
nous supposons dans le théorème \ref{theo12intro} que l'on a l'inclusion~:
\begin{equation}
\label{incl1intro}
\AA^{+}(G,\TT) \subseteq \AA^{{\rm sp}}(G,\TT).
\end{equation}
Une simple application d'une formule de Bushnell-Henniart \cite{BHCJM01}
dé\-cri\-vant le~com\-portement~du facteur epsilon d'une représentation 
cuspidale de niveau non nul par torsion par
un caractère~mo\-dérément ramifié
(voir la proposition \ref{FormulePourEKPi})
montre que,
pour une repré\-sen\-tation $\pi$ dans $\AA(G,\TT)$, 
la valeur du facteur epsilon apparaissant dans
la conjecture~\ref{CONJPTB} prend la forme~:
\begin{equation}
c_\pi(-1) \cdot \EP_K(\TT)
\end{equation}
où $c_\pi$ est le caractère central de $\pi$
et $\EP_K(\TT)$ est un signe ne dépendant
ni du choix de $\pi$ ni de la forme intérieure con\-sidérée
mais~\textit{uni\-quement de~$\TT$}.
Ceci a pour conséquence que
$\AA^{{\rm ptb}}(G,\TT)$~est soit égal à $\AA^{{\rm sp}}(G,\TT)$, 
soit vide.
(Il y a une exception quand $\TT$ est nulle~: voir la remarque~\ref{Parlement}.)
Notre stratégie consiste à montrer~:
\begin{itemize}
\item 
d'une part que l'ensemble $\AA^{+}(G,\TT)$ est vide si et seulement si
$\AA^{{\rm ptb}}(G,\TT)$ l'est, \ie si et seu\-lement si
$\EP_K(\TT)\neq(-1)^r$,
\item 
d'autre part que,
s'il est non vide,
son cardinal est au moins moitié de celui de $\AA(G,\TT)$. 
\end{itemize}
(On observera que le caractère central $c_\pi$ est trivial pour les 
représentations $\pi$ dans $\AA^{{\rm sp}}(G,\TT)$.) 
Il nous faut maintenant comprendre à quelles conditions $\AA^{+}(G,\TT)$ et 
$\AA^{{\rm ptb}}(G,\TT)$ sont vides. 

\subsection{}
\label{voiraussi110}

Pour déterminer à quelles conditions l'ensemble $\AA^{{\rm ptb}}(G,\TT)$
est vide, \ie pour~cal\-cu\-ler le signe $\EP_K(\TT)$,
il nous faut introduire deux invariants supplémentaires. 
Soit $\pi$ une~re\-pré\-sentation cuspidale autoduale de $G$
d'endo-classe $\TT$,
et soit $\g$ un facteur irréductible de la res\-triction à
$\Pp_F$ du paramètre de Langlands de $\pi$,
qui n'est pas trivial puisque $\TT$ a été supposée non nulle. 
(D'après le paragraphe \ref{endoc}, 
la classe~de $\Ww_F$-conjugaison de $\g$ et $\TT$
se déterminent l'une l'autre.)
Le sta\-bi\-li\-sa\-teur de $\g$ dans $\Ww_F$ est un~sous-groupe
de $\Ww_F$
égal à $\Ww_T$~pour une unique extension modérément ramifiée
$T$ de~$F$ dans $\overline{F}$.
On pose~:
\begin{equation}
\deg(\TT) = \dim(\g) \cdot [T:F],
\end{equation}
qu'on appelle le \textit{degré} de $\TT$. 
De même,
le sta\-bi\-li\-sa\-teur dans $\Ww_F$
de la somme directe de $\g$~et~de sa contragrédiente $\g^\vee$
est égal à $\Ww_{T_0}$ pour une unique extension $T_0$ de $F$ contenue dans
$T$.
La représentation $\pi$ étant auto\-dua\-le, et comme $p\neq2$, 
les représentations $\g^\vee$ et $\g$ sont conjuguées mais pas isomorphes, 
ce qui entraîne que l'extension $T/T_0$ est quadratique.
Celle-ci ne dépend, à $F$-isomorphisme près, que de $\TT$
(voir le lemme \ref{connemara}).
(Si $\TT$ est nulle,
\ie si $\g$ est le caractère trivial de $\Pp_F$,
on a $T=T_0=F$.) 

Fixons maintenant un élément $\dd\in\K$ engendrant~$K$ sur $F$,
et tel que $\a=\dd^2\in\F^\times$.
L'auto\-morphisme de conjugaison $\Ad(\dd)$, noté $\tau$,
est une in\-vo\-lu\-tion sur $G$,
dont le sous-groupe $G^\tau$~des points fixes est égal à $H$.
Nous prouvons le résultat suivant
(voir le théorème \ref{IsabelledeFrejusNS}). 

\begin{theo}
\label{valeureKTTintro}
Soit $\TT$ l'endo-classe d'une représentation cuspidale autoduale
de niveau non nul de $G$. 
Alors~: 
\begin{equation}
\EP_K(\TT) = \bo_{T/T_0}(\a)^{2n/[T:F]} 
\end{equation}
où $\bo_{T/T_0}$ est le caractère de $T_0^\times$ de noyau 
$\N_{T/T_0}(T^\times)$. 
\end{theo}

Le cas des représentations de niveau $0$ nécessite un traitement à part~:
voir le paragraphe \ref{Banaan}. 

\subsection{}

Pour aller plus loin,
on ne peut plus se contenter de l'interprétation d'une
endo-classe
comme classe de $\Ww_F$-conjugaison de représentations irréductibles
de $\Pp_F$,
et il faut introduire la notion de caractère simple,
qui est au coeur de la théorie des types. 

Les caractères simples sont des caractères bien particuliers de 
pro-$p$-sous-groupes ouverts~bien particuliers de $G$,
construits dans \cite{BK,BroussousGL1,Grabitz,VS1}.
L'ensemble des caractères simples~des~formes in\-té\-rieures des groupes
linéaires 
généraux sur $F$ possède des propriétés remarquables d'entrela\-cement et de
transfert d'un groupe à l'autre,
subsumées dans la définition d'une relation~d'équi\-valence sur 
cet ensemble appelée \textit{endo-équivalence}
(voir le paragraphe \ref{simplechar}).
Une endo-classe~est alors une classe d'équi\-valence de caractères simples 
pour cette relation.
Une représentation~cus\-pi\-dale $\pi$ de $G$ contient,
à conjugaison près par $G$,
un unique caractère simple~;
sa classe d'endo-équivalence est l'endo-classe de $\pi$. 

\subsection{}

Dans l'étude des représentations cuspidales de $\GL_n(F)$ distinguées par
$\GL_n(F_0)$ effec\-tuée dans \cite{VSANT19},
un résultat fondamental est l'existence,
dans toute représentation cuspidale~\textit{$\s$-autoduale} de $\GL_n(F)$,
\ie toute représentation cuspidale $\pi$ dont la contragrédiente
$\pi^\vee$ est~iso\-morphe à la conjuguée $\pi^\s$ par l'automorphisme
non trivial $\s\in\Gal(F/F_0)$,
d'un caractère sim\-ple $\s$-au\-todual (\cite[Theorem 4.1]{AKMSS}),
\ie un caractère simple $\t$ tel que $\t^\s=\t^{-1}$. 
Dans l'étu\-de des représentations cuspidales autoduales de $\GL_{2n}(F)$,
il suit des travaux de Blondel \cite{BlondelAENS04} qu'un bon
analogue consiste à choisir
pour $\s$ un élément de $\GL_{2n}(F)$ tel que $\s^2=1$,
et dont le~polynôme ca\-ractéristique est égal à $(X^2-1)^{n}$~:
toute représentation cuspidale autoduale de
$\GL_{2n}(F)$~con\-tient alors un caractère simple $\s$-autodual
(voir le corollaire \ref{MAIN1}).

Soit maintenant $\pi$ une représentation cuspidale autoduale
d'une forme intérieure quelconque $G$~de $\GL_{2n}(F)$,
et soit $\tau$ comme au paragraphe \ref{voiraussi110}.
\textit{Contrairement au cas $\s$-autodual considéré plus haut},
$\pi$ ne contient pas toujours un caractère simple $\t$ qui soit 
$\tau$-autodual, \ie tel que $\t^\tau=\t^{-1}$.
Le théorème suivant donne une condition nécessaire~et suf\-fisante 
d'existence d'un caractère simple $\tau$-autodual dans
une représentation cuspidale autoduale de $G$. 
Il généralise le corollaire \ref{MAIN1}
(voir le théorème \ref{THMEXISTENCECARACSIMPLE}).

\begin{theo}
\label{THMEXISTENCECARACSIMPLEintro}
Soit $\pi$ une représentation cuspidale autoduale 
de niveau non nul de $G$. Notons $\TT$ son endo-classe.
Alors $\pi$ contient un caractère simple
$\tau$-autodual si et seulement~si~: 
\begin{equation}
\bo_{T/T_0}(\a)^{2n/[T:F]} = (-1)^{r}.
\end{equation} 
\end{theo}

Ici encore, 
le cas des représentations de niveau $0$ nécessite un traitement à part~:
voir le~corol\-lai\-re \ref{baffcor}.

Le lien avec le facteur epsilon du paragraphe \ref{p19intro} est transparent~:
d'après le théorème \ref{valeureKTTintro},~une 
représentation cuspidale autoduale de niveau non nul de $G$ et 
d'endo-classe $\TT$
contient un~ca\-rac\-tère simple $\tau$-autodual si et seulement~si
$\EP_K(\TT)=(-1)^r$. 

Il ressort aussi des paragraphes \ref{tetrobot1} à \ref{degueulis}
menant à la preuve du théorème \ref{THMEXISTENCECARACSIMPLEintro}
que 
le problè\-me de l'existence d'un caractère simple $\tau$-autodual dans
une représentation cuspidale de niveau non nul et d'endo-classe donnée
peut être interprété comme un problème de plongement~:
pour qu'un tel caractère existe,
il faut et suffit qu'il y ait un plongement de $T$ dans $A$ tel que
la conjugaison par $\dd$ induise sur $T$ l'automorphisme non trivial
de $T/T_0$ (voir la proposition \ref{gandalf0coro}). 

Il ressort enfin de cette analyse que,
pour qu'une représentation cuspidale autoduale
de niveau non nul de $\AA^{{\rm sp}}(G,\TT)$
appartienne à $\AA^{{\rm ptb}}(G,\TT)$,
il faut et suffit qu'elle contienne un caractère simple $\tau$-autodual.  

\subsection{}

Cette analyse de $\AA^{{\rm ptb}}(G,\TT)$
en termes de caractères simples $\tau$-autoduaux
n'est pas un sim\-ple artifice~:
c'est ce qui va nous permettre de déterminer s'il peut y avoir 
distinction par~$H$.
Faire le lien entre caractères simples $\tau$-autoduaux et distinction 
est l'objet de la section \ref{SEC5}.
Le premier résultat~dans cette direction est le suivant
(voir la proposition \ref{grammage} et le paragraphe \ref{enterrement83}). 

\begin{prop}
\label{grammageintro}
Toute représentation cuspidale autoduale $H$-distinguée
de $G$ 
contient un caractère simple $\tau$-autodual. 
\end{prop}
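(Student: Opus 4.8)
La stratégie consiste à exploiter l'isomorphisme de Mackey \eqref{girafeintro} pour ramener la distinction de $\pi$ par $H$ à la distinction du type $(\BJ,\bl)$ par un certain conjugué intersecté avec $H$, puis à descendre cette information jusqu'au caractère simple sous-jacent. Partons d'une représentation cuspidale autoduale $\pi$ de $G$ qui est $H$-distinguée, et soit $(\BJ,\bl)$ un type qui lui correspond. Comme $\pi$ est $H$-distinguée, l'espace $\Hom_H(\pi,\CC)$ est non nul, donc par \eqref{girafeintro} il existe $g\in G$ tel que $\Hom_{\BJ^g\cap H}(\bl^g,\CC)\neq0$. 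Quitte à remplacer $(\BJ,\bl)$ par $(\BJ^g,\bl^g)$ — ce qui donne encore un type pour $\pi$ — on peut supposer que $\Hom_{\BJ\cap H}(\bl,\CC)\neq0$, autrement dit que $\bl$ est distinguée par $\BJ\cap H=\BJ^\tau$, où $\tau=\Ad(\dd)$ est l'involution intérieure du paragraphe \ref{voiraussi110}.

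L'étape suivante est d'observer que, puisque $\pi$ est autoduale, elle contient aussi le type contragrédient $(\BJ,\bl^\vee)$ (à $G$-conjugaison près), et que de surcroît $(\BJ^\tau,\bl^\tau)$ est un type pour $\pi^\tau$. Mais la distinction de $\bl$ par $\BJ^\tau$ fournit un plongement $\Hom_{\BJ^\tau}(\bl,\CC)\hookrightarrow\Hom_{\BJ^\tau}(\bl^\tau|_{\BJ^\tau},\CC)$ — puisque $\BJ^\tau$ est fixé par $\tau$ — d'où l'on tire, via l'adjonction usuelle, que $\bl^\vee$ et $\bl^\tau$ ont un entrelacement non trivial, donc que $(\BJ,\bl^\tau)$ et $(\BJ,\bl^\vee)$ sont des types pour une même représentation cuspidale. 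Comme les caractères simples sont déterminés (à conjugaison près) par la représentation cuspidale qui les contient, on en déduit que le caractère simple $\t$ contenu dans $\bl$ vérifie que $\t^\tau$ et $\t^{-1}$ sont $G$-conjugués, voire, en raffinant l'argument de conjugaison à l'intérieur du groupe qui préserve la décomposition, que $\t^\tau=\t^{-1}$ après un ajustement approprié de $\dd$ (ou de $g$) dans sa classe. C'est précisément dire que $\pi$ contient un caractère simple $\tau$-autodual.

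Le passage délicat est le dernier : la distinction de $\bl$ par $\BJ^\tau$ ne livre \emph{a priori} qu'une relation de $G$-conjugaison entre $\t^\tau$ et $\t^{-1}$, alors que l'on veut l'égalité $\t^\tau=\t^{-1}$ pour le \emph{même} $\tau$. Il faut ici invoquer la liberté de remplacer $\dd$ par un autre générateur de $K$ sur $F$ de carré dans $F^\times$ — ce qui revient à tordre $\tau$ par conjugaison par un élément de $G$ — et montrer que l'obstruction à rendre $\t^\tau=\t^{-1}$ s'annule. C'est là qu'intervient de façon cruciale l'hypothèse $p\neq2$ : la trivialité du $\H^1$ de $\ZZ/2\ZZ$ à valeurs dans un pro-$p$-groupe (le pro-$p$-sous-groupe portant le caractère simple) permet de corriger le cocycle qui mesure le défaut, et de conjuguer la situation pour obtenir exactement une involution $\tau$ pour laquelle $\t$ est $\tau$-autodual. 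Je m'attends à ce que cet argument cohomologique, combiné avec le contrôle précis des normalisateurs des caractères simples, constitue le cœur technique ; les autres étapes (Mackey, autodualité, unicité du caractère simple) sont désormais standard dans ce cercle d'idées et reposent sur les références déjà citées.
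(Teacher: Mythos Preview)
Your argument has a genuine gap at the crucial step. When you descend from the distinction of $\bl$ to the statement that $\t^\tau$ and $\t^{-1}$ are $G$-conjugate, you have thrown away the distinction hypothesis: this $G$-conjugacy already follows from $\pi$ being autodual and $\tau$ being inner (so $\pi^\tau\simeq\pi$), with no use of $H$-distinction at all. Your final cohomological step would therefore have to prove that \emph{every} autodual cuspidal $\pi$ contains a $\tau$-autodual simple character, which is false --- th\'eor\`eme~\ref{THMEXISTENCECARACSIMPLE} shows that this existence is equivalent to explicit arithmetic conditions on $\alpha$, $T/T_0$ and $2n/\deg(\TT)$, and there are plenty of autodual cuspidal $\pi$ for which these fail (see e.g.\ remarque~\ref{carpasrep}). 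Concretely, the obstruction to upgrading ``$G$-conjugate'' to ``equal'' is the class of $w\tau(g)g^{-1}$ modulo the normaliser $\BJI_\t$; this element lives in the intertwining set $\BJ^1 B^\times \BJ^1$, which is strictly larger than $\BJI_\t$ as soon as $m>1$, and neither $B^\times$ nor the relevant quotient is a pro-$p$-group, so your $\H^1(\ZZ/2\ZZ,\text{pro-}p)=0$ argument does not apply.

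What the paper actually does (proposition~\ref{grammage}) is quite different and uses the distinction hypothesis in a sharper way. Via the abstract lemme~\ref{bathmologieabstraite}, the triviality of $\t^g$ on $H^1_\t{}^g\cap G^\tau$ is shown to be \emph{equivalent} to the intertwining condition $\gamma=w\tau(g)g^{-1}\in\BJ^1 B^\times\BJ^1$. When $m=1$ this already forces $\gamma\in\BJI_\t$ and one concludes directly. When $m>1$ (necessarily odd, and then $T/T_0$ is unramified by lemme~\ref{fromagequipue}), the paper does \emph{not} attempt to make any conjugate of $\t$ literally $\tau$-autodual; instead it takes the reduced-norm valuation of the relation $u\gamma u^{-1}=\alpha u^2\gamma^{-1}$ (with $u=w\dd^{-1}$) and, after choosing $\t$ and $u$ judiciously via corollaire~\ref{tolkienelrond} or proposition~\ref{qqq}, extracts the parity condition on $mc\cdot{\rm val}_T(\alpha)$ that matches the hypotheses of th\'eor\`eme~\ref{THMEXISTENCECARACSIMPLE}. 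That theorem then furnishes a $\tau$-autodual simple character --- but one constructed by transfer from the split group, not as a visible conjugate of the $\t$ you started with. This indirect route through the arithmetic classification is precisely the ``missing idea'' in your proposal.
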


En d'autres termes,
il découle de la proposition \ref{grammageintro}, 
des théorèmes \ref{THMEXISTENCECARACSIMPLEintro}
et \ref{valeureKTTintro}
et enfin de \eqref{incl1intro},  
qu'on a l'inclusion~:
\begin{equation}
\label{incl2intro}
\AA^{+}(G,\TT) \subseteq \AA^{{\rm ptb}}(G,\TT).
\end{equation} 
On en déduit immédiatement que,
si $\AA^{{\rm ptb}}(G,\TT)$ est vide,
alors $\AA^{+}(G,\TT)$ l'est aussi,
ce qui~prou\-ve la conjecture dans ce cas. 

Il reste alors à prouver l'égalité entre 
$\AA^{+}(G,\TT)$ et $\AA^{{\rm ptb}}(G,\TT)$ 
dans le cas où ce dernier n'est pas vide. 
Dans ce cas, partant de~:
\begin{equation}
\label{incl3intro}
\AA^{+}(G,\TT) \subseteq \AA^{{\rm ptb}}(G,\TT) = \AA^{{\rm sp}}(G,\TT), 
\end{equation}
et sachant que le cardinal de $\AA^{{\rm sp}}(G,\TT)$ est égal à
la moitié de celui de $\AA(G,\TT)$
(voir le paragra\-phe \ref{p19intro}), 
notre stratégie consiste à construire dans $\AA^{+}(G,\TT)$ des
représentations $H$-distinguées de $G$ en quantité au moins
égale à la moitié du cardinal de $\AA(G,\TT)$.
C'est ce que nous faisons dans les paragraphes \ref{viteA} à 
\ref{malinlegarconcor}~:
voir la proposition \ref{OlivierdeNoyen}
(et la proposition \ref{OlivierdeNoyen0} en niveau $0$). 
Pour des raisons de cardinal,
et compte tenu de \eqref{incl3intro},
on obtient ainsi l'égalité cherchée. 

\subsection{}

Lorsque $r$ est pair, de la forme $2k$ pour un entier $k\>1$, 
tous les résultats des sections \ref{SEC4} et \ref{SEC5} 
s'adaptent au cas où~$H$
est remplacé par le sous-groupe de Levi
$L=\GL_k(D)\times\GL_k(D)$ de $G$. 
Si l'on choisit un élément $\s\in\G$ tel que $\s^2=1$
et dont le polynôme caractéristique réduit est égal à $(X^2-1)^{n}$,
l'auto\-morphisme de conjugaison $\Ad(\s)$,
simplement noté $\s$,
est une in\-vo\-lu\-tion de $G$
dont le sous-groupe des points fixes est conjugué à $L$.
Le théorème \ref{THMEXISTENCECARACSIMPLEintro} devient
alors (voir le corollaire \ref{tolkienelrond} et la remarque 
\ref{balmasquecor0})~: 

\begin{theo}
\label{tolkienelrondintro}
Toute représentation cuspidale autoduale de $G$
contient un~ca\-rac\-tère simple $\s$-autodual.
\end{theo}

Les arguments
de la preuve du théorème \ref{theo12intro} 
sont eux aussi valables pour le groupe~$L$.
Ainsi,
si $\pi$ est une représentation cuspidale de $\GL_{2k}(D)$, 
et si l'on considère les assertions~:
\begin{enumerate}
\item 
la représentation $\pi$ est distinguée par $\GL_k(D)\times\GL_k(D)$, 
\item
le paramètre de Langlands de $\pi$ est autodual symplectique, 
\end{enumerate}
on a le résultat suivant (voir le théorème \ref{PTBsplitlevi}). 

\begin{theo}
\label{theo12introsplit}
Si (1) implique (2) pour~toute~re\-présentation cuspidale de $G$, 
alors les asser\-tions (1) et (2) sont équivalentes
pour~toute~re\-présentation cuspidale de $G$.
\end{theo}

Dans le cas où $G$ est déployé et $F$ est de caractéristique nulle,
il est connu que les asser\-tions (1) et (2) sont équivalentes~pour
toute~re\-présentation cuspidale~:
voir \cite{JNQ} et le théorème \ref{THMJNQ}.

\subsection{}

Voici deux conséquences que nous tirons du théorème \ref{theo12intro} 
et des résultats de la section \ref{SEC5}
dans le cas où le corps $F$ est de caractéristique nulle. 

D'abord,
la proposition \ref{MAIN1prop}
affirme qu'une représen\-tation cuspidale autoduale de niveau~non nul
$\pi$ de $G$
conte\-nant un caractère simple $\tau$-au\-todual~contient
automatiquement un type $\tau$-au\-todual,
\ie un type $(\BJ,\bl)$ tel que $\bl^\tau$ soit isomorphe à
$\bl^\vee$.
(On observera que ce résultat est faux en niveau $0$~:
voir le paragraphe \ref{findumonde}.)
Parmi les types $\tau$-auto\-duaux contenus dans $\pi$,
qui sont en nombre fini à $H$-conjugaison près,
il est possible de mettre en évidence 
un type~par\-ti\-cu\-lier,~unique à $H$-conjugaison près,
dit \textit{générique}.
(On renvoie au paragraphe~\ref{generic}~pour~une~justi\-fication~de la 
terminologie.)
On a le résultat suivant 
(voir le théorème \ref{MAIN2},
que l'on comparera à \cite[Corollary 6.6]{AKMSS}).

\begin{theo}
La représentation $\pi$ est $H$-distinguée si et seulement si son type 
générique est distinguée. 
\end{theo}

Enfin,
soit $\pi$ une représentation cuspidale autoduale de $\GL_{2n}(F)$
de niveau non nul.
Il existe une unique représentation cuspidale autoduale de $\GL_{2n}(F)$
inertiellement équivalente mais non isomorphe à $\pi$~;
notons-là $\pi^*$.
Soit $\TT$ l'endo-classe de $\pi$,
soit $T/T_0$ l'extension quadratique qui lui est associée et posons
$m=2n/\deg(\TT)$.
Lorsque $T/T_0$ est ramifiée et $m=1$,
les représenta\-tions $\pi$ et $\pi^*$ ont la même parité,
et~\cite[6.8]{BHS} montre comment déterminer
cette parité en termes~de théorie des types.
Dans les autres cas,
\ie si $T/T_0$ est non ramifiée, 
ou si $T/T_0$ est~ra\-mi\-fiée et $m$ est pair,
les représentations~$\pi$ et $\pi^*$ ont des parités différentes,
et il s'agit de~dé\-ter\-miner laquelle des deux est
de parité sym\-plec\-tique en termes de types. 
Les propositions \ref{critsymplec} et \ref{critsymplecnr}
donnent une réponse à cette question. 

\subsection{}

Pour finir,
dans la section \ref{tarteauxpommes},
nous expliquons brièvement comment  les méthodes développées dans les
sections \ref{SEC4} et \ref{SEC5} de l'article
peuvent s'appliquer au cas d'une
involution galoisienne sur une forme intérieure de $\GL_n(F)$.

\section*{Remerciements}

Je remercie chaleureusement Nadir Matringe pour de nombreuses discussions
stimulantes~à pro\-pos de ce travail.
Je le remercie en particulier de m'avoir expliqué les arguments de 
globalisation utilisés dans \cite{BroussousMatringe} et 
\cite{ChommauxMatringe},
ainsi que \cite{NadirJNT14}.

Je remercie également l'Institut Universitaire de France
pour les excellentes conditions de~tra\-vail qu'il m'a fournies
durant la réalisation de ce travail.

Je remercie enfin les rapporteurs anonymes pour leur relecture méticuleuse
et leurs excellentes suggestions, 
et pour m'avoir permis de corriger plusieurs erreurs dans certains arguments.

\section{Notations}
\label{Notation}

\subsection{}

On fixe un corps localement compact non archimédien $\F$,
de caractéristique résiduelle $p$ impaire.
On fixe une fois pour toute un caractère~:
\begin{equation}
\label{AddCharPsi}
\psi : \F \to \FC^\times
\end{equation}
trivial sur $\p_\F$ mais pas sur $\Oo_\F$,
à valeurs complexes. 

Si $\K$ est une extension finie de $F$,
ou plus généralement une $\F$-algèbre à division de dimension finie,
on note $\Oo_\K$ son anneau d'entiers,
$\p_\K$ l'idéal maximal de $\Oo_\K$ et $\kk_\K$ son corps résiduel,
qui est un corps fini de cardinal noté $q_K$.

Si $n$ est un entier strictement positif,
on note $\Mat_n(K)$ l'algèbre des~ma\-trices carrés de
taille $n$ à coefficients dans $K$
et $\GL_n(K)$ le groupe de ses éléments inversibles.

Si $K$ est commutatif,
on note $\N_{K/F}$ et $\tr_{K/F}$ la norme et la trace de $K$ sur $F$,
et on note $e_{K/F}$ et $f_{K/F}$ l'indice de ramification et le degré
résiduel de $K$ sur $F$.
On note aussi $\bm_K$ le sous-groupe de $\K^\times$ formé
de ses racines~de l'unité d'ordre premier à $p$.

Si $K$ est quadratique sur $F$,
on note $\bo_{K/F}$ le caractère de $\F^\times$ de noyau 
$\N_{K/F}(\K^\times)$.

\subsection{}

Par \textit{représentation} d'un groupe localement profini $\G$,
on entendra toujours une représentation lisse sur un $\FC$-espace
vectoriel.
On appellera \textit{caractère} de $G$
un homomorphisme de groupes de $\G$
dans $\FC^\times$ de noyau ouvert. 

\'Etant donné une représentation $\pi$ d'un sous-groupe fermé
$H$ de $\G$, on note $\pi^\vee$ sa représen\-ta\-tion contragrédiente. 
Si $\chi$ est un caractère de $H$, on note $\pi\chi$ la représentation
$x\mapsto\chi(x)\pi(x)$ de $H$.
Si $g\in\G$, on pose $H^g=g^{-1}\H g$ et on note 
$\pi^g$ la représentation $x\mapsto\pi(gxg^{-1})$ de $H^g$. 

Si $\s$ est une involution continue de $G$,
on note $\pi^\s$ la représentation $\pi\circ\s$ de $\s(\H)$.~Si $\mu$
est un caractère de $\H\cap\G^\s$, on dit que $\pi$ est 
$\mu$-\textit{distinguée} si l'espace 
$\Hom_{\H\cap\G^\s}(\pi,\mu)$ est non nul.~Si $\mu$
est le caractère trivial, on dit simplement que $\pi$ est
$\H\cap\G^\s$-\textit{distinguée}, ou juste \textit{distinguée}. 

\section{Préliminaires sur les types simples}
\label{PrelimST}

Nous introduisons dans cette section le langage élémentaire 
de la théorie des types simples, et nous
rappelons les principaux résultats 
dont nous aurons besoin concernant les strates, carac\-tè\-res~et types simples. 
Ces résultats sont issus
de \cite{BK,BHLTL1,BHEffectiveLC} pour les groupes linéaires~gé\-né\-raux sur $F$ 
et de \cite{VS1,VS2,VS3,SeStJIMJ,BSS} pour leurs formes intérieures.
On trouvera également quelques ré\-sultats originaux~:
il s'agit principalement de résultats qui étaient déjà connus pour les 
groupes 
linéaires~gé\-né\-raux mais pas pour leurs formes intérieures
(à l'exception des deux derniers~pa\-ra\-graphes
\ref{flipflap6} et \ref{invTTmoinsproto}).

\subsection{} 
\label{genitrix41}

Dans toute cette section,
on fixe un entier $n\>1$ et 
une $\F$-algèbre centrale simple $\A$ de~de\-gré réduit $n$.
{(On no\-te\-ra la différence avec le paragraphe \ref{intro1}
de l'introduction,
où le degré réduit de $\A$ est supposé être égal à $2n$.
Nous reviendrons à une $\F$-algèbre centrale simple de degré~ré\-duit $2n$
à partir~de la section \ref{SEC3}.)}
Fixons~un
$A$-mo\-du\-le à gauche~sim\-ple~$\V$, 
et notons $\D$ la $\F$-algèbre opposée à $\End_\A(\V)$.
C'est une $\F$-algèbre à division centrale,
$\V$ est un~$\D$-espace~vectoriel à~droi\-te et
$\A$ s'iden\-ti\-fie naturellement à la $\F$-al\-gè\-bre $\End_{\D}(\V)$.
Notons $r$ la di\-men\-sion de $\V$ sur $\D$ et $d$ le degré réduit de $D$
sur $F$.
On a donc $n=rd$ et,
si l'on fixe une base de $\V$ sur $\D$,
on obtient un isomorphisme de $D$-espaces vectoriels entre $V$ et $D^r$
et un isomorphisme de $\F$-algèbres entre $\A$ et
$\Mat_{r}(\D)$.

Soit $[\aa,\b]$ une strate simple dans $\A$.
Rappelons que $\aa$ est un $\Oo_F$-ordre héréditaire dans $\A$
et que $\b$ est un élément de $\A$ satisfaisant à certaines conditions,
parmi lesquelles~:
\begin{enumerate}
\item
la $\F$-algèbre $\E=\F[\b]$ est un corps, 
\item
son groupe multiplicatif $\E^\times$ normalise $\aa$.
\end{enumerate}
L'inclusion de $\E$ dans $\A$ fait de $\V$ un $E\otimes_{\F}\D$-module à droite.
Le cen\-trali\-sa\-teur de $\E$ dans $\A$, no\-té $\B$,
est une $\E$-algèbre centrale simple
s'identifiant natu\-rel\-lement à $\End_{\E\otimes\D}(V)$,
et l'intersec\-tion $\bb=\aa\cap\B$ est un $\Oo_E$-ordre héréditaire de $\B$.

Notons $\pp_\aa$ le radical de Jacobson de $\aa$.
Alors $\U^{i}(\aa)=1+\pp_\aa^{i}$ est un pro-$p$-sous-groupe
ouvert compact de $\G=\A^\times$ contenu dans $\aa^\times$,
pour tout entier $i\>1$.
La \textit{période} de $\aa$ est l'unique entier $e\>1$ tel 
que $\aa\pp_F$ soit égal à $\pp_{\aa}^{ed}$.

On associe à $[\aa,\b]$ des pro-$p$-sous-groupes ouverts compacts
$\H^1(\aa,\b)\subseteq\BJ^1(\aa,\b)\subseteq\U^1(\aa)$~et
un ensemble fini non vide $\Cc(\aa,\b)$
de caractères de $\H^1(\aa,\b)$ appelés
\textit{caractères simples},~dépendant
du ca\-rac\-tè\-re $\psi$ fixé à la section \ref{Notation}.
On pose $\BJ^0(\aa,\b)=\bb^\times\BJ^1(\aa,\b)$ et
on note $\BJI(\aa,\b)$ le sous-groupe de $G$ engendré par
$\BJ^1(\aa,\b)$ et le normalisateur de $\bb$ dans $\B^\times$.
On a~:
\begin{equation}
\label{TimJamieson}
\BJ^0(\aa,\b)\cap\B^\times=\bb^\times,
\quad
\BJ^1(\aa,\b)\cap\B^\times=\U^1(\bb),
\quad
\BJ^0(\aa,\b)/\BJ^1(\aa,\b) \simeq \bb^\times/\U^1(\bb).
\end{equation}
Pour la proposition qui suit, nous renvoyons à
\cite[2.1]{BHEffectiveLC}, \cite[Propo\-si\-tion 5.1.1]{BK},
\cite[3.3.2]{VS1}, \cite[Pro\-po\-sition~2.1]{MSt}
{et à la preuve de \cite[Proposition 2.6]{Dotto}.}

\begin{prop}
\label{patel}
Soit $[\aa,\b]$ une strate simple dans $\A$,
et soit $\t\in\Cc(\aa,\b)$.
\begin{enumerate}
\item
Le normalisateur de $\t$ dans $\G$ est égal à $\BJI(\aa,\b)$.
\item
Le groupe $\BJI(\aa,\b)$ a les propriétés suivantes~:
\begin{enumerate}
\item 
C'est un sous-groupe ouvert et
compact modulo le centre de $\G$. 
\item
Il possède un unique sous-groupe compact maximal,
égal à $\BJ^0(\aa,\b)$.
\item
Il possède un unique pro-$p$-sous-groupe compact distingué maximal, 
égal à $\BJ^1(\aa,\b)$.
\end{enumerate}
\item
\label{bourrin6}
L'ensemble d'entrelacement de $\t$ dans $\G$ est égal à 
$\BJ^1(\aa,\b)\B^\times\BJ^1(\aa,\b)$.
\item
\label{bourrin4}
Il existe une représentation irréductible $\n$ de $\BJ^1(\aa,\b)$,
unique à isomorphisme près,
dont~la res\-triction à $\H^1(\aa,\b)$ contienne $\t$,
et une telle représentation se prolonge à $\BJI(\aa,\b)$. 
\end{enumerate}
\end{prop}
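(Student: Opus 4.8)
The statement to prove, Proposition~\ref{patel}, collects together several structural facts about simple characters $\t\in\Cc(\aa,\b)$ attached to a simple stratum $[\aa,\b]$ in a central simple $F$-algebra $\A$: the computation of the $G$-normaliser of $\t$ as $\BJI(\aa,\b)$, the three internal properties of this compact-mod-centre group, the intertwining formula $\BJ^1\B^\times\BJ^1$, and the existence and uniqueness up to isomorphism of the Heisenberg representation $\n$ of $\BJ^1$ above $\t$ together with its extendibility to $\BJI$. Since the excerpt explicitly directs the reader to \cite{BHEffectiveLC} 2.1, \cite{BK} Proposition 5.1.1, \cite{VS1} 3.3.2 and \cite{MSt} Proposition 2.1, the "proof" is essentially a matter of transcribing and stitching together the relevant statements from these references, adapting the split-case results of Bushnell--Kutzko to the general inner-form setting via the work of Sécherre and Sécherre--Stevens. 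So my plan is not to reprove type theory from scratch but to organise the citations correctly, indicating for each item which reference supplies it and what (if any) gluing is needed.

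\medskip

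**Plan de preuve.** First I would treat (4), since the other assertions are phrased in terms of the objects it produces. The existence and uniqueness up to isomorphism of an irreducible representation $\n$ of $\BJ^1(\aa,\b)$ whose restriction to $\H^1(\aa,\b)$ contains $\t$ is the Heisenberg construction: $\H^1/\Ker(\t)$ is, by the structure theory of simple characters, a Heisenberg $p$-group with centre $\H^1/\H^1\cap\Ker(\t)$, and $\BJ^1/\H^1$ carries a nondegenerate alternating form for which $\t$ is the associated character; Stone--von Neumann then gives the unique irreducible $\n$ with central character $\t$, which one inflates to $\BJ^1$. That $\n$ extends to $\BJI(\aa,\b)$ is where I would cite \cite{MSt} Proposition~2.1 (and \cite{VS1} 3.3.2 for the inner-form version); the point is that $\BJI$ normalises $\t$ (to be proved in item~(1)), so it acts on the isomorphism class of $\n$, and a cocycle/obstruction argument — using that $\BJI/\BJ^1$ has the right structure — shows the projective representation lifts. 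For item~(1), that the $G$-normaliser of $\t$ equals $\BJI(\aa,\b)$: the inclusion $\BJI\subseteq N_G(\t)$ is by construction (elements of $\B^\times$ normalising $\bb$ fix $\Cc(\aa,\b)$, and $\BJ^1$ normalises each of its characters since $\H^1\trianglelefteq\BJ^1$ and $\t$ is $\BJ^1$-invariant — itself a standard fact), and the reverse inclusion follows from the intertwining formula in (3) together with the description of the normaliser inside the intertwining set. For item~(3), the intertwining set $I_G(\t)=\BJ^1\B^\times\BJ^1$ is exactly \cite{BK} (in the split case) and its inner-form analogue in \cite{VS1} 3.3.2 / \cite{BSS}; I would just cite it. For item~(2), properties (a),(b),(c) of $\BJI(\aa,\b)$: that it is open and compact-mod-centre follows from $\BJ^0=\bb^\times\BJ^1$ being compact open and $\BJI/\BJ^0$ being cyclic generated by a normaliser of $\bb$ modulo $\bb^\times$, using \eqref{TimJamieson}; that $\BJ^0$ is its unique maximal compact subgroup and $\BJ^1$ its unique maximal normal pro-$p$ subgroup follows from the hereditary-order structure — $\bb$ being an $\Oo_E$-order, its normaliser in $\B^\times$ modulo $\U^1(\bb)$ is virtually cyclic with $\bb^\times/\U^1(\bb)$ as unique maximal compact, and $\U^1(\bb)=\BJ^1\cap\B^\times$ is the unique maximal normal pro-$p$ — and I would cite \cite{BK} Proposition~5.1.1 and \cite{BHEffectiveLC} 2.1 for the precise packaging.

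\medskip

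**Le point délicat.** The genuine content, as opposed to bookkeeping, is the passage from the split case (Bushnell--Kutzko, Bushnell--Henniart) to arbitrary inner forms, i.e. to $\D$ a division algebra of degree $d>1$. Here the subtlety is that $\B=\End_{E\otimes D}(V)$ need no longer be a matrix algebra over $E$, so one works with $\Oo_E$-hereditary orders in a possibly non-split $E$-central simple algebra; the extendibility of $\n$ to $\BJI$ and the exact shape of $\BJI$ require the results of \cite{VS1}, \cite{SeStJIMJ}, \cite{BSS}. I expect the main obstacle — or rather, the one place where simply quoting is not quite enough — to be checking that all four references use mutually compatible normalisations of $\H^1$, $\BJ^1$, $\Cc(\aa,\b)$ and the additive character $\psi$, so that the statements can be concatenated verbatim; once that compatibility is recorded, the proposition is immediate from the cited results. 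I would therefore open the proof by fixing these normalisations explicitly (they were set in Section~\ref{Notation} and \ref{genitrix41}) and then dispatch each of (1)--(4) with a one-line pointer to the appropriate reference, in the order (4) $\to$ (3) $\to$ (1) $\to$ (2) dictated by the logical dependencies above.
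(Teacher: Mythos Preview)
Your proposal is correct and matches the paper's approach exactly: the paper gives no proof at all for this proposition, simply stating before it ``Pour la proposition qui suit, on renvoie à \cite{BHEffectiveLC} 2.1, \cite{BK} Proposition 5.1.1, \cite{VS1} 3.3.2 et \cite{MSt} Proposition 2.1.'' Your plan to dispatch each item with a pointer to the appropriate reference is thus precisely what is expected, and indeed more detailed than what the paper itself provides.
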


\begin{rema}
\label{nullss}
Ceci inclut le cas particulier où $\b=0$.
Dans ce cas, $[\aa,0]$ est une strate~sim\-ple dans $\A$
quel que soit l'ordre héréditaire 
$\aa$.
Une telle strate simple est dite \textit{nulle}.
Dans cette situation, on a $E=F$ et $\H^1(\aa,0)=\U^1(\aa)$,
et $\Cc(\aa,0)$ est réduit au caractère trivial de $\U^1(\aa)$.~Le
groupe $\BJI(\aa,0)$ est le normalisateur de $\aa$ dans $\G$,
et on a $\BJ^0(\aa,0)=\aa^\times$ et $\BJ^1(\aa,0)=\U^1(\aa)$.
Ainsi la représentation $\n$ est le caractère trivial de $\U^1(\aa)$.
\end{rema}

Si $\t$ est un caractère simple dans $G$,
\ie s'il existe une strate simple $[\aa,\b]$ dans $A$~telle que
$\t\in\Cc(\aa,\b)$, 
on note
$\BJI_{\t}^{\phantom{'}}$ son normalisateur, 
$\BJ^0_{\t}$ l'unique sous-groupe compact maximal~de~$\BJI_{\t}^{\phantom{'}}$
et~$\BJ^1_{\t}$ son unique pro-$p$-sous-groupe distingué ma\-xi\-mal. 
On note aussi
$\H^1_{\t}
$ le pro-$p$-sous-groupe de $\BJ^1_\t$ sur lequel
$\t$ est défini.
La représentation irréductible $\n$ de $\BJ^1_\t$
caractérisée par~la~pro\-posi\-tion~\ref{patel}\eqref{bourrin4}
est appelée la \textit{représentation de~Hei\-sen\-berg} associée à $\t$.

\subsection{}
\label{saucisse}

Fixons un $\E\otimes_{\F}\D$-mo\-du\-le à droite~sim\-ple~$\SS$
et posons $\W=\Hom_{E\otimes\D}(\SS,\V)$.
C'est un~$\B$-modu\-le à gauche simple. 
Notons $\C$ la $E$-algèbre à division opposée à $\End_B(W)$.
Notons $m$ la di\-mension de $W$ sur $C$
et $c$ le degré réduit de $C$ sur $E$.
D'après \cite[Pro\-po\-si\-tion 1]{ZinkJNT99}, on a~:
\begin{equation}
\label{egalizink}
mc=\frac n {[E:F]},
\quad
c= \frac d {(d, [E:F])},
\quad
r = m\frac {[E:F]} {(d, [E:F])}.
\end{equation}
Le choix d'un~isomor\-phisme de $\E\otimes_{\F}\D$-modules 
$\V\simeq\SS^m$ induit un isomorphisme de $\F$-algèbres~: 
\begin{equation}
\label{PHIWEDEC}
\phi:\A\to\Mat_{m}(\End_\D(\SS))
\end{equation}  
dont la restriction à $B$ induit un isomorphisme de $\E$-algèbres
$\B\simeq\Mat_{m}(\C)$.
On note $\iso(\aa,\b)$~l'en\-semble des isomorphismes
\eqref{PHIWEDEC}~ainsi obtenus tels que l'ordre 
$\phi(\aa\cap\B)$ soit standard,
\ie formé de matrices à coefficients dans $\Oo_{\C}$ 
dont la réduction modulo $\p_C$ soit triangulaire supérieu\-re~par blocs. 
Un choix de $\phi\in\iso(\aa,\b)$ induit un~iso\-morphisme
de groupes~:
\begin{equation}
\label{JJ1UU1GLnonmax}
\BJ^0(\aa,\b)/\BJ^1(\aa,\b) \simeq \bb^\times/\U^1(\bb) \simeq
\GL_{m_1}(\ee)\tdt \GL_{m_s}(\ee)
\end{equation}
(le premier isomorphisme provenant de \eqref{TimJamieson})
où $\ee$ est le corps résiduel de $\C$,
les $m_i$~sont des~en\-tiers de somme $m$
et l'entier $s$ est la période de $\bb$.

\begin{rema}
\label{meplat}
Le cardinal de $\ee$ et l'entier $s$ sont entièrement déterminés par
n'importe~quel caractère simple $\t\in\Cc(\aa,\b)$.
En effet, d'après \eqref{JJ1UU1GLnonmax} et la proposition \ref{patel}, 
le centre de $\BJ^0_{\t}/\BJ^1_{\t}$ est un groupe abélien fini isomorphe
à $(\ee^{\times})^s$. 
On observe que l'ordre $\bb$ est maximal si et seulement si $s=1$,
c'est-à-dire si et seulement si le centre de $\BJ^0_{\t}/\BJ^1_{\t}$
est cyclique. 
\end{rema}

\subsection{}
\label{flipflap3}

Par la suite, nous ne considérerons que des caractères
simples~\textit{maximaux}, au sens de la~dé\-fini\-tion suivante
(voir la remarque \ref{chafouin1} pour une justification).

\begin{defi}
Une strate simple $[\aa,\b]$ dans $A$ est dite \textit{maximale}
si $\bb=\aa\cap\B$ est un ordre maximal dans $\B$. 
Si c'est le cas, les caractères simples dans $\Cc(\aa,\b)$
sont dits \textit{maximaux}.
\end{defi}

La remarque \ref{meplat} montre que, 
pour un caractère simple,
la propriété d'être maximal est indé\-pendante de la strate
simple~choi\-sie pour le définir.
En d'autres termes,
si $[\aa,\b]$ est maximale,~si
$\t\in\Cc(\aa,\b)$
et si $[\aa',\b']$~est une~stra\-te sim\-ple 
telle que $\t\in\Cc(\aa',\b')$,
alors $[\aa',\b']$~est maximale.

Une strate simple maximale a les propriétés suivantes,
qui précisent la proposition \ref{patel}.

\begin{prop} 
\label{patelmax}
Si $[\aa,\b]$ est une strate simple maximale dans $\A$,
alors $\aa$ est l'unique ordre de $\A$ normalisé par $E^\times$
tel que $\aa\cap\B=\bb$,
et le groupe $\BJ(\aa,\b)$ normalise $\aa$.
\end{prop}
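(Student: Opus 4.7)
The plan is to prove the two assertions separately, deriving the normalization property of $\BJ(\aa,\b)$ (understood as $\BJI(\aa,\b)$, the only nontrivial case since $\BJ^0(\aa,\b)\subseteq\aa^\times$) from the uniqueness of $\aa$.

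\emph{Uniqueness of $\aa$.} I would rely on the natural bijection between $\Oo_F$-hereditary orders of $A$ normalized by $E^\times$ and hereditary $\Oo_E$-orders of $B$, given by $\aa'\mapsto\aa'\cap B$, which is part of the standard Broussous--Grabitz type machinery for inner forms (\cite{BroussousGL1}, \cite{VS1}). Fix $\phi\in\iso(\aa,\b)$ as in \eqref{PHIWEDEC}, so that $V\simeq S^m$ as an $E\otimes_F D$-module in the sense of \ref{saucisse}. An $\Oo_F$-hereditary order $\aa'$ of $A$ corresponds to an $\Oo_D$-lattice chain $\Ll'$ in $V$, and $E^\times$-stability of $\aa'$ translates into $\Oo_E$-stability of $\Ll'$. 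Up to the canonical $\Oo_E\otimes_{\Oo_F}\Oo_D$-lattice structure on the simple $E\otimes_F D$-module $S$, such $\Oo_E$-stable chains are in bijection with $\Oo_C$-lattice chains in $W=\Hom_{E\otimes D}(S,V)$, and under the identification $B\simeq\End_C(W)$ these are exactly the hereditary $\Oo_E$-orders in $B$. Moreover $\aa'\mapsto\aa'\cap B$ transports directly into this passage from lattice chains. Since $\bb$ is maximal, its associated $\Oo_C$-lattice chain in $W$ is reduced to a single homothety class, which pins down $\Ll$ and hence $\aa$.

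\emph{Normalization property.} By the definition recalled in \ref{genitrix41}, $\BJI(\aa,\b)$ is generated by $\BJ^1(\aa,\b)$ and the normalizer $\Kk(\bb)$ of $\bb$ in $B^\times$. The inclusion $\BJ^1(\aa,\b)\subseteq\U^1(\aa)\subseteq\aa^\times$ shows that the first factor trivially normalizes $\aa$. Let now $g\in\Kk(\bb)$. Then $g\aa g^{-1}$ is an $\Oo_F$-hereditary order of $A$; since $g\in B$ centralizes $E$, it is again normalized by $E^\times$; and
\[
(g\aa g^{-1})\cap B = g(\aa\cap B)g^{-1} = g\bb g^{-1} = \bb.
\]
The uniqueness just established forces $g\aa g^{-1}=\aa$, so $g$ normalizes $\aa$. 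Thus $\BJI(\aa,\b)$ normalizes $\aa$, as required.

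\emph{Main obstacle.} All the content is concentrated in the bijection used in the first paragraph. Its verification requires controlling the $\Oo_E$-stable $\Oo_D$-lattice chains in $V$ by descending through $V\simeq S^m$, which demands some care in treating the $\Oo_E\otimes_{\Oo_F}\Oo_D$-lattices in $S$ when $E\otimes_F D$ is not itself a division algebra. Once that is granted---as it is in the cited references---the deduction of both parts of the proposition is entirely formal.
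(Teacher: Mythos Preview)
Your proof is correct and follows the same approach as the paper: the uniqueness of $\aa$ is the content of \cite{VS2} Lemme 1.6 (you sketch the underlying lattice-chain bijection rather than merely citing it, but the argument is the same), and the normalization of $\aa$ by $\BJI(\aa,\b)$ is deduced from this uniqueness exactly as you do, by treating the generators $\BJ^1(\aa,\b)\subseteq\U^1(\aa)$ and the normalizer of $\bb$ in $B^\times$ separately.
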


\begin{proof}
La première assertion se déduit de \cite[Lemme 1.6]{VS2}. 
Rappelons ensuite que $\BJ(\aa,\b)$ est engendré par
$\BJ^1(\aa,\b)$, qui normalise $\aa$ car il est inclus dans
$\U^1(\aa)$, et le~nor\-ma\-lisateur de $\bb$ dans $B^\times$.
La seconde assertion suit alors de la propriété d'unicité de $\aa$.
\end{proof}

La proposition suivante montre que des strates simples
définissant un même caractère simple maximal ont de nombreux
invariants en commun. 

\begin{prop}
\label{rosimond}
Soit $[\aa,\b]$ et $[\aa',\b']$ des strates simples maximales dans $\A$
telles que~l'in\-ter\-section 
$\Cc(\aa,\b)\cap\Cc(\aa',\b')$ soit non vide.
Alors~:
\begin{equation*}
\Cc(\aa',\b')=\Cc(\aa,\b), \quad
\aa'=\aa, \quad
e_{F[\b']/F}=e_{F[\b]/F}, \quad
f_{F[\b']/F}=f_{F[\b]/F}.
\end{equation*}
\end{prop}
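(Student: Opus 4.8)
The statement to prove is Proposition \ref{rosimond}: if $[\aa,\b]$ and $[\aa',\b']$ are maximal simple strata in $\A$ with $\Cc(\aa,\b)\cap\Cc(\aa',\b')\neq\varnothing$, then $\Cc(\aa',\b')=\Cc(\aa,\b)$, $\aa'=\aa$, and the ramification and residue degrees of $F[\b']/F$ and $F[\b]/F$ agree. The plan is to pick a simple character $\t$ in the intersection and exploit the rigidity of its normalizer and of the associated groups, together with the maximality hypothesis.

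\textbf{Step 1: the character sets coincide.} First I would recall that the set $\Cc(\aa,\b)$ of simple characters is a \emph{single} fibre of the canonical intertwining (``transfer'') relation, and more concretely that $\Cc(\aa,\b)$ is entirely determined by any one of its members: given $\t\in\Cc(\aa,\b)\cap\Cc(\aa',\b')$, the group $\H^1(\aa,\b)$ on which $\t$ lives is intrinsic to $\t$ (it is $\H^1_\t$ in the notation of \S\ref{genitrix41}), hence $\H^1(\aa,\b)=\H^1(\aa',\b')=\H^1_\t$; and $\Cc(\aa,\b)$ is recovered from $\t$ as the orbit of $\t$ under the natural action making the transfer well defined — equivalently, by the fundamental property of simple characters, $\Cc(\aa,\b)=\Cc(\aa',\b')$ as soon as they share one element. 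This is exactly the content invoked repeatedly in the type-theory literature (\cite{BK}, \cite{VS1}), so I would cite it rather than reprove it.

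\textbf{Step 2: the hereditary orders coincide.} Here maximality is essential. By Proposition \ref{patel}(2)(b), the group $\BJ^0_\t$ is the unique maximal compact subgroup of the normalizer $\BJI_\t$ of $\t$; it therefore does not depend on the stratum used to define $\t$, so $\BJ^0(\aa,\b)=\BJ^0(\aa',\b')$ and likewise $\BJ^1(\aa,\b)=\BJ^1(\aa',\b')$. Now by Proposition \ref{patelmax}, when the stratum is maximal the group $\BJ(\aa,\b)$ normalizes $\aa$, and $\aa$ is the \emph{unique} hereditary order normalized by $E^\times=F[\b]^\times$ with $\aa\cap\B=\bb$. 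I would argue: $\BJ^1_\t\subseteq\U^1(\aa)\cap\U^1(\aa')$, and from $\BJ^1_\t$ one reads off $\aa$ as the order whose principal units $\U^1$ it generates in an appropriate sense — more safely, one uses that $\aa^\times$ is the unique maximal compact subgroup of the normalizer of $\U^1(\aa)$ containing it, and $\U^1(\aa)$ is recovered intrinsically from the pair $(\H^1_\t,\t)$ via the maximality of $\bb$ (which forces $s=1$, i.e. $\bb^\times/\U^1(\bb)\simeq\ee^\times$ cyclic, by Remark \ref{meplat}). Then $\aa=\aa'$ follows. Alternatively, and perhaps more cleanly, one invokes directly \cite{VS2} Lemme 1.6 (cited in the proof of Proposition \ref{patelmax}) to identify $\aa$ from the character $\t$ together with maximality.

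\textbf{Step 3: equality of ramification and residue degrees, and the main obstacle.} Once $\Cc(\aa,\b)=\Cc(\aa',\b')$ and $\aa=\aa'$, I would extract $e_{F[\b]/F}$ and $f_{F[\b]/F}$ from invariants visible on $\t$ and $\aa$ alone. The period $e$ of $\aa$ satisfies, via \eqref{egalizink} and the structure \eqref{JJ1UU1GLnonmax}, a relation linking $[E:F]=e_{E/F}f_{E/F}$ to $n$, $d$, and the residual data $(\ee, s)$ of $\bb$, and by Remark \ref{meplat} the pair $(\#\ee, s)$ depends only on $\t$. Since $\bb$ is maximal we have $s=1$, so $\#\ee$ and hence $c=\deg(\C/\E)$ and $m=\dim_\C W$ are $\t$-invariant; combined with \eqref{egalizink}, $mc=n/[E:F]$ pins down $[E:F]$. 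To separate $e_{E/F}$ from $f_{E/F}$ I would use that the residue field $\ee$ of $\C$ contains (a copy of) $\kk_E$, whose cardinality $q_E=q_F^{f_{E/F}}$ is therefore determined by $\#\ee$ together with $c$ and the divisibility constraints in \eqref{egalizink}; then $e_{E/F}=[E:F]/f_{E/F}$. I expect \textbf{the main obstacle} to be precisely this last bookkeeping: verifying carefully that the residual datum $\#\ee$, the integer $c$, and the known numerical relations suffice to isolate $e_{F[\b]/F}$ and $f_{F[\b]/F}$ individually — not merely their product — and that nothing depends on the choice of $\phi\in\iso(\aa,\b)$. This is a finite but slightly delicate computation with the formulas of \S\ref{saucisse}; everything else is a direct appeal to the uniqueness statements in Propositions \ref{patel}, \ref{patelmax} and Remark \ref{meplat}.
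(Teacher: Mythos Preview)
Your Step~2 contains a genuine gap. You correctly observe that $\BJ^0_\t$ and $\BJ^1_\t$ are intrinsic to $\t$, but the proposed recovery of $\aa$ from $(\H^1_\t,\t)$ is not justified: in general $\H^1_\t\subseteq\BJ^1_\t\subseteq\U^1(\aa)$ with strict inclusions, and nothing you have written explains how $\U^1(\aa)$ --- let alone $\aa$ --- is read off from these subgroups. Invoking \cite{VS2}~Lemme~1.6 ``directly'' does not help either, since that lemma characterizes $\aa$ as the unique order normalized by $E^\times$ with $\aa\cap\B=\bb$: to apply it to $\aa'$ you would need to know that $E^\times=F[\b]^\times$ (not $F[\b']^\times$!) normalizes $\aa'$ and that $\aa'\cap\B=\bb$, neither of which you have established.

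The paper's argument fills exactly this gap in one clean move. Since $\BJI_\t=\BJI(\aa,\b)=\BJI(\aa',\b')$, this group contains $E^\times$ (from the first description) and, by Proposition~\ref{patelmax} applied to the maximal stratum $[\aa',\b']$, it normalizes $\aa'$. Hence $E^\times$ normalizes $\aa'$, so $\aa'\cap\B$ is a hereditary $\Oo_E$-order in $\B$. Intersecting $\BJ^0_\t=\BJ^0(\aa',\b')\subseteq\aa'^\times$ with $\B^\times$ gives $\bb^\times\subseteq(\aa'\cap\B)^\times$; maximality of $\bb$ forces $\aa'\cap\B=\bb$, and now Proposition~\ref{patelmax} (uniqueness) yields $\aa'=\aa$. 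This is the missing idea: use the normalizer $\BJI_\t$ to transport $E^\times$-stability from $\aa$ to $\aa'$.

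Your Step~3 is then unnecessary. Once $\aa=\aa'$, the equalities $e_{F[\b']/F}=e_{F[\b]/F}$ and $f_{F[\b']/F}=f_{F[\b]/F}$ are a direct citation of \cite{BSS}~Lemma~4.14, and $\Cc(\aa,\b)=\Cc(\aa',\b')$ follows from \cite{BSS}~Theorem~4.16 --- no residual-field bookkeeping is required. Your Step~1, as written, also inverts the logical order: in the non-split setting the equality of simple-character sets is obtained \emph{after} $\aa=\aa'$, not before (cf.\ the remark following the proposition, which notes that the split case is where one can bypass maximality).
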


\begin{proof}
Posons $E=F[\b]$ et $E'=F[\b']$.
Soit $\t\in\Cc(\aa,\b)\cap\Cc(\aa',\b')$.
D'après la~pro\-position \ref{patel}, son normalisateur $\BJI_{\t}$
est égal à $\BJI(\aa,\b)=\BJI(\aa',\b')$.
Ce groupe contient $E^{\times}$~et~$E'^{\times}$~et,
les deux strates étant maximales, 
il~nor\-ma\-li\-se $\aa$ et $\aa'$. 
Intersectant la relation 
$\BJ^0(\aa,\b)\subseteq\aa'^{\times}$ avec
$\B^{\times}$, on trouve que $\bb^\times\subseteq\aa'^{\times}\cap\B^\times$.
Comme $\aa'$ est normalisé par $E^{\times}$,
l'intersection $\aa'\cap\B$ est un ordre héréditaire de $\B$.
Comme~$\bb$ est maximal, on en déduit donc que $\aa'\cap\B=\bb$.
La~pro\-po\-sition \ref{patelmax} entraîne que $\aa'=\aa$,
les égalités
$e_{E'/F}=e_{E/F}$ et $f_{E'/F}=f_{E/F}$
suivent de \cite[Lemma 4.14]{BSS} et
l'éga\-li\-té
$\Cc(\aa',\b')=\Cc(\aa,\b)$
suit de \cite[Theorem 4.16]{BSS}.
\end{proof}

\begin{rema}
\begin{enumerate}
\item 
Lorsque $A$ est déployée sur $F$,
la proposition est vraie sans hypothèse de maximalité
(voir \cite[2.1.1]{BHEffectiveLC} et l'argument au début de
\cite[Section 6]{SSPLMS01}).
\item 
Dans le cas non déployé,
il n'est pas difficile de trouver des strates simples 
$[\aa,\b]$ et $[\aa',\b']$ telles que 
$\Cc(\aa,\b)=\Cc(\aa',\b')$ et $\aa\neq\aa'$.
Par exemple,
soit $D$ une algèbre de quaternions sur~$F$,~et soit $E$ une extension
quadratique non ramifiée de $F$ incluse dans $D$.
Posons $A=\Mat_2(D)$ et~: 
\begin{equation*}
\aa = \begin{pmatrix}\Oo_D&\Oo_D\\\p_D&\Oo_D\end{pmatrix}
= \left\{\begin{pmatrix}a&b\\c&d\end{pmatrix}\in\Mat_2(\Oo_D)\
\bigg\vert \ c\in\p_D\right\}.
\end{equation*}
qui est un ordre minimal de $A$. 
Choi\-sis\-sons un $\b\in\E$ minimal
sur $F$ au sens de \cite[1.4.14]{BK}.~Alors
$[\aa,\b]$ est une strate simple dans $\A$ et $\B=\Mat_2(E)$.
Tout élément $y\in\B^\times$ normalisant $\bb=\aa\cap\B$
normalise aussi chaque caractère simple de $\Cc(\aa,\b)$.
On a donc $\Cc(\aa,\b)=\Cc(\aa^y,\b)$.
Pourtant, si~:
\begin{equation*}
y = \begin{pmatrix}0&1\\ \w_F&0\end{pmatrix}\in\B^\times,
\quad
\text{$\w_F$ uniformisante de $F$,}
\end{equation*}
on a $\aa^y\neq\aa$.
\end{enumerate}
\end{rema}

Soit $\t$ un caractère simple maximal de $G$,
et soit $[\aa,\b]$ une strate simple telle que $\t\in\Cc(\aa,\b)$.
L'ensemble~$\iso(\aa,\b)$ défini au paragraphe \ref{saucisse}
est formé~des isomorphismes \eqref{PHIWEDEC} tels que
l'image~de $\aa\cap\B$ soit égale à $\GL_{m}(\Oo_C)$.
En vertu du théorème de Skolem-Noe\-ther, deux isomorphismes
de $\iso(\aa,\b)$ sont
conjugués sous $\C^\times\GL_{m}(\Oo_C)$.
Un choix de $\phi\in\iso(\aa,\b)$~in\-duit un~iso\-morphisme~:
\begin{equation}
\label{JJ1UU1GLmax}
\BJ^0(\aa,\b)/\BJ^1(\aa,\b) \simeq \GL_{m}(\ee)
\end{equation}
de groupes,
où l'on rappelle que $\ee$ est le corps résiduel de $\C$. 

\subsection{} 
\label{pifoulechien}
\label{par44}

Une famille de~pai\-res $(\BJ,\bl)$ appelées
\textit{types simples maximaux étendus} de $\G$, 
constituées d'un sous-groupe $\BJ$~ou\-vert compact modulo le centre de $\G$
et d'une représentation irréductible~$\bl$ de~$\BJ$,
a été construite dans \cite{BK} et \cite{VS1,VS2,VS3}.
Pour qu'une paire $(\BJ,\bl)$ soit un type simple maximal étendu de $G$, 
il faut et suffit qu'il existe un caractère simple maximal~$\t$ de $G$ tel 
que~:
\begin{enumerate}
\item
$\F^\times\BJ^{0}_{\t}\subseteq\BJ\subseteq\BJI_{\t}^{\phantom{'}}$ et
le normalisateur de $\bl$ dans $\G$ est égal à $\BJ$,
\item
la restriction de $\bl$ à $\H^1_{\t}$ est un multiple de $\t$,
\item
étant donné une représentation $\bk$ de $\BJ$ prolongeant la représentation 
de Heisenberg de $\t$,
il existe une représentation irréductible $\bt$ de $\BJ$ triviale sur 
$\BJ^1_\t$ telle que~:
\begin{enumerate}
\item 
$\bl$ soit isomorphe à $\bk\otimes\bt$,
\item 
pour un
(ou, de façon équivalente, pour tout) 
choix d'une strate simple $[\aa,\b]$ telle~que
$\t\in\Cc(\aa,\b)$ et d'un isomorphisme $\phi\in\iso(\aa,\b)$,
la~res\-tric\-tion de $\bt$ à $\BJ^0$~est l'inflation 
\textit{via} \eqref{JJ1UU1GLmax}
d'une~re\-pré\-sentation cuspidale de $\GL_{m}(\ee)$.
\end{enumerate}
\end{enumerate} 
Dans toute la suite, pour abréger, nous écrirons \textit{type}
plutôt que \textit{type simple maximal étendu}.

\begin{rema}
Un tel groupe $\BJ$ a un~unique sous-groupe compact maximal $\BJ^0$
et un~unique pro-$p$-sous-grou\-pe com\-pact dis\-tingué maximal
$\BJ^1$ et,
pour tout caractère simple maximal $\t$ de~$G$ com\-me en (1),
on a $\BJ^0=\BJ^0_{\t}$ et $\BJ^1=\BJ^1_{\t}$.
\end{rema}

\begin{rema}
\label{pifoulechien00}
Un \textit{type de niveau $0$} est un type $(\BJ,\bl)$ de $\G$
attaché à une strate simple~ma\-xi\-male nulle $[\aa,0]$ au sens de la
remarque \ref{nullss}.
L'ordre $\aa$ est maximal
et $\bl$ est une~re\-pré\-sentation irréductible de $\BJ(\aa,0)$
triviale sur $\BJ^1(\aa,0)=\U^1(\aa)$ dont la restriction à
$\BJ^0(\aa,0)=\aa^\times$~induit~une représentation cuspidale de 
$\aa^\times/\U^1(\aa)\simeq\GL_{r}(\kk_D)$.
Une représentation cuspidale $\pi$ de~$G$ contient un type de niveau $0$
si et seulement si elle est de niveau $0$.
On a alors $[\E:\F]=1$.
\end{rema}

Soit $(\BJ,\bl)$ un type de $G$, 
et soit $\t$ un caractère simple maximal~comme ci-dessus. 
Si $\bk$ est une représentation de $\BJ$ prolongeant la représenta\-tion de
Heisenberg $\n$ associée à~$\t$, l'application~:
\begin{equation}
\label{fannyv}
\boldsymbol{\xi}\mapsto\bk\otimes\boldsymbol{\xi}
\end{equation} 
induit une bijection entre classes d'isomorphisme de représentations
irréductibles $\boldsymbol{\xi}$ de $\BJ$ triviales sur~$\BJ^1$ 
et classes d'isomorphisme de représentations irréductibles de $\BJ$
dont la restriction au sous-groupe $\BJ^1$ contient $\n$
(voir par exemple \cite[Lemme 2.6]{MSt}).
Par conséquent, 
la représentation $\bt$~de $\BJ$ triviale sur 
$\BJ^1$ telle que $\bl$ soit isomorphe à $\bk\otimes\bt$ est unique.

\begin{rema}
\label{Halo}
\label{malinlechien}
Fixons un isomorphisme $\phi\in\iso(\aa,\b)$ comme en (3). 
Soit~$\w$ une uniformi\-sante de $C$,
soit $\rho$ la représentation cuspidale de $\GL_{m}(\ee)$ 
dont la restriction de $\bt$ à $\BJ^0$ est~l'infla\-tion~et 
soit $b$ l'indice de $\BJ$ dans~$\BJI_{\t}$.
Le groupe $\BJ_\t$ est engendré par $\BJ^0$ et $\w$ et 
l'orbite de $\rho$ sous l'action de $\Gal(\ee/\kk_\E)$ a pour cardinal $b$, 
\ie que $\BJ$ est engendré par $\BJ^0$ et $\w^b$.
\end{rema}

\begin{rema}
Le lecteur au fait de la théorie des types aura remarqué que nous n'avons pas 
utilisé ici la notion de beta-extension (voir \cite{VS2} et \cite[4.1]{VS3}).
Une beta-extension est une~re\-pré\-sentation~de $\BJ^0$ prolongeant $\n$ et
entrelacée par $\BJ^1\B^\times\BJ^1$.
Elle est donc normalisée par $\BJ_\t$ et se prolonge à~$\BJ_\t$.
Inversement,
étant donné une représentation $\bk_\b$ de $\BJ_\t$ prolongeant une 
beta-extension, 
toute~re\-pré\-sentation $\bk$ de $\BJ_\t$ prolongeant $\n$ s'obtient
en tordant $\bk_\b$ par un caractère $\bx$ de $\BJ_\t$ trivial sur $\BJ^1$.
La restriction de $\bx$ à $\BJ^0$ s'identifie \textit{via} \eqref{JJ1UU1GLmax}
à un caractère $\xi$ de $\GL_m(\ee)$.
Ce groupe n'étant pas isomorphe~à $\GL_2(\FF_2)$ puisque $p\neq2$,
et $\bx$ étant invariant par conjugaison par l'uniformisante $\w$ de $\C$,
le caractère $\xi$ se factorise par le déterminant et est
invariant sous l'action de $\Gal(\ee/\kk_\E)$. 
Aussi $\bx$, 
vu comme un caractère de 
$(\BJ_\t\cap\B^\times)/(\BJ^1\cap\B^\times)$,
se factorise-t-il par $\Nrd_{B/E}$.
Compte tenu de \cite[Théorème 2.28]{VS2},
la restriction de $\bk$ à $\BJ^0$
est une beta-extension. 
\end{rema}

Profitons-en pour énoncer le lemme suivant,
qui sera très utile par la suite. 

\begin{lemm}
\label{bkpprimaire}
Soit $\t$ un caractère simple maximal de $G$,
soit $\BJI_\t$ son normalisateur dans $G$ et
soit $\n$ sa représentation de Heisenberg.  
Il existe des prolongements de $\n$ à $\BJ_\t$
dont le~dé\-ter\-mi\-nant soit d'ordre une puissance de $p$.
\end{lemm}

\begin{proof}
Si $\bk$ est un prolongement de $\BJI_{\t}$ à $\n$,
son~dé\-ter\-minant est un caractère de~$\BJI_{\t}$
noté $\boldsymbol{\d}$. 
Quitte à tordre $\bk$ par un caractère de $\BJI_\t$ trivial sur $\BJ_\t^0$,
on peut supposer que l'ordre de $\boldsymbol{\d}$ est fini. 
Il se décompose donc de façon unique comme le produit d'un ca\-rac\-tère
d'ordre une puissance de $p$ et d'un caractère d'ordre premier à $p$. 
Notons $\boldsymbol{\omega}$ ce dernier~:
c'est un caractère~de $\BJI_{\t}$ trivial sur $\BJ_\t^1$.
Comme la dimension de $\bk$ est une puissance de $p$, 
il existe un entier $a\>1$ tel que $a\cdot\dim\bk$ soit con\-gru à $1$ modulo
l'ordre de $\boldsymbol{\omega}$. 
On vérifie alors que la représentation $\bk\boldsymbol{\omega}^{-a}$ a la propriété 
voulue. 
\end{proof}

\subsection{}
\label{regimbard}

Donnons la classification des représentations irréductibles
cuspidales de $\G$~en~termes de~ty\-pes 
(voir \cite[6.2, 8.4]{BK}, \cite[Théorème 5.23]{SeStJIMJ} et
\cite[Corollary 7.3]{SeStIMRN}).

\begin{theo}
\label{THEOCLASSIFTYPES}
\begin{enumerate}
\item
\'Etant donné une représentation cuspidale $\pi$ de $\G$,
il existe un type $(\BJ,\bl)$ tel que $\bl$
apparaisse comme sous-représentation de la restriction de
$\pi$ à $\BJ$,
et un tel type est unique à $\G$-conjugaison près.
\item
L'induction compacte induit une bijection entre classes de
$\G$-conjugaison de types 
et~clas\-ses d'isomorphisme de représentations cuspidales de $\G$.
\end{enumerate}
\end{theo}

{Plus précisément,
fixons un caractère simple maximal $\t$ de $G$
et une représentation $\bk$ de $\BJ_\t$ prolongeant 
sa représentation de Heisenberg.  
L'application~:
\begin{equation}
\label{bijthetacusp}
(\BJ,\bt) \mapsto \ind^\G_{\BJ} \left(\bk|_{\BJ} \otimes\bt\right)
\end{equation}
induit une bijection entre~:
\begin{enumerate}
\item 
classes de $\BJ_\t$-conjugaison de paires $(\BJ,\bt)$ telles que~:
\begin{enumerate}
\item
$\F^\times\BJ^{0}\subseteq\BJ\subseteq\BJI_{\t}$
et $\bt$ est une classe d'isomorphisme de
représentations irréductibles de $\BJ$ triviale sur $\BJ^1$
dont le normalisateur dans $\BJI_{\t}$ est égal à $\BJ$,
\item
la restriction de $\bt$ à $\BJ^0$ est l'inflation d'une représentation 
cuspidale de $\GL_m(\ee)$,
\end{enumerate}
\item
classes d'isomorphisme de
représentations irréductibles cuspidales de $G$ contenant $\t$.
\end{enumerate}
La bijection \eqref{bijthetacusp} dépend du choix de la représentation $\bk$,
mais l'ensemble (1) n'en dépend pas.

\begin{rema}
\label{chafouin1}
Un caractère simple contenu dans une représentation cuspidale de $G$ est
toujours maximal (\cite[Corollaire 5.20]{SeStJIMJ}),
ce qui justifie qu'on ne s'intéresse ici qu'aux~ca\-ractè\-res sim\-ples
maximaux.
Inversement, 
étant donné un caractère simple maximal $\t$ de $\G$, 
il existe~une représentation cuspidale de $G$ contenant $\t$.
\end{rema}

\'Etant donné une représentation cuspidale $\pi$ de $\G$,
nous lui associons deux invariants importants,
qui seront utiles à partir de la section \ref{SEC4}. 
D'une part, si $(\BJ,\bl)$ est un type contenu dans~la représentation $\pi$,
le \textit{degré~para\-métrique} de $\pi$ est l'entier~:
\begin{equation}
\label{pardeg}
\delta(\pi) = mb \cdot [E:F]
\end{equation}
où $m$ est défini par \eqref{egalizink} et $b$ par la remarque \ref{Halo}
(voir \cite[Definition 2.7]{BHJL3}).

D'autre part,
le transfert de Jacquet-Langlands de $\pi$ à $\GL_{n}(F)$ est
une re\-pré\-sentation es\-sen\-tiellement de carré intégrable,
notée $\pi'$.
D'après \cite[Theorem 9.3]{ZelAENS},
il y~a un unique diviseur~$s=s(\pi)$ de $n$ et 
une unique~représen\-ta\-tion cuspidale $\pi'_{0}$ de
$\GL_{n/s}(F)$~telle
que $\pi'$ soit isomorphe~à l'uni\-que quotient irréductible
$L(\pi'_{0},s)$ de l'induite parabolique~:
\begin{equation*}
\label{portedoree}
\Ind^{\GL_{n}(F)}_{P}
\left(\pi'_{0}\nu^{(1-s)/2}\otimes\dots\otimes\pi'_{0}\nu^{(s-1)/2}\right),
\end{equation*}
l'induction étant normalisée et prise par rapport au sous-groupe
parabolique standard triangulaire supérieur par blocs $P$,
et $\nu$ désignant le caractère non ramifié ``valeur absolue du détermi\-nant''
de $\GL_{n/s}(F)$.
Ces deux entiers sont liés par la relation importante~:
\begin{equation}
\label{scoprimem}
\d(\pi) s(\pi) = n
\end{equation} 
(voir par exemple \cite[Proposition 12.2]{MSjl}). 

\begin{rema}
\label{marecage}
\begin{enumerate}
\item 
Cette relation est une conséquence directe de l'identité 
$\d(\pi)=\d(\pi'_0)$ don\-née par 2.8 Corollary 1
et l'identité (2.8.1) de \cite{BHJL3}.
Il s'ensuit que $bs(\pi)=c$,
\ie que,
compte tenu de la~re\-marque~\ref{malinlechien} dont
on reprend les notations, 
$s(\pi)$ est l'ordre du stabilisateur de $\rho$ dans 
$\Gal(\ee/\kk_E)$. 
\item
De cette première remarque on déduit que $s(\pi)$ est premier à $r$
(selon \cite[Corollaire~3.9]{MSjl}). 
Posant $\d=\d(\pi)$ et $s=s(\pi)$, on obtient les formules~:
\begin{equation}
\label{rsprime}
r = \frac {\d} {(d,\d)},
\quad
s = \frac {d} {(d,\d)}.
\end{equation}
\item
Inversement, 
si $k$ est un diviseur de $n$ et $\rho$ est une représentation cuspidale 
de $\GL_{n/k}(F)$,
le transfert de Jacquet-Langlands de $L(\rho,k)$ à $\G$ est une représentation
essentiellement de carré intégrable ${\s}$.
{D'après \cite{DKV,Tadic} (voir aussi \cite[(2.2)]{BHLS},}
il y a un unique diviseur $t$ de $r$ et une unique représentation cuspidale 
$\mu$ de $\GL_{r/t}(D)$ tel que celle-ci soit isomorphe à l'unique
quotient~ir\-ré\-duc\-tible $L(\mu,t)$ de l'induite parabolique~:
\begin{equation*}
\label{portedoreens}
\Ind^{\G}_Q
\left(\mu\nu^{s(\mu)(1-t)/2}\otimes\dots\otimes\mu\nu^{s(\mu)(t-1)/2}\right),
\end{equation*}
l'induction étant normalisée et prise par rapport au sous-groupe
parabolique standard triangu\-laire supérieur par blocs $Q$,
et $\nu$ désignant ici le caractère non ramifié
``valeur absolue de la~nor\-me~réduite'' de $\GL_{r/t}(D)$.
Le transfert de Jacquet-Langlands de $\mu$ à $\GL_{n/t}(F)$
étant de la~for\-me~$L(\mu'_0,s(\mu))$ pour une représentation cuspidale
$\mu'_0$ de $\GL_{n/ts(\mu)}(F)$,
on en déduit que $\mu'_0$ et $\rho$ sont isomorphes,
donc que $\d(\mu)=n/k$.
Appliquant \eqref{rsprime} à la représentation $\mu$, on trouve~:
\begin{equation*}
\frac r t = \frac {n/k} {(d,n/k)} = \frac {r} {(r,k)},
\end{equation*}
ce dont on déduit que $t=(r,k)$.
Par conséquent,
pour que $\s$ soit cuspidale, 
il faut et suffit que $k$ soit premier à $r$.
\end{enumerate}
\end{rema}

\subsection{}
\label{simplechar}

Introduisons maintenant la notion d'endo-classe de caractères simples
\cite{BHLTL1,BHIMRN,BSS},
qui sera au centre de tout notre travail. 

Soit $[\aa,\b]$ une strate simple dans $\A$,
et soit $[\aa',\b']$ une strate simple dans une autre $F$-algèbre
centrale simple $\A'$.
Supposons qu'il existe un isomorphisme de
$\F$-algèbres $\phi:\F[\b]\to\F[\b']$ tel que $\phi (\b)=\b'$.
Il y a alors une bijection canonique~:
\begin{equation*}
\Cc(\aa,\b) \to \Cc(\aa',\b')
\end{equation*}
appelée \emph{transfert}
(\cite[3.6]{BK} et \cite[3.3.3]{VS1}).

Les applications de transfert jouissent d'une propriété de transitivité~:
si $[\aa,\b]$ et $[\aa',\b']$ sont comme ci-dessus,
si $[\aa'',\b'']$ est une strate simple dans une $F$-algèbre centrale simple 
$\A''$ et s'il~y a un~iso\-mor\-phisme de $\F$-algèbres
de $\F[\b']$ sur $\F[\b'']$ envoyant $\b'$ sur $\b''$,
le transfert de $\Cc(\aa,\b)$~à $\Cc(\aa'',\b'')$ est la composée du
transfert de $\Cc(\aa,\b)$ à $\Cc(\aa',\b')$ et du 
transfert de $\Cc(\aa',\b')$ à $\Cc(\aa'',\b'')$.

\begin{rema}
\label{transfertconj}
Un cas particulier simple mais important de transfert
est celui où $\A'=\A$ et
$[\aa',\b']$ est~conju\-guée à $[\aa,\b]$ par un $g\in\G$.
Dans ce cas, le transfert de $\Cc(\aa,\b)$ à $\Cc(\aa',\b')$
est donné par la conjugaison par $g$.
\end{rema}

Dans la suite du paragraphe,
nous nous concentrons sur les propriétés des caractères simples
\textit{maximaux}
vis-à-vis du transfert.

\begin{defi}
Soient $A_1$ et $\A_2$ des $F$-algèbres centra\-les simples,
et $\t_1$ et $\t_2$ des caractères simples maximaux dans $\A_1^\times$ et
$\A_2^\times$ respectivement. 
Les caractères simples $\t_1$ et $\t_2$
sont dits \emph{endo-équi\-valents} s'il
existe des strates simples $[\aa_1,\b_1]$ et $[\aa_2,\b_2]$ dans $\A_1$
et $\A_2$ telles que~:
\begin{enumerate}
\item
$\t_1\in\Cc(\aa_1,\b_1)$ et $\t_2\in\Cc(\aa_2,\b_2)$,
\item
les extensions $\F[\b_1]$ et $\F[\b_2]$ ont le même degré sur $F$,
\item 
il y a une $F$-algèbre centrale~sim\-ple $\A'$ et 
des strates simples maximales $[\aa',\b'_1]$ et $[\aa',\b'_2]$ dans~$\A'$
tel\-les que $\t_1$ et $\t_2$ 
se transfèrent en des~ca\-ractères simples $\t'_1\in\Cc(\aa',\b'_1)$ et 
$\t'_2\in\Cc(\aa',\b'_2)$ qui~s'en\-tre\-lacent dans $\A'^\times$, 
\ie qu'il existe un $g\in \A'^\times$ tel que~:
\begin{equation*}
\t'_2(x)=\t'_1(gxg^{-1}),
\quad
x\in\H^1(\aa',\b'_2)\cap g^{-1}\H^1(\aa',\b'_1)g.
\end{equation*}
\end{enumerate}
\end{defi}

En particulier, 
deux caractères simples maximaux transferts l'un de l'autre sont 
endo-équi\-va\-lents.

La notion d'endo-équivalence a été introduite dans \cite{BHLTL1}
(voir \cite{BSS} pour les formes intérieures) d'une
fa\-çon légèrement différente
reposant sur la notion de caractère simple potentiel,
que~nous n'utiliserons pas dans cet article. 
Dans le cas des groupes linéaires généraux déployés,
\cite{BHIMRN}~ex\-plique comment faire l'économie des
caractères simples potentiels.  
Dans le présent article,
nous nous contentons de le faire pour les caractères simples 
maximaux des formes intérieures.
Ainsi,
la proposition suivante est une adaptation de \cite[Corollary 8.3]{BSS}. 

\begin{prop}
L'endo-équivalence est une relation d'équivalence sur l'ensemble~:
\begin{equation*}
\Cc_{{\rm max}}(F) = \bigcup\limits_{[\aa,\b]} \Cc(\aa,\b),
\end{equation*}
l'union étant prise sur l'ensemble des strates simples maximales
des $F$-algèbres centrales simples.
\end{prop}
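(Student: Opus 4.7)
Le plan est de v�rifier les trois axiomes d'une relation d'�quivalence. La \emph{r�flexivit�} se voit en prenant $\A'=\A$, $[\aa',\b'_1]=[\aa',\b'_2]=[\aa,\b]$ et $g=1$ : le transfert de $\t\in\Cc(\aa,\b)$ vers lui-m�me est $\t$, qui s'entrelace trivialement avec lui-m�me. La \emph{sym�trie} est tout aussi formelle : remplacer $g$ par $g^{-1}$ et �changer les indices transforme un t�moin de $\t_1\sim\t_2$ en un t�moin de $\t_2\sim\t_1$.

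La \emph{transitivit�} est le point substantiel. Supposons des endo-�quivalences $\t_1\sim\t_2$ et $\t_2\sim\t_3$ t�moign�es respectivement sur $\A'$ et $\A''$ par des strates $[\aa',\b'_i]$, $[\aa'',\b''_i]$ et des �l�ments $g'\in\A'^\times$, $g''\in\A''^\times$. L'�galit� $[F[\b_1]:F]=[F[\b_3]:F]$ est automatique. Le plan est de plonger toutes les donn�es dans une $F$-alg�bre centrale simple commune $\A^{\sharp}$ de degr� r�duit suffisamment grand (par exemple d�ploy�e), en choisissant dans $\A^{\sharp}$ des strates simples maximales dont les composantes scalaires sont respectivement $\b'_1,\b'_2,\b''_2,\b''_3$ apr�s identification. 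Le transfert fournit alors quatre caract�res simples maximaux $\t^{\sharp}_1,\t^{\sharp}_2,\tilde\t^{\sharp}_2,\t^{\sharp}_3$ dans $\A^{\sharp}$. Par transitivit� du transfert (rappel�e avant la remarque \ref{transfertconj}), les paires $(\t^{\sharp}_1,\t^{\sharp}_2)$ et $(\tilde\t^{\sharp}_2,\t^{\sharp}_3)$ doivent chacune h�riter d'un entrelacement dans $\A^{\sharp,\times}$. Enfin, la proposition \ref{rosimond} jointe au th�or�me de Skolem--Noether fournit un �l�ment de $\A^{\sharp,\times}$ conjuguant $\t^{\sharp}_2$ � $\tilde\t^{\sharp}_2$, et la composition des trois entrelacements donne celui cherch� entre $\t^{\sharp}_1$ et $\t^{\sharp}_3$.

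L'obstacle principal est pr�cis�ment la production de ces entrelacements dans $\A^{\sharp,\times}$ � partir de $g'$ et $g''$ : le transfert est d�fini sur les caract�res, pas sur les �l�ments du groupe ambiant, de sorte que ce relev� n'est pas formel. L'id�e centrale, adapt�e de \cite{BSS} Corollary 8.3, est d'utiliser la description de l'ensemble d'entrelacement d'un caract�re simple maximal (proposition \ref{patel}, assertion \ref{bourrin6}), qui montre que l'entrelacement est enti�rement contr�l� par le centralisateur $\B^\times$ de l'extension $F[\b]$~; ce centralisateur est compatible avec le transfert entre strates simples maximales ayant m�me composante scalaire, ce qui permet de traduire l'entrelacement de $\A'$ (respectivement $\A''$) en un entrelacement de $\A^{\sharp}$. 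La restriction aux caract�res simples maximaux, via l'unicit� de l'ordre fournie par la proposition \ref{rosimond}, est ce qui permet de se passer de la machinerie des caract�res simples potentiels employ�e dans \cite{BHLTL1,BSS} dans le cadre g�n�ral.
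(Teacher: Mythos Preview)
Your overall architecture --- transfer everything into a common split algebra $\A^{\sharp}$ and then pass from intertwining to conjugacy --- is exactly the paper's strategy. But two steps in your execution do not go through as written.

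First, the claim that ``la composition des trois entrelacements donne celui cherch�'' is the crux, and it fails: intertwining is \emph{not} transitive. If $g_1$ intertwines $\t^{\sharp}_1$ with $\t^{\sharp}_2$ and $g_2$ intertwines $\tilde\t^{\sharp}_2$ with $\t^{\sharp}_3$, there is no reason for any product of $g_1,h,g_2$ to intertwine $\t^{\sharp}_1$ with $\t^{\sharp}_3$, because the identity $\t_C(x)=\t_B(g_2xg_2^{-1})$ only holds on $H_C\cap g_2^{-1}H_Bg_2$, and nothing forces $g_2xg_2^{-1}$ to land in $H_B$ when $x$ lies merely in $H_C\cap(g_1g_2)^{-1}H_A(g_1g_2)$. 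The paper repairs this by invoking \cite{BK}~Theorem~3.5.11: in a \emph{split} algebra, two maximal simple characters attached to the same order that intertwine are in fact \emph{conjugate}. Conjugacy is transitive, and this is what makes the chain $\t^{\circ}_1\sim\t^{\circ}_2\sim\t^{\circ}_3$ collapse.

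Second, your proposed mechanism for transporting intertwining from $\A'$ to $\A^{\sharp}$ via proposition~\ref{patel}\eqref{bourrin6} is misapplied: that assertion describes the \emph{self}-intertwining set $\BJ^1\B^\times\BJ^1$ of a single simple character $\t$, not the set of elements intertwining two \emph{distinct} characters $\t'_1,\t'_2$ attached to different $\b'_i$ (hence different $\B_i$). The paper does not attempt to reprove this transport and simply cites \cite{BSS}~Theorem~1.11, which asserts directly that intertwining of simple characters is preserved under transfer. Your use of proposition~\ref{rosimond} is correct and matches the paper's: it is what allows a single order $\aa^{\circ}$ to receive all three transfers, by ensuring the two realizations of $\t_2$ have matching $e$ and $f$.
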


\begin{proof}
Prouvons qu'il s'agit d'une relation transitive. 
Soit $\t_1$, $\t_2$ et $\t_3$ des caractères simples maximaux tels que 
$\t_1$, $\t_2$ soient endo-équivalents,
ainsi que $\t_2$, $\t_3$.
Pour $\t_1$, $\t_2$, il y a~des strates simples maximales
$[\aa_1,\b_1]$, $[\aa_2,\b_2]$ et $[\aa',\b'_1]$, $[\aa',\b'_2]$
comme ci-dessus,
avec des transferts $\t'_1$, $\t'_2$ qui s'entrelacent dans 
$\A'^\times$.
{D'après \cite[Lemma 4.7]{BSS}, on a~:}
\begin{equation}
\label{forestier}
{e_{\F[\b_1]/\F} = e_{\F[\b_2]/\F},
\quad
f_{\F[\b_1]/\F} = f_{\F[\b_2]/\F}.}
\end{equation}
Pour $\t_2$, $\t_3$, il y a~des strates simples maximales
$[\aa^*_2,\b^*_2]$ et $[\aa^*_3,\b^*_3]$ dans $A_2$ et $A_3$
telles que~: 
\begin{enumerate}
\item
$\t_2\in\Cc(\aa^*_2,\b^*_2)$ et $\t_3\in\Cc(\aa^*_3,\b^*_3)$,
\item
les extensions $\F[\b^*_2]$ et $\F[\b^*_3]$ ont le même degré sur $F$,
\item 
il y a une $F$-algèbre centrale~sim\-ple $\A''$ et 
des strates simples maximales $[\aa'',\b''_2]$ et $[\aa'',\b''_3]$ dans~$\A''$
tel\-les que $\t_2$ et $\t_3$ 
se transfèrent en des~ca\-ractères simples $\t''_2\in\Cc(\aa'',\b''_2)$ et 
$\t''_3\in\Cc(\aa'',\b''_3)$ qui~s'en\-tre\-lacent dans $\A''^\times$,
\end{enumerate}
et de façon analogue à \eqref{forestier} on a les égalités
$e_{\F[\b^*_2]/\F} = e_{\F[\b^*_3]/\F}$ et
$f_{\F[\b^*_2]/\F} = f_{\F[\b^*_3]/\F}$. 
Comme le caractère simple 
$\t_2$ est dans l'intersection
$\Cc(\aa_2^{\phantom{*}},\b_2^{\phantom{*}})\cap\Cc(\aa_2^*,\b_2^*)$, 
la proposition \ref{rosimond} implique~:
\begin{equation*}
\Cc(\aa_2^*,\b_2^*)=\Cc(\aa_2^{\phantom{*}},\b_2^{\phantom{*}}),
\quad
\aa_2^*=\aa_2^{\phantom{*}},
\quad
e_{\F[\b^*_2]/F}=e_{\F[\b_2^{\phantom{*}}]/F},
\quad
f_{\F[\b^*_2]/F}=f_{\F[\b_2^{\phantom{*}}]/F}.
\end{equation*} 
On en déduit que les extensions $\F[\b_1^{\phantom{*}}]$,
$\F[\b_2^{\phantom{*}}]$, $\F[\b^*_2]$ et $\F[\b^*_3]$
ont 
le même indice~de ra\-mi\-fication (noté $e$),
le même degré résiduel (noté $f$) et 
le même degré (noté $g$) sur $\F$.

{Soit maintenant une $F$-algèbre centrale sim\-ple déployée $\A^{\circ}$,
dont le degré réduit $n^{\circ}$ soit~divi\-sible par $g$,
et fixons un $\Oo_\F$-ordre principal $\aa^{\circ}$ de $\A^{\circ}$
de période $e$
(\ie que la $\kk_\F$-algèbre $\aa^{\circ}/\pp_{\aa^{\circ}}$ est isomorphe
à la somme directe de $e$ copies de $\Mat_{n^{\circ}/e}(\kk_\D)$).
Selon \cite[Proposition~1.1]{BHLTL1}, 
pour chaque $i\in\{1,2,3\}$, 
il y a des plongements de
$\F$-algèbres $\iota_i:\F[\b_i]\to\A^{\circ}$
tels que $\iota_i(\F[\b_i])^\times$ normalise $\aa^{\circ}$.
On en déduit des~stra\-tes simples maximales
$[\aa^{\circ},\b^{\circ}_1]$, $[\aa^{\circ},\b^{\circ}_2]$, $[\aa^{\circ},\b^{\circ}_3]$ 
de $\A^{\circ}$ tel\-les que $\t_1$, $\t_2$, $\t_3$ 
se~trans\-fèrent en 
$\t_1^{\circ}\in\Cc(\aa^{\circ},\b^{\circ}_1)$,
$\t_2^{\circ}\in\Cc(\aa^{\circ},\b^{\circ}_2)$,
$\t_3^{\circ}\in\Cc(\aa^{\circ},\b^{\circ}_3)$.}

Comme $\t_1'$ et $\t_2'$ s'entrelacent dans $\A'^\times$,
il suit de \cite[Theorem 1.11]{BSS} 
que $\t_1^{\circ}$ et $\t_2^{\circ}$ s'entrelacent dans $\A^{\circ\times}$,
et de \cite[Theo\-rem 3.5.11]{BK}
qu'ils sont conjugués sous $\A^{\circ\times}$. 
Faisant de même avec $\t_2''$ et $\t_3''$,
on trouve que 
$\t_2^{\circ}$, $\t_3^{\circ}$ sont conjugués sous $\A^{\circ\times}$.
Par conséquent, $\t_1^{\circ}$ et $\t_3^{\circ}$ sont conjugués,
donc s'en\-tre\-lacent dans $\A^{\circ\times}$.
\end{proof}

Une classe d'équivalence pour cette relation 
sur $\Cc_{{\rm max}}(F)$
est appelée une $F$-\emph{endo-classe}.
On note $\Ee(F)$ l'ensemble des $F$-endo-classes. 

\'Etant donné un caractère simple maximal $\t\in\Cc(\aa,\b)$,
le degré de $\F[\b]/\F$, son indice de~rami\-fication et son degré résiduel
dépendent uniquement de son endo-classe $\TT$.
Ces entiers sont~ap\-pe\-lés respectivement le degré,
l'indice de ramification et le degré résiduel de $\TT$.
Cette extension $\F[\b]$ n'est pas uniquement déterminée,
mais sa sous-extension modérément ramifiée maximale l'est,
à $F$-isomorphisme près (\cite[2.2,~2.4]{BHEffectiveLC}). 

\begin{rema}
L'endo-classe nulle est l'endo-classe d'un caractère simple
maximal trivial associé à une strate simple maximale nulle
(voir la remarque \ref{nullss}). 
\end{rema}

\subsection{}
\label{toposcristallin}

Lorsqu'on s'intéresse aux caractères simples maximaux d'un même 
groupe,
la relation~d'en\-do-équivalence prend une forme simplifiée. 

\begin{prop}
\label{entrenous}
{Soit $A$ une $F$-algèbre centrale simple,
et soit $\t_1$, $\t_2$ des caractères simples maximaux de $G=\A^\times$.
Les assertions suivantes sont équivalentes~: 
\begin{enumerate}
\item
Les caractères $\t_1$, $\t_2$ sont endo-équivalents,
\item
{Les caractères $\t_1$, $\t_2$ sont entrelacés par un élément de $G$,}
\item
Les caractères $\t_1$, $\t_2$ sont con\-jugués sous $G$,
\item
Les caractères $\t_1$, $\t_2$ sont contenus dans une même
représentation cuspidale de $G$. 
\end{enumerate}}
\end{prop}

\begin{proof}
{Le fait que (1) implique (3) est donné par \cite[Proposition 2.5]{Dotto}.
Bien~sûr,~(3) implique (2),
elle-même impliquant (1) par définition.
Ensuite,~la~re\-marque \ref{chafouin1} assure
qu'il~y~a une représentation cuspidale de~$G$ contenant $\t_1$.
Une telle~re\-pré\-sentation contient tous les conju\-gués de $\t_1$
sous $\G$, 
ce qui entraîne que (3) implique~(4).
Enfin,~si $\t_1$, $\t_2$ sont contenus dans une même 
représentation cuspidale $\pi$ de $G$, on a par~ré\-ci\-pro\-cité~de Frobenius
des morphismes~sur\-jec\-tifs~:
\begin{equation*}
\ind^\G_{\H^1_{\t_i}}(\t_i)\to\pi, 
\quad
i=1,2,
\end{equation*}
le membre de gauche désignant l'induite compacte de $\t_i$ à $\G$.
Le sous-groupe~$\H^1_{\t_i}$ étant ouvert et compact, 
cette induite compacte est un objet projectif de la catégorie
des~re\-pré\-sentations (lisses, complexes) de $\G$.
On en déduit un morphisme non nul~:
\begin{equation*}
\ind^\G_{\H^1_{\t_1}}(\t_1) \to \ind^\G_{\H^1_{\t_2}}(\t_2).
\end{equation*}
Appliquant à nouveau la ré\-ci\-pro\-cité~de Frobenius,
puis la formule de Mackey, 
on en déduit que $\t_1$ et $\t_2$ s'entrelacent, \ie que (4) implique (2).}
\end{proof}

\begin{defi}
L'\textit{endo-classe} d'une représentation~cus\-pi\-da\-le $\pi$ de $\G$ 
est l'endo-classe de n'importe quel caractère simple maximal contenu dans 
$\pi$. 
\end{defi}

\begin{rema}
Une représentation cuspidale de $\G$ est d'endo-classe nulle
si et seulement si elle est de niveau $0$,
\ie si et seulement si elle admet un vecteur non nul invariant
par $\BU^1(\aa)$, où $\aa$ est un ordre maximal quelconque de $A$.
\end{rema}

{Compte tenu de la remarque \ref{chafouin1},
on en déduit le corollaire suivant.}

\begin{coro}
\label{entrenouscor}
{Soit $\pi$ une représentation~cus\-pi\-da\-le de $\G$ d'endo-classe 
$\TT$. 
Pour qu'un~ca\-rac\-tère simple de $\G$ soit contenu dans $\pi$,
il faut et suffit qu'il soit maximal et d'endo-classe $\TT$.}
\end{coro}

\subsection{}

Nous déduisons du paragraphe \ref{toposcristallin}
le résultat complémentaire suivant. 

\begin{prop}
\label{VanceWestonCARASS}
{Soit $(\BJ,\bl)$ un type de $G$.
Le caractère simple maximal $\t$ vérifiant les~con\-di\-tions 
{\rm (1)} et {\rm (2)} du paragraphe \ref{pifoulechien} est unique.
On l'appelle le caractère simple {\rm attaché à} $(\BJ,\bl)$.}
\end{prop}

\begin{proof}
Soit $\t$, $\t'$ des caractères simples maximaux de $G$ 
vérifiant les conditions~re\-qui\-ses. 
Soit $[\aa,\b]$, $[\aa',\b']$ des strates simples de $\A$ telles
que $\t\in\Cc(\aa,\b)$ et $\t'\in\Cc(\aa',\b')$.~Restrei\-gnant~$\bl$~au 
pro-$p$-sous-groupe $H^1_{\t}\cap H^1_{\t'}$,
on a l'égalité $\t=\t'$ sur ce sous-grou\-pe. 
D'après~la~pro\-position \ref{entrenous},
et du fait que $\t$, $\t'$ sont contenus dans la représentation cuspidale
$\ind^G_{\BJ}(\bl)$, 
il y~a un $g\in\G$ tel que $\t'=\t^g$.
Il entrelace $\t$, on a donc $g\in\BJ^1\B^\times\BJ^1$.
Comme~$\BJ^1$~nor\-ma\-lise~$\t$, 
on peut supposer que $g\in\B^\times$.
De l'égalité $\BJI_{\t'}^{\phantom{g}}=\BJI_{\t}^{g}$,
on tire que $\BJ^{0}=\BJ^{0g}$
par unicité~du sous-groupe compact~ma\-ximal.
Intersectant avec $\B^\times$, on trouve $\bb^\times=\bb^{\times g}$,
\ie~que~$g$~nor\-malise $\bb$, donc $\t$.
\end{proof}

\begin{rema}
\label{chafouin}
\begin{enumerate}
\item 
Un même caractère simple maximal $\t$
peut être attaché à des types non~iso\-mor\-phes. 
\item 
Si $\pi$ est une représentation~cus\-pi\-da\-le de $\G$,
deux types contenus dans $\pi$ ont le même~ca\-rac\-tère simple attaché $\t$
si et seulement s'ils sont conjugués sous $\BJI_{\t}$. 
\item
Si $\A$ est~dé\-ployée sur $F$,
et si $(\BJ,\bl)$ est un type de ca\-rac\-tère simple attaché $\t$, 
alors~$\BJ=\BJI_{\t}$.
Par conséquent,
une représentation~cus\-pi\-da\-le de $\GL_n(F)$ 
contenant $\t$ contient un unique
type~au\-quel $\t$ est attaché.
\end{enumerate}
\end{rema}

\subsection{}
\label{flipflap6}

Si $\t$ est un caractère simple maximal de $G$,
et si $[\aa,\b]$ est une strate simple de $A$ telle que
$\t\in\Cc(\aa,\b)$,
alors $[\aa,-\b]$ est une strate simple dans $A$
et $\t^{-1}\in\Cc(\aa,-\b)$.

Deux caractères simples maximaux
sont endo-équivalents si et seulement si leurs~inver\-ses
sont endo-équivalents.
Par conséquent, 
étant donné une endo-classe $\TT\in\Ee(F)$ et un caractère simple
maximal $\t$ d'endo-classe $\TT$,
l'endo-classe de $\t^{-1}$ ne dépend pas du choix de $\t$.
On la note $\TT^\vee$.

\begin{defi}
Une endo-classe $\TT\in\Ee(F)$ est dite \textit{autoduale} si $\TT^\vee=\TT$.
\end{defi}

{En d'autres termes,
pour qu'une endo-classe $\TT$ soit autoduale,
il faut et suffit qu'au moins un,
ou de façon équivalente n'importe quel,
caractère simple maximal d'endo-classe $\TT$ soit conjugué à son inverse.}

\begin{lemm}
\label{lemmedetransfertautodual}
Soient $A_1$ et $A_2$ des $F$-algèbres centrale simples,
soient $[\aa_1,\b_1]$ et $[\aa_2,\b_2]$ des strates simples maximales
respectivement dans $A_1$ et $A_2$,
soient $\t_1\in\Cc(\aa_1,\b_1)$, $\t_2\in\Cc(\aa_2,\b_2)$~des 
caractè\-res simples 
et soient $u_1$ et $u_2$ des éléments inversibles de $A_1$ et $A_2$
respectivement.~Sup\-po\-sons que $\t_2$ soit le transfert de $\t_1$.
Pour chaque $i\in\{1,2\}$,
notons $\s_i$ l'automorphisme intérieur~de conjugaison par $u_i$,
supposons que $\aa_i$ soit stable par $\s_i$ et que
$\s_i(\b_i)=-\b_i$.  
Alors $\t_1\circ\s_1=\t_1^{-1}$ si et seulement si $\t_2\circ\s_2=\t_2^{-1}$.
\end{lemm}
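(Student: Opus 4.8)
The statement is symmetric in the indices $1$ and $2$, so it suffices to prove one implication, say that $\t_1\circ\s_1=\t_1^{-1}$ implies $\t_2\circ\s_2=\t_2^{-1}$. The key observation is that the hypothesis on $u_i$ means precisely that $\s_i$ maps the simple stratum $[\aa_i,\b_i]$ to the simple stratum $[\aa_i,-\b_i]$, and hence (being an algebra automorphism fixing $F$ pointwise) it carries $\Cc(\aa_i,\b_i)$ onto $\Cc(\aa_i,-\b_i)$, sending a simple character $\t$ to $\t\circ\s_i^{-1}$. Combining this with the recollection from paragraph \ref{flipflap6} that $\t_i^{-1}\in\Cc(\aa_i,-\b_i)$ whenever $\t_i\in\Cc(\aa_i,\b_i)$, one sees that the condition ``$\t_i\circ\s_i=\t_i^{-1}$'' is equivalent to saying that, inside the set $\Cc(\aa_i,-\b_i)$, the two simple characters $\t_i\circ\s_i$ and $\t_i^{-1}$ coincide.

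**First step.** I would spell out that $\s_i$ induces a bijection $\Cc(\aa_i,\b_i)\to\Cc(\aa_i,-\b_i)$, $\t\mapsto\t\circ\s_i^{-1}$. Indeed $\s_i$ is an $F$-algebra automorphism of $A_i$ with $\s_i(\aa_i)=\aa_i$ and $\s_i(\b_i)=-\b_i$, hence $\s_i(\F[\b_i])=\F[-\b_i]=\F[\b_i]$ and $\s_i$ carries the groups $\H^1(\aa_i,\b_i)$, $\BJ^1(\aa_i,\b_i)$ onto $\H^1(\aa_i,-\b_i)$, $\BJ^1(\aa_i,-\b_i)$; since the construction of simple characters depends only on the stratum and on the fixed additive character $\psi$ (which $\s_i$ leaves untouched because it is the identity on $F$), precomposition by $\s_i^{-1}$ transports $\Cc(\aa_i,\b_i)$ onto $\Cc(\aa_i,-\b_i)$.

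**Second step.** Now I bring in the transfer. Since $\t_2$ is the transfer of $\t_1$, there is an $F$-algebra isomorphism $\h\colon\F[\b_1]\to\F[\b_2]$ with $\h(\b_1)=\b_2$; composing with $-\mathrm{id}$ this same $\h$ also realizes $\F[-\b_1]\to\F[-\b_2]$ sending $-\b_1$ to $-\b_2$, so the transfer map $\Cc(\aa_1,-\b_1)\to\Cc(\aa_2,-\b_2)$ is defined for the same $\h$. By the transitivity and naturality of transfer, and because transfer commutes with the bijections of the first step (both the transfer and the map $\t\mapsto\t\circ\s_i^{-1}$ are defined intrinsically from the strata and $\psi$, and $\h$ intertwines $\s_1$ and $\s_2$ up to the identification of the $\F[\b_i]$), one gets a commutative square: transfer sends $\t_1\mapsto\t_2$, $\t_1^{-1}\mapsto\t_2^{-1}$, and $\t_1\circ\s_1\mapsto\t_2\circ\s_2$. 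Since transfer is a bijection, the equality $\t_1\circ\s_1=\t_1^{-1}$ in $\Cc(\aa_1,-\b_1)$ is equivalent to $\t_2\circ\s_2=\t_2^{-1}$ in $\Cc(\aa_2,-\b_2)$, which is the claim.

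**Main obstacle.** The delicate point is the compatibility asserted in the second step, namely that transfer commutes with conjugation by the involutions $\s_i$ once the identification $\h$ of the parameter fields is fixed. This is essentially the statement that the transfer maps are canonical with respect to isomorphisms of the ambient data; one has to check that $\h$ can be chosen so that, under the identifications it induces, $\s_1$ and $\s_2$ correspond. Concretely, transfer between $\Cc(\aa_1,\b_1)$ and $\Cc(\aa_2,\b_2)$ factors through a common split algebra via interior twists, and one needs the analogous factorization to be $\s$-equivariant; this follows from the construction of transfer in \cite{BK} 3.6 and \cite{VS1} 3.3.3 together with the functoriality of simple characters under $\psi$-preserving algebra isomorphisms, but writing it cleanly is where the real work lies. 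Everything else is formal bookkeeping with the bijection $\t\mapsto\t\circ\s_i^{-1}$ of the first step.
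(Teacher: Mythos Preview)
Your outline is correct and matches the paper's proof. The paper resolves your main obstacle more directly than you anticipate: the bijection $\t\mapsto\t\circ\s_i^{-1}$ of your first step is itself the transfer map for conjugate strata (Remark~\ref{transfertconj}), so the desired commutative square follows immediately from transitivity of transfer, with no need to unpack the construction; the paper then invokes \cite{BSS} Theorem~7.8 to identify the two resulting transfers between $\Cc(\aa_1,-\b_1)$ and $\Cc(\aa_2,-\b_2)$, and uses compatibility of transfer with inversion to conclude.
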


\begin{proof}
Dans un premier temps, 
supposons simplement que $[\aa_1,\b_1]$ et $[\aa_2,\b_2]$ soient des strates 
simples maximales dans $A_1$ et $A_2$ et que $\t_2$ soit le transfert de 
$\t_1$,
sans supposer que $\aa_i$ soit stable par $\s_i$ ni que $\s_i(\b_i)=-\b_i$.  
Désignons par $\boldsymbol{{\sf t}}$ 
le transfert de $\Cc(\aa_1,\b_1)$ à $\Cc(\aa_2,\b_2)$.
On en déduit
\begin{itemize}
\item 
d'une part des strates simples maximales $[\s_1(\aa_1),\s_1(\b_1)]$ et 
$[\s_2(\aa_2),\s_2(\b_2)]$
et~un transfert $\boldsymbol{{\sf t}}'$ 
de $\Cc(\s_1(\aa_1),\s_1(\b_1))$~à 
$\Cc(\s_2(\aa_2),\s_2(\b_2))$,
\item
et d'autre part des strates simples maximales $[\aa_1,-\b_1]$ et $[\aa_2,-\b_2]$ et 
un transfert $\boldsymbol{{\sf t}}''$ de $\Cc(\aa_1,-\b_1)$ à 
$\Cc(\aa_2,-\b_2)$.
\end{itemize} 
D'après la remarque~\ref{transfertconj},
le transfert de $\Cc(\aa_i,\b_i)$~à $\Cc(\s_i(\aa_i),\s_i(\b_i))$ 
est donné par la conju\-gai\-son par $u_i$. 
Les propriétés~de compo\-si\-tion des transferts font donc que 
$\boldsymbol{{\sf t}}'\circ\s_1$ est égal à $\s_2\circ\boldsymbol{{\sf t}}$.

{Supposons maintenant que $\aa_i$ soit stable par $\s_i$ et que
$\s_i(\b_i)=-\b_i$.}
Alors \cite[Theo\-rem 7.8]{BSS} assure que
$\boldsymbol{{\sf t}}''$ est égal à $\boldsymbol{{\sf t}}'$.
Enfin, supposant que $\t^{\phantom{'}}_1\circ\s^{\phantom{'}}_1=\t_1^{-1}$, 
on~a~:
\begin{equation*}
\t_2\circ\s_2
=\boldsymbol{{\sf t}}'(\t_1\circ\s_1)
=\boldsymbol{{\sf t}}''(\t_1^{-1})
=\boldsymbol{{\sf t}}(\t_1^{\phantom{'}})^{-1}
=\t_2^{-1},
\end{equation*}
l'avant-dernière égalité venant de ce que le transfert est compatible à 
l'inversion. 
\end{proof}

\subsection{}
\label{invTTmoinsproto}

Nous terminons cette section par
des lemmes généraux qui seront très utiles
par la suite. 

\begin{lemm}
\label{etakappasigmageneral}
\label{etakappasigmageneralnonsplit}
\label{kappatauautoduale}
Soit $\tau$ un automorphisme continu de $G$,
soit $\t$ un caractère simple maximal~de $G$ tel que $\t\circ\tau=\t^{-1}$, 
soient $\BJI_\t$ son normalisateur dans $G$
et $\n$ sa représentation de Heisenberg. 
\begin{enumerate}
\item 
La représentation $\n^{\vee\tau}$ est isomorphe à $\n$. 
\item 
Pour toute représentation $\bk$ de $\BJI_\t$ prolongeant $\n$,
il y a un unique caractère $\bx$ de $\BJI_\t$ trivial sur $\BJ_\t^1$
tel que $\bk^{\vee\tau}$ soit isomorphe à $\bk\bx$. 
\item
Notons ${\rm val}_F$ la valuation sur $F$ et supposons que~:
\begin{equation}
\label{condsigmanonsplit}
{\rm val}_F \circ \Nrd_{A/F} \circ\, \tau = {\rm val}_F \circ \Nrd_{A/F}.
\end{equation}
Il existe une représentation $\bk$ de $\BJI_\t$ prolongeant $\n$ 
telle que $\bk^{\vee\tau}$ soit isomorphe à $\bk$. 
\end{enumerate}
\end{lemm}

\begin{proof}
L'hypothèse faite sur $\t$ entraîne que $\BJ_\t$ est stable par $\tau$,~et
il en~va de même de~$\BJ^0$, son sous-groupe compact maximal, 
et de $\BJ^1$, son pro-$p$-sous-groupe~dis\-tin\-gué maximal.~En
outre,
la restriction de $\n^{\vee\tau}$ à $H^1_{\t}$ contient $\t^{-1}\circ\tau=\t$.
Le premier point se déduit de la pro\-prié\-té d'unicité de $\n$.

Ensuite, soit $\bk$ une représentation de $\BJ_\t$ prolongeant $\n$.
L'existence et l'unicité du caractère $\bx$ vient de \eqref{fannyv}. 

Enfin,
pour prouver (3),
on choisit $\bk$ dont le déterminant soit d'ordre une puissance de $p$~(voir
le lemme \ref{bkpprimaire}).
Il lui correspond un caractère $\bx$
dont l'ordre est une puissance de $p$. 
Soit $[\aa,\b]$ une strate simple maximale telle que 
$\t\in\Cc(\aa,\b)$, et posons $E=\F[\b]$.
Identifions la~restriction de~$\bx$ à $\BJ^0$ à un caractère 
de $\BJ^0/\BJ^1$,
et identifions ce groupe-ci à $\GL_m(\ee)$,
où $\ee$ est~le corps~résiduel de $C$ et $m$ est défini par \eqref{egalizink}.
Comme $p\neq2$,
ses caractères se factorisent par le déter\-mi\-nant~:
ils sont donc d'ordre premier à $p$.
Par~con\-séquent, $\bx$ est trivial sur $\BJ^0$.

Enfin,
rappelons que $\BJ_\t$ est engendré par $\BJ^0$ et une uniformisante 
$\w$ de $C$ (selon la remarque \ref{malinlechien}).
Un caractère~de~$\BJ_\t$ trivial sur $\BJ^0$ est donc déterminé par~sa
valeur en $\w$. 
\'Ecrivons $\tau(\w)$ sous la forme $\w^{k}x$ pour un $k\in\ZZ$~et~un $x\in\BJ^0$.
La valuation de la norme réduite de $\w$ est égale à $mf_{E/F}$
et celle de $x$ est nulle~car $\BJ^0$ est~compact. 
Compte tenu de \eqref{condsigmanonsplit}, on trouve que $k=1$.
Choisissons 
un~caractère $\boldsymbol{\zeta}$ de~$\BJ_\t$~tri\-vial~sur $\BJ^0$
tel que~$\boldsymbol{\zeta}^2=\bx$,
ce qui équivaut à 
$\boldsymbol{\zeta}(\boldsymbol{\zeta}\circ\tau)=\bx$ d'après
ce qui~précède.~La~re\-pré\-sen\-ta\-tion
$\bk\boldsymbol{\zeta}$ a la propriété voulue.
\end{proof}

\begin{rema}
\label{etakappasigmageneralrema}
Si $\tau$ est une involution de $G$,
la propriété d'unicité du caractère $\bx$ donnée par le lemme 
\ref{etakappasigmageneral}(2)
implique que celui-ci vérifie l'identité $\bx\circ\tau=\bx$. 
\end{rema}

Sous les mêmes hypothèses que le lemme précédent,
on a le lemme suivant. 

\begin{lemm}
\label{etatau}
\label{kappatau}
Notons $\BJ^1$ le pro-$p$-sous-groupe~dis\-tin\-gué maximal de $\BJ_\t$. 
\begin{enumerate} 
\item
Le $\FC$-espace vectoriel $\Hom_{\BJ^{1}\cap\G^\tau}(\n,\FC)$
est de dimension $1$.
\item
Soit $\bk$ une représentation de $\BJI_{\t}$ prolongeant $\n$ et 
soit $\bx$ le caractère de $\BJ_\t$ qui lui est associé par le
lemme \ref{kappatauautoduale}(2).  
\begin{enumerate} 
\item 
Il y a un unique caractère $\chi$ de $\BJI_{\t}\cap\G^\tau$ trivial sur 
$\BJ^{1}\cap\G^\tau$ tel que~:
\begin{equation*}
\Hom_{\BJ^{1}\cap\G^\tau}(\n,\FC) = \Hom_{\BJI_{\t}\cap\G^\tau}(\bk,\chi^{-1})
\end{equation*}
et la restriction de $\bx$ à $\BJI_{\t}\cap\G^\tau$ est égale à $\chi^2$.
\item
Soit $\bt$ une représentation de~$\BJI_{\t}$ triviale sur~$\BJ^1$.
L'application linéaire canonique~:
\begin{equation*}
\Hom_{\BJ^{1}\cap\G^\tau}(\n,\FC) \otimes
\Hom_{\BJI_{\t}\cap\G^\tau}(\bt,\chi)
\to \Hom_{\BJI_{\t}\cap\G^\tau}(\bk\otimes\bt,\FC) 
\end{equation*}
est un isomorphisme. 
\end{enumerate}
\end{enumerate}
\end{lemm}

\begin{proof}
La preuve de (1) est la même que pour \cite[Proposition 6.12]{VSANT19} 
(voir aussi~\cite[Pro\-po\-sition 6.14]{ZouU}).
La preuve de (2.b) et de la première partie de (2.a)
est la même que pour~\cite[Lemma 6.20]{VSANT19}.
Pour prouver la seconde partie de (2.a), on écrit~:
\begin{equation*}
\Hom_{\BJ^{1}\cap\G^\tau}(\n,\FC)
= \Hom_{\BJI_{\t}\cap\G^\tau}(\bk,\chi^{-1})
\simeq \Hom_{\BJI_{\t}\cap\G^\tau}(\chi,\bk^{\vee\tau})
\simeq \Hom_{\BJI_{\t}\cap\G^\tau}(\chi\bx^{-1},\bk)
\end{equation*}
le premier isomorphisme étant obtenu par application du foncteur
$\pi\mapsto\pi^{\vee\tau}$,
et on conclut grâce au fait que $\BJI_{\t}\cap\G^\tau$ est compact
modulo le centre et à l'unicité de $\chi$.
\end{proof}

\section{Représentations cuspidales autoduales de $\GL_{2n}(F)$}
\label{SEC3}

Dans cette section,
nous étudions les propriétés des représentations 
cuspidales autoduales de
$\GL_{2n}(F)$ du point de vue de la théorie des types. 
Plus précisément,
nous montrons que~toute~re\-pré\-sentation cuspidale autoduale de
$\GL_{2n}(F)$ contient un caractère simple
-- et même un type~--
possédant des propriétés de dualité remarquables.

\subsection{} 
\label{P11}

Fixons un entier $n\>1$, et écrivons $\G=\GL_{2n}(\F)$ et $\A=\Mat_{2n}(\F)$.
Notre point de départ sera le résultat suivant de Blondel.

\begin{theo}[\cite{BlondelAENS04} Theorem 1]
\label{Blondel1}
Soit $\pi$ une représentation cuspidale autoduale de niveau non nul de $\G$. 
Il existe une strate simple $[\aa,\b]$ dans $\A$, 
un caractère simple $\t\in\Cc(\aa,\b)$ contenu dans $\pi$
et un élément $u\in\aa^\times$ tels que 
$\t^u=\t^{-1}$ et $\b^u=-\b$.
\end{theo}

Faisons tout de suite quelques remarques sur ce résultat. 

\begin{rema}
\label{firstremark}
\label{secondremark}
Posons $E=\F[\b]$, notons $\B$ le cen\-tralisateur de $\E$ dans $\A$
et $\BJ$ le norma\-li\-sateur de $\t$ dans $\G$.
\begin{enumerate}
\item
Le centralisateur de $u$ dans $E$ est le sous-corps
$\E_0=\F[\b^2]$, l'extension
$\E/\E_0$ est~quadra\-tique et $\Ad(u)$,
l'automorphisme intérieur de conjugaison par $u$ dans~$G$,
induit sur $\E$~l'automor\-phisme non trivial de $\E/\E_0$.
En outre,
on a $u\notin\BJ$ et $u^2\in\aa^\times\cap\B=\bb^\times$.
\item
Si $(\BJ,\bl)$ est le type (unique d'après la remarque \ref{chafouin})
contenu dans $\pi$ auquel le caractè\-re~sim\-ple $\t$ est attaché,
alors $\BJ^u=\BJ$ et $\bl^u$ est isomorphe à $\bl^\vee$. 
\end{enumerate}
\end{rema}

\subsection{} 
\label{P12}

Nous allons préciser le théorème \ref{Blondel1},
en montrant qu'on peut prendre pour $u$ un élément d'une forme bien précise. 

Soit $\ss$ un élément de $\G$ tel que $\ss^2=1$ et dont les valeurs 
propres $-1$ et $1$ aient même~multi\-plicité $n$, 
c'est-à-dire que le polynôme caractéristique 
de $\s$ sur $F$ est égal à $(X^2-1)^{n}$.
Le~cen\-tra\-lisateur de $\ss$ dans $G$ est
un sous-groupe de Levi 
conjugué à $\GL_n(F)\times\GL_n(F)$.
L'au\-to\-mor\-phisme~in\-té\-rieur $\Ad(\ss)$ de $\G$
sera simplement noté $\ss$. 
Ainsi, on notera $\ss(x)=\ss x\ss^{-1}$ pour tout élément $x\in\G$.

\begin{defi}
\label{defsad}
\begin{enumerate}
\item 
Un caractère simple $\t$ de $\G$ est $\s$-\textit{autodual} 
si $\t\circ\s=\t^{-1}$.
\item 
Un type $(\BJ,\bl)$ de $\G$ est $\s$-\textit{autodual} 
si $\BJ$ est stable par $\ss$ et $\bl^\ss$ est~iso\-mor\-phe à $\bl^\vee$.
\item 
Une strate simple $[\aa,\b]$ de $\A$ est $\s$-\textit{autoduale} 
si $\aa$ est stable par $\ss$ et $\s(\b)=-\b$.
\end{enumerate}
\end{defi}

On a le résultat suivant,
que l'on comparera à \cite[Theorem 4.2, Corollary 4.21]{AKMSS}.

\begin{prop}
\label{redformadap}
Soit $\TT$ une endo-classe autoduale non nulle, de degré divisant $2n$.
\begin{enumerate}
\item 
Il~y a une strate simple maxi\-ma\-le $\s$-auto\-duale $[\aa,\b]$ de 
$\A$ ayant les propriétés suivantes~:
\begin{enumerate}
\item 
L'ensemble $\Cc(\aa,\b)$ contient 
un caractère simple $\s$-autodual
d'endo-classe $\TT$.
\item
Posant $\E=\F[\b]$, $E_0=\F[\b^2]$ et $m=2n/[E:F]$, 
il y a un isomor\-phisme~:
\begin{equation}
\label{empereur}
\A\to\Mat_m(\End_F(E))
\end{equation}
de $\F$-algè\-bres identifiant 
$\aa\cap\B$ à l'ordre maximal standard $\Mat_m(\Oo_E)$
et l'action de $\s$ sur~$\B$
à celle de l'au\-to\-mor\-phisme~non trivial de $E/E_0$ sur $\Mat_m(E)$.  
\end{enumerate}
\item
Pour tout isomorphisme \eqref{empereur} comme en {\rm (1.b)},
notant $\ee$ le corps résiduel de $E$ et $\ee_0$ celui de $\E_0$, 
l'action~in\-dui\-te par $\s$ sur 
$\BJ^0(\aa,\b)/\BJ^1(\aa,\b)$ s'identifie \textit{via} \eqref{JJ1UU1GLmax}
à l'action du générateur~de $\Gal(\ee/\ee_0)$ sur $\GL_m(\ee)$. 
\end{enumerate} 
\end{prop}

\begin{proof}
Notons $d$ le degré de $\TT$. 
Soit $\t_{\bullet}\in\Cc(\aa_{\bullet},\b_{\bullet})$
un caractère simple d'endo-classe $\TT$, 
où $[\aa_{\bullet},\b_{\bullet}]$ est une strate simple maximale de
$\Mat_d(F)$. 
Posons $G_{\bullet}=\GL_d(F)$.

\begin{lemm}
\label{petitpois}
{Il y a une représentation irréductible cuspidale autoduale de $\G_{\bullet}$ 
contenant~$\t_{\bullet}$.}
\end{lemm}

\begin{proof}
Comme $\TT$ est autoduale,
il y a un $v\in\G_{\bullet}$ tel que $\t_{\bullet}^{-1}$
soit égal à~$\t_{\bullet}^{v}$.
Selon le lemme \ref{etakappasigmageneral}
appliqué à l'automorphisme intérieur de conjugaison par $v$, 
qui vérifie bien la~con\-dition \eqref{condsigmanonsplit},
il existe une représentation $\bk_{\bullet}$,
prolongeant la représentation de Heisenberg de $\t_{\bullet}$
à son normalisateur $\BJ_{\bullet}$ dans $\G_{\bullet}$, 
et telle que $\bk_{\bullet}^{\vee v}$ soit isomorphe à $\bk_{\bullet}$. 
Selon le théorème \ref{THEOCLASSIFTYPES},
l'induite compacte de $\bk_{\bullet}$ à $\G_{\bullet}$
est une représentation cuspidale autoduale de $\G_{\bullet}$ 
contenant le caractère simple $\t_{\bullet}$.
\end{proof}

Soit donc $\pi_{\bullet}$ une représentation cuspidale autoduale de $\G_{\bullet}$ 
contenant $\t_{\bullet}$.
Comme son endo-classe est non nulle,
\cite[Theorem 1]{BlondelAENS04} assure que $d$ est pair.
Le théorème \ref{Blondel1}~ap\-pli\-qué à~$\pi_{\bullet}$~four\-nit
une strate simple $[\aa_*,\b]$ de $\Mat_d(F)$,
un caractère simple $\t_*\in\Cc(\aa_*,\b)$ et~un élément
$u\in\aa_*^\times$ tels que $\t_*^{u}=\t_*^{-1}$
et $\b^u=-\b$.
On pose $E=F[\b]$ et $E_0=\F[\b^2]$. 

\'Ecrivons $2n=md$. 
Quitte à remplacer $\s$ par un de ses conjugués,
ce qui ne change rien au résultat à prouver,
on peut identifier $A$ à $\Mat_m(\Mat_d(F))$ et mettre $\s$ 
sous la forme~:
\begin{equation}
\label{redm1xiaomi}
{\rm diag}(\s_{*},\dots,\s_{*}),
\quad
\text{$\s_{*}\in\GL_d(F)$ de polynôme caractéristique $(X^2-1)^{d/2}$}.
\end{equation}
Ensuite, identifions les $F$-agèbres $\Mat_{d}(F)$ et $\End_F(E)$ 
de sorte que $\s_{*}$ soit l'automorphisme~défi\-ni~par 
$\s_{*}(x+\b y) = x-\b y$ pour tout $x,y \in \E_0$. 
Ainsi $\s_*$ normalise $E$, 
et induit sur $E$~l'au\-to\-mor\-phisme~non trivial de $E/E_0$.
Comme $E$ est maximal dans $\Mat_d(F)$,
l'ordre $\aa_*$ est l'unique ordre~princi\-pal de $\Mat_d(F)$ 
norma\-li\-sé~par $E^\times$,
qui est donc normalisé par $\s_*$.
La strate simple~maxi\-ma\-le~$[\aa_*,\b]$ est donc
$\s_*$-au\-to\-duale.
L'in\-vo\-lution $\s_*$ agit sur le cen\-tralisateur de $E$ 
dans $\Mat_d(F)$, égal à $E$, 
com\-me l'au\-to\-mor\-phisme~non trivial de $E/E_0$,
et sur le quotient $\BJ^0(\aa_*,\b)/\BJ^1(\aa_*,\b)$,~na\-turelle\-ment
isomorphe à 
$\Oo_E^\times/(1+\p_E)=\ee^\times$,
comme le générateur~de $\Gal(\ee/\ee_0)$. 

Enfin,
posons $\aa=\Mat_m(\aa_*)$ et identifions $E$ à son image diagonale dans $A$. 
Notons $\t\in \Cc(\aa,\b)$ le~transfert de $\t_*$,
qui est $\s$-autodual grâce au lemme \ref{lemmedetransfertautodual}.
On obtient une strate~sim\-ple maxi\-ma\-le
$\s$-autoduale $[\aa,\b]$
de $\A$ possédant les propriétés voulues. 
\end{proof}

On en déduit le corollaire suivant,
que l'on comparera à \cite[Theorem 4.1]{AKMSS},
\cite[Remark 5.13]{VSANT19}.

\begin{coro}
\label{MAIN1}
Pour qu'une représentation cuspidale de $\G$ contienne un type 
$\s$-autodual,~il faut et suffit qu'elle soit autoduale. 
En particulier, 
toute représentation cuspidale autoduale de~$\G$ contient 
un caractère simple $\s$-autodual. 
\end{coro}

\begin{proof}
Soit $\pi$ une représentation cuspidale de $\G$.
Si $\pi$ contient un type $\s$-autodual $(\BJ,\bl)$ de $G$,
sa contragrédiente $\pi^\vee$ contient le type $(\BJ,\bl^\vee)$.
On a donc~:
\begin{equation*}
\pi^\vee\simeq\ind^\G_{\BJ}(\bl^\vee)\simeq
\ind^\G_{\BJ}(\bl^\s) \simeq \pi.
\end{equation*}
Supposons maintenant que $\pi$ soit autoduale. 

Si la représentation $\pi$ est de niveau $0$, elle contient un type $(\BJ,\bl)$
avec $\BJ=\F^\times\GL_{2n}(\Oo_\F)$.
On peut~sup\-po\-ser que $\ss$ est diagonal,
auquel cas il est dans $\BJ$.
Alors $\pi$ contient à la fois $\bl\simeq\bl^\s$~et $\bl^\vee$,~qui
sont donc conjugués par un élément $g\in\G$ normalisant $\BJ$.
On a donc $g\in\BJ$, et il s'ensuit que $(\BJ,\bl)$ est $\s$-autodual.

{Supposons maintenant que $\pi$ soit de niveau non nul. 
Son endo-classe, notée $\TT$, est non nulle et auto\-duale. 
D'après la proposition \ref{redformadap},
il existe une strate simple maximale $\s$-autoduale $[\aa,\b]$ dans
$\A$~et un caractère simple (maximal) $\s$-autodual $\t\in\Cc(\aa,\b)$
d'endo-classe $\TT$.
En particulier,
le normalisateur $\BJ$ de $\t$ dans $\G$ est normalisé par $\s$.
D'après le corollaire \ref{entrenouscor},
le caractère $\t$~est contenu dans $\pi$.
Soit $(\BJ,\bl)$ le type contenu dans $\pi$ auquel $\t$ est
attaché (remarque \ref{chafouin}(3)).
Les types $(\BJ,\bl^\s)$ et $(\BJ,\bl^\vee)$ sont 
contenus dans $\pi$ et contiennent $\t^\s=\t^{-1}$.
Ils sont donc (d'après le théorème \ref{THEOCLASSIFTYPES})
conjugués par un $g\in\G$ normalisant ce caractère~:
on a donc $g\in\BJ$,
\ie que $(\BJ,\bl)$ est $\s$-autodual.}
\end{proof}

\begin{rema}
\label{lll}
La preuve du corollaire \ref{MAIN1} montre que toute
représentation cuspidale auto\-duale de $\G$
de niveau non nul
contient non seulement un type $\s$-autodual,
mais plus précisément un
type $\s$-auto\-dual associé~à une strate simple~$\ss$-autoduale 
$[\aa,\b]$ dans $\A$ possédant la proprié\-té (1.b) 
-- donc aussi la propriété (2) -- de la proposition \ref{redformadap}.
Il est naturel de se demander~si~un caractère simple maximal
-- ou un type --
$\s$-autodual est~tou\-jours
associé à une strate simple $\ss$-autoduale.
Le lemme \ref{gandalf} répondra~ à cette question par l'affirmative
pour les caractères,
dans un cadre plus~gé\-né\-ral.~Une telle strate ne vérifiera 
cependant pas toujours~les propriétés (1.b)~et 
(2) de la proposi\-tion~\ref{redformadap}~:
on renvoie à la proposition \ref{indiceduntype} pour une classification complète.
\end{rema}

\subsection{}
\label{invTTmoins}
\label{invTT}

Soit $\TT$ une endo-classe autoduale non nulle
de degré divisant $2n$.
Selon la proposition~\ref{redformadap},
il existe une strate simple maxi\-ma\-le $\s$-auto\-duale $[\aa,\b]$ dans 
$\A$ et un
caractère simple $\s$-autodual $\t\in\Cc(\aa,\b)$ d'endo-classe 
$\TT$.
Notons $\BJ=\BJ(\aa,\b)$, ainsi que $\BJ^0=\BJ^0(\aa,\b)$ et
$\BJ^1=\BJ^1(\aa,\b)$.~Le~ca\-ractère $\t$ étant $\s$-autodual,
son normalisateur $\BJ$ est stable par $\s$,
et il en va de même de~$\BJ^0$, son sous-groupe compact maximal, 
et de $\BJ^1$, son pro-$p$-sous-groupe distingué maximal.

Posons $E=F[\b]$ et $E_0=\F[\b^2]$.
Soit $T$ la sous-extension~modéré\-ment ramifiée maximale de $E$ sur $F$. 
Alors $T_0=\T\cap\E_0$
est la sous-extension modérément~rami\-fiée ~maximale de $E_0$ sur $F$
et l'extension $T/T_0$ est quadratique. 

\begin{rema}
\label{invTTrem}
D'après \cite[Proposition 6.7]{BHS}, 
la classe de $F$-isomorphisme de $T/T_0$ dépend uni\-quement de $\TT$, 
et ni du choix de $n$ ni de celui de $[\aa,\b]$.
Voir \cite[6.3, 6.5]{BHS} 
(et le paragraphe \ref{voiraussi110} de l'introduction)
pour une interprétation de cette extension 
quadratique du côté galoisien
de~la correspondance de Langlands locale. 
\end{rema}

Les lemmes suivants joueront un rôle important par la suite.

\begin{lemm}
\label{lemmedeparite}
Posons $m=2n/\deg(\TT)$.
Pour qu'il existe une représentation cuspidale auto\-duale de $\G$ 
d'endo-classe $\TT$,
il faut et suffit que $m$ vérifie les conditions suivantes~:
\begin{enumerate}
\item
si $\T/\T_0$ est non ramifiée, alors $m$ est impair,
\item
si $\T/\T_0$ est ramifiée, alors $m$ est pair ou égal à $1$. 
\end{enumerate}
\end{lemm}

\begin{proof}
On peut supposer, ce que l'on fera, que la strate simple $[\aa,\b]$ fixée ci-dessus
possède également la pro\-priété (1.b) de la proposition \ref{redformadap}.
Le degré de $\TT$ est égal à $[\E:\F]$.
  
Soit $\n$ la représentation de Heisenberg associée à $\t$,
et soit $\bk$ une représentation de $\BJI$ prolon\-geant $\n$ telle que
$\bk^{\vee\s}$ soit isomorphe à $\bk$,
dont l'existence est assurée par le lemme \ref{etakappasigmageneral}
appliqué à l'involution $\s$.

Supposons d'abord qu'il y ait une représentation cuspidale auto\-duale
$\pi$ de $\G$ d'endo-classe~$\TT$.
Selon le corollaire \ref{entrenouscor},
elle contient $\t$.  
Soit $(\BJ,\bl)$ le type contenu dans $\pi$ auquel $\t$ est
attaché (voir la remarque \ref{chafouin}(3)).
Il est $\s$-autodual (voir~la preuve du corollaire \ref{MAIN1}).
Soit $\bt$ la repré\-sen\-ta\-tion irréductible de
$\BJ$ triviale sur $\BJ^1$ telle que $\bl$ soit iso\-mor\-phe à
$\bk\otimes\bt$.
Par unicité,
$\bt$~est $\s$-autoduale.
La représentation de $\GL_m(\ee)$
dont la restriction de $\bt$ à $\BJ^0$ est l'inflation,
notée $\rho$, est également $\s$-autoduale.

Si $T/T_0$ est non ramifiée,
$\s$ agit sur $\GL_m(\ee)$ comme l'automorphisme non trivial de $\ee/\ee_0$.
Dans ce cas, le résultat est donné par \cite[Lemma 2.3]{VSANT19} 
appliqué à $\rho$.

Si $T/T_0$ est ramifiée,
$\s$ agit trivialement sur $\GL_m(\ee)$,
\ie que $\rho$ est autoduale. 
Dans~ce cas, le résultat est donné par \cite[Lemma 2.17]{VSANT19} 
appliqué à $\rho$.

Inversement,
supposons que l'entier $m$ vérifie les conditions de l'énoncé.
Invoquant à nouveau \cite[Lemmas 2.3, 2.17]{VSANT19}, 
on en déduit l'existence d'une représentation cuspidale
$\s$-au\-toduale~$\rho$~de $\GL_m(\ee)$.
L'action de $E^\times$ par conjugaison sur $\BJ^0/\BJ^1$ étant triviale,
l'infla\-tion~de $\rho$ à $\BJ^0$ se prolon\-ge~à $\BJ$.
Choisissons un tel prolongement $\bt$.
La représentation $\bl=\bk\otimes\bt$ est un type auquel $\t$ est 
attaché. 
D'après le théorème \ref{THEOCLASSIFTYPES},
son induite compacte à $G$, notée $\pi$,
est~une représentation cus\-pidale d'endo-classe $\TT$
telle que $\pi^\vee$, $\pi$ soient inertiellement équiva\-len\-tes.
Tordant $\pi$ par~un caractère non ramifié convenable de $G$,
on obtient une représentation cuspidale auto\-duale de $\G$ 
d'endo-classe $\TT$. 
\end{proof}

Terminons cette section en énonçant
l'analogue du lemme \ref{lemmedeparite} pour les représentations
cuspi\-dales autoduales de niveau $0$, 
issu de \cite[Theorem~7.1]{Adler} 
(qui omet le cas de $F^\times$,
qui a quatre~re\-pré\-sentations cuspidales autoduales du fait que $p\neq2$). 

\begin{lemm}
\label{lemmedepariteniveauzero} 
Soit un entier $m\>1$. 
Pour qu'il y ait une représentation cuspidale auto\-duale de $\GL_m(F)$ 
de niveau $0$, il faut et suffit que $m$ soit pair ou égal à $1$.  
\end{lemm}

\section{Involutions intérieures sur les formes intérieures 
de $\GL_{2n}(F)$} 
\label{SEC4}

Dans cette section, nous élargissons le cadre de notre étude
en remplaçant le groupe $\GL_{2n}(F)$ par une forme inté\-rieure
et l'involution $\s$ par une involution plus générale. 
Ce nouveau cadre~est fixé au paragraphe \ref{cyprien}.

\subsection{}
\label{cyprien}

Soit $\A$ une $\F$-algèbre centrale simple de degré réduit $2n$.
Fixons un $A$-module à gauche~sim\-ple $V$, 
et notons $D$ la $F$-algèbre opposée à $\End_A(V)$.
C'est une $F$-algèbre à division centrale,~$V$
est un $D$-espace vectoriel à droite
et $A$~s'iden\-ti\-fie naturellement à la $F$-algèbre $\End_D(V)$.
Notons $r$ la dimension de $V$ sur $D$
et $d$ le degré réduit de $D$ sur $F$,
de sorte que $2n=rd$.

Posons $\G=\mult\A$,
fixons un $\a\in\F^\times$
et fixons~un $\dd\in\G$ non central tel que $\dd^2=\a$ .
Ceci~définit une involu\-tion 
$\tau:g\mapsto\dd g\dd^{-1}$ du groupe $\G$.
Remplacer $\dd$ par $x\dd x^{-1}$ pour un $x\in\G$
a pour effet de changer $\tau$ en l'involution conjuguée~:
\begin{equation*} 
x\cdot\tau 
= \Ad(x)\circ\tau \circ\Ad(x)^{-1}
\end{equation*}
et on a $\G^{x\cdot\tau}=x\G^{\tau}x^{-1}$.
Par ailleurs, 
remplacer $\dd$ par $\l\dd$ pour un $\l\in\F^\times$ a pour effet de
changer $\a$ en $\a\l^{2}$ sans changer $\tau$.
On peut donc considérer $\a$ modulo le sous-groupe $\F^{\times2}$.

\begin{lemm}
\label{phlebel}
Soit $\Oo(\a)$ l'ensemble des éléments $\dd\in\G$ non centraux
tels que $\dd^2=\a$. 
\begin{enumerate}
\item 
Si $\a$ n'est pas un carré de $\F^\times$, 
alors $\Oo(\a)$ est formé d'une seule clas\-se de $\G$-conju\-gaison.
\item
S'il existe un $\l\in\F^{\times}$ tel que $\a=\l^2$, alors
pour tout entier $i\in\{1,\dots,r-1\}$,
il existe~une uni\-que classe de $\G$-conjugaison dans $\Oo(\a)$
dont le polynôme caractéristique réduit sur $F$ soit~égal à
$(X+\l)^{di}(X-\l)^{d(r-i)}$.
\end{enumerate}
\end{lemm}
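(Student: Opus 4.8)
Le plan est de distinguer les deux cas selon que $\a$ est ou non un carr\'e dans $\F^\times$ et, dans chaque cas, de ramener la classification \`a l'\'etude de l'action de $\dd$ sur le $\A$-module simple $V$~; on notera que $\Oo(\a)$ est non vide, puisqu'il contient l'\'el\'ement $\dd$ du paragraphe \ref{cyprien}. Dans le cas (1), o\`u $\a$ n'est pas un carr\'e, tout $\dd\in\Oo(\a)$ annule le polyn\^ome irr\'eductible $X^2-\a$ sur $F$, de sorte que la sous-$F$-alg\`ebre $\F[\dd]$ de $A$ est un corps $F$-isomorphe \`a $K=\F[X]/(X^2-\a)$. \'Etant donn\'e $\dd,\dd'\in\Oo(\a)$, les applications $\sqrt{\a}\mapsto\dd$ et $\sqrt{\a}\mapsto\dd'$ d\'efinissent deux plongements de $F$-alg\`ebres de $K$ dans $A$~; d'apr\`es le th\'eor\`eme de Skolem--Noether, il existe $x\in\G$ conjuguant l'un \`a l'autre, d'o\`u $x\dd x^{-1}=\dd'$. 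Ainsi $\Oo(\a)$ est form\'e d'une seule classe de $\G$-conjugaison.

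Dans le cas (2), \'ecrivons $\a=\l^2$ avec $\l\in\F^\times$. Pour $\dd\in\Oo(\a)$, l'identit\'e $(\dd-\l)(\dd+\l)=0$ montre que le polyn\^ome minimal de $\dd$ sur $F$ divise $(X-\l)(X+\l)$~; comme $\dd$ n'est pas scalaire (car non central), ce polyn\^ome minimal vaut exactement $X^2-\l^2$, d'o\`u une d\'ecomposition $V=V_+\oplus V_-$ en sous-$\D$-espaces non nuls, avec $V_\pm=\Ker(\dd\mp\l)$. Posons $i=\dim_{\D}V_-$, de sorte que $\dim_{\D}V_+=r-i$ et $1\<i\<r-1$. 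Comme le polyn\^ome caract\'eristique r\'eduit est multiplicatif le long d'une d\'ecomposition de $V$ stable par $\dd$, et qu'un facteur direct de $\D$-dimension $k$ sur lequel $\dd$ agit par le scalaire $\l$ y contribue le terme $(X-\l)^{dk}$, le polyn\^ome caract\'eristique r\'eduit de $\dd$ sur $A$ est \'egal \`a $(X+\l)^{di}(X-\l)^{d(r-i)}$.

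Il reste \`a montrer que, pour chaque $i\in\{1,\dots,r-1\}$, il y a exactement une classe de $\G$-conjugaison dans $\Oo(\a)$ de polyn\^ome caract\'eristique r\'eduit $(X+\l)^{di}(X-\l)^{d(r-i)}$. Pour l'existence, on d\'ecompose $V=W_-\oplus W_+$ avec $\dim_\D W_-=i$ et $\dim_\D W_+=r-i$, et on fait agir $\dd_0$ par $-\l$ sur $W_-$ et par $\l$ sur $W_+$~: alors $\dd_0\in\Oo(\a)$ et a le polyn\^ome caract\'eristique r\'eduit voulu. Pour l'unicit\'e, si $\dd,\dd'\in\Oo(\a)$ ont ce polyn\^ome, alors $\Ker(\dd+\l)$ et $\Ker(\dd'+\l)$ ont m\^eme $\D$-dimension $i$, et de m\^eme $\Ker(\dd-\l)$ et $\Ker(\dd'-\l)$ ont $\D$-dimension $r-i$~; il existe donc un automorphisme $\D$-lin\'eaire $x$ de $V$ envoyant $\Ker(\dd\pm\l)$ sur $\Ker(\dd'\pm\l)$, et alors $x\dd x^{-1}$ agit par $\pm\l$ sur $\Ker(\dd'\pm\l)$, d'o\`u $x\dd x^{-1}=\dd'$. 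Le seul point demandant un peu de soin est la formule pour le polyn\^ome caract\'eristique r\'eduit de $\dd$ dans le cas (2)~: la multiplicativit\'e le long d'une d\'ecomposition en blocs pr\'eserv\'ee par $\dd$ et la contribution $(X-\l)^{dk}$ d'un bloc scalaire se v\'erifient apr\`es extension des scalaires \`a un corps de d\'ecomposition de $A$~; tout le reste n'est que de l'alg\`ebre lin\'eaire sur $\D$.
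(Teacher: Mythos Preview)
Your proof is correct and follows essentially the same approach as the paper's: Skolem--Noether for part (1), and the eigenspace decomposition $V=\Ker(\dd-\l)\oplus\Ker(\dd+\l)$ over $\D$ for part (2). The only cosmetic differences are that the paper first reduces to $\a=1$ via $\dd\mapsto\dd\l^{-1}$, and that you spell out the existence and uniqueness of the conjugacy class for each $i$ more explicitly than the paper does.
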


\begin{proof}
L'assertion (1) est une conséquence du théorème de~Sko\-lem-Noether.
S'il~y~a un $\l\in\F^\times$ tel que $\a=\l^2$,
on se ramène au cas où $\a=1$ en~appli\-quant $\dd\mapsto\dd\l^{-1}$.
Supposons donc que $\a=1$,
et soit $\dd\in\G$ tel que $\dd^2=1$.
Considéré comme un $F$-endomor\-phis\-me de $V$,
il définit une décomposition $V=\Ker(\dd-{\rm id})\oplus\Ker(\dd+{\rm id})$
en sous-$F$-espa\-ces pro\-pres.
Comme $\dd$ est $D$-linéaire,
ces $F$-espaces propres sont des $D$-espaces vectoriels. 
Le~polynô\-me~ca\-rac\-téristique réduit de $\dd$ sur $F$
est donc de la forme~:
\begin{equation}
\label{formepcrd}
{\rm Pcrd}_{A/F}(\dd) = (X+1)^{di}(X-1)^{d(r-i)} \in \F[X]
\end{equation}
pour un unique $i\in\{0,\dots,r\}$, 
les éléments centraux correspondant à $i=0$ et $i=r$. 
Ceci prouve l'assertion (2).
\end{proof}

\begin{rema}
On observera en particulier que, si $A$ est une algèbre à division, 
\ie si~$r=1$,
les seuls $\s\in\G$ tels que $\s^2=1$ sont $1$ et $-1$,
\ie que $\Oo(1)$ est vide. 
\end{rema}

Par la suite, nous adopterons la convention suivante. 

\begin{enonce}{Convention}
\label{CONV2} 
Nous considérerons uniquement les cas suivants~:
\begin{enumerate}
\item
ou bien $\a\notin\F^{\times2}$,
\item
ou bien $\a=1$, $r$ est pair et ${\rm Pcrd}_{A/F}(\dd)=(X^2-1)^{n}$.
\end{enumerate} 
Dans tous les cas, le polynôme caractéristique réduit de $\dd$ sur $F$
est donc égal à $(X^2-\a)^{n}$.
\end{enonce}

\begin{rema}
\label{pluiedautomne}
En d'autres termes, 
si $\a$ est un carré de $\F^{\times}$,
nous ne traitons pas le cas~où ${\rm Pcrd}_{A/F}(\dd)$ est~de la forme 
\eqref{formepcrd} avec $i\neq r-i$.
Les méthodes employées dans cet article peuvent en principe 
être adaptées à ce cas~;
cependant,
si $i\neq r-i$,
on s'attend à ce qu'aucune~re\-pré\-sentation cuspidale de $\G$ ne soit 
distinguée par $\GL_i(D)\times\GL_{r-i}(D)$.
Voir \cite[Theorem 3.1]{MatringeCRAS} si $D$ est égal à $F$.
\end{rema}

\subsection{}
\label{tetrobot1}

Comme au paragraphe précédent,
on fixe un $\a\in\F^\times$ et un $\dd\in\G$ non central tel que
$\dd^2=\a$, dé\-fi\-nissant une involution $\tau$.
La définition suivante généralise naturellement la définition \ref{defsad}.

\begin{defi}
\label{deftad}
\begin{enumerate}
\item 
Un caractère simple $\t$ dans $\G$ est $\tau$-\textit{autodual} 
si $\t\circ\tau=\t^{-1}$.
\item 
Un type $(\BJ,\bl)$ dans $\G$ est $\tau$-\textit{autodual} 
si $\BJ$ est stable par $\tau$ et $\bl^\tau$ est~iso\-mor\-phe à $\bl^\vee$.
\item 
Une strate simple $[\aa,\b]$ dans $\A$ est $\tau$-\textit{autoduale} 
si $\aa$ est stable par $\tau$ et $\tau (\b)=-\b$.
\end{enumerate}
\end{defi}

Contrairement à ce que nous avons vu au paragraphe \ref{P12},
nous allons voir qu'il n'y a pas~tou\-jours,
étant donné une endo-classe autoduale non nulle $\TT$,
de caractère simple maximal $\tau$-auto\-dual 
d'endo-classe $\TT$ dans $G$.

\begin{lemm}
\label{gandalf}
\label{gandalfcor1}
Soit $\t$ un caractère simple maximal $\tau$-autodual dans $G$.
\begin{enumerate}
\item
Il y a une strate simple maximale
$\tau$-autoduale $[\aa,\b]$ dans $\A$ telle que $\t\in\Cc(\aa,\b)$.
\item
Soit $[\aa,\b]$ une strate simple $\tau$-autoduale dans $A$, 
et posons $E=\F[\b]$ et $\E_0=\F[\b^2]$.
\begin{enumerate}
\item
L'involution $\tau$ induit sur $E$ l'automorphisme non trivial de $E/E_0$. 
\item
Cette involution s'étend en une $E_0$-involution du centralisateur $B$ de $E$ 
dans $A$.
\item
L'intersection $\aa\cap\B$ est un ordre maximal de $B$ stable par $\tau$.
\end{enumerate}
\end{enumerate} 
\end{lemm}

\begin{proof}
Soit $\t$ un caractère simple maximal dans $G$, 
et soit $[\aa,\g]$ une strate simple dans $\A$ telle que 
$\t\in\Cc(\aa,\g)$. 
Alors $[\aa^\dd,-\g^\dd]$ est une strate simple de $\A$,
et $\t^{-1}\circ\tau$ est~un~carac\-tère~simple
dans $\Cc(\aa^\dd,-\g^\dd)$. 
Supposons maintenant que $\t$ soit $\tau$-autodual.
Alors la~propo\-sition \ref{rosimond} im\-pli\-que que $\aa$ est stable par $\tau$.
Puis, adaptant \cite[Proposition 1.10, Theorem 6.3]{SSPLMS01} 
au~cas de l'involution $\tau$,
on en déduit qu'il y a un $\b\in\A$
tel que $[\aa,\b]$ soit une strate simple $\tau$-autoduale dans $A$ 
et $\t\in\Cc(\aa,\b)$.
{(Voir \cite[\S 4.2]{SkodlerackRT20} 
pour l'adaptation de \cite{SSPLMS01} à un autre type d'involution de $G$.)}  
L'assertion (2) est immédiate. 
\end{proof}

Nous verrons que ces conditions imposent des contraintes à 
$\TT$ vis-à-vis de $A$ et de $\a$. 
Le~ré\-sultat principal de cette section,
le théorème \ref{THMEXISTENCECARACSIMPLE},
donne une con\-di\-tion néces\-saire et~suffisan\-te d'existence
d'un caractère simple maximal $\tau$-autodual dans une~re\-présenta\-tion
cus\-pi\-dale autoduale de $G$ de niveau non nul. 
Le cas des~re\-pré\-sentations de niveau $0$ sera traité 
à~la~sec\-tion~\ref{theriverofnoreturn}. 

\subsection{}
\label{Biotabstait0}
 
Les notations~du paragraphe \ref{cyprien} sont en vigueur. 
Soit $E_0$ une extension finie de $F$ de~de\-gré divisant $n$,
et soit $E$ une extension~qua\-dra\-tique de $E_0$.

\begin{lemm}
\label{gandalfcarre} 
Supposons que $r=2k$ pour un entier $k\>1$.
Fixons un élément $\s\in\G$ tel que $\s^2=1$,
de polynôme caractéristique réduit $(X^2-1)^n$.
Il existe un plongement de $F$-al\-gè\-bres 
de $E$ dans $A$ tel que la conjugaison par $\s$ induise
sur $\E$ l'automorphisme non trivial de $E/E_0$.
\end{lemm}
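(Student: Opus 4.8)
The plan is to decompose $\V$ along the $\pm1$-eigenspaces of $\s$ and then build the embedding blockwise. Write $\V=\V_+\oplus\V_-$ with $\V_\pm=\{v\in\V:\s v=\pm v\}$; these are $\D$-subspaces because $\s$ acts $\D$-linearly, and reading off the hypothesis ${\rm Pcrd}_{A/F}(\s)=(X^2-1)^n=(X-1)^n(X+1)^n$ gives $\dim_\D\V_+=\dim_\D\V_-=n/d=k$ (recall $2n=rd=2kd$). Hence the centraliser of $\s$ in $\A=\End_\D(\V)$ is the block-diagonal subalgebra $\End_\D(\V_+)\times\End_\D(\V_-)$, each factor being a central simple $\F$-algebra of reduced degree $kd=n$. (Alternatively one may invoke Lemma \ref{phlebel}(2) to conjugate $\s$ to the standard involution ${\rm diag}(1,-1)$ on $\A\simeq\Mat_2(\Mat_k(\D))$.) Fix once and for all a $\D$-linear isomorphism $\theta\colon\V_+\to\V_-$.

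Since $p\neq2$, write the quadratic extension as $E=E_0[\g]$ with $\g^2=\d\in E_0^\times$, so that the nontrivial element $\s_0$ of $\Gal(E/E_0)$ sends $\g$ to $-\g$. Because $[E_0:\F]$ divides $n=kd$, which is the reduced degree of $\End_\D(\V_+)$, the field $E_0$ embeds as an $\F$-subalgebra of $\End_\D(\V_+)$: this is the standard embeddability criterion for a field extension inside a central simple algebra over a non-Archimedean local field, the divisibility $[E_0:\F]\mid\deg\End_\D(\V_+)$ being exactly the obstruction. Fix such an embedding $\iota_+$, transport it to $\End_\D(\V_-)$ via $\iota_-(x)=\theta\,\iota_+(x)\,\theta^{-1}$, and set $\iota_0=\iota_+\oplus\iota_-$; this is an $\F$-embedding of $E_0$ into $\A$ whose image, being block-diagonal, commutes with $\s$.

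Finally, let $J\in\A$ be the $\D$-linear map acting by $\theta$ on $\V_+$ and by $\iota_+(\d)\circ\theta^{-1}$ on $\V_-$. A direct computation gives $J^2=\iota_0(\d)$, shows that $J$ commutes with $\iota_0(E_0)$ (precisely because $\iota_-=\theta\,\iota_+(\cdot)\,\theta^{-1}$ and $E_0$ is commutative), and gives $\s J\s^{-1}=-J$ (conjugation by $\s$ negates any operator interchanging $\V_+$ and $\V_-$). The map $\iota\colon x+y\g\mapsto\iota_0(x)+\iota_0(y)\,J$, for $x,y\in E_0$, is then a homomorphism of $\F$-algebras $E\to\A$ — the only things to check are that $J$ commutes with $\iota_0(E_0)$ and that $J^2=\iota_0(\d)$, mirroring $\g$ being central in $E$ and $\g^2=\d$ — and it is automatically injective since $E$ is a field. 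Moreover $\s\,\iota(x+y\g)\,\s^{-1}=\iota_0(x)-\iota_0(y)J=\iota(x-y\g)=\iota\bigl(\s_0(x+y\g)\bigr)$, so conjugation by $\s$ normalises $\iota(E)$ and induces on it the automorphism $\s_0$, as required.

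The single non-formal ingredient is the embeddability of $E_0$ into $\End_\D(\V_+)$, and over a non-Archimedean local field this is routine once $[E_0:\F]\mid n$; the rest is block linear algebra. The point needing a little care is the interplay of the left actions of $\s$ and of $\iota(E)$ with the right $\D$-module structure of $\V$, together with the bookkeeping involving $\theta$ (which disappears entirely if one first standardises $\s$ as above).
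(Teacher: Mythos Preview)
Your proof is correct and is essentially the same as the paper's, just phrased in a coordinate-free way. The paper identifies $A$ with $\Mat_r(D)$ so that $\s$ becomes the block-diagonal matrix ${\rm diag}(-{\rm id},{\rm id})$, embeds $E_0$ diagonally into $A^\s\simeq\Mat_k(D)\times\Mat_k(D)$, and sends the generator $\b$ (with $\b^2\in E_0$) to the off-diagonal matrix $\begin{pmatrix}0&\b^2\\1&0\end{pmatrix}$; your eigenspace decomposition $V=V_+\oplus V_-$, diagonal embedding $\iota_0$, and off-diagonal operator $J$ are exactly this construction written intrinsically via the isomorphism $\theta$.
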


\begin{proof}
Fixons un générateur $\b$ de $\E$ sur $E_0$ tel que $\b^2\in\E_0$, 
et identifions $A$ à la $F$-algèbre $\Mat_r(D)$ de façon que~:
\begin{equation*}
\s = \begin{pmatrix} -{\rm id}&0\\0&{\rm id}\end{pmatrix} 
\in \Mat_{k}(D)\times \Mat_{k}(D) \subseteq \A
\end{equation*}
où ${\rm id}$ est l'identité de $\Mat_{k}(D)$.
La $F$-algèbre $A^\s$ s'identifie donc à
$\Mat_k(D)\times\Mat_k(D)$ dans laquelle il suffit de plonger $E_0$
diagona\-lement,
ce qui est possible car $[E_0:F]$ divise $kd=n$.
Il suffit~alors de plonger $E$ de façon que l'image de $\b$ soit de
la forme~: 
\begin{equation*}
\b = \begin{pmatrix} 0&\g\\1&0\end{pmatrix}
\end{equation*}
pour un $\g\in\E_0^\times$.
\end{proof}

\subsection{}

Nous traitons maintenant le cas plus difficile
où $\a$ n'est pas un carré de $\F^\times$,
en commençant par le cas particulier où $E_0=F$.

\begin{lemm}
\label{gandalf0}
Supposons que $\a$ ne soit pas un carré de $\F^\times$
et que $E_0=F$.
Pour qu'il~exis\-te un plongement de~$F$-al\-gèbres 
de $E$ dans $A$ tel que
la conjugai\-son par $\dd$
induise sur $\E$~l'auto\-mor\-phisme non trivial de $E/F$, 
il faut et suffit que~:
\begin{equation*}
\bo_{E/F}(\a)^{n} = (-1)^{r}
\end{equation*}
où $\bo_{E/F}$ est le caractère de $F^\times$ de noyau 
$\N_{E/F}(E^\times)$. 
\end{lemm}

\begin{proof}
Fixons un élément $\b\in\E^\times$ tel que $E=F[\b]$ et
$\g=\b^2\in\F^\times$.
Notons $H$ la $F$-algèbre~de dimension $4$ de base
$(1,\boldsymbol{i}, \boldsymbol{j}, \boldsymbol{k})$ avec les relations~:
\begin{equation*}
\boldsymbol{i}^2=\a,
\quad
\boldsymbol{j}^2=\g,
\quad
\boldsymbol{i}\boldsymbol{j}=-\boldsymbol{j}\boldsymbol{i}=\boldsymbol{k}.
\end{equation*}
C'est une $F$-algèbre centrale simple de degré réduit $2$.
Notons $(\a,\g)_F$ le symbole~de Hilbert~de $\a$ et~$\g$ sur $F$.
On a $(\a,\g)_F=\bo_{E/F}(\a)$.
La $F$-algèbre $H$ est~:
\begin{itemize}
\item[(a)] 
déployée si $(\a,\g)_F=1$, \ie si $\a\in\N_{E/F}(\E^\times)$,
\item[(b)]
une~algèbre~à~di\-vision si $(\a,\g)_F=-1$,
\ie si $\a\notin\N_{E/F}(\E^\times)$.
\end{itemize} 
Pour qu'il existe un plongement de~$F$-algè\-bres de $E$ dans $A$
vérifiant la condition demandée,~il 
faut et suffit qu'il existe un plongement de $F$-algèbres 
$\phi:H\to A$ tel que $\phi(\boldsymbol{i})=\dd$ et 
$\phi(\boldsymbol{j})=\b$. 
Comme $\a$ n'est pas un carré dans $\F^\times$,
le théorème de Skolem-Noether assure que deux~élé\-ments
de $A$ de carré égal à $\a$ sont con\-jugués sous $G$.
Nous aurons donc résolu notre~pro\-blème une fois déterminées les 
conditions auxquelles il existe un plongement de~$F$-algè\-bres
de $H$ dans $A$.

\begin{lemm}
\label{plonge1}
Soit un entier $k\>1$.
Pour qu'il y ait un plongement de $\Mat_{k}(F)$ dans $A$, 
il faut et suffit que $k$ divise $r$. 
\end{lemm}

\begin{proof}
C'est certainement une condition suffisante. 
Supposons que $\Mat_k(F)$ se plonge dans $A$. 
Son centralisateur $C$ étant de même classe de Brauer que $A$,
il est isomorphe à $\Mat_s(D)$ pour un certain $s\>1$,
et son degré réduit $N=sd$ vérifie $kN=rd$,
donc $k$ divise $r$.
\end{proof}

Par conséquent, 
si $H$ est déployée,
\ie si $\a\in\N_{E/F}(\E^\times)$,
il y a un plongement de $\H$ dans $A$ si et seulement si $r$ est pair. 

\begin{lemm}
Pour qu'il existe un plongement de $\H$ dans $A$, 
il  faut  et   suffit  qu'il  existe  un  plon\-gement   de  $\Mat_4(F)$  dans
$A\otimes_FH$. 
\end{lemm}

\begin{proof}
C'est une condition nécessaire car $H\otimes_FH\simeq\Mat_4(F)$.
Supposons que $\Mat_4(F)$ se plonge dans $A\otimes_FH$. 
Tensorisant par $H$, il s'ensuit que $\Mat_4(H)$ se plonge dans
$\Mat_4(A)$.
Soit $C$ son centralisateur.
On a un isomorphisme de $F$-algèbres 
$\Mat_4(A)\simeq\Mat_4(H)\otimes_FC$.
Considérons maintenant $H\otimes_FC$. 
C'est une $F$-algèbre centrale simple de même dimension et de même
classe de Brauer que $A$,~de sorte qu'elle lui est isomorphe.
La $F$-algèbre $\H$ se plonge donc dans $A$.
\end{proof}

Supposons maintenant que $H$ soit une algèbre~à~di\-vi\-sion.
Il s'agit de prouver que $H$ se plonge dans $A$ si et seulement si 
$r$ et $n$ ont même parité. 
Soit $h/d\in\QQ/\ZZ$ l'invariant de Hasse de $D$,~où 
$h$ est un entier premier à $d$.
Alors $A\otimes_FH$~est isomorphe à $\Mat_{v}(U)$,
où $v\>1$ est un entier, $U$~une $F$-algèbre à division centrale
de degré réduit $u$ et d'invariant de Hasse $k/u$ donnés par~:
\begin{equation*}
\frac k u = \frac h d + \frac 1 2 = \frac {2h+d} {2d},
\quad
u = \frac {2d} {(2d,2h+d)},
\quad
uv=2rd.
\end{equation*} 
Selon le lemme \ref{plonge1},
l'algèbre $\Mat_4(F)$
se plonge dedans si et seulement si $v=r(2d,2h+d)$ est~di\-visible par $4$.
C'est le cas si et seulement si
$4$ divise $r$, ou $r$ et $d$ sont pairs,
ou $d$ est divisible par $2$ mais pas par $4$, 
\ie si et seulement si $r$ et $n=rd/2$ ont même parité. 
\end{proof}

\begin{rema}
\label{lapinfacetieux} 
S'il y a un plongement de~$F$-al\-gèbres $\iota$ de $E$ dans~$A$
sa\-tis\-faisant aux~con\-di\-tions du lemme \ref{gandalf0},
{le centralisateur de $\dd$ et $\iota\E$ dans $A$}
est une $F$-algèbre~cen\-trale simple~de même classe de Brauer que
$\A\otimes_\F\H$, isomorphe à $\Mat_{w}(U)$ avec~:
\begin{equation*}
u = \left\{ 
\begin{array}{rl}
d & \text{si $\a\in\N_{E/F}(\E^\times)$
ou si
$4$ divise $d$}, \\
2d & \text{si $\a\notin\N_{E/F}(\E^\times)$ et $d$ est impair}, \\
d/2 & \text{si $\a\notin\N_{E/F}(\E^\times)$
et $d$ est est divisible par $2$ mais pas par $4$},
\end{array}\right.
\end{equation*}
et $v=4w$.
\end{rema}

\subsection{}
\label{pelleteuse}

Nous traitons maintenant le cas général.

\begin{prop}
\label{gandalf0coro}
Pour qu'il existe un~plon\-ge\-ment de $F$-algèbres 
de $E$ dans $A$ tel que $\tau$~in\-duise sur $\E$ l'auto\-mor\-phisme non trivial de $E/E_0$, 
il faut et suffit que~:
\begin{equation}
\label{bizuth}
\bo_{E/E_0}(\a)^{2n/[E:F]} = (-1)^{r}.
\end{equation} 
\end{prop}

\begin{proof}
Le cas où $\a$ est un carré dans $\F^\times$ découle du lemme
\ref{gandalfcarre} (car $r$ est pair~dans ce cas d'après la convention \ref{CONV2}).
Traitons maintenant le cas où $\a$ n'est pas un carré dans
$\F^\times$.
Il faut d'abord plonger $E_0$ dans le centralisateur de $\dd$ dans $A$.
La $F$-algèbre $K=\F[\dd]$ est~une extension quadratique~;
son centralisateur dans $A$ est donc une
$K$-algèbre cen\-tra\-le simple $S$~de degré réduit $n$. 
Y plonger $E_0$ équivaut à fixer un morphisme de $K$-algèbres de
$K\otimes_FE_0$ dans~$S$.
Soit $E_1$ un facteur direct~de $K\otimes_FE_0$.
C'est une extension de $K$ de~degré égal soit à $[E_0:F]$,
soit à $[E_0:F]/2$.
Dans les deux cas, on peut la plonger dans $S$ car $[E_0:F]$ divise $n$.
Ceci étant fait,
il reste~à plonger $E$ comme $E_0$-algèbre
dans le centralisateur $A_0$ de~$E_0$ dans $A$, qui 
est une $E_0$-algèbre~cen\-trale simple contenant $\dd$, 
de degré réduit $2n/[E_0:F]$.
Pour cela,~on applique le lemme \ref{gandalf0}
en remplaçant $F$, $A$ par $E_0$, $A_0$,~le
rôle de $r$ étant joué par $t=2n/{\rm ppcm}(d,[E_0:F])$.
On obtient donc la condition~:
\begin{equation*}
\bo_{E/E_0}(\a)^{2n/[E:F]} = (-1)^t.
\end{equation*} 
Il ne reste plus qu'à observer que $t$ est pair si et seulement si
ppcm$(d,[E_0:F])$ divise $n$,~c'est-à-dire
si et seu\-lement si $d$ divise $n=rd/2$, \ie si et seulement si $r$ est pair.
\end{proof}

\begin{rema}
\label{confino}
\begin{enumerate}
\item 
Selon la remarque \ref{lapinfacetieux},
si l'on pose $g=[E_0:F]$ et
s'il y a un~plon\-gement de~$F$-al\-gèbres $\iota$ de $E$~dans $A$
sa\-tis\-faisant aux conditions de la proposition \ref{gandalf0coro},
le~cen\-tralisateur $B_0$ de $\dd$ et $\iota\E$ dans $A$ est
isomorphe à une algèbre de matrices à coefficients dans une 
$E_0$-algèbre à division centrale $C_0$
dont le degré réduit est égal à~:
\begin{equation*}
c_0 = \left\{ 
\begin{array}{rl}
d/(d,g) & \text{si $\a\in\N_{E/E_0}(\E^\times)$
ou si 
$4$ divise $d/(d,g)$}, \\
2d/(d,g) & \text{si $\a\notin\N_{E/E_0}(\E^\times)$ et $d/(d,g)$ est impair}, \\
d/2(d,g) & \text{si $\a\notin\N_{E/E_0}(\E^\times)$
et $d/(d,g)$ est divisible par $2$ mais pas par $4$}.
\end{array}\right.
\end{equation*} 
Cette formule est aussi valable dans le cas où $\a$ est un carré dans $\F^\times$.
\item 
Observons que $2n/[E:F]=mc$, et que $m$ et $c$ sont liés à $r$ et $d$
par les~for\-mu\-les \eqref{egalizink}.
Si l'on pose $k=r/m$,
la formule \eqref{bizuth} prend la forme plus symétrique~:
\begin{equation*}
\label{MadeleineForestierSym}
\bo_{E/E_0}(\a)^{mc} = (-1)^{mk} 
\end{equation*}
où $c$ et $k$ sont premiers entre eux.
\end{enumerate}
\end{rema}

\subsection{}
\label{Biotabstait}

Dans les paragraphes \ref{Biotabstait} et \ref{cestlong},
$E$~est une extension finie de $F$
et $B$ est une $E$-algèbre~cen\-trale simple munie~d'une
$F$-involution $*$.
Le~cen\-tre $E$ de $B$ est stable par $*$.
Notons $E_0$
le sous-corps des~élé\-ments de $E$ in\-variants par $*$.
On suppose que $E/E_0$ est quadratique, 
\ie que $*$ n'est pas triviale sur~$E$.

\begin{lemm}
\label{bilbon1}
La sous-algèbre $\B_0$ des éléments de $B$ invariants par $*$ 
est une $E_0$-algèbre cen\-trale simple,~et~le 
morphisme naturel de $E$-algèbres
de $\B_{0}\otimes_{E_0}E$ dans $B$
est un isomorphisme. 
\end{lemm}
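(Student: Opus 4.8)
The plan is to prove this by a standard Galois-descent / double-centralizer argument, working over the quadratic extension $E/E_0$.

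First I would observe that since $\iota$ restricts to the nontrivial automorphism of $E/E_0$, the pair $(B,\iota)$ is an $E_0$-algebra with a semilinear involution, and the fixed algebra $B_0 = B^\iota$ is naturally an $E_0$-algebra. The natural multiplication map
\begin{equation*}
\mu\colon B_0\otimes_{E_0}E \longrightarrow B
\end{equation*}
is an $E$-algebra homomorphism (using that $E$ is in the center of $B$). To see it is an isomorphism, I would first check injectivity: if $b\otimes 1 + b'\otimes\delta$ (writing $E=E_0\oplus E_0\delta$ with $\delta\in E$, $\delta^2\in E_0$, $\iota(\delta)=-\delta$, which exists since $p\neq 2$) maps to $0$, i.e. $b + b'\delta = 0$ in $B$ with $b,b'\in B_0$, then applying $\iota$ gives $b - b'\delta = 0$, whence $b=b'\delta=0$ and thus $b'=0$ as well since $\delta$ is a unit. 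Surjectivity is then a dimension count: every $x\in B$ decomposes as $x = \frac12(x+\iota(x)) + \frac12(x-\iota(x))$, and $x+\iota(x)\in B_0$ while $\frac12\delta^{-1}(x-\iota(x))\in B_0$ (it is fixed by $\iota$), so $x$ lies in the image of $\mu$. Hence $\mu$ is an isomorphism, and $\dim_{E_0}B_0 = \frac12\dim_{E_0}B = \dim_E B$.

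It then remains to show $B_0$ is central simple over $E_0$. Centrality: if $z\in B_0$ commutes with all of $B_0$, then by the isomorphism $\mu$ it commutes with all of $B = B_0\otimes_{E_0}E$, so $z$ lies in the center $E$ of $B$; but $z\in B_0$ forces $z\in E\cap B_0 = E_0$. Simplicity: since $B_0\otimes_{E_0}E\simeq B$ is simple and $E/E_0$ is a (finite, separable) field extension, $B_0$ must itself be simple — indeed any nonzero two-sided ideal $I\subsetneq B_0$ would give a nonzero proper two-sided ideal $I\otimes_{E_0}E$ of $B$, contradiction. Alternatively one can cite that an $E_0$-algebra whose scalar extension to a field is central simple is itself central simple. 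I expect no serious obstacle here; the only mild subtlety is the existence of the element $\delta$ with $\delta^2\in E_0^\times$ and $\iota(\delta)=-\delta$, which is where $p\neq2$ enters (it guarantees $E = E_0(\sqrt{a})$ for some $a\in E_0^\times$), and the fact that the idempotents $\frac12(1\pm\delta^{-1}\iota(\cdot))$ make sense, again using $2$ invertible. Everything else is formal.
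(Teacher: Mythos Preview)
Your proof is correct and follows essentially the same route as the paper: both establish the isomorphism $B_0\otimes_{E_0}E\simeq B$ via the averaging decomposition $x=\tfrac12(x+\iota(x))+\delta\cdot\tfrac{1}{2\delta}(x-\iota(x))$ using an element $\delta\in E$ with $\delta^2\in E_0$ (which exists since $p\neq2$). The only difference is that the paper invokes \cite{GilleSzamuely} Lemma~2.2.2 to deduce that $B_0$ is central simple over $E_0$ from the isomorphism, whereas you give the (equally short) direct argument for centrality and simplicity.
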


\begin{proof}
D'après \cite[Lemma 2.2.2]{GilleSzamuely},
il suffit de prouver que $\B_{0}\otimes_{E_0}E$ est isomorphe à $B$. 
Fixons un $\b\in\E$ tel que $\b\notin\E_0$ et $\b^2\in\E_0$.
Tout $x\in\B$ s'écrit $x'+\b x''$ avec~:
\begin{equation*}
x' = \frac{x+x^*}{2}\in\B_{0},
\quad
x'' = \frac{x-x^*}{2\b}\in\B_{0}.
\end{equation*}
Le morphisme naturel de $E$-algèbres de $\B_{0}\otimes_{E_0}E$ dans $B$
est donc surjectif.
Ces deux $E$-algèbres ayant la même dimension,
c'est un isomorphisme.
\end{proof}

De par l'isomorphisme naturel entre $\B_{0}\otimes_{E_0}\E$ et $B$,
l'action de $*$ sur $B$ s'identifie à l'action du géné\-ra\-teur
de $\Gal(E/E_0)$ sur $\B_{0}\otimes_{E_0}\E$.
Par abus de notation, nous noterons $*$ le géné\-ra\-teur
de $\Gal(E/E_0)$.

Prouvons d'abord le résultat suivant,
qui sera utile au paragraphe \ref{simples}. 
Notons $\Nrd_{B/E}$~la~nor\-me réduite de $B$ sur $E$.

\begin{lemm}
\label{NRDBE}
Pour tout $x\in\B$, on a $\Nrd_{B/E}(x^*)=\Nrd_{B/E}(x)^*$.
\end{lemm}

\begin{proof}
Si $P\in\E[X]$, on notera $P^*$ le polynôme obtenu en appliquant $*$
à tous~ses coefficients.
Nous allons prouver l'égalité 
${\rm Pcrd}_{B/E}(x^*)={\rm Pcrd}_{B/E}(x)^*$
entre poly\-nômes~carac\-téristique réduits,
dont nous déduirons l'égalité voulue en comparant les termes~cons\-tants. 

Selon \cite[17.2 Proposition 2]{BkiALG8}, il suffit de prouver que $x$ et 
$x^*$, considérés comme endo\-mor\-phismes (par multiplication à gauche)
de $B$ considéré comme $E$-espace vectoriel,
ont des~poly\-nômes caractéristi\-ques
conjugués l'un de l'autre par $*$.
Il suffit pour cela
de montrer que,~pour tout endo\-mor\-phisme $u\in\End_E(B)$,
les po\-lynômes caractéristi\-ques de $u$ et de
$u^*:x\mapsto u(x^*)^*$ sont conju\-gués~l'un de l'autre par $*$.
Pour le prouver, 
fixons une base de $B$ sur $E$ qui soit~une base~de $B_0$ sur $E_0$.
On voit alors que, pour $\l\in\E$,
la matrice de $\l\cdot{\rm id}_B-u^*$
est~conju\-guée~par $*$ à celle de 
$\l^*\cdot{\rm id}_B- u$,
ce qui donne le résultat voulu. 
\end{proof}

Fixons maintenant une $E_0$-algèbre à~di\-vi\-sion cen\-trale $C_0$
et un entier $m_0$ tels que~:
\begin{equation*}
B_0 \simeq \Mat_{m_0}(C_0).
\end{equation*}
On en déduit un isomorphisme de $E$-algèbres
$\B \simeq\Mat_{m_0}(C_0\otimes_{E_0} E)$
à travers lequel 
l'action de $*$ sur $B$~dé\-ri\-ve~naturellement de celle de $*$
sur $C_0\otimes_{E_0} E$.
Notons $c_0$ le de\-gré réduit de $C_0$ sur~$E_0$.
Le lemme suivant sera essentiel dans la preuve du théorème 
\ref{THMEXISTENCECARACSIMPLE}. 

\begin{lemm}
\label{bilbon3}
Pour qu'il existe un ordre maximal stable par $*$ dans $\B$,
il faut et suffit que~$c_0$ soit impair, ou que $c_0$ soit pair et que $E$
soit ramifié sur $E_0$. 
\end{lemm}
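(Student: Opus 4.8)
Le plan est de raisonner s\'epar\'ement selon la parit\'e de $c_0$, en se ramenant d'abord au cas $l=1$. L'iso\-mor\-phisme $\B=\B_0\otimes_{E_0}E\simeq\Mat_l(\B')$, o\`u $\B':=C_0\otimes_{E_0}E$, transporte l'action de $\iota$ sur l'action coefficient par coefficient de $\iota={\rm id}_{C_0}\otimes\iota$ sur $\B'$ (cf. le lemme \ref{bilbon1} et le paragraphe \ref{Biotabstait}) ; comme les ordres maximaux de $\Mat_l(\B')$ sont exactement les $\Mat_l(\Lambda)$ pour $\Lambda$ ordre maximal de $\B'$, et comme $\Mat_l$ d'un ordre $\iota$-stable est $\iota$-stable, il suffira, pour la suffisance, de produire un ordre maximal $\iota$-stable dans $\B'$ ; pour la n\'ecessit\'e on se ram\`enera \`a $\B'$ de la m\^eme mani\`ere en cours de route.

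Lorsque $c_0$ est impair, $\B'$ est une alg\`ebre \`a division de centre $E$, donc poss\`ede un unique ordre maximal $\Oo_{\B'}$ ; comme $\iota$ est un automorphisme d'anneaux de $\B'$, il envoie $\Oo_{\B'}$ sur un ordre maximal, donc sur lui-m\^eme, et c'est termin\'e : l'\'enonc\'e n'impose alors aucune condition. Pour la n\'ecessit\'e avec $c_0$ pair, je raisonnerais par l'absurde, en supposant $E/E_0$ non ramifi\'ee et $\B$ muni d'un ordre maximal $\iota$-stable $\Lambda$. L'extension $\Oo_E/\Oo_{E_0}$ \'etant non ramifi\'ee, donc galoisienne au sens \'etale, la descente galoisienne appliqu\'ee au $\Oo_E$-module $\Lambda$ muni de son action $\iota$-semi-lin\'eaire fournit $\Lambda=\Lambda^\iota\otimes_{\Oo_{E_0}}\Oo_E$, avec $\Lambda^\iota=\Lambda\cap\B_0$ ; par platitude fid\`ele, $\Lambda^\iota$ est un ordre maximal de $\B_0=\Mat_l(C_0)$, et quitte \`a conjuguer $\Lambda$ par un \'el\'ement convenable de $\B_0^\times=\B^\iota$ (ce qui pr\'eserve la $\iota$-stabilit\'e) on peut supposer $\Lambda=\Mat_l(\Oo_{C_0}\otimes_{\Oo_{E_0}}\Oo_E)$, donc que $\Oo_{C_0}\otimes_{\Oo_{E_0}}\Oo_E$ est un ordre maximal de $\B'$. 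Or, utilisant $\p_E=\p_{E_0}\Oo_E$, sa r\'eduction modulo le radical de Jacobson vaut $\kk_{C_0}\otimes_{\kk_{E_0}}\kk_E$, qui, $c_0$ \'etant pair -- de sorte que $\kk_E$ se plonge dans $\kk_{C_0}$ au-dessus de $\kk_{E_0}$ -- est isomorphe \`a $\kk_{C_0}\times\kk_{C_0}$ : ce n'est pas une alg\`ebre simple, alors que la r\'eduction modulo son radical d'un ordre maximal d'une alg\`ebre centrale simple sur un corps local l'est toujours. Contradiction.

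Le c\oe ur de la preuve est la suffisance lorsque $c_0=2e$ est pair et $E/E_0$ ramifi\'ee. Comme $p\neq2$, j'\'ecrirais $E=E_0[\delta]$ avec $\delta^2=\w_0$ une uniformisante de $E_0$, et je choisirais une uniformisante $\Pi_0$ de $\Oo_{C_0}$ avec $\Pi_0^{c_0}=\w_0$ (possible, $\Pi_0^{c_0}$ \'etant une uniformisante de $E_0$ et la norme $\kk_{C_0}^\times\to\kk_{E_0}^\times$ \'etant surjective). Voyant $\delta$ comme \'el\'ement central de $\B'=C_0\otimes_{E_0}E$ (de carr\'e $\w_0$) et posant $w=\delta^{-1}\Pi_0^{e}$, on a $w^2=1$, $\iota(w)=-w$, et $w$ normalise $\Oo_{C_0}$ et commute \`a $\Pi_0$. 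Alors $\Lambda:=\Oo_{C_0}\oplus\Oo_{C_0}w$ est un sous-anneau de $\B'$, manifestement $\iota$-stable, qui contient $\delta=\Pi_0^{e}w$, donc $\Oo_E$ : c'est un $\Oo_E$-ordre. Son radical de Jacobson est $\Pi_0\Lambda=\p_{C_0}\oplus\p_{C_0}w$, id\'eal inversible (puisque $\Pi_0$ normalise $\Lambda$), donc $\Lambda$ est h\'er\'editaire ; et $\Lambda/\Pi_0\Lambda=\kk_{C_0}\oplus\kk_{C_0}\overline{w}$, o\`u $\overline{w}^2=1$ et $\overline{w}$ agit sur $\kk_{C_0}$ par l'automorphisme induit par $\Ad(\Pi_0^e)$, qui est d'ordre $c_0/e=2$ : ce quotient est le produit crois\'e de $\kk_{C_0}$ par ce groupe d'ordre $2$ pour le cocycle trivial, \ie une alg\`ebre de matrices sur le corps fixe, donc une alg\`ebre simple. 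Un ordre h\'er\'editaire dont la r\'eduction modulo le radical est simple \'etant maximal, $\Lambda$ convient, et $\Mat_l(\Lambda)$ r\'epond \`a la question pour $\B$. J'attends cette derni\`ere v\'erification -- identifier ${\rm rad}(\Lambda)$ et reconna\^itre $\Lambda/{\rm rad}(\Lambda)$ comme alg\`ebre simple -- comme le seul point d\'elicat ; le cas impair et la n\'ecessit\'e sont routiniers une fois admis la descente galoisienne et le lien entre maximalit\'e et alg\`ebre r\'esiduelle.
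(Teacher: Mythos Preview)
Votre preuve est correcte, et votre d\'emarche diff\`ere substantiellement de celle de l'article.

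Un point mineur : l'affirmation \og les ordres maximaux de $\Mat_l(\B')$ sont exactement les $\Mat_l(\Lambda)$\fg{} est fausse telle quelle (tous les ordres maximaux ne sont pas de cette forme bloc-diagonale), mais vous n'utilisez en r\'ealit\'e que l'implication facile \og $\Lambda$ maximal dans $\B'$ entra\^ine $\Mat_l(\Lambda)$ maximal dans $\Mat_l(\B')$\fg{}, qui est vraie et suffit.

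Comparaison avec l'article. L'article affirme d'embl\'ee qu'un ordre h\'er\'editaire de $B$ est $\iota$-stable si et seulement s'il est de la forme $\bb_0\otimes_{\Oo_{E_0}}\Oo_E$ pour un ordre h\'er\'editaire $\bb_0$ de $B_0$, puis calcule la \og p\'eriode\fg{} de $\Oo_{C_0}\otimes_{\Oo_{E_0}}\Oo_E$ pour conclure. Votre approche, elle, n'invoque la descente galoisienne que dans le cas non ramifi\'e (o\`u $\Oo_E/\Oo_{E_0}$ est \'etale et o\`u l'argument est limpide), et traite la suffisance dans le cas ramifi\'e pair par une construction explicite. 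C'est l\`a que votre preuve apporte quelque chose : votre ordre $\Lambda=\Oo_{C_0}\oplus\Oo_{C_0}w$ contient \emph{strictement} $\Oo_{C_0}\otimes_{\Oo_{E_0}}\Oo_E=\Oo_{C_0}\oplus\p_{C_0}^{e}w$ (puisque $\delta=\Pi_0^ew$). Cela montre en particulier que $\Oo_{C_0}\otimes_{\Oo_{E_0}}\Oo_E$ n'est pas maximal dans ce cas --- on v\'erifie d'ailleurs que son radical $\p_{C_0}\oplus\Oo_{C_0}\delta$ satisfait $\dim_{\kk_{C_0}}(\p_\aa/\p_\aa^2)=2$, donc n'est pas principal, et que l'ordre n'est m\^eme pas h\'er\'editaire. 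Autrement dit, l'argument de p\'eriode de l'article ne donne pas directement un ordre maximal $\iota$-stable dans le cas $c_0$ pair et $E/E_0$ ramifi\'ee ; votre construction explicite comble cette lacune et rend la preuve autonome.

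Votre identification de $\Lambda/\Pi_0\Lambda$ comme produit crois\'e de $\kk_{C_0}$ par l'automorphisme d'ordre $2$ induit par $\Ad(\Pi_0^e)$, donc comme $\Mat_2$ du sous-corps fixe, est exactement ce qu'il faut pour conclure \`a la maximalit\'e. C'est effectivement le seul point qui demande un petit calcul ; le reste est routine comme vous le dites.
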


\begin{proof}
Pour prouver le lemme, 
nous allons décrire la $E$-algèbre $C_0\otimes_{E_0}E$ munie de
l'involution $*$,
selon la parité de $c_0$.

Si $c_0$ est im\-pair, alors $C=C_0\otimes_{E_0}E$ est une $E$-algèbre 
à division centrale de degré réduit~$c_0$.
Elle a un unique ordre maximal $\Oo_C$, qui est donc~sta\-ble par $*$.
On en déduit que $\Mat_{m_0}(\Oo_C)$ est un ordre maximal de
$B\simeq\Mat_{m_0}(C)$ stable par $*$.

On suppose maintenant que $c_0$ est pair.
Dans ce cas,
$C_0\otimes_{E_0}E$ est isomorphe à l'algèbre~des matrices de
taille $2$ à coefficients dans une $E$-algèbre à division centrale $C$
de~de\-gré réduit~$c$~égal à $c_0/2$.
Précisons cette description. 
Fixons pour cela une
{uniformisante $\w_{E_0}$ de $E_0$,
ainsi qu'une ex\-ten\-sion non~ra\-mi\-fiée $L_0$ de $E_0$ dans $C_0$
de de\-gré~$c_0$ et une uni\-for\-mi\-sante $\w_0$ de $C_0$ normalisant
$L_0$ telle que $\w_0^{c_0}$ soit égale à $\w_{E_0}$
(voir \cite[\S 9.13]{Jacobson}).}
Ainsi $\w_0$ définit,
par conjugaison sur $L_0$,~un générateur de~$\Gal(L_0/E_0)$.

Si $E/E_0$ est non ramifiée,
on plonge $E$ dans $L_0$
et on note $C$ le commutant de $E$ dans $C_0$.~On observera que
$\w_0^2$ agit trivialement par conjugaison sur $E$ et
est une uniformisante de $C$. 

Si $E/E_0$ est ramifiée,
{il y a un choix de $\w_{E_0}$ tel que $E$ soit engendrée sur 
$E_0$ par une racine carrée de $\w_{E_0}$.
On peut donc identifier $E$ à l'extension de $E_0$ dans $C_0$
engendrée par $\w_0^{c}$,
dont~le carré est $\w_{E_0}$.
À travers cette identification,
$\w_E=\w_0^{c}$ est une uniformisante de $E$.}
On note $C$ le commutant de~$E$ dans $C_0$,
et on fixe une racine de~l'uni\-té $\xi\in\L_0$ d'ordre premier à $p$
telle que $\w_E\xi=-\xi\w_E$.
On observera que $\w_0$ appartient à $C$ et en est une uniformisante.

Posons $\Omega=\w_0$ si $E/E_0$ est non ramifiée,
et $\Omega=\xi$ si $E/E_0$ est ramifiée.
Dans tous les cas, $\Omega$ dé\-fi\-nit,
par conjugaison sur $E$,
le générateur de $\Gal(E/E_0)$,
il normalise $C$ et on a $\Omega^2\in\C$.

Con\-si\-dé\-rons maintenant
$C_0$ comme le $C$-espace vectoriel à droite $C+\Omega C$.
L'application~:
\begin{equation}
\label{plongemap}
x + \Omega y
\mapsto 
\begin{pmatrix}
x & \Omega y\Omega\\
y & \Omega^{-1}x\Omega
\end{pmatrix},
\quad
x,y\in\C,
\end{equation}
est un plongement de $E_0$-algèbres de $C_0$ dans $\Mat_2(C)$,
induisant un isomorphisme de $E$-algèbres, noté $\iota$, 
entre $C_0\otimes_{E_0}E$ et $\Mat_2(C)$.
On en déduit un isomorphisme
entre $B$ et la $E$-algèbre $\Mat_{m}(C)$ pour $m=2m_0$.
{Prouvons que l'action de $*$ sur $C_0\otimes_{E_0}E$
se transporte à $\Mat_2(C)$ en l'involution~:
\begin{equation}
\label{actiondestar}
\begin{pmatrix} x & z \\ y & u \end{pmatrix}
\mapsto
\begin{pmatrix}
\Omega {u} \Omega^{-1} & \Omega {y}\Omega \\
\Omega^{-1} {z} \Omega^{-1} & \Omega^{-1} {x} \Omega 
\end{pmatrix} 
\quad
x,y,z,u\in\C.
\end{equation}
Pour cela,
on fixe,
comme dans la preuve du lemme \ref{bilbon1},
un $\b\in E$ tel que $\b\notin\E_0$ et $\b^2\in\E_0$~et,
pour tout $g\in\Mat_2(C)$ qu'on écrit sous la forme du membre de
gauche de \eqref{actiondestar},
on vérifie par un calcul explicite à partir de \eqref{plongemap}
que l'antécédent de $g$ dans $\C_0\otimes_{E_0}E$ est l'élément~:
\begin{equation*}
(x+\Omega u\Omega^{-1}+\Omega y+z\Omega^{-1})\otimes \frac 1 2
+(x-\Omega u\Omega^{-1}+\Omega y-z\Omega^{-1})\b^{-1}\otimes \frac {\b} 2
\end{equation*} 
dont le conjugué par $*$ est égal à~: 
\begin{equation*}
(\Omega u\Omega^{-1}+x+z\Omega^{-1}+\Omega y)\otimes \frac 1 2
+(\Omega u\Omega^{-1}-x+z\Omega^{-1}-\Omega y)\b^{-1}\otimes \frac {\b} 2 
\end{equation*} 
car $\b^*$ est égal à $-\b$.}

Prouvons maintenant le lemme dans le cas où $c_0$ est pair. 
Si $E/E_0$ est ramifiée,
l'ordre maxi\-mal $\mathfrak{c}$ de $C_0\otimes_{E_0}E$ corres\-pon\-dant
à $\Mat_2(\Oo_C)$
est stable~par $\Gal(E/E_0)$ car $\Omega=\xi$ normalise~$\Oo_C$.
On en déduit que $\Mat_{m_0}(\mathfrak{c})$ est un ordre maximal de
$B\simeq\Mat_{m_0}(C_0\otimes_{E_0}E)\simeq\Mat_{m}(C)$ stable~par~$*$.
Supposons~main\-te\-nant que $E/E_0$ soit non ramifiée
et qu'il y ait dans $B$ un ordre maximal~$\bb$ sta\-ble par $*$.
Dans ce cas,
$\Omega$ est une uniformisante de $C_0$ normalisant $C$ et,
selon \cite[Lemme~4.4]{BTSMF1},
l'application
$\bb_0\mapsto\bb_0\otimes\Oo_E$ est~une bijection entre ordres de $B_0$
et ordres de $B$ stables par $*$,~le
produit~ten\-soriel étant pris sur $\Oo_{E_0}$.
Il existe donc~un unique ordre $\bb_0$ de $B_0$ correspondant~à~$\bb$
par cette~bi\-jection,~et $\bb_0$ est un ordre maximal de~$B_0$.
Comme l'action de $B_0^\times$ par conjugaison sur~$B$
com\-mu\-te à $*$,
on peut même supposer,
quitte~à~con\-ju\-guer $\bb$,
que $\bb_0$ est égal à $\Mat_{m_0}(\Oo_{C_0})$.
Par~conséquent,
$\Oo_{C_0}\otimes\Oo_{E}$ est ma\-ximal dans 
$\C_0\otimes_{E_0}\E$
et stable par $*$. 
Mais (toujours d'après \cite[Lemme~4.4]{BTSMF1})
$\C_0\otimes_{E_0}\E$ admet un unique ordre stable par $\Gal(E/E_0)$.
{Appliquant la formule \eqref{actiondestar},
et compte tenu de ce que $\Oo_C$ et $\p_C$ sont stables
par conjugaison par $\Omega$
et $\Omega^2$ est une~uni\-formisante de $C$,
on vérifie que l'ordre~:
\begin{equation*}
\begin{pmatrix}
\Oo_C & \p_C \\ \Oo_C & \Oo_C
\end{pmatrix}
\end{equation*}
est stable par $\Gal(E/E_0)$.
Celui-ci est minimal et non pas maximal,
ce qui nous donne la~con\-tradiction voulue.}
\end{proof}

\subsection{}
\label{cestlong}

Nous sommes toujours dans la situation du paragraphe \ref{Biotabstait}.
Supposons en outre que $B$~ad\-met\-te un ordre maximal stable par $*$.
Définissons $c_0$ comme au paragraphe \ref{Biotabstait}.
D'après le lemme \ref{bilbon3},
l'extension $E/E_0$ est ramifiée~si $c_0$ est pair.

\begin{lemm}
\label{PoneyFringant}
Soient $C$ une $E$-algèbre à division réduite et $m$ un entier tel que
$\B$~soit~iso\-morphe à $\Mat_m(C)$.
Notons $\ee$ le corps résiduel de $C$ et $c$ son degré réduit sur $E$.
\begin{enumerate}
\item 
Il existe un ordre maximal $\bb$ stable par $*$ 
et un isomor\-phisme de~$E$-al\-gè\-bres $\B\simeq\Mat_m(C)$ tels que
l'image de $\bb$ soit égale à l'ordre maximal standard $\Mat_m(\Oo_C)$
et l'action induite
par~$*$ sur le groupe $\bb^\times/\BU^1(\bb)\simeq\GL_m(\ee)$ soit~:
\begin{enumerate}
\item
l'action de l'automorphisme d'ordre $2$ de $\ee$
si $E/E_0$ est non ramifiée,
\item
l'action par conjugaison d'un $\v\in\GL_m(\ee)$ 
tel que $\v^2\in\ee^{\times}$ et $\v^2\notin\ee^{\times2}$ si
$E/E_0$~est~ra\-mi\-fiée et $c_0$ est pair,
\item
triviale si $E/E_0$ est ramifiée et $c_0$ est impair.
\end{enumerate}
\item
Le normalisateur de $\bb$ dans $B^\times\simeq\GL_m(C)$
est engendré par $\bb^\times$ 
et une uniformisante $\w$ de $C$
telle que $\w^c$ soit une uniformisante de $E$ et~:
\begin{equation*}
\label{choixweqB}
\w^* = \left\{ 
\begin{array}{rl}
-\w & \text{si $E/E_0$ est ramifiée et $c_0$ est impair}, \\ 
\w & \text{sinon.}
\end{array}\right.
\end{equation*}
\end{enumerate}
\end{lemm}

\begin{proof}
Comme au paragraphe précédent, 
on identifie $B$ à $\Mat_{m_0}(C_0\otimes_{E_0}E)$.
Il suffit de prouver le lemme lorsque $m_0=1$,
ce que l'on suppose par la suite.
On a donc $B=C_0\otimes_{E_0}E$.

Si $c_0$ est impair, alors $m=1$ et $c=c_0$,
et $B$ est isomorphe à $\C$.
L'unique ordre maximal~de~$B$ a~les~pro\-priétés requises {en (1)}.
En outre, lorsque $E/E_0$ est non ramifiée,
on choisit pour $\w$ une uni\-formisan\-te~$\w_0$ de $C_0$
telle que $\w_0^{c_0}$ soit une~uni\-formisan\-te~de $E_0$ et,
lorsque $E/E_0$ est ramifiée,
on choi\-sit~une uniformisante $\w_0$ de $C_0$
et une uniformisante $\w_E$ de $E$
telles que $\w_0^{c_0}=\w_E^2$ soit une~unifor\-mi\-sante
de $E_0$,
et on pose $\w=\w_E^{}\w_0^{(1-c_0)/2}$,
{qui vérifie $\w^2=\w_0$.} 
{Dans les deux cas, un tel $\w$ vérifie les conditions de (2).}

Si $c_0$ est pair,
auquel cas $E/E_0$ est ramifiée,
alors $m=2$ et $c_0=2c$,
et on fixe comme~au~pa\-ra\-graphe
\ref{Biotabstait}~un~iso\-morphis\-me entre
$B$ et la $E$-algèbre $\Mat_2(C)$,
sur laquelle $*$ agit par \eqref{actiondestar}
avec $\Omega=\xi\in\L_0$ et $\Omega^2\in\L_1=\L_0\cap\C$.
On identifie,
grâce à l'inclusion de $L_0$ dans $\C_0$, 
le corps~ré\-si\-duel~de $L_0$ au corps~ré\-siduel $\ee_0$
de $C_0$.
De même,
on identifie le corps résiduel de $L_1$ à~$\ee$.
Aussi $\ee_0$~est-il une extension quadratique de $\ee$.
L'automorphisme~de $\ee$ induit par conjugaison par $\Omega$,
qui appartient à $L_0$, est donc trivial.~On
en déduit que
l'action~induite~par~$*$ sur 
$\bb/\p_{\bb}\simeq\Mat_2(\ee)$ est $\ee$-li\-né\-aire.
D'après le théorè\-me~de Skolem-Noether, c'est~donc l'ac\-tion par conjugai\-son
d'un élément $\v\in\GL_2(\ee)$ tel que $\v^2\in\ee^{\times}$.~Mais,
{selon \eqref{actiondestar},}
les points fixes~de $*$ dans $\Mat_2(\ee)$ sont les~:
\begin{equation}
\label{ptfix}
\begin{pmatrix}
x & \e y \\
y & x
\end{pmatrix},
\quad
x,y\in\ee,
\end{equation}
où $\e$ désigne l'image de $\Omega^2$ dans $\ee$.
Montrons que $\e$ est un générateur de $\ee^\times$.
En effet,
si $\zeta$~est un générateur de $\bm_{L_0}$,
le fait que $\w_E\xi=-\xi\w_E$ entraîne que
$\xi\in\zeta^{(Q+1)/2}\bm_{L_1}$,
où $Q$ est le cardinal de $\ee$,
donc que $\e$ appartient à $\N_{\ee_0/\ee}(\zeta)\ee^{\times2}$.
On en déduit que les points $*$-fixes \eqref{ptfix}
forment une extension quadrati\-que de $\ee$,
donc que $\v^2\notin\ee^{\times2}$.
{On a donc (1) et,
pour obtenir (2), 
on choisit pour $\w$~une~uni\-for\-mi\-sante de $C_0$
telle que $\w_0^{c}$ soit une~unifor\-mi\-sante de $E$.}
\end{proof}

\subsection{} 
\label{motifs58}

Nous allons 
reprendre la~dis\-cussion du paragraphe \ref{tetrobot1}
à la lumière du travail~ef\-fectué~dans les paragraphes précédent. 
Commençons par généraliser 
les résultats du paragraphe \ref{invTTmoins}.

\begin{lemm}
\label{connemaraTT0}
Soit $\TT$ une endoclasse autoduale non nulle de degré divisant $2n$. 
Soit $\t$ un~ca\-ractère simple maximal de $G$ d'endo-classe $\TT$,
et soit $u\in\G$ tel que $\t^{u}=\t^{-1}$.
Suppo\-sons qu'il existe une strate simple maximale
$[\aa,\b]$ de $A$ telle que $\t\in\Cc(\aa,\b)$ et $\b^{u}=-\b$.  
Posons $E=F[\b]$ et $E_0=F[\b^2]$,
no\-tons $T$ la sous-extension modérément ramifiée ma\-xi\-male~de $\E$
sur $F$ et posons $T_0=T\cap \E_0$.  
Alors la classe de $F$-isomorphisme de $T/T_0$ dépend~uni\-que\-ment de $\TT$.  
\end{lemm}

\begin{proof}
En d'autres termes, nous allons prouver que 
la classe de $F$-isomorphisme de $T/T_0$ 
ne dépend du choix ni de $G$, ni de $\t$, ni de $[\aa,\b]$, ni de $u$.
Posons~:  
\begin{equation*}
\s = \begin{pmatrix} -{\rm id}&0\\0&{\rm id}\end{pmatrix} 
\in \Mat_{n}(F)\times \Mat_{n}(F) \subseteq \Mat_{2n}(F)
\end{equation*}
où ${\rm id}$ est l'identité de $\Mat_{n}(F)$.
Soit $[\aa_1,\g]$ une strate simple maximale~$\s$-au\-to\-duale 
de $\Mat_{2n}(F)$ telle que $\Cc(\aa_1,\g)$ contienne un
caractè\-re~simple~$\s$-au\-todual $\t_1$
d'endo-classe $\TT$,
dont l'existence est assurée par la proposition \ref{redformadap}.
Posons $P=F[\g]$ et $P_0=F[\g^2]$.
Notons $L$ la sous-extension mo\-dérément ramifiée ma\-xi\-ma\-le de $P$ 
sur $F$, et posons $L_0=L\cap P_0$.
D'après la remarque \ref{invTTrem}, 
la classe de $F$-isomorphisme de $L/L_0$ dépend~uni\-quement de $\TT$.
Nous allons prou\-ver que 
$L/L_0$ et $T/T_0$ sont $F$-isomorphes. 

D'après le lemme \ref{gandalfcarre},
on peut plonger $E$ dans $\Mat_{2n}(F)$ de façon que la
conjugaison par $\s$~in\-duise l'automorphisme non~trivial de $E/E_0$.
Le centralisateur $B'$ de $E$ dans $\Mat_{2n}(F)$ est une~$E$-algèbre
centrale simple munie d'une $F$-involution $\s$,
entrant dans le cadre du paragraphe~\ref{Biotabstait}.~La
$E_0$-algèbre $B'^\s$ est isomorphe à une algèbre de matrices à
coefficients dans $E_0$. 
Selon le lemme \ref{bilbon3},
il y a donc dans $B'$ un ordre maximal $\bb'$ stable par $\s$. 
Notons $\aa'$ l'unique ordre nor\-malisé par $\E^\times$ 
tel que $\aa'\cap B'=\bb'$.
Par unicité, il est stable par $\s$, donc $[\aa',\b]$ est 
une strate simple maximale $\s$-autoduale de $\Mat_{2n}(F)$.
D'après le lemme \ref{lemmedetransfertautodual},
le transfert $\t'\in\Cc(\aa',\b)$ de $\t$
est un caractère simple maximal $\s$-autodual d'endo-classe $\TT$.
D'après la remarque \ref{invTTrem}, 
les extensions $L/L_0$ et $T/T_0$ sont $F$-isomorphes.
\end{proof}

Le degré paramé\-trique d'une~représen\-ta\-tion cuspi\-dale~de $\G$
est défini par \eqref{pardeg}.

\begin{lemm}
\label{fromagequipue}
\label{fromagequipuerec}
Soit $\TT$ une endo-classe autoduale de degré divisant $2n$
et soit un entier $N\>1$ divisant $2n/\deg(\TT)$. 
On pose~:
\begin{equation}
\label{bananerotie}
c=\frac d {(d,\deg(\TT))},
\quad
m=\frac {2n}{c\cdot\deg(\TT)},
\end{equation} 
et $\d=N\cdot\deg(\TT)$.
Si $\TT$~est non nulle,
on note $T/T_0$ l'extension~qua\-dratique 
qui lui est associée au lemme \ref{connemaraTT0}. 
Pour qu'il y ait une~re\-pré\-sentation cuspidale autoduale de $G$ 
d'en\-do-classe $\TT$~et de~de\-gré~para\-métrique $\d$,
il faut et suffit~:
\begin{enumerate}
\item 
d'une part que $m=N/(c,N)$,
\item
d'autre part que $N$ vérifie les conditions~sui\-van\-tes~:
\begin{enumerate}
\item
si $\TT$ est non nulle et $T/T_0$ est non ramifiée,
alors $N$ est impair,
\item
si $\TT$ est non nulle et $T/T_0$ est ramifiée, 
alors $N$ est pair ou égal à $1$,
\item
si $\TT$ est nulle,
alors $N$ est pair ou égal à $1$.
\end{enumerate}
\end{enumerate}
\end{lemm}

\begin{proof}
Observons tout d'abord que~:
\begin{equation*}
\label{truffe}
\frac {\d} {(d,\d)} = \frac {N} {(c,N)} \cdot \frac {\deg(\TT)} {(d,\deg(\TT))}.
\end{equation*}
Soit $\pi$ une représentation cuspidale de $G$
d'en\-do-classe $\TT$ et de~de\-gré~para\-métrique $\d$.
De \eqref{rsprime} on tire~:
\begin{equation*}
\frac {N} {(c,N)} \cdot c \cdot \deg(\TT)
= rd = m \cdot c \cdot \deg(\TT),
\end{equation*}
ce qui donne $m=N/(c,N)$.
Comme au~para\-graphe \ref{regimbard},
le transfert de Jacquet-Lang\-lands de $\pi$ à $\GL_{2n}(F)$
est de la forme $L(\pi'_{0},s)$,
où~$s$ est égal à $2n/\d$ selon \eqref{scoprimem}
et $\pi'_0$ est une~repré\-senta\-tion cuspidale de $\GL_{\d}(F)$.
Celle-ci est auto\-duale~si et seu\-le\-ment si $\pi$ l'est. 
On a le ré\-sultat~sui\-vant,~dû à \cite{SZ1,BHJL3,SeStLinked} et 
\cite[Theo\-rem~4.8]{Dotto}.  

\begin{theo}
\label{conjecjl}
Les~re\-présen\-tations $\pi$ et $\pi'_{0}$~ont la même endo-classe.
\end{theo}

Supposons maintenant que $\pi$ soit autoduale.
Alors $\pi'_{0}$ est elle-même autoduale,
et la condition (2) sur $N$ est une conséquen\-ce~des lemmes
\ref{lemmedeparite} et \ref{lemmedepariteniveauzero} 
appliqués à $\pi'_0$. 

Inversement, 
supposons vérifiées les conditions du lemme.
En vertu des lemmes
\ref{lemmedeparite}, \ref{lemmedepariteniveauzero},~il
y a une représentation cuspi\-dale~autoduale $\pi'_{0}$
de $\GL_{\d}(F)$ d'endo-classe~$\TT$.
Posons $s=d/(d,\d)$ et
formons la représentation essentielle\-ment de carré 
intégrable $L(\pi'_{0},s)$ de $\GL_{2n}(F)$.
La condition (1) sur $N$
entraîne~que $r=\d/(d,\d)$,
qui est premier à $s$. 
D'après la remarque \ref{marecage},
le transfert de Jacquet-Lang\-lands~de $L(\pi'_{0},s)$
à $G$ est une repré\-sentation cuspidale $\pi$
de degré paramétrique $\d$, 
qui~est au\-to\-duale car $\pi'_{0}$ l'est,
et son endo-classe est égale à $\TT$ d'après le théorème \ref{conjecjl}. 
\end{proof}

Terminons par le lemme suivant,
qui nous sera utile par la suite. 

\begin{lemm}
\label{connemara}
Soit $E_0$ une extension finie de $F$ de degré $g$ divisant $n$,
et soit $E$ une extension qua\-dra\-tique de $E_0$. 
No\-tons $T$ la sous-extension modérément ramifiée ma\-xi\-male~de $\E$
sur $F$ et posons $T_0=T\cap \E_0$.
\begin{enumerate}
\item
L'extension $E/E_0$ est ramifiée si et seulement si $T/T_0$ est ramifiée,
et on a~:
\begin{eqnarray}
\label{whimsical1}
\N_{E/E_0}(\E^\times)\cap\F^\times &=& \N_{T/T_0}(\T^\times)\cap\F^\times, \\
\label{whimsical2}
\bo_{E/E_0}(\a)^{2n/[E:F]} &=& \bo_{T/T_0}(\a)^{2n/[T:F]}.
\end{eqnarray} 
\item
Supposons qu'il existe~un plon\-ge\-ment de $F$-algèbres de $E$ dans $A$ tel
que $\tau$ induise sur $\E$ l'automorphisme non trivial de $E/E_0$.
Alors~:
\begin{enumerate}
\item
le centralisateur $B$ de $E$ dans $A$ est une $E$-algèbre centrale simple
munie de la $F$-in\-vo\-lu\-tion $\tau$, 
\item
et $B_0=B^\tau$ est une $E_0$-algèbre centrale simple isomorphe à une algèbre 
de matrices~à coefficients dans une $E_0$-algèbre à division centrale de degré
réduit égal à~: 
\begin{equation}
\label{marquisat}
c_0 = \left\{ 
\begin{array}{rl}
d/(d,g) & \text{si $\a\in\N_{T/T_0}(\T^\times)$
ou si $4$ divise $d/(d,g)$}, \\
2d/(d,g) & \text{si $\a\notin\N_{T/T_0}(\T^\times)$ et $d/(d,g)$ est impair}, \\
d/2(d,g) & \text{si $\a\notin\N_{T/T_0}(\T^\times)$
et $d/(d,g)$ est divisible par $2$ mais pas par $4$}.
\end{array}\right.
\end{equation} 
\end{enumerate}
\end{enumerate} 
\end{lemm}

\begin{proof} 
La première assertion provient 
du fait que $E$ est totalement sau\-va\-gement~ra\-mi\-fiée sur $T$
et que $p\neq2$,
ce qui entraîne en particulier que $[E:T]$ est impair.
Pour~obte\-nir~la seconde assertion,
on applique la remarque \ref{confino} et \eqref{whimsical1}.
\end{proof}

\subsection{}
\label{degueulis}
\label{vitevite}

Nous en arrivons au résultat principal de cette section,
qui donne une condition nécessai\-re et suffisante d'existence d'un
caractère simple $\tau$-autodual dans une représentation cuspidale
auto\-duale de $G$.
Nous traitons ici le cas des représentations de niveau non nul. 
Le cas des~re\-pré\-sentations de niveau $0$ sera traité à la section
\ref{theriverofnoreturn}.  

On fixe dans tout le reste de cette section une
endo-classe autoduale non nulle $\TT$ de degré~di\-visant $2n$.
On note $T/T_0$ l'extension~qua\-dra\-tique qui lui est associée,
on définit des entiers $m$ et $c$ par \eqref{bananerotie}
et un entier $c_0$ par \eqref{marquisat} avec $g=\deg(\TT)/2$. 

\begin{theo}
\label{THMEXISTENCECARACSIMPLE}
\label{tomwolfe}
\begin{enumerate}
\item
Pour qu'il existe un caractère simple maximal $\tau$-autodual 
d'endo-classe $\TT$ dans $G$,
il faut et suffit que~:
\begin{enumerate}
\item 
d'une part~: 
\begin{equation}
\label{bizuthrecap1} 
\bo_{\T/\T_0}(\a)^{2n/[T:F]} = (-1)^{r},
\end{equation}
\item
d'autre part que,
si $T/T_0$ est non ramifiée,
l'entier $c_0$ soit impair. 
\end{enumerate}
\item 
Soit $\pi$ une représentation cuspidale autoduale de $G$
d'endo-classe $\TT$.
Pour que $\pi$ contienne un caractère simple $\tau$-autodual,
il faut et suffit que l'égalité \eqref{bizuthrecap1} soit vérifiée.  
\end{enumerate}
\end{theo}

\begin{proof}
Soit d'abord $\t$ un ca\-ractère simple maximal $\tau$-autodual 
d'endo-classe $\TT$ de $G$.
Soit $[\aa,\b]$ une~strate simple 
$\tau$-au\-to\-duale de $\A$ telle que $\t\in\Cc(\aa,\b)$,
dont l'existence~est~as\-su\-rée par~le lemme \ref{gandalf}.
On pose $E=F[\b]$, $E_0=F[\b^2]$.
Le lemme \ref{gandalfcor1}(2.a) et la proposition~\ref{gandalf0coro}
entraînent \eqref{bizuth}.
On déduit du lemme \ref{connemara}(1) d'une part l'égalité \eqref{bizuthrecap1},
d'autre part que $E/E_0$ est ra\-mi\-fiée si et seulement si $T/T_0$ l'est. 
Soit $B$ le~cen\-tra\-lisateur de~$E$ dans $A$.
Appliquant~le~lem\-me \ref{connemara}(2), la 
$E_0$-algèbre $B_0=B^\tau$ est 
isomorphe à une algèbre de matrices~à~coeffi\-cients dans une
$E_0$-algèbre à division de degré~ré\-duit $c_0$. 
Comme $\aa\cap\B$ est un ordre maximal de $B$ stable par~$\tau$, 
il suit du lemme \ref{bilbon3} que, 
si $E/E_0$
(ou de façon équivalente $T/T_0$)
est non ramifiée, $c_0$ est impair.  

Inversement,
suppo\-sons~que~\eqref{bizuthrecap1} soit vérifiée.
Soit $[\aa_1,\b]$ une strate simple maximale $\s$-au\-to\-duale de $\Mat_{2n}(F)$ 
telle que $\Cc(\aa_1,\b)$ contienne un caractère~sim\-ple $\s$-autodual $\t_1$
d'endo-classe $\TT$,
dont l'existence est assurée par la pro\-position \ref{redformadap}.
Posons $E=\F[\b]$ et $\E_0=\F[\b^2]$.
D'après le lemme \ref{connemaraTT0},
on peut identifier $T$~à~la sous-extension modérément ramifiée maximale de
$E$~sur $F$ et $T_0$ à $T\cap E_0$.
Compte tenu de \eqref{whimsical2} et de la proposition \ref{gandalf0coro},
il existe un plon\-ge\-ment~de $F$-al\-gè\-bres de $E$ dans $A$ tel que 
$\tau$ induise sur $E$ l'auto\-mor\-phis\-me~non trivial de $E/E_0$.
Fixons~un tel plon\-ge\-ment.
Alors~:
\begin{itemize}
\item 
d'une part,
le centralisateur $B$ de $E$ dans $A$ est isomorphe à une algèbre
$\Mat_m(C)$ de matri\-ces~à~coef\-ficients dans une $E$-algèbre à division
centrale $C$ de degré réduit $c$,
\item 
et d'autre part,
la $E_0$-algèbre $B_0=B^\tau$ est isomorphe à une algèbre de matrices
$\Mat_{m_0}(C_0)$~à coeffi\-cients dans une
$E_0$-algèbre à division centrale $C_0$ de degré réduit $c_0$. 
\end{itemize}

Supposons maintenant en outre que $c_0$ soit impair 
si $T/T_0$ est non ramifiée.
Selon le lemme \ref{bilbon3}, 
il existe dans $B$ un ordre maximal $\bb$ stable par $\tau$.
Soit~$\aa$ l'unique ordre de $A$ normalisé par $E^\times$ 
tel que $\aa\cap\B=\bb$ (lemme \ref{patelmax}). 
Par  unicité, $\aa$~est stable par $\tau$.
La strate simple maximale $[\aa,\b]$ est donc $\tau$-autoduale.  
Le transfert de $\t_1$ à $\Cc(\aa,\b)$ est $\tau$-autodual
d'après le lemme \ref{lemmedetransfertautodual}.
Ceci prouve la première partie du théorème. 

Prouvons la seconde partie du théorème. 
Soit $\pi$ une représentation cuspidale autoduale de $G$
d'endo-classe $\TT$.
Pour prouver que $\pi$ contient un caractère simple $\tau$-autodual,
il suffit, d'après le corollaire \ref{entrenouscor}, 
de~prou\-ver l'existence d'un caractère simple maximal $\tau$-autodual 
de $G$ d'endo-classe $\TT$.
D'après la première partie du théorème,
il suffit donc de prouver que $c_0$ est impair~si $T/T_0$ est non ramifiée.
Supposons donc que $T/T_0$ soit non ramifiée,
et notons $\d$ le degré pa\-ra\-mé\-trique de $\pi$.
D'après le lemme \ref{fromagequipue},
l'entier $N=\d/\deg(\TT)$ est impair.
Comme c'est un multiple de $m$ d'après \eqref{pardeg}, 
on en déduit que $m$ est impair.
Il suit du paragraphe \ref{cestlong}
que $c_0$ est impair (et que $c=c_0$ l'est aussi),
sans quoi on aurait $c_0=2c$ et $m=2m_0$. 
\end{proof}

On tire de cette preuve le corollaire suivant,
qui précise le lemme \ref{fromagequipue}.

\begin{coro}
\label{fromagequipuecor}
Soit $\pi$ une~repré\-sentation cuspi\-dale autoduale de
$G$~d'en\-do-classe $\TT$. 
Supposons que $\pi$ contienne un caractère simple $\tau$-autodual.
\begin{enumerate}
\item
Si $T/T_0$ est non ramifiée,
alors $m$ et $c$ sont impairs.
\item
Si $T/T_0$ est ramifiée, 
alors $m$ est pair ou égal à $1$.
\item
Si $r$ est impair, alors $m$ et $c$ sont impairs.
\end{enumerate}
\end{coro}

\begin{proof}
Reprenons les notations de la preuve du théorème 
\ref{THMEXISTENCECARACSIMPLE}.
D'après \eqref{pardeg},
il~y~a un diviseur $b$ de $c$ tel que $N=mb$. 
L'assertion (1) a déjà été prouvée à la fin de la preuve~pré\-cé\-dente.
Si $T/T_0$ est~ra\-mi\-fiée,
le lemme \ref{fromagequipue} assure que $N$ est pair ou égal à $1$.
Si $c_0$ est pair,~alors $m=2m_0$ est pair.
Si $c_0$~est impair, alors $c=c_0$ aussi,
donc $b$ aussi,
donc $m$ est pair ou égal~à $1$.
Enfin,
si $r$ est impair, 
les formules \eqref{bizuthrecap1} et \eqref{whimsical2}
impliquent que $2n/[E:F]=mc$ est impair. 
\end{proof}

Voici une autre conséquence, simple mais importante,
du théorème \ref{THMEXISTENCECARACSIMPLE}.
Rappelons que, si $r$ est~pair,
on a fixé un $\s\in\G$ tel que $\s^2=1$
et de polynôme caractéristique réduit $(X^2-1)^n$.

\begin{coro}
\label{tolkienelrond}
\begin{enumerate}
\item 
Supposons que $r$ soit pair.
Toute représentation cuspidale autoduale
de~ni\-veau non nul
de $G$
contient un~ca\-rac\-tère simple $\s$-autodual.
\item
Supposons que $m$ soit pair.
Toute représentation cuspidale autoduale de~ni\-veau non nul~de $G$
contient un~ca\-rac\-tère simple $\tau$-autodual,
quel que soit $\a\in\F^\times$.
\end{enumerate}
\end{coro}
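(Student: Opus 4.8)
The plan is to derive both assertions directly from Theorem~\ref{THMEXISTENCECARACSIMPLE}. Since $\pi$ has non-zero level, its endo-class $\TT$ is non-zero, and it is autodual because $\pi$ is; thus the theorem applies, with $T/T_0$ the quadratic extension attached to $\TT$. It then suffices, in each of the two situations, to verify that the numerical conditions (1)--(2) of that theorem are satisfied.

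For the first assertion, $r$ is even and, by Convention~\ref{CONV2}, the element $\s$ is taken with $\s^2=1$, so that $\a=1$ in the notation of Theorem~\ref{THMEXISTENCECARACSIMPLE}. Since $1=\N_{T/T_0}(1)$ lies in $\N_{T/T_0}(\T^\times)$, the first alternative in condition~(2) of that theorem holds unconditionally, and hence $\pi$ contains a $\s$-autodual simple character. This case is essentially immediate.

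For the second assertion, the first thing I would record is a divisibility. With $c=d/(d,\deg(\TT))$ and $m=2n/(c\,\deg(\TT))$ as in \eqref{bananerotie}, using $2n=rd$ one obtains
\begin{equation*}
m=\frac{2n\,(d,\deg(\TT))}{d\,\deg(\TT)}=\frac{r\,(d,\deg(\TT))}{\deg(\TT)},
\end{equation*}
and since $(d,\deg(\TT))$ divides $\deg(\TT)$, the integer $m$ divides $r$. Consequently, if $m$ is even then $r$ is even, which places us in case~(2) of Theorem~\ref{THMEXISTENCECARACSIMPLE}; moreover $2n/\deg(\TT)=mc$ is then even as well, so the condition ``$\a\in\N_{T/T_0}(\T^\times)$ or $2n/\deg(\TT)$ is even'' of that case is met for every $\a\in\F^\times$. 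The theorem then produces a $\tau$-autodual simple character in $\pi$, as required.

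I do not expect a genuine obstacle. The one point deserving a little care is the identity $m\mid r$: it has to be read off \eqref{egalizink}--\eqref{bananerotie} directly (bearing in mind that in the present section the algebra $A$ has reduced degree $2n$), rather than imported from Corollary~\ref{fromagequipuecor}, whose proof presupposes the very conclusion we are after.
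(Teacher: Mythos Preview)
Your proposal is correct and follows essentially the same route as the paper: both assertions are read off Theorem~\ref{THMEXISTENCECARACSIMPLE} by checking its condition~(2). The paper's proof is terser---for part~(2) it simply asserts that $m$ even forces $r$ even and $mc=2n/\deg(\TT)$ even---while you make the divisibility $m\mid r$ explicit via the identity $m=r(d,\deg(\TT))/\deg(\TT)$, which is a worthwhile detail to spell out; your caution about not appealing to Corollary~\ref{fromagequipuecor} here is also well placed.
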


\begin{proof}
La première assertion est une conséquence immédiate du 
théorème \ref{THMEXISTENCECARACSIMPLE}
dans le cas où $r$ est pair et $\a$ est un carré dans $F^\times$.
{Pour ce qui est de la seconde assertion,
$m$ divisant à la fois $2n/[T:F]$ et $r$ (voir \eqref{egalizink}), 
on voit que l'égalité \eqref{bizuthrecap1} est vérifiée
lorsque $m$ est pair.}
\end{proof}

\begin{rema}
Voici un exemple de représentation cuspidale autoduale 
ne contenant pas de caractère simple $\tau$-autodual
et pour laquelle les conclusions du corollaire 
\ref{fromagequipuecor} ne sont pas~vé\-rifiées. 
Supposons que $T/T_0$ soit non ramifiée,
choisissons un $n\>1$ tel que $2n/\deg(\TT)$ soit pair 
et supposons que $r=1$.
D'après~le lemme \ref{fromagequipuerec}, 
il y a une représentation cuspidale autoduale $\pi$ de~$G$
d'en\-do-classe $\TT$ et de~degré
paramétrique $\deg(\TT)$.
D'après le théorème \ref{THMEXISTENCECARACSIMPLE},
la représen\-ta\-tion
$\pi$ ne contient pas de caractère simple $\tau$-au\-to\-dual,
quel que soit $\a$.
Enfin, $c=2n/\deg(\TT)$ est pair. 
\end{rema}

\begin{rema}
\label{carpasrep}
Voici un exemple de caractère simple $\tau$-autodual 
de $G$ qui n'est contenu dans aucune représentation cuspidale
autoduale de $G$.
Soit $D$ une $F$-algèbre à division centrale de degré réduit $2$
et soit $A=\Mat_2(D)$. 
Supposons que $T/T_0$ soit non ramifiée,
que $\a\notin\N_{T/T_0}(T^\times)$
et que $\TT$ soit de degré
$2$,~de sorte que $E=T$, $E_0=T_0=F$ et $m=2$.
L'identité \eqref{bizuthrecap1} est~véri\-fiée 
et l'entier~$c_0$ défini par \eqref{marquisat}
vaut $1$.
D'après le théorème \ref{THMEXISTENCECARACSIMPLE}, 
il existe un caractère~simple $\tau$-autodual de $G$.
Mais il ne peut pas exister de représentation cuspidale autoduale
de $G$ le~con\-te\-nant, sans quoi le corollaire \ref{fromagequipuecor}
serait contredit car $m=2$.
\end{rema}

\subsection{}
\label{indiceduntypepara}

On suppose dans ce paragraphe
qu'il y a un caractère simple maximal $\tau$-autodual de
$G$ dont l'endo-classe $\TT$ est celle fixée au~para\-gra\-phe \ref{vitevite}.
Les conditions (1.a) et (1.b) du théo\-rème 
\ref{THMEXISTENCECARACSIMPLE}~sont donc véri\-fiées.

Pour la définition de l'ensemble $\iso(\aa,\b)$
associé à une strate simple maximale $[\aa,\b]$
et du corps fini $\ee$,
on renvoie au paragraphe~\ref{saucisse}.

\begin{prop}
\label{indiceduntype}
\begin{enumerate}
\item 
Le nombre de classes de $\G^\tau$-conjugaison de 
caractères simples $\tau$-au\-to\-duaux 
contenus dans la classe de $G$-conjugaison d'un
caractère simple maximal $\tau$-autodual de $G$
d'endo-classe $\TT$ est~:
\begin{enumerate}
\item 
égal à $1$ si $T/T_0$ est non ramifiée,
ou si $T/T_0$ est ramifiée et $c_0$ est pair,
\item
égal à $\lfloor m/2\rfloor+1$ sinon.
\end{enumerate}
\item
Si $\t$ est un caractère simple maximal $\tau$-autodual d'endo-classe $\TT$ 
et si $[\aa,\b]$ est une~strate simple maximale $\tau$-autodua\-le telle que 
$\t\in\Cc(\aa,\b)$,
alors il existe un~isomor\-phis\-me de $F$-algè\-bres $\phi\in\iso(\aa,\b)$
identifiant $\BJ^0(\aa,\b)/\BJ^1(\aa,\b)$ à $\GL_m(\ee)$ et 
l'action~de $\tau$ sur ce groupe à~:
\begin{enumerate}
\item 
l'ac\-tion de l'automorphisme d'ordre $2$ de $\ee$
si $T/T_0$ est non ramifiée, 
\item
l'action par conjugaison d'un $\v\in\GL_m(\ee)$ 
tel que $\v^2\in\ee^{\times}$ et $\v^2\notin\ee^{\times2}$
si $T/T_0$ est~ra\-mi\-fiée et $c_0$ est pair,
\item
l'action par conjugaison de~:
\begin{equation}
\label{defsi42}
\s_i = {\rm diag}(-1,\dots,-1,1,\dots,1) \in \GL_m(\ee)
\end{equation}
où $-1$ apparait avec multiplicité $i$,
pour un unique entier $i\in\{0,\dots,\lfloor m/2\rfloor\}$,
si $T/T_0$ est~ra\-mi\-fiée et $c_0$ est impair.
\end{enumerate}
\item
Notant $B$ le centralisateur de $E=F[\b]$ dans $A$,
le normalisateur de $\t$ dans $G$ est engen\-dré~par
$\BJ^0(\aa,\b)$ et un élément $\w\in\B^\times$
tel que $\w^c$ soit une uniformi\-san\-te~de $E$ et~:
\begin{equation*}
\label{choixweqB}
\tau(\w) = \left\{ 
\begin{array}{rl}
-\w & \text{si $T/T_0$ est ramifiée et $c_0$ est impair}, \\ 
\w & \text{sinon.}
\end{array}\right.
\end{equation*}
\end{enumerate}
\end{prop}

\begin{defi}
\label{defindicetheta}
L'entier $i$ de {\rm (2.c)} est appelé l'\textit{indice} de $\t$.
Il ne dépend que de sa classe~de $G^\tau$-con\-ju\-gaison, 
et est déterminé par le fait que 
$(\BJ^0(\aa,\b)/\BJ^1(\aa,\b))^\tau\simeq\GL_i(\ee)\times\GL_{m-i}(\ee)$.
\end{defi}

\begin{proof}
Fixons un caractère simple $\tau$-autodual $\t\in\Cc(\aa,\b)$
d'endo-classe $\TT$.
D'après le lemme \ref{gandalf},
on peut supposer que la strate simple $[\aa,\b]$ est $\tau$-autoduale. 
Posons $E=F[\b]$~et notons $B$ son centralisateur dans $A$.
Munie de l'in\-vo\-lu\-tion $\tau$,
la $E$-algèbre $B$ entre dans le~cadre du paragraphe~\ref{Biotabstait}
et l'ordre maximal $\bb=\aa\cap\B$ est stable par~$\tau$. 
D'après le lemme \ref{PoneyFringant},
il y a une $E$-algèbre~à division centrale $C$ et
un isomor\-phisme de $F$-algè\-bres $\phi\in\iso(\aa,\b)$
induisant un isomor\-phisme~de $F$-al\-gè\-bres entre $B$ et 
$\Mat_m(C)$ tel que l'ordre maximal $\bb'$ de $B$ correspondant~à
$\Mat_m(\Oo_C)$ 
ait les~pro\-prié\-tés dé\-crites dans ce lemme.
Quitte à conjuguer $\t$~par~un élément de $B^\times$,
on peut~sup\-po\-ser~que $\bb'$ est égal à $\bb$.
En effet, si $y\in\B^\times$ est tel que $\bb'=\bb^y$,
le fait que $\bb$ et $\bb'$ soient tous deux stables par $\tau$
entraîne que $\tau(y)y^{-1}$ normalise $\bb$,
donc $\t$.
Il s'ensuit que $\t^y$~est~un caractère simple maximal $\tau$-autodual
d'endo-classe $\TT$. 
Posant $\BJ^0=\BJ^0(\aa,\b)$ et $\BJ^1=\BJ^1(\aa,\b)$,~le
quotient $\BJ^0/\BJ^1$ s'identifie donc
à $\GL_{m}(\ee)$ muni d'une action de
$\tau$ décrite par le lemme \ref{PoneyFringant}.
Selon les lemmes \ref{connemaraTT0} et \ref{connemara},
$E/E_0$ est ramifiée si et seulement si $T/T_0$ est ramifiée.

La proposition \ref{entrenous} dit qu'un 
caractère simple maximal d'endo-classe $\TT$ de $G$
est de la forme $\t^y$ pour un $y\in\G$,
et un tel caractère est $\tau$-autodual 
si et seulement si $w=\tau(y)y^{-1}$ ap\-par\-tient au normalisateur de 
$\t$ dans $\G$. 
Comme $\tau (w)=w^{-1}$, cela équivaut même à ce que $w$
appar\-tien\-ne à $\BJ^0$. 
Notons $u$ l'image de $w$ dans $\BJ^0/\BJ^1\simeq\GL_{m}(\ee)$. 

Si $c_0$ est impair et si $E$ est non ramifiée sur $E_0$,
l'involution $\tau$ agit sur $\GL_m(\ee)$ comme~l'auto\-morphis\-me
non trivial de $\ee/\ee_0$.
Comme $u\tau(u)=1$,
il y a un $z\in \GL_m(\ee)$~tel que
$u=z\tau (z)^{-1}$.~En\-suite, si $c_0$ est pair et si $E$ est ramifiée sur $E_0$, 
l'involution $\tau$ agit sur $\GL_m(\ee)$ par
conjugaison par un élément $\v$ 
tel que $\v^2\in\ee^\times$ et $\v^2\notin\ee^{\times2}$.
{\'Ecrivons~:
\begin{equation*}
(uv)^2 = uvuv = u\tau(u)v^2 = v^2. 
\end{equation*}
D'après le théorème de~Sko\-lem-Noether,
il existe un $z\in \GL_m(\ee)$ tel que $uv=zvz^{-1}$,
\ie que $u=z\tau (z)^{-1}$.}
Dans les deux cas,
{remplaçant $y$ par $jy$,
où $j$ est un~re\-lè\-ve\-ment de $\tau(z)^{-1}$~au groupe $\BJ^{0}$,}
on peut supposer que $w\in\BJ^{1}$.
Le premier~en\-sem\-ble~de~coho\-mologie de
$\langle\tau\rangle$ dans~$\BJ^{1}$ étant 
trivial, il y a un $x\in\BJ^{1}$ tel que $w=\tau (x)x^{-1}$.
On a donc $x^{-1}y\in\G^\tau$.~Comme~$\BJ^1$~norma\-lise $\t$, 
il s'ensuit que $\t^y$ est con\-jugué à $\t$ sous $\G^\tau$
(par $x^{-1}y$).
On en déduit (1.a),
et les assertions (2.a) et (2.b) suivent du lemme \ref{PoneyFringant}.
Comme le normalisateur de $\t$ dans $G$ est,
d'après la proposition \ref{patel},
engendré par $\BJ^0$ et le normalisateur de $\bb$ dans $B^\times$,
on déduit également de ce même lemme l'assertion (3)
dans le cas où $E/E_0$ est non ramifiée ou $c_0$ est pair. 

Supposons maintenant que $c_0$ soit impair et que
$\E/\E_0$ soit ramifiée.
L'action de $\tau$ sur $\GL_{m}(\ee)$ étant triviale, on a $u^2=1$,
\ie qu'il y a un $z\in\GL_{m}(\ee)$ et un $i\in\{0,\dots,m\}$ tels que~:
\begin{equation*}
z u z^{-1} = \s_i
\end{equation*}
où $\s_i$ est la matrice diagonale définie par \eqref{defsi42}.
{Remplaçant $y$ par $jy$ où $j\in\BJ^{0}$ est un relèvement de $z$,} 
on peut supposer que $s_i w\in\BJ^{1}$,
où $s_i$ est la matrice 
$ {\rm diag}(-1,\dots,-1,1,\dots,1)\in\BJ^{0}$~re\-le\-vant $\s_i$.
Selon le lemme \ref{PoneyFringant}(2),
le normalisateur de $\bb$ dans $\B^\times$ est~en\-gendré par $\bb^\times$ 
et une~uni\-formi\-sante $\w\in\C$ telle que $\w^c$ soit une
uniformi\-san\-te~de $E$ et $\tau(\w)=-\w$.
Posons~: 
\begin{equation*}
t_i = {\rm diag}(\w,\dots,\w,1,\dots,1) \in \B^\times=\GL_m(C),
\end{equation*}
où $\w$ apparaît $i$ fois.
On remarque que $s_it_i=\tau(t_i)$.
Comme $\BJ^{1t_i}$ est un pro-$p$-groupe $\tau$-stable,
le premier ensemble de cohomologie de $\langle\tau\rangle$
dans $\BJ^{1t_i}$  est trivial. 
Il s'ensuit que $\t^y$ est conjugué~à $\t_i=\t^{t_i}$ sous $G^\tau$. 
Enfin, pour vérifier que $\t_i$ et $\t_k$ sont 
$G^\tau$-conjugués si et seulement si $i=k$ ou
$i+k=m$, 
on raisonne comme dans la preuve~de \cite[Lemma 4.25]{AKMSS}. 
Prouvons enfin (3).
D'après~la remarque \ref{malinlechien}, 
le normalisateur de $\t$ dans $G$ est engen\-dré~par
$\BJ^0$ et $\w$.
Celui de $\t_i$ est donc engendré par
$\BJ^{0t_i}$ et l'élément $t_i^{-1}\w t_i^{}=\w$,
qui a les propriétés requises. 
\end{proof}

\subsection{}

Avant de clore cette section,
voici un résultat faisant pendant au
co\-rol\-laire \ref{tolkienelrond}(1).~L'en\-do-classe
$\TT$ est~tou\-jours celle qui a été fixée au paragraphe 
\ref{vitevite}.

\begin{prop}
\label{qqq}
Soit $\pi$ une représentation cuspidale autoduale de $G$
d'endo-classe $\TT$.
On suppose que $r$ est impair. 
Il y a une strate simple maximale $[\aa,\b]$, 
un caractère simple $\t\in\Cc(\aa,\b)$ contenu dans $\pi$ et un $u\in\G$
tels que, 
posant $E=F[\b]$ et $E_0=F[\b^2]$,
on ait $\t^{-1}=\t^{u}$, $\b^{u}=-\b$
et $\Nrd_{B/E}(u^2)\notin\N_{E/E_0}(\E^\times)$. 
\end{prop}

\begin{proof}
Observons d'abord que la condition $\b^{u}=-\b$ entraîne que
$u^2\in\B^\times$,
ce qui assure qu'on peut former la norme réduite de $u^2$.
Comme $r$ est impair et $rd=2n$,~l'entier~$d$~est pair. 
Comme dans la preuve du théorème \ref{THMEXISTENCECARACSIMPLE}, 
fixons un $\s\in\GL_{2n}(F)$ tel que~$\s^2=1$,
de~poly\-nô\-me caractéristique $(X^2-1)^n$.
D'après la proposition \ref{redformadap},
il existe une strate simple~maxi\-male $\s$-autoduale $[\aa_1,\b]$
de $\Mat_{2n}(F)$ 
telle que $\Cc(\aa_1,\b)$ contienne un caractère simple maximal
$\s$-auto\-dual 
$\t_1$ d'endo-classe $\TT$. 
Posons $E=\F[\b]$ et $E_0=\F[\b^2]$.

Fixons un plongement de $F$-al\-gè\-bres~de~$E_0$ dans $A$.
Identifions $E_0$ à son image par ce plongement
et le centralisateur $A_0$ de $E_0$ dans $A$ à~une $E_0$-algèbre 
$\Mat_k(U_0)$, où $U_0$ est une $E_0$-algèbre à division,
de degré réduit noté $u_0$, et où $k$ est un entier strictement positif.
Selon les formules~de \eqref{egalizink}, 
on a $ku_0=2n/[E_0:F]$,
qui est pair car $[\E:\F]=2[E_0:F]$ divise $2n$,~et $k$ divise~$r$.~On
en déduit que $k$ est impair et que $u_0$ est pair.
On peut donc plonger $E$ dans $U_0$ comme $E_0$-al\-gè\-bre,
ce que l'on fait. 
Soit $C$ le~cen\-tra\-li\-sateur de $E$ dans $U_0$.
Le centralisateur $B$ de $E$ dans $A$ s'identifie à $\Mat_k(C)$.
On a donc $k=m$.
Notons $\bb$ l'ordre maximal standard $\Mat_m(\Oo_C)$ de~$B$,
et $\aa$ l'ordre~hé\-ré\-ditaire de $\A$ normalisé par $\E^\times$
tel que $\aa\cap\B=\bb$. 
Soit $\t\in\Cc(\aa,\b)$ le transfert de $\t_1$. 
Observons que $m$ est impair, 
car c'est un diviseur de $r$ d'après \eqref{egalizink}.

\begin{lemm}
\label{lemmey}
Pour tout $u\in U_0^\times$ tel que $\b^u =-\b$,
on a $\t^{-1}=\t^{u}$.
\end{lemm}

\begin{proof}
Un tel élément $u$ normalise $E$, donc son centralisateur $C$.
Il normalise donc $\Oo_C$, donc aussi $\bb$ et $\aa$. 
Le résultat découle alors du lem\-me \ref{lemmedetransfertautodual}. 
\end{proof}

Fixons une extension non ramifiée maximale~$L_0$ de $E_0$ dans $U_0$
et une uniformisante~$\w_0$~de~$U_0$ nor\-malisant $L_0$ 
telle que $\w_0^{u_0}$ soit une uniformisante de $E_0$. 
L'élément $\w_0$ induit, par conjugaison sur $L_0$, 
un générateur~de $\Gal(L_0/E_0)$. 

Supposons d'abord que $E/E_0$ soit non ramifiée. 
On peut supposer que $E$ est inclus dans $L_0$, au\-quel 
cas il est le corps des points fixes de $\w_0^2$ dans $L_0$.
On a donc $\w^{\phantom{'}}_0\b\w_0^{-1}=-\b$.
D'autre part, $\w_0^2$ étant une uniformisante de $\C$,
sa norme réduite relativement à $B/E$ est de valuation $m$~dans $E$.
L'entier $m$ étant impair, 
celle-ci n'appartient pas à $\N_{E/E_0}(\E^\times)$.
Appliquant le~lem\-me~\ref{lemmey}~à~$u=\w_0$,
on trouve que $\t^{-1}=\t^{u}$.

Supposons maintenant que $E/E_0$ soit ramifiée. 
\'Ecrivons $u_0=2c$.
On peut supposer que~$\w_0^{c}$ est une uniformisante $\w_E$ de $E$.
Celle-ci induit,
par conjugaison sur $L_0$,
l'unique automorphisme d'or\-dre $2$ de $L_0/E_0$.
Il existe donc 
une racine de l'unité $\xi\in\boldsymbol{\mu}_{L_0}$ telle que
$\w_E\xi = -\xi\w_E$.
À~son tour, 
cette racine de l'unité induit, 
par conjugaison sur $E$,
l'automorphisme non trivial de $E/E_0$.
Comme $\b\notin\E_0$ et $\b^2\in\E_0$,
on en déduit que $\xi\b\xi^{-1}=-\b$.
Enfin, 
comme $\xi^2\in\C$,
on a~:
\begin{equation}
\label{normeBExi2}
\Nrd_{B/E}(\xi^2) = \N_{L/E}(\xi^2)^m
\end{equation}
où $L/E$ est une extension non ramifiée maximale de $E$ dans $C$.
L'entier $m$ étant impair et $L/E$ étant non ramifiée,
cette quantité appartient à $\N_{E/E_0}(E^\times)$ si et seulement si
$\N_{\ee/\kk_E}(\xi^2)\in\kk_E^{\times2}$,~où $\ee$ est le corps résiduel de $L$,
et cette dernière condition équivaut à $\xi^2\in\ee^{\times2}$,
\ie à $\xi\in\ee^{\times}$.
Comme $\xi$ ne commute pas à $E$,
on en déduit que \eqref{normeBExi2} n'appartient pas à
$\N_{E/E_0}(\E^\times)$.
En~appli\-quant le lemme \ref{lemmey} à $u=\xi$,
on trouve que $\t^{-1}=\t^{u}$.
\end{proof}

\section{Caractères simples $\tau$-autoduaux et distinction}
\label{SEC5}

Dans cette section,
on suppose être dans la situation introduite dans la section \ref{SEC4}.
On~a donc une $F$-algèbre centrale simple $A$ de degré réduit $2n$,
un élément $\a\in\F^{\times}$ et un $\dd\in\A^\times$ non central
tel que $\dd^2=\a$.
On pose $\G=\A^\times$ et on note $\tau$ l'automorphisme
involutif de conjugaison~par~$\dd$.
Rappelons que,
compte tenu de la convention \ref{CONV2},
le poly\-nôme caractéristique réduit de $\dd$ sur~$F$ est égal à $(X^2-\a)^n$.
En particulier,
lorsque $\a\in\F^{\times2}$,
auquel cas on suppose que $\a=1$,
l'entier $r$ est pair. 
Nous al\-lons prouver deux résultats essentiels pour la suite,
faisant tous deux un lien entre distinction par $G^\tau$ et existence d'un
caractère simple maximal $\tau$-autodual~:
\begin{itemize}
\item 
nous prouvons d'abord,
dans le paragraphe \ref{simples},
que toute représentation cuspidale
$\G^\tau$-dis\-tin\-guée de niveau non nul 
de $G$ con\-tient un caractère simple $\tau$-autodual~; 
\item
nous prouvons ensuite,
dans le paragraphe \ref{malinlegarconcor},
que l'ensemble des représentations cuspi\-da\-les autoduales
de~$G$~d'en\-do-classe non nulle donnée,
s'il contient une représentation $\G^\tau$-dis\-tin\-guée,
en est constitué au moins pour moitié.
\end{itemize}

Sauf dans le paragraphe \ref{guopar},
nous ne considérerons dans cette section
que des représentations de niveau non nul de $G$.
Le cas de représentations de niveau $0$ sera 
traité dans la section \ref{theriverofnoreturn}.

\subsection{}
\label{guopar}

Le résultat suivant jouera un rôle important par la suite.  

\begin{theo}
\label{GuoBM}
Toute représentation cuspidale $\G^\tau$-distinguée de $G$
est autoduale.  
\end{theo}

\begin{proof}
Lorsque $\a$ n'est pas un carré de $F^\times$,
le résultat est initialement dû à
Guo~\cite{GuoPJM97} si $d\<2$ et si $F$ est 
de ca\-ractéristique nulle. 
Dans le cas d'une forme intérieure quelconque et~d'un corps 
$F$ de carac\-téristique résiduelle $p\neq2$,
il est prouvé par Chommaux-Matringe
\cite[Pro\-po\-si\-tion 3.2]{ChommauxMatringe}.

Lorsque $\a$ est un carré dans $F^\times$,
le résultat est dû à Jac\-quet-Rallis \cite[Theorem 1.1]{JR} si~$G$ est déployé
et $F$ est de caractéristique nulle.
Pour traiter le cas d'une forme intérieure
quelconque sur un corps $F$ de caractéristique résiduelle impaire,
il suffit d'utiliser un argument de globalisation similaire à celui de 
\cite{BroussousMatringe} et \cite{ChommauxMatringe}.
Comme $\a\in F^{\times 2}$,
rappelons que $\A\simeq\Mat_r(D)$ avec $r$ pair.

Soit $k$ un corps global possédant une place $w$ divisant $p$
pour laquelle $k_w$ soit isomorphe à $F$,
et soit $l$ une extension quadratique de $k$
telle que $l_w=l\otimes_{k}k_w$ soit décomposée sur
$k_w$.~Soit~$\mathbf{A}$ l'anneau des adèles de $k$.
Fixons des isomorphismes $k_w\simeq\F$, $l_w\simeq\F\oplus\F$. 
Selon~\cite[Theo\-rem 1.12]{PlatoRapin},
il y a une $k$-algèbre à division $\mathbb{D}$ telle que~:
\begin{enumerate}
\item 
il existe un isomorphisme de $F$-algèbres
$\mathbb{D}\otimes_{k}F\simeq\A$, 
\item
il existe un plongement de $k$-algèbres de $l$ dans $\mathbb{D}$ 
vérifiant les conditions suivantes:
\begin{enumerate}
\item 
l'image 
du centralisateur de $l_w$ dans $(\mathbb{D}\otimes_k F)^\times$
par l'isomorphisme ci-dessus est $\G^\tau$,
\item
pour presque toute place finie $v$ de $k$,
la $k_v$-algèbre $\mathbb{D}\otimes_k k_v$ est déployée et le
centra\-lisateur de $l_v=l\otimes_k k_v$ dans
$(\mathbb{D}\otimes_k k_v)^\times\simeq\GL_{2n}(k_v)$
est isomorphe à $\GL_n(k_v)\times\GL_n(k_v)$~si $v$ est décomposée
et à $\GL_n(l_v)$ si $v$ est inerte.
\end{enumerate}
\end{enumerate}
(Il suffit de fixer un ensemble fini $S$ de places finies de $k$ contenant $w$
et d'impo\-ser que $\mathbb{D}\otimes_k k_v$
soit~déployée pour toute place $v\notin S$,
auquel cas $l_v$ se plonge dans
$\mathbb{D}\otimes_k k_v\simeq\Mat_{2n}(k_v)$
comme souhaité,
et de fixer pour toute place $v\in S$ une $k_v$-algèbre à division centrale
$D_v$ de degré~ré\-duit~$d_v$ et d'invariant de Hasse $h_v/d_v\in\QQ/\ZZ$
tels que~:
\begin{equation*}
\sum\limits_{v\in S} \frac {h_v} {d_v} \in \ZZ
\end{equation*}
et $D_w\simeq D$.
On peut alors imposer à $\mathbb{D}$, 
outre les conditions (1) et (2) ci-dessus,
la condition~sup\-plémentaire
$\mathbb{D}\otimes_k k_v\simeq\Mat_{2n/d_v}(D_v)$ pour
toute $v\in S$.)
On pose $\mathbb{G}=(\mathbb{D}\otimes_{k}\mathbf{A})^\times$
et on note~$\mathbb{H}$~le centralisateur de $l$ dans $\mathbb{G}$. 
\`A la place $w$, on a donc $\mathbb{G}_{w}\simeq\GL_{r}(D)$
et $\mathbb{H}_{w}\simeq\GL_{s}(D)\times\GL_{s}(D)$
avec $r=2s$.

Soit $\pi$ une représentation cuspidale de $G\simeq\mathbb{G}_{w}$
distinguée par $G^\tau\simeq\mathbb{H}_{w}$.
Selon \cite[Theo\-rem 4.1]{PSP} et \cite[Theo\-rem 1.3]{GL}, 
il existe une représentation automorphe cuspidale $\Pi$ de $\mathbb{G}$
telle~que la composante locale de $\Pi$ en $w$
soit isomorphe à $\pi$,
et telle que $\Pi$ ait une période non nulle~re\-lativement à $\mathbb{H}$.
Pour~toute place finie $v$ de $k$,
la composante locale $\Pi_v$ de $\Pi$ en $v$ est distinguée par 
$\mathbb{H}_v$.
En presque~toute place finie $v$,
le groupe $\mathbb{G}_v$ est isomorphe à $\GL_{2n}(k_v)$
et $\mathbb{H}_v$ est~iso\-mor\-phe à $\GL_n(k_v)\times\GL_n(k_v)$
ou à $\GL_n(l_v)$
(selon que $v$ est décomposée ou inerte dans $l$),
et $\Pi_v$ est générique et non ramifiée. 
Ainsi \cite[Proposition 3.1]{ChommauxMatringe} entraîne que
$\Pi_v$ est autoduale en~presque toute place finie $v$.
Appliquant~le théorème de multiplicité $1$ forte pour les formes
intérieures~de $\GL_n$ sur $k$
(voir \cite[Theorem 5.1]{BaduINVENT} en carac\-té\-ris\-tique nulle,
\cite[Theorem 3.3]{BaduRoche} en~caractéristique~$p$),
on en déduit que les représen\-ta\-tions automorphes cuspidales $\Pi$ et 
$\Pi^\vee$ sont isomorphes.
En~par\-ti\-culier, la représentation $\Pi_w\simeq\pi$ est autoduale. 
\end{proof}

\subsection{}
\label{P39}

Prouvons la variante\footnote{Je remercie Jiandi~Zou d'avoir attiré mon attention
sur le fait qu'une telle variante est possible.}
suivante de \cite[Lemma 6.5]{VSANT19}.

\begin{lemm}
\label{bathmologieabstraite}
Soit $\iota$ une involution de $G$,
soit $H$ un pro-$p$-sous-groupe ouvert compact de~$G$~et
soit $\chi$ un caractère de $H$.
Supposons qu'il y ait un élément $w\in\G$ tel que~:
\begin{equation*}
\iota(H)=H^w
\quad\text{et}\quad
\chi^{-1}\circ\iota=\chi^w.
\end{equation*}
Pour tout $g\in\G$, le caractère $\chi^g$ est trivial sur
$H^g\cap\G^\iota$ si et seulement si $w\iota(g)g^{-1}$
entrelace $\chi$.
\end{lemm}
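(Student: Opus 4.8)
The statement to prove is Lemma \ref{bathmologieabstraite}, which is an abstract Mackey-type criterion: given an involution $\iota$ of $G$, a pro-$p$ open compact subgroup $H$, a character $\chi$ of $H$, and an element $w$ with $\iota(H)=H^w$ and $\chi^{-1}\circ\iota=\chi^w$, we want: for $g\in G$, $\chi^g$ is trivial on $H^g\cap G^\iota$ iff $w\iota(g)g^{-1}$ intertwines $\chi$. Let me check what the hypotheses mean. $\iota(H)=H^w$ means $\iota(H)=w^{-1}Hw$. And $\chi^w$ is the character $x\mapsto \chi(wxw^{-1})$ of $H^w$. So $\chi^{-1}\circ\iota=\chi^w$ means: for $x\in H^w=\iota(H)$ — wait, need $\chi^{-1}\circ\iota$ defined on $H^w$. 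Actually $\chi^{-1}\circ\iota$ is defined on $\iota^{-1}(H)=\iota(H)=H^w$, good. So for $x\in H^w$: $\chi(\iota(x))^{-1}=\chi(wxw^{-1})$.

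Let me think about this more carefully.

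**Plan.** First I would translate "$\chi^g$ is trivial on $H^g\cap G^\iota$" into a statement about $\chi$ on a subgroup of $H$. Recall $H^g=g^{-1}Hg$ and $\chi^g(x)=\chi(gxg^{-1})$ for $x\in H^g$. So $\chi^g$ trivial on $H^g\cap G^\iota$ means: for all $y\in H$ with $g^{-1}yg\in G^\iota$ (i.e. $\iota(g^{-1}yg)=g^{-1}yg$), we have $\chi(y)=1$. Now I unwind the fixed-point condition: $\iota(g)^{-1}\iota(y)\iota(g)=g^{-1}yg$, i.e. $\iota(y)=\iota(g)g^{-1}\,y\,g\iota(g)^{-1}$. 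Set $h=g\iota(g)^{-1}$ (so $\iota(h)=\iota(g)g^{-1}=h^{-1}$), then the condition on $y\in H$ reads $\iota(y)=h^{-1}yh$, equivalently $y^{h}\in \iota(H)$ with $\iota(y^h)$... hmm, let me be cleaner: $y\in H$ lies in the relevant set iff $\iota(y)=h^{-1}yh$. The claim is that $\chi$ is trivial on $\{y\in H:\iota(y)=h^{-1}yh\}$ iff $wh^{-1}=w\iota(g)g^{-1}$ intertwines $\chi$, i.e. iff there is agreement of $\chi$ and $\chi^{wh^{-1}}$ on $H\cap (wh^{-1})^{-1}H(wh^{-1})$.

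**Key computation.** The bridge is the following identity for $y\in H$ with $\iota(y)=h^{-1}yh$. Using $\chi^{-1}\circ\iota=\chi^w$ on $H^w$: we need $\iota(y)\in H^w$, i.e. $y\in\iota(H^w)=\iota(H)^{\iota(w)}$; since $\iota(H)=H^w$ this is $H^{w\iota(w)}$... this is getting delicate, so the honest plan is: set $g'=w\iota(g)g^{-1}=wh^{-1}$ and directly compare, for $y$ ranging over $H\cap g'^{-1}Hg'$, the two characters $\chi(y)$ and $\chi(g'yg'^{-1})$, showing their ratio is $\chi$ evaluated on a "symmetrized" element, and that as $y$ runs over an appropriate set these symmetrized elements run exactly over $\{z\in H:\iota(z)=h^{-1}zh\}$ (up to the kernel of $\chi$). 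Concretely, I expect: $g'yg'^{-1}=w\iota(g)g^{-1}\,y\,g\iota(g)^{-1}w^{-1}$, and using $\iota$ one rewrites $g\iota(g)^{-1}\,\iota(z)\,\iota(g)g^{-1}=z$ type relations together with $\chi^{-1}\circ\iota=\chi^w$ to convert the intertwining equality $\chi(y)=\chi(g'yg'^{-1})$ into $\chi(z)=1$ where $z=y\cdot\iota(\text{something})$ satisfies $\iota(z)=h^{-1}zh$. This is exactly the mechanism of \cite{VSANT19} Lemma 6.5, and the pro-$p$ hypothesis on $H$ is used (as in that reference) to guarantee that the symmetrization map $y\mapsto z$ is surjective onto the relevant set, via triviality of $H^1(\langle\iota'\rangle,H)$ for the relevant twisted involution $\iota'$ (twisted $2$-torsion acting on a pro-$p$ group has trivial $H^1$, since $p\neq2$).

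**Main obstacle.** The delicate point is bookkeeping the conjugations and the twist: one must verify that the element $w\iota(g)g^{-1}$, which is what appears in the conclusion, is precisely the one for which the "twisted" cohomology argument applies — i.e. that $\iota^{g}:=\mathrm{Ad}(g)\circ\iota\circ\mathrm{Ad}(g)^{-1}$ composed appropriately with $\mathrm{Ad}(w)$ is an involution stabilizing $H$, so that $H^1$ of the corresponding order-$2$ group with values in $H$ vanishes. Once the correct twisted involution $\iota'$ on $H$ is identified (with $\iota'(y)=w\,\iota(g^{-1}yg)\,w^{-1}$ suitably interpreted, conjugated back into $H$), the equivalence becomes: $\chi$ trivial on $H^{\iota'=1}$ (fixed points) $\iff$ $\chi\circ\iota'=\chi$, and the latter unwinds to the intertwining statement via the hypotheses $\iota(H)=H^w$, $\chi^{-1}\circ\iota=\chi^w$. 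I would structure the proof as: (i) reduce to showing $\chi\circ\iota'=\chi$ iff $\chi$ trivial on fixed points of $\iota'$, citing the pro-$p$ $H^1$-vanishing to get surjectivity of $y\mapsto y\iota'(y)$ onto $H^{\iota'=1}$ (giving one direction) and the obvious direction the other way; (ii) a short direct calculation, using only the two hypotheses, showing $\chi\circ\iota'=\chi$ on the relevant intersection $\iff$ $w\iota(g)g^{-1}$ intertwines $\chi$. Step (ii) is pure substitution; step (i) is where $p\neq2$ enters and is the only conceptual ingredient, but it is standard (cf. \cite{VSANT19}).
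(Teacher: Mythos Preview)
Your approach is essentially the paper's, and correct in outline: the same two steps (a pro-$p$ cohomological reduction, then a straight substitution using the hypotheses on $w$) appear in both. Two remarks on execution.

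First, your step (i) as written has the sign wrong: the condition you want is $\chi^g\circ\iota=(\chi^g)^{-1}$ on $H^g\cap\iota(H^g)$, not $\chi\circ\iota'=\chi$. With the correct sign the ``obvious direction'' really is obvious (on $\iota$-fixed points one gets $\chi^g(x)^2=1$, hence $\chi^g(x)=1$ since $H$ is pro-$p$ with $p\neq2$), and the converse follows by observing that $\theta(x)=\chi^g(x)\chi^g(\iota(x))$ is a genuine character of the $\iota$-stable pro-$p$ group $H^g\cap\iota(H^g)$ which is $\iota$-invariant and trivial on fixed points, hence trivial. Your ``surjectivity of $y\mapsto y\iota'(y)$'' is not quite the right mechanism (that map does not land in fixed points when the group is nonabelian), but the conclusion you reach is correct.

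Second, the paper avoids your ``main obstacle'' entirely by never introducing $\iota'$: it works with $\chi^g$ and $\iota$ on $H^g$ rather than transporting everything to $H$. The bridging computation is then a one-liner:
\[
\chi^g\circ\iota=(\chi\circ\iota)^{\iota(g)}=(\chi^{-1})^{w\iota(g)}=(\chi^{w\iota(g)})^{-1}
\quad\text{on }H^{w\iota(g)}=\iota(H^g),
\]
so condition (W1) reads $\chi^{w\iota(g)}=\chi^g$ on $H^g\cap H^{w\iota(g)}$, which after conjugating by $g$ is exactly the intertwining of $\chi$ by $\gamma=w\iota(g)g^{-1}$. This sidesteps the question of whether your $\iota'$ is an involution stabilising $H$ (it need not be, and you do not need it to be).
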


\begin{proof}
Soit $g\in\G$.
Exactement comme dans la preuve de \cite[Lemma 6.5]{VSANT19},
on~prou\-ve que $\chi^g$ est trivial sur
$H^g\cap\G^\iota$ si et seulement si~:
\begin{equation}
\label{W1}
\chi^g(\iota(x)) = \chi^g(x)^{-1},
\quad
\text{pour tout $x\in\H^{g}\cap\iota(\H^{g})$}.
\end{equation}
Ensuite, par hypothèse sur $\chi$, on a~:
\begin{equation*}
\label{W2} 
\chi^g\circ\iota=(\chi\circ\iota)^{\iota(g)}
=(\chi^{-1})^{w\iota(g)}=(\chi^{w\iota(g)})^{-1}
\end{equation*}
sur $\H^{w\iota(g)}$.
Si l'on pose $\g=w\iota(g)g^{-1}$, alors \eqref{W1} 
est équivalent à l'identité $\chi(h)=\chi^{\g}(h)$
pour tout $h\in\H\cap\H^{\g}$,
\ie que $\g$ entrelace $\chi$.
\end{proof}

\subsection{}
\label{simples}

On en déduit le résultat suivant,
donnant une condition nécessaire de $\G^\tau$-dis\-tinc\-tion~pour les 
représentations cuspidales de $G$ en termes de théorie des types,
et qui est le premier résultat principal de cette section. 

\begin{prop}
\label{grammage}
Toute représentation cuspidale $\G^\tau$-distinguée
de niveau non nul de $G$ 
contient un caractère simple $\tau$-autodual. 
\end{prop}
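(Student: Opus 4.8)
The starting point is a cuspidal autodual representation $\pi$ of $G$ of positive level, distinguished by $\G^\tau$, and we want to produce a $\tau$-autodual simple character inside $\pi$. Fix a type $(\BJ,\bl)$ contained in $\pi$, with attached simple character $\t\in\Cc(\aa,\b)$, where $[\aa,\b]$ is a maximal simple stratum. By the Mackey-type decomposition \eqref{girafeintro} applied to $H=\G^\tau$, the space $\Hom_{\G^\tau}(\pi,\CC)$ decomposes as a product over $(\BJ,\G^\tau)$-double cosets $g$ of the spaces $\Hom_{\BJ^g\cap\G^\tau}(\bl^g,\CC)$. Since this Hom space is nonzero, there is a $g\in\G$ such that $\bl^g$ is $\BJ^g\cap\G^\tau$-distinguished; in particular its restriction to the pro-$p$-group $\H^1_\t{}^g$, which is a multiple of $\t^g$, is $\H^1_\t{}^g\cap\G^\tau$-distinguished, so $\t^g$ is trivial on $\H^1_\t{}^g\cap\G^\tau$. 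Replacing $(\BJ,\bl)$ and $\t$ by their $g$-conjugates (which is harmless, as any $G$-conjugate of a type in $\pi$ is again a type in $\pi$ by Theorem \ref{THEOCLASSIFTYPES}), we may assume $g=1$: the simple character $\t$ is trivial on $\H^1_\t\cap\G^\tau$.

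The next step is to apply Lemma \ref{bathmologieabstraite} with $\iota=\tau$, $H=\H^1_\t$ and $\chi=\t$. For this I need an element $w\in\G$ such that $\tau(\H^1_\t)=(\H^1_\t)^w$ and $\t^{-1}\circ\tau=\t^w$. Here is where autoduality of $\pi$ enters: by Theorem \ref{THMEXISTENCECARACSIMPLE} together with Lemma \ref{lemmedeparite} (or directly via Blondel's Theorem \ref{Blondel1} and the refinement in Proposition \ref{redform}, transferred to $A$), the fact that $\pi$ is autodual of positive level means that $\pi$ contains \emph{some} simple character $\t_0$ of endo-class $\TT$ which is $\s$-autodual for a suitable $\s$; but more to the point, since $\pi^\vee\simeq\pi$, the simple character $\t^\vee=\t^{-1}$ and the simple character $\t\circ\tau$ are both contained in $\pi^\tau\simeq\pi^\vee\simeq\pi$ up to the appropriate identifications — both $\t^{-1}$ and $\t\circ\tau$ lie in the $G$-conjugacy class of simple characters attached to $\pi$ (Proposition \ref{entrenous}(3)). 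Hence $\t^{-1}\circ\tau$ is $G$-conjugate to $\t$, say $\t^{-1}\circ\tau=\t^w$ for some $w\in\G$; and then $\tau(\H^1_\t)=\H^1_{\t\circ\tau}=\H^1_{(\t^{-1})^w}=(\H^1_\t)^w$ automatically. So the hypotheses of Lemma \ref{bathmologieabstraite} are met.

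Applying Lemma \ref{bathmologieabstraite} with $g=1$ and the fact (just established) that $\t$ is trivial on $\H^1_\t\cap\G^\tau$, we conclude that $w=w\tau(1)1^{-1}$ intertwines $\t$. By Proposition \ref{patel}\eqref{bourrin6}, the intertwining of $\t$ in $\G$ equals $\BJ^1_\t\B^\times\BJ^1_\t$, so we may write $w=j_1 b j_2$ with $j_1,j_2\in\BJ^1_\t$ and $b\in\B^\times$; since $\BJ^1_\t$ normalizes $\t$, the element $b$ already satisfies $\t^{-1}\circ\tau=\t^{b}$ after adjusting $\t\circ\tau=\t^{-1}\circ\text{(conjugation)}$ — more precisely, replacing $w$ by $b\in\B^\times$ we still have $\t^b=\t^{-1}\circ\tau$. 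Now I want to promote $b$ to an \emph{involution}. The key observation is that $\t^{b^2}=(\t^b)^b=(\t^{-1}\circ\tau)^b$; computing, $\tau(b)\in\B^\times$ as well and one checks $\t^{\tau(b)b}=\t$, so $\tau(b)b$ normalizes $\t$ and lies in $\B^\times$, hence in $\bb^\times$ by Proposition \ref{patel}. The main obstacle — and the technical heart of the argument — is then to modify $b$ within its coset to an element $u$ with $u^2$ central (ideally $u^2=\a$) and $\tau(u)=u^{-1}$, so that $\Ad(u)$ is the involution $\tau$ itself up to the relevant identification; this is exactly the kind of cohomological adjustment carried out in the proof of Proposition \ref{indiceduntype}, using that $H^1(\langle\tau\rangle,\BJ^1_\t)$ is trivial because $\BJ^1_\t$ is a pro-$p$-group and $p\neq2$. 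Once $u$ is arranged, $\t^u=\t^{-1}$ with $\Ad(u)$ conjugate to $\tau$, and transferring through the identification exhibits a genuinely $\tau$-autodual simple character inside $\pi$, which is what we wanted. \hfill$\qed$
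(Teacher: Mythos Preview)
Your opening moves are correct and match the paper: Mackey plus Frobenius gives a $g$ with $\t^g$ trivial on $H^1_\t{}^g\cap\G^\tau$; autoduality of $\pi$ gives $w\in\G$ with $\t^{-1}\circ\tau=\t^w$; Lemma \ref{bathmologieabstraite} forces $\gamma=w\tau(g)g^{-1}$ to intertwine $\t$, so $\gamma\in\BJ^1_\t\B^\times\BJ^1_\t$. The gap is in the final step.

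You assert $\tau(b)\in\B^\times$. This is unjustified and in general false: $\B$ is the centralizer of $E=F[\b]$, and $\tau$ carries $\B$ into itself only if $\tau$ stabilizes $E$, which would already require the stratum to be $\tau$-autodual --- precisely what you are trying to prove. So the chain ``$\tau(b)b\in\B^\times$, hence in $\bb^\times$'' collapses. (Even if it held, $\BJI_\t\cap\B^\times$ is the normalizer of $\bb$ in $\B^\times$, strictly larger than $\bb^\times$.) The appeal to the cohomological adjustment of Proposition \ref{indiceduntype} is also misplaced: that argument works \emph{inside} the compact-mod-centre group $\BJI_\t$, comparing two $\tau$-autodual characters already known to exist; here $b$ lives in the non-compact group $\B^\times$, and no such reduction is available.

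The paper does \emph{not} try to massage $w$ (or $b$) into an element with central square. Instead it separates cases on $m$: for $m$ even it invokes Corollary \ref{tolkienelrond} directly; for $m=1$ one has $\B^\times\subseteq\BJI_\t$ so the intertwining already gives $\gamma\in\BJI_\t$ and $\t^g$ is $\tau$-autodual; for $m>1$ odd (forcing $T/T_0$ unramified by Lemma \ref{fromagequipue}) the paper takes the valuation of the reduced norm in the identity $u\gamma u^{-1}=\alpha u^2\gamma^{-1}$, where $u=w\dd^{-1}$, and --- after a clever \emph{choice} of the starting $\t$ and $u$ furnished by Corollary \ref{tolkienelrond} (if $r$ even) or Proposition \ref{qqq} (if $r$ odd) --- reads off the parity condition of Theorem \ref{THMEXISTENCECARACSIMPLE}. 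That theorem then produces a $\tau$-autodual simple character in $\pi$. The point is that one verifies the numerical criterion rather than constructing the character by hand.
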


\begin{proof}
Soit $\pi$ une représentation cuspidale $\G^\tau$-distinguée de $G$, 
de niveau non nul.
{D'après le théorème \ref{GuoBM},
elle est autoduale.}
On note $\TT$ son endo-classe, et on définit des entiers $m$ et $c$ par 
\eqref{bananerotie}.

\begin{lemm} 
\label{tracteur}
Il y a une strate simple maximale $[\aa,\b]$, 
un caractère simple $\t\in\Cc(\aa,\b)$ contenu dans $\pi$ et un $u\in\G$
tels que, 
posant $E=F[\b]$ et $E_0=F[\b^2]$,
on ait $\t^{-1}=\t^{u}$, $\b^{u}=-\b$ et~:
\begin{equation*}
\bo_{E/E_0}(\Nrd_{B/E}(u^2))=(-1)^{r}.
\end{equation*}
\end{lemm}

\begin{proof}
Si $r$ est impair,
le résultat est donné par la proposition \ref{qqq}.

Si $r$ est pair,
fixons comme au paragraphe \ref{Biotabstait0}
un~élé\-ment $\s\in\G$ tel que $\ss^2=1$, 
de~poly\-nô\-me ca\-rac\-téristique réduit $(X^2-1)^{n}$.
D'après le corollaire \ref{tolkienelrond},
la représentation $\pi$ contient 
un ca\-rac\-tère simple maximal $\s$-autodual $\t$.
Fixons une strate simple $\s$-autoduale $[\aa,\b]$ de $A$ telle que
$\t\in\Cc(\aa,\b)$,
dont l'existence est assurée par le lemme \ref{gandalf}.
On obtient le résultat voulu avec $u=\s$.
\end{proof}

Soient $\t\in\Cc(\aa,\b)$ et $u\in\G$ comme dans le lemme \ref{tracteur}.
Notons $\BJI_{\t}$ le~norma\-li\-sa\-teur de~$\t$~dans $G$, 
et notons respectivement $\BJ^0$ et $\BJ^1$ 
le sous-groupe compact maximal et le 
pro-$p$-sous-groupe dis\-tingué~maximal de $\BJ_\t$. 
Po\-sons $E=\F[\b]$ et $E_0=\F[\b^2]$.
L'induite compacte de~$\t$ à $\G$,
dont~$\pi$ est~un quotient,
admet une forme linéaire $G^\tau$-invariante non nulle. 
Appliquant la formule~de~Mac\-key,
il existe un~$g\in\G$ tel que
$\t^g$ soit trivial sur $\H^{1}(\aa,\b)^g\cap\G^\tau$. 
On observera que
$u$ normalise $B^\times$ et $u^2\in\BJI_\t\cap\B^\times$.

Posons maintenant $w=u\dd$,
de sorte que $\t^{-1}\circ\tau=\t^w$. 
Appliquant~le lemme \ref{bathmologieabstraite} 
au caractère $\t$~et à l'involution $\tau$,
on en déduit que $\g=w\tau(g)g^{-1}$~en\-tre\-la\-ce $\t$,
c'est-à-dire, d'après la proposi\-tion \ref{patel}\eqref{bourrin6}, 
que $\g\in\BJ^1\B^\times\BJ^1$,
où $\B$ désigne le~cen\-tra\-li\-sa\-teur de $E$ dans $A$.
L'élément $\g$ vérifie~: 
\begin{equation}
\label{marmouset2}
u\g u^{-1}=\a u^2\g^{-1}.
\end{equation}
Fixons un élément $x\in\B^\times$ tel que $\g\in\BJ^1 x\BJ^1$.
Le groupe $\BJI_{\t}$ est stable par conjugaison par $u$, 
car c'est à la fois le normalisateur de $\t$ et de $\t^{-1}=\t^u$.
Il en est donc de même de $\BJ^1$, ce qui donne~:
\begin{equation}
\label{marmouset3}
\BJ^1 uxu^{-1}\BJ^1=\BJ^1 \a u^2x^{-1}\BJ^1.
\end{equation} 
Prenant l'inter\-sec\-tion avec $\B^\times$
et appliquant \cite[Corollaire 3.3]{VS1}, 
l'égalité \eqref{marmouset3} devient~:
\begin{equation}
\label{marmouset35}
\BU^1(\bb) uxu^{-1}\BU^1(\bb) = \BU^1(\bb) \a u^2x^{-1} \BU^1(\bb). 
\end{equation}
Puis, prenant la norme réduite de $B$ sur $E$, on obtient l'identité~:
\begin{equation}
\label{marmouset4}
\Nrd_{B/E}(x) \cdot \Nrd_{B/E}(uxu^{-1}) \equiv \a^{2n/[E:F]} 
\cdot \Nrd_{B/E}(u^2)
\end{equation}
dans $E^\times/(1+\p_E)$, 
du fait que l'image de $\BU^1(\bb)$ par $\Nrd_{B/E}$ est égale à $1+\p_E$. 
D'après le lemme \ref{NRDBE}, 
le membre de gauche de \eqref{marmouset4} est égal à 
$\N_{E/E_0}(\Nrd_{B/E}(x))$ et $\Nrd_{B/E}(u^2)\in\E_0^\times$.
L'identité \eqref{marmouset4} est donc valable dans $E_0^\times/(1+\p_{E_0})$.
On peut ainsi lui appliquer le caractère quadratique $\bo_{E/E_0}$, ce qui donne~:
\begin{equation*}
\bo_{E/E_0}(\a)^{2n/[E:F]} = \bo_{E/E_0}(\Nrd_{B/E}(u^2)) = (-1)^r.
\end{equation*}
Appliquant le lemme \ref{connemara} et le théorème 
\ref{THMEXISTENCECARACSIMPLE}, 
on en déduit que $\pi$ contient un caractère simple $\tau$-autodual. 
\end{proof}

\subsection{}
\label{couragelesgars}
\label{viteA}

Fixons un caractère simple maximal
$\tau$-autodual non trivial $\t$ de $G$.~On
note $\BJI_{\t}$ le normali\-sateur de $\t$ dans $G$,
$\BJ^0$ le sous-groupe compact maxi\-mal de $\BJI_{\t}$ 
et $\BJ^1$ son pro-$p$-sous-groupe~nor\-mal~maximal.
L'objectif des paragraphes \ref{couragelesgars} à \ref{eb} qui suivent
est de montrer que, sous certaines hypothèses,
la représentation de Heisenberg $\n$ admet un prolongement à $\BJI_{\t}$,
ou à certains sous-groupes d'indice fini de $\BJI_{\t}$,
possédant des propriétés remarquables vis-à-vis de $\tau$~:
voir les~corol\-laires \ref{exkappadistaddetp} et \ref{exkappadistaddetpnr}.
On fixe une strate simple $\tau$-autoduale $[\aa,\b]$ telle que
$\t\in\Cc(\aa,\b)$,
dont l'exis\-ten\-ce est assurée par le lemme \ref{gandalf}, 
et on utilise les notations~de la section \ref{SEC4}.
En particulier~:
\begin{itemize}
\item
on pose $E=F[\b]$ et $E_0=F[\b^2]$,
\item
on note $B$ le centralisateur~de~$E$ dans $A$
et $m$, $c$ les entiers définis par~\eqref{egalizink},
\item
on note $c_0$ l'entier défini par \eqref{marquisat} avec $g=[E_0:F]$.
\end{itemize}
D'après la proposition \ref{indiceduntype}, 
on peut iden\-tifier $\BJ^0/\BJ^1$ à $\GL_m(\ee)$ de
façon que l'action de $\tau$ soit~: 
\begin{enumerate}
\item 
l'action de l'automorphisme d'ordre $2$ de $\ee$
si $E/E_0$ est non ramifiée (donc $c_0$ impair), 
\item
l'action par conjugaison d'un élément $\v\in\GL_m(\ee)$
si $E/E_0$ est ramifiée, avec~:
\begin{enumerate}
\item 
$\v^2\in\ee^\times$ et $\v^2\notin\ee^{\times2}$ si $c_0$ est pair,
\item
$\v$ est diagonal, $\v^2=1$ et les multiplicités des valeurs propres 
$-1$ et $1$ sont respectivement $i$ et $m-i$,
avec $i\in\{0,\dots,\lfloor m/2\rfloor\}$,
si $c_0$ est impair. 
\end{enumerate}
\end{enumerate}
(Dans ce dernier cas, 
rappelons que l'entier $i$ s'appelle l'indice de $\t$
(voir la définition \ref{defindicetheta}).)
On fixe également un élément $\w$ de $\BJ_\t\cap\B^\times$ comme dans
la proposition \ref{indiceduntype}(3), 
\ie que $\w^c$ est une uniformisante de $E$,
le groupe $\BJ_\t$ est engendré par $\BJ^0$ et $\w$ et~: 
\begin{equation*}
\label{choixweqBbis}
\tau(\w) = \left\{ 
\begin{array}{rl}
-\w & \text{si $T/T_0$ est ramifiée et $c_0$ est impair}, \\ 
\w & \text{sinon.}
\end{array}\right.
\end{equation*}
On ne suppose pas ici que $\t$ apparaît dans une représentation
cuspidale autoduale de $G$.
On ne peut donc pas utiliser le corollaire \ref{fromagequipuecor}.

\begin{lemm}
\label{choixw}
\label{descrJtau} 
Le groupe $\BJI_{\t}\cap\G^\tau$ est engendré par $\BJ^{0}\cap\G^\tau$ et
un élément $\w'$ tel que~:
\begin{enumerate}
\item
$\w'=\w$ si $\E/\E_0$ est non ramifiée,
ou si $E/E_0$ est ramifiée et $c_0$ est pair,
\item
$\w'=\w^2$ si $\E/\E_0$ est ramifiée, $c_0$ est impair et {$m\neq2i$,}
\item
$\w'=\w t$ pour un élément $t\in\BJ^0$
dont la réduction modulo $\BJ^1$ est~:
\begin{equation}
\label{deftantidiag}
\begin{pmatrix}0&{\rm id}\\{\rm id}&0\end{pmatrix}
\in \GL_m(\ee)
\end{equation}
si $\E/\E_0$ est ramifiée, $c_0$ est impair et $m=2i$,
où ${\rm id}$ est l'identité de $\GL_i(\ee)$.
\end{enumerate}
\end{lemm}

\begin{proof}
Soit un élément $x\in\BJI_{\t}\cap\G^\tau$, qu'on écrit $\w^ky$
avec $k\in\ZZ$ et $y\in\BJ^0$.
Si $\E/\E_0$ est non~ra\-mi\-fiée, ou~si $E/E_0$ est ramifiée et $c_0$ est 
pair, alors 
$\w$ est invariant par $\tau$, donc $y$ l'est aussi,
ce qui~trai\-te~le~cas (1). 
Si $\E/\E_0$ est ramifiée et $c_0$ est impair, 
on a $\tau(\w)=-\w$.~Ainsi, 
ou bien $k$ est pair et $\tau(y)=y$,
ou bien $k$ est impair et $\tau(y)=-y$.
Supposons qu'on soit~dans~le second~cas.
Réduisant $y$ modulo $\BJ^1$,
on obtient un $z\in\GL_m(\ee)$ tel que $\tau(z)=-z$.
Comme $\tau$~agit sur $\GL_m(\ee)$ par conjugaison par 
l'élément $\s_i$ défini par \eqref{defsi42}, 
on en déduit que $\s_i$ et $-\s_i$ sont conjugués, 
donc que $m=2i$.

Inversement,
si $m=2i$,
on note $s$ l'élément \eqref{deftantidiag},
qui est anti-invariant par $\tau$.
Si $u\in\BJ^0$ relève $s$,
on a donc $v=-\tau(u)u^{-1}\in\BJ^1$.
Comme $\tau(v)v=1$,
il existe un $w\in\BJ^1$ tel que $v=\tau(w)w^{-1}$.
L'élément $t=w^{-1}u$ a la propriété requise.
\end{proof}

\subsection{}
\label{onnoubliepas}
\label{ea}

La situation est la même qu'au paragraphe précédent. 
On suppose en outre que,
si $E/E_0$ est ramifiée et $c_0$ est impair,
$m$ est pair ou égal à $1$
et $\t$ est d'indice $i=\lfloor m/2\rfloor$.
(Voir la remarque \ref{pluiedautomne}.)
On a donc $m=1$ ou $m=2i$.
Rappelons que,
comme $p\neq2$, tout caractère de $\GL_m(\ee)$ se fac\-torise
par le déterminant.

\begin{prop}
\label{exkappadist}
Supposons que l'on soit dans l'un des cas suivants~:
\begin{enumerate}
\item 
$E/E_0$ est non ramifiée, ou $E/E_0$ est ramifiée et $c_0$ est pair,
\item
$E/E_0$ est ramifiée, $c_0$ est impair, $m$ est pair ou égal à $1$
et $\t$ est d'indice $i=\lfloor m/2\rfloor$.
\end{enumerate}
Alors il existe une représentation
$\BJI_{\t}\cap\G^\tau$-distinguée de $\BJI_{\t}$ prolongeant $\n$. 
\end{prop}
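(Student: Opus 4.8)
The plan is to adapt the argument of \cite{VSANT19} (Lemma 7.7 and Proposition 7.8) to the involution $\tau$, reducing the existence of a distinguished extension to a character extension problem on $\BJI_{\t}\cap\G^\tau$. First I would fix, by Proposition \ref{kappatauautoduale}, a $\tau$-autoduale representation $\bk$ of $\BJI_{\t}$ prolonging $\n$, and apply Lemma \ref{kappatau}: since $\bk^{\vee\tau}\simeq\bk$, the character $\mu$ of part~(1) is trivial, hence the attached character $\chi$ of $\BJI_{\t}\cap\G^\tau$ of part~(2) satisfies $\chi^{2}=\mu|_{\BJI_{\t}\cap\G^\tau}=1$. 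Every extension of $\n$ to $\BJI_{\t}$ is $\bk\qq$ with $\qq$ a character of $\BJI_{\t}$ trivial on $\BJ^{1}$, and the character attached to $\bk\qq$ is $\chi\cdot(\qq|_{\BJI_{\t}\cap\G^\tau})^{-1}$; since $\Hom_{\BJ^{1}\cap\G^\tau}(\n,\FC)\neq0$ by Lemma \ref{etatau}(2), the representation $\bk\qq$ is $\BJI_{\t}\cap\G^\tau$-distinguished as soon as $\qq|_{\BJI_{\t}\cap\G^\tau}=\chi$. So the whole task reduces to \textbf{extending $\chi$ to a character of $\BJI_{\t}$ trivial on $\BJ^{1}$}, using the descriptions of $\BJI_{\t}$ and $\BJI_{\t}\cap\G^\tau$ from Lemmas \ref{choixw} and \ref{descrJtau}, the identification $\BJ^{0}/\BJ^{1}\simeq\GL_{m}(\ee)$ of paragraph \ref{couragelesgars}, and the fact that, $p$ being odd, every character of $\GL_{m}(\ee)$ and of $\GL_{m}(\ee)^\tau$ factors through determinant(s).

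In case (1a) ($E/E_{0}$ non ramified) the integers $c_{0},c$ are odd by Corollary \ref{fromagequipuecor}, one has $\w'=\w\in\G^\tau$ by Lemma \ref{descrJtau}(1), and $\GL_{m}(\ee)^\tau=\GL_{m}(\ee_{0})$ with $\ee/\ee_{0}$ quadratic. Writing $\chi=\overline\chi\circ\det$ on $\BJ^{0}\cap\G^\tau$ with $\overline\chi$ a character of $\ee_{0}^\times$ of order dividing $2$, I would observe that the residue-field norm occurring in the $\Ad(\w)$-invariance condition has kernel of odd order (because $c$ is odd and $p\neq2$), so $\overline\chi$ descends through it and yields an $\Ad(\w)$-invariant character $\overline\qq$ of $\ee^\times$ with $\overline\qq\circ\det$ restricting to $\chi$ on $\BJ^{0}\cap\G^\tau$; one then finishes by setting $\qq(\w)=\chi(\w)$. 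In the case $E/E_{0}$ ramified with $c_{0}$ even one still has $\w'=\w\in\G^\tau$, and the factorisation $\overline\chi=\overline\qq\circ\N_{\ee_{0}/\ee}$ required to extend $\chi$ across the determinant is precisely what the construction of the $\tau$-autoduale $\bk$ already gives, namely the conclusion drawn from Lemma \ref{centralY} in the proof of Proposition \ref{kappatauautoduale}; again $\overline\qq\circ\det$ extends to $\BJI_{\t}$ and $\qq(\w)=\chi(\w)$ finishes.

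The delicate case is $E/E_{0}$ ramified with $c_{0}$ odd, $m$ even or equal to $1$, and $\t$ of index $i=\lfloor m/2\rfloor$: here $\tau$ acts trivially on $\GL_{m}(\ee)$ whereas the generator $\w'$ of $\BJI_{\t}\cap\G^\tau$ modulo $\BJ^{0}\cap\G^\tau$ is $\w^{2}$ (if $m=1$) or $\w t$ with $t$ the matrix of \eqref{deftantidiag} (if $m=2l$), by Lemma \ref{descrJtau}(2),(3). Reasoning as at the start of the proof of Proposition \ref{kappatauautoduale} I would reduce to $i=0$, hence $m=1$ and $\w'=\w^{2}$, $\GL_{m}(\ee)^\tau=\ee^\times$; then I pick a character $\overline\qq$ of $\ee^\times$ agreeing with $\chi$ on $\BJ^{0}\cap\G^\tau$ and take for $\qq(\w)$ a square root of the scalar $\chi(\w^{2})$ appropriately corrected, such a square root existing exactly as in Lemma \ref{kappatau}(2). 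For $m=2l$ with index $l$, conjugation by $t$ exchanges the two factors of $\GL_{l}(\ee)\times\GL_{l}(\ee)=\GL_{m}(\ee)^\tau$, so $\w'$-invariance of $\chi$ forces its two determinant-components to coincide, and their common value is the restriction to the diagonal of a single $\overline\chi\circ\det$ on $\GL_{m}(\ee)$, after which one concludes as above. \textbf{The main obstacle} is exactly this last point — checking that the obstruction character $\chi$, which has order dividing $2$ and is further constrained by the $\tau$-autoduality of $\bk$, always lies in the image of the restriction map from characters of $\BJI_{\t}$ trivial on $\BJ^{1}$. This fails for a general index (and for odd $m>1$), which is why the statement isolates $i=\lfloor m/2\rfloor$ and the parity of $m$; performing the residue-field norm computations and the square-root argument in each sub-case, all crucially using $p\neq2$, is where the genuine work lies.
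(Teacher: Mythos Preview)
Your approach coincides with the paper's: start from a $\tau$-autoduale $\bk$ given by Proposition~\ref{kappatauautoduale}, so that the associated character $\chi$ of Lemma~\ref{kappatau} satisfies $\chi^{2}=1$, and reduce everything to extending $\chi$ from $\BJI_{\t}\cap\G^\tau$ to a character $\qq$ of $\BJI_{\t}$ trivial on $\BJ^{1}$, case by case using Lemmas~\ref{choixw} and~\ref{descrJtau}. The paper dispatches case~(2) by reference to \cite{VSANT19} Lemma~7.10 and p.~1719, whereas you sketch the argument directly.

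One step in your case~(2) should be dropped, not because it breaks the proof but because it is illegitimate and unnecessary. You write that, as in the proof of Proposition~\ref{kappatauautoduale}, you would ``reduce to $i=0$, hence $m=1$''. That reduction conjugates $\t$ by an element $t_{i}\in B^{\times}$ which does \emph{not} lie in $\G^{\tau}$; this preserves $\tau$-autoduality of an extension of $\n$ (since $t_{i}\tau(t_{i})^{-1}\in\BJI_{\t}$) but it does not preserve the distinction problem, because $\BJI_{\t}^{t_{i}}\cap\G^{\tau}$ is not conjugate to $\BJI_{\t}\cap\G^{\tau}$ by $t_{i}$. No reduction is needed anyway: when $m=1$ the hypothesis $i=\lfloor m/2\rfloor$ already gives $i=0$, and you handle $m=2l$ with $i=l$ separately. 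Relatedly, your opening clause ``here $\tau$ acts trivially on $\GL_{m}(\ee)$'' is only true for $m=1$; for $m=2l$ with index $l$ the action is $\Ad(\s_{l})$, as you then correctly use when writing $\GL_{m}(\ee)^{\tau}=\GL_{l}(\ee)\times\GL_{l}(\ee)$. One small remark on case~(1a): rather than Corollary~\ref{fromagequipuecor} (which presupposes a cuspidal representation), the oddness of $c$ you need for the $\Ad(\w)$-invariance follows already from Lemma~\ref{bilbon3}, since a $\tau$-stable maximal order in $B$ with $E/E_{0}$ non ramified forces $c_{0}$ (hence $c=c_{0}$) to be odd.
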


\begin{proof}
Soit $\bk$ une représentation $\tau$-autoduale de $\BJI_{\t}$ prolongeant $\n$,
dont l'existence est assurée par la proposition \ref{kappatauautoduale}.
Selon le lemme \ref{kappatau}, il lui~correspond
un caractère $\chi$ du~grou\-pe $\BJI_{\t}\cap\G^\tau$
trivial sur $\BJ^1\cap\G^\tau$ tel que 
$\Hom_{\BJI_{\t}\cap\G^\tau}(\bk,\chi^{-1})$ soit non nul.
Comme $\bk$ est $\tau$-autoduale,
$\chi^2$ est trivial. 
On cherche un caractère $\qq$ de $\BJI_{\t}$ trivial sur $\BJ^1$ tel que 
$\bk\qq$ soit $\BJI_{\t}\cap\G^\tau$-distinguée, \ie telle
que la restriction de $\qq$ à 
$\BJI_{\t}\cap\G^\tau$ soit égale à $\chi$.
Soit $\w\in\BJI_{\t}\cap\B^\times$ comme dans le paragraphe \ref{couragelesgars}.
Commençons par traiter le cas (1).

Si $E/E_0$ est non ramifiée,
notons $\ee_0$ le sous-corps des invariants de $\ee$ par son
automorphisme d'ordre $2$.
La restriction de $\chi$ à $\BJ^0\cap\G^\tau$ s'identifie à un
caractère de $\GL_m(\ee_0)$,
qui se factorise sous la forme $\delta\circ\det$
pour un unique caractère quadrati\-que $\d$ de $\ee_0^\times$,
qu'on écrit $\d=\e\circ\N_{\ee_0/\kk_{E_0}}$
pour un unique caractère quadrati\-que $\e$ de $\kk_{E_0}^\times$.
Soit $\g$ un~caractère~de~$\kk_E^\times$~pro\-longeant $\e$.
On pose $\h=\g\circ\N_{\ee/\kk_{E}}$.
Le degré de $\ee/\kk_E$,
qui est égal à $c_0$,
est impair.
La~restric\-tion~de $\h$ à $\ee_0^\times$ est donc égale à $\d$. 

Si $E/E_0$ est ramifiée et $c_0$ est pair,
soit $\ee_0$ l'extension quadratique de $\ee$ 
engendrée par $\v$~dans $\Mat_m(\ee)$.
La restriction de $\chi$ à $\BJ^0\cap\G^\tau$ s'identifie à un
carac\-tè\-re~de $\GL_{m/2}(\ee_0)$
de la forme $\delta\circ\det$
où $\delta$ est un caractère quadrati\-que de $\ee_0^\times$.
On note $\varphi$ l'unique~ca\-ractère quadrati\-que
de $\ee^\times$ tel que
$\delta$ soit égal à $\varphi\circ\N_{\ee_0/\ee}$.

Dans ces deux cas,
$\h$ est invariant par $\Gal(\ee/\kk_E)$.
Le caractère $\qq$ de $\BJ^0$ trivial sur $\BJ^1$
obtenu par inflation  du caractère
$\varphi\circ\det$ de $\GL_{m}(\ee)$
est donc normalisé par $\w$,
et son unique prolonge\-ment~à $\BJI_{\t}$ tel que $\qq(\w)=\chi(\w)$ a la 
propriété voulue.
Traitons maintenant le cas (2).

Dans le cas où $m=1$, 
on a $\BJ^0=(\BJ^{0}\cap\G^{\tau})\BJ^1$.
Il existe donc un unique caractère $\qq$ de~$\BJ^0$
trivial sur $\BJ^1$ et 
coïncidant avec $\chi$ sur $\BJ^0\cap\G^{\tau}$,
et il est normalisé par $\BJI_\t$.
On le prolonge à $\BJI_\t$
en demandant que $\qq(\w)^2=\chi(\w^2)$.
On obtient un caractère ayant la propriété voulue. 

Enfin,
dans le cas où $m=2i$,
on suit l'argument~de la preuve de \cite[Lemma 7.10]{VSANT19}
dont~on repro\-duit ici les grandes lignes pour la commodité du lecteur.
On identifie
la restriction de~$\chi$~à $\BJ^0\cap\G^\tau$ à un
caractère de $\GL_{i}(\ee)\times\GL_{i}(\ee)$,
qu'on écrit 
$(\delta_1\circ\det)\otimes(\delta_2\circ\det)$,
où $\delta_1$, $\delta_2$ sont des
caractères quadrati\-ques de $\ee^\times$.
Comme l'élément $t$ du lemme \ref{choixw}(3)
normalise $\bk$ et le sous-groupe $\BJ_\t\cap\G^\tau$,~il
normalise aussi $\chi$.
On en déduit que $\delta_1=\delta_2$,
puis que
la restriction de $\chi$ à~$\BJ^0\cap\G^\tau$
se~pro\-longe en un unique caractère $\qq$ de $\BJI^0$ trivial sur $\BJ^1$. 
Celui-ci est normalisé par $\BJI_\t$.~En le~pro\-longeant à $\BJI_\t$
en demandant que $\qq(\w)=\chi(\w t)\qq(t)^{-1}$, 
on obtient un caractère ayant la propriété voulue. 
\end{proof}

\subsection{}

Dans le cas où $\E/\E_0$ est ramifiée, on en déduit le résultat suivant. 

\begin{prop}
\label{exkappadistad}
Supposons que l'on soit dans l'un des cas suivants~:
\begin{enumerate}
\item 
$E/E_0$ est ramifiée et $c_0$ est pair,
\item
$E/E_0$ est ramifiée, $c_0$ est impair, $m$ est pair ou égal à $1$
et $\t$ est d'indice $i=\lfloor m/2\rfloor$.
\end{enumerate}
Alors toute représentation $\BJI_{\t}\cap\G^\tau$-distinguée de
$\BJI_{\t}$ prolongeant $\n$ est $\tau$-autoduale. 
\end{prop}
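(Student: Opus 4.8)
Proposition \ref{exkappadistad} asserts that in the ramified cases listed, the property of being $\BJI_{\t}\cap\G^\tau$-distinguished (among extensions of $\n$ to $\BJI_{\t}$) forces $\tau$-autoduality. This is a converse to Proposition \ref{exkappadist}, and the natural strategy is to compare the two "torsion torsors": extensions of $\n$ to $\BJI_{\t}$ form a torsor under characters $\qq$ of $\BJI_{\t}$ trivial on $\BJ^1$, and I want to show that the subset of $\tau$-autodual extensions and the subset of $\BJI_{\t}\cap\G^\tau$-distinguished extensions actually coincide (both being nonempty by Propositions \ref{kappatauautoduale} and \ref{exkappadist}).

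Here is the plan. Fix a $\tau$-autodual extension $\bk$ of $\n$, which exists by Proposition \ref{kappatauautoduale}; by Lemma \ref{kappatau} it carries the associated character $\mu$ with $\mu\circ\tau=\mu$, and since $\bk$ is $\tau$-autodual we may normalize so that $\mu$ is trivial, hence the attached character $\chi$ of $\BJI_{\t}\cap\G^\tau$ satisfies $\chi^2=1$. Now let $\bk\qq$ be an arbitrary $\BJI_{\t}\cap\G^\tau$-distinguished extension, i.e. the restriction of $\qq$ to $\BJI_{\t}\cap\G^\tau$ equals $\chi$. I must show $\mu(\bk\qq)=\qq(\qq\circ\tau)$ is trivial, i.e. that $\qq\cdot(\qq\circ\tau)=1$. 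Writing $\qq=\overline{\qq}\circ\det$ on $\GL_m(\ee)$ (legitimate since $p\neq2$, as noted at the start of the Heisenberg-extension paragraph) and using Lemma \ref{choixw} for a generator $\w$ with $\tau(\w)=\w$ (the ramified-$c_0$-even case) respectively $\tau(\w)=-\w$ (the ramified-$c_0$-odd case), the identity $\qq(\qq\circ\tau)=1$ splits, exactly as in the proof of Proposition \ref{kappatauautoduale}, into a condition on $\overline{\qq}$ over $\ee^\times$ — $\overline{\qq}(\overline{\qq}\circ\tau)=1$, which in case (1) reads $\overline{\qq}^2=1$ after composing with $\N_{\ee_0/\ee}$, and in case (2) reads $\overline{\qq}^2=1$ — together with a condition at $\w$. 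The key point is that $\chi^2=1$ and the relation (recorded in Lemma \ref{kappatau}(2)) that $\mu$ restricted to $\BJI_{\t}\cap\G^\tau$ equals $\chi^2$: since $\bk$ was chosen $\tau$-autodual, $\mu=1$, so $\qq|_{\BJI_{\t}\cap\G^\tau}=\chi$ has order dividing $2$, and the description of $\BJI_{\t}\cap\G^\tau$ in Lemma \ref{descrJtau} (generated by $\BJ^0\cap\G^\tau$ together with $\w'=\w$ in case (1), $\w'=\w^2$ or $\w t$ in case (2)) pins down $\overline{\qq}$ on $\ee_0^\times$ and $\qq(\w')$ up to the constraints needed.

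The forward direction of the verification is then: in case (1), $\chi=\overline{\chi}\circ\det$ on $\GL_{m/2}(\ee_0)$ with $\overline{\chi}$ quadratic, and $\overline{\qq}$ is the character of $\ee^\times$ with $\overline{\chi}=\overline{\qq}\circ\N_{\ee_0/\ee}$ — hence $\overline{\qq}^2$, composed with $\N_{\ee_0/\ee}$, equals $\overline{\chi}^2=1$; since $\N_{\ee_0/\ee}$ is surjective this forces $\overline{\qq}^2=1$, which is condition (1.b) with $\overline{\mu}=1$. For the value at $\w'=\w$, one has $\qq(\w)=\chi(\w)$ with $\chi(\w)^2=\chi(\w^2)=1$ because $\chi^2=1$; combined with $\tau(\w)=\w$ this gives $\qq(\w)(\qq\circ\tau)(\w)=\qq(\w)^2=1$, so $\mu(\bk\qq)(\w)=1$. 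In case (2) with $m=2l$, the element $\w'=\w t$ is $\tau$-fixed and $\qq(\w')^2=\chi(\w'^2)=1$; when $m=1$, $\w'=\w^2$ is $\tau$-fixed and again $\qq(\w'){}^2=\chi(\w^4)=1$. In every case $\mu(\bk\qq)$ is trivial on a generating set of $\BJI_{\t}$, hence trivial, hence $\bk\qq$ is $\tau$-autodual.

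The main obstacle, and the only genuinely delicate point, is the bookkeeping at the uniformizer: one must check that the two normalizations — the $\tau$-autodual extension $\bk$ used to define $\mu$ and $\chi$, and the $\BJI_{\t}\cap\G^\tau$-distinguished extension being tested — are compatible, i.e. that after arranging $\mu=1$ the attached $\chi$ really does square to $1$ rather than merely to a character killed by $\N$. This is where Lemma \ref{kappatau}(2), giving $\mu|_{\BJI_{\t}\cap\G^\tau}=\chi^2$, is essential, and where one must be careful in case (2), $c_0$ odd, that $\t$ has index $i=\lfloor m/2\rfloor$, so that $\GL_m(\ee)^\tau=\GL_i(\ee)\times\GL_{m-i}(\ee)$ and the restriction of $\det$ behaves as in \cite{VSANT19} Lemma 7.10; the argument there transfers verbatim. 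The remaining steps are the routine torsor/character manipulations already carried out in the proofs of Propositions \ref{kappatauautoduale} and \ref{exkappadist}, which I would cite rather than repeat.
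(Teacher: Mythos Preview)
Your approach and the paper's are essentially dual to each other in case~(1) and in case~(2) with $m$ even: the paper starts from a distinguished extension and shows its $\mu$ is trivial, while you start from a $\tau$-autodual extension (so $\mu=1$, $\chi^2=1$) and show that any distinguished twist $\bk\qq$ has trivial $\mu(\bk\qq)=(\qq(\qq\circ\tau))^{-1}$. In those cases the bookkeeping you outline is correct and closes exactly as in the paper.

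The gap is in case~(2) with $m=1$. There $\BJI_{\t}\cap\G^\tau$ is generated by $\BJ^0\cap\G^\tau$ and $\w'=\w^2$, so the constraint $\qq|_{\BJI_{\t}\cap\G^\tau}=\chi$ together with $\chi^2=1$ gives you only $\overline{\qq}^2=1$ and $\qq(\w)^4=\chi(\w^2)^2=1$. But what you need is that $\mu(\bk\qq)$ vanishes at the generator $\w$ of $\BJI_{\t}$, i.e.\ (since $\tau(\w)=-\w$) that
\[
\qq(\w)\,\qq(-\w)=\qq(\w)^2\,\overline{\qq}(-1)=\chi(\w^2)\,\overline{\chi}(-1)=\chi(-\w^2)=1.
\]
Nothing in your argument forces $\chi(-\w^2)=1$: this is a genuine constraint on the character $\chi$ attached to a $\tau$-autodual $\bk$, and it does not follow from $\chi^2=1$ alone. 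In fact the subgroup of twists preserving $\tau$-autoduality has index~$2$ over the subgroup of twists preserving distinction here, so the containment you want is not automatic from torsor considerations; you must exhibit at least one extension lying in both cosets, or otherwise pin down $\chi(-\w^2)$.

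The paper closes this gap by an entirely different route: when $m=1$ the pair $(\BJI_{\t},\bk)$ is itself a type, its compact induction is a cuspidal $\G^\tau$-distinguished representation, and Theorem~\ref{GuoBM} (the autoduality theorem for distinguished cuspidals) forces $\pi\simeq\pi^\vee$, hence $\bk\simeq\bk^{\vee\tau}$ and $\mu=1$. This external input is essential; your purely character-theoretic computation cannot replace it. If you want to salvage your approach, you should either invoke Theorem~\ref{GuoBM} at this point as the paper does, or argue via Proposition~\ref{exkappadist} together with a separate verification that the distinguished extension constructed there is $\tau$-autodual---but the latter again reduces to the same issue.
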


\begin{proof}
Soit $\bk$ une représentation $\BJI_{\t}\cap\G^\tau$-distinguée
de $\BJI_{\t}$ prolongeant $\n$.
D'après le lemme \ref{etakappasigmageneral},
il y a un unique caractère $\bx$ de $\BJI_{\t}$ trivial sur $\BJ^1$
tel que $\bk^{\vee\tau}$ soit isomorphe à~$\bk\bx$.
Comme $\bk$ est $\BJI_{\t}\cap\G^\tau$-distinguée,
la restriction de $\bx$ à $\BJ_\t\cap\G^\tau$ est triviale.
Il s'agit de prouver que $\bx$ est trivial sur $\BJI_{\t}$. 
Soit $\w\in\BJI_{\t}\cap\B^\times$ comme dans le paragraphe 
\ref{couragelesgars}. 

Commençons par le cas (1).
La restriction de $\bx$ à $\BJ^0$ s'identifie à un
caractère de $\GL_{m}(\ee)$~tri\-vial~sur $\GL_{m/2}(\ee_0)$,
où $\ee_0$ est l'extension quadratique de $\ee$ 
engendrée par $\v$~dans $\Mat_m(\ee)$.
Celui-ci se factorise sous la forme $\upsilon\circ\det$
où $\upsilon$ est un caractère de $\ee^\times$
trivial sur $\N_{\ee_0/\ee}(\ee_0^\times)=\ee^\times$,
\ie~que $\bx$ est trivial sur $\BJ^0$.
Il l'est aussi sur $\BJI_\t$,
car $\bx(\w)=1$
d'après le lemme \ref{choixw}.

Passons au cas (2).
Si $m=1$, la paire 
$(\BJ_\t,\bk)$ est~un type $\BJ_\t\cap\G^\tau$-distingué de $G$.
Son indui\-te~com\-pacte $\pi$ est donc une représentation cuspidale
$\G^\tau$-distinguée de $G$.
D'après le théorème \ref{GuoBM}, celle-ci est autoduale.
Par conséquent, les types $\bk$ et $\bk^{\vee\tau}\simeq\bk\bx$ sont
tous les deux contenus dans $\pi$,
ce dont on déduit que $\bx$ est trivial.
Si $m=2i$, la restriction de $\bx$ à $\BJ^0$ s'identifie à un
caractère de $\GL_{m}(\ee)$ tri\-vial sur
$\GL_{i}(\ee)\times\GL_{i}(\ee)$.
On en déduit que $\bx$~est 
tri\-vial sur $\BJ^0$.
Puis on a $\bx(\w t)=1$
(voir le lemme \ref{choixw})
et $\bx(t)=1$, donc $\bx(\w)=1$.  
\end{proof}

Nous en déduisons l'existence et l'unicité d'un prolongement remarquable
de $\n$ à $\BJI_{\t}$.

\begin{coro}
\label{exkappadistaddetp} 
Supposons que l'on soit dans l'un des cas considérés à la proposition
\ref{exkappadistad}.
\begin{enumerate}
\item
Le nombre de représentations~de $\BJI_{\t}$ prolongeant $\n$
étant à la fois $\tau$-autoduales et $\BJI_{\t}\cap\G^\tau$-distinguées est~: 
\begin{enumerate}
\item 
égal à $2$ si $E/E_0$ est ramifiée, $c_0$ est impair et $m=1$,
\item
égal à $1$ sinon.
\end{enumerate}
\item 
Il existe une unique représentation $\bk_{\t}$ de $\BJI_{\t}$ prolongeant $\n$
étant à la fois $\tau$-autoduale et $\BJI_{\t}\cap\G^\tau$-distinguée, 
et dont le déterminant soit d'ordre une puissance de $p$. 
\end{enumerate}
\end{coro}

\begin{proof}
Soit $\bk$ une représentation de $\BJI_{\t}$ prolongeant $\n$
qui soit à la fois $\tau$-autoduale et $\BJI_{\t}\cap\G^\tau$-distinguée.
Son~dé\-ter\-minant $\boldsymbol{\d}$
est un caractère $\tau$-autodual de $\BJI_{\t}$
trivial sur $\BJI_{\t}\cap\G^\tau$.
Le lemme~\ref{choixw} assure qu'il est d'ordre fini.
Le facteur $\boldsymbol{\omega}$ de $\boldsymbol{\d}$ 
d'ordre premier à $p$ est $\tau$-autodual 
et~$\BJI_{\t}\cap\G^\tau$-distingué.
Raisonnant comme dans la preuve du lemme 
\ref{bkpprimaire},~on~mon\-tre que,
tordant $\bk$ par $\boldsymbol{\omega}^{-a}$ pour un entier $a$ 
convenable,
on obtient une représentation $\bk_{\t}$ 
à la fois $\tau$-autoduale et $\BJI_{\t}\cap\G^\tau$-distinguée,
et dont le déterminant est d'ordre une puissance de $p$.~Une
représentation de $\BJ_\t$ prolongeant $\n$
à la fois $\tau$-autoduale et $\BJI_{\t}\cap\G^\tau$-distinguée
s'écrit donc sous la forme $\bk_{\t}\bx$ pour un caractère $\bx$ 
de $\BJI_{\t}$ trivial sur $(\BJI_{\t}\cap\G^\tau)\BJ^1$ et $\tau$-autodual.

Nous allons voir que $\bx$ est~trivial sur $\BJ^0$.
Identifions la restriction de $\bx$ à $\BJ^0$ à un
caractère de $\GL_m(\ee)$~tri\-vial~sur~$\GL_m(\ee)^\tau$,
se factorisant sous la forme $\upsilon\circ\det$
où $\upsilon$ est un caractère de $\ee^\times$.
\begin{itemize}
\item 
Si $E/E_0$ est ramifiée et $c_0$ est pair,
$\upsilon\circ\det$ est trivial sur
$\GL_m(\ee)^\tau=\GL_{m/2}(\ee_0)$,
donc $\upsilon$ est trivial sur
$\N_{\ee_0/\ee}(\ee_0^\times)=\ee^\times$.
\item 
Si $E/E_0$ est ramifiée, $c_0$ est impair et $m=2i$,
alors 
$\upsilon\circ\det$ est trivial sur
le~sous-grou\-pe $\GL_m(\ee)^\tau=\GL_{i}(\ee)\times\GL_{i}(\ee)$, 
donc $\upsilon$ est trivial.
\item 
Si $E/E_0$ est ramifiée, $c_0$ est impair et $m=1$,
alors $\ee^{\times\tau}=\ee^\times$ 
donc $\upsilon$ est trivial.
\end{itemize}
Ensuite, d'après le lemme \ref{choixw}, on a~:
\begin{itemize}
\item 
$\bx (\w)=1$
si $E/E_0$ est ramifiée et $c_0$ est pair, 
\item
$\bx (\w t)=1$ si $E/E_0$ est ramifiée, $c_0$ est impair et $m=2i$, 
\item
$\bx (\w^2)=1$ si $E/E_0$ est ramifiée, $c_0$ est impair et $m=1$.
\end{itemize}
Dans les deux premiers cas, le caractère $\bx$ est trivial
(dans le second cas car $\bx (t)=1$ puisque $\bx$ est trivial sur $\BJ^0$).
Dans le troisième et dernier cas, $\bx$ est quadratique et
trivial sur $\BJ^0$~;
il y a exactement deux tels caractères,
dont un seul qui soit d'ordre une puissance de $p$.
\end{proof}

\subsection{}
\label{eb}

Dans ce paragraphe,
on s'intéresse au cas où $\E/\E_0$ est non ramifiée.
Si $\rho$ est une~re\-pré\-sen\-tation irréductible cuspidale de $\GL_m(\ee)$,
on note $s(\rho)$ l'ordre du stabili\-sa\-teur de la
classe~d'iso\-morphisme de $\rho$ sous l'action de $\Gal(\ee/\kk_E)$.
D'après \cite[Corollaire 3.9]{MSjl}, 
c'est un diviseur de $c$ premier à $m$. 

\begin{lemm}
\label{hamptons}
On suppose que $mc$ est impair. 
Soit un entier $s\>1$. 
Pour qu'il existe~une~re\-pré\-sentation cuspidale $\tau$-auto\-dua\-le $\rho$
de $\GL_m(\ee)$ telle $s(\rho)=s$, il faut et suffit que
$s$ soit un~diviseur de~$c$ premier à $m$. 
\end{lemm}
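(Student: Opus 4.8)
The statement is a parametrization fact about cuspidal representations of $\GL_m(\ee)$ that are $\tau$-autodual, where $\tau$ acts on $\GL_m(\ee)$ via the nontrivial element of $\Gal(\ee/\ee_0)$ (the unramified case), under the hypothesis $mc$ odd. Since $E/E_0$ is unramified, Lemma~\ref{lemmedeparite} (and its refinement Lemma~\ref{fromagequipue}) already force $m$ odd, so in fact both $m$ and $c$ are odd here. The plan is to translate $\tau$-autoduality of $\rho$ into a condition on a Green parameter $\xi$, exactly as in the proof of Lemma~\ref{lemmedeparite}, and then count the orbits.

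\textbf{Step 1: Green parametrization and the autoduality condition.} Let $q_0=|\ee_0|$, so $q=|\ee|=q_0^2$. By Green's parametrization \cite{Green}, cuspidal representations $\rho$ of $\GL_m(\ee)$ correspond to $\ee$-regular characters $\xi$ of $\tt^\times$ for a degree-$m$ extension $\tt/\ee$, via \eqref{GreenParam}, up to the $\Gal(\tt/\ee)$-action on $\xi$. The action of $\tau$ on $\GL_m(\ee)$ (Frobenius of $\ee/\ee_0$) sends $\rho$ to the representation with parameter $\xi^{q_0}$ (the Frobenius of $\ee_0$ acting on $\tt$, which makes sense since $\gcd(m,2)=1$ so $\tt$ has a canonical degree-$m$ unramified subextension stable under $\Gal(\ee/\ee_0)$), and $\rho^\vee$ has parameter $\xi^{-1}$. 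Hence $\rho$ is $\tau$-autodual iff $\xi^{q_0}=\xi^{-q^{i}}$ for some $i\in\{0,\dots,m-1\}$, i.e. $\xi^{q_0(q^{i}+1)}=1$... more precisely $\xi^{q^{i}+q_0}=1$ after adjusting, which as in Lemma~\ref{lemmedeparite} forces $\xi^{q}=\xi^{q^{2i}}$, hence (by $\ee$-regularity) $m\mid 2i-1$; since $m$ is odd this pins down $i$ and shows such a $\xi$ exists.

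\textbf{Step 2: The stabilizer $s(\rho)$.} Recall $s(\rho)$ is the order of the stabilizer of $\rho$ under $\Gal(\ee/\kk_E)$, equivalently the order of $\mathrm{Frob}_{\kk_E}$ acting on the $\Gal(\tt/\ee)$-orbit of $\xi$; by Proposition~\ref{scoprimem} (quoted as \cite{MSjl} Corollaire 3.9) it is always a divisor of $c$ coprime to $m$. So \textbf{necessity} is immediate. For \textbf{sufficiency}, given $s\mid c$ with $\gcd(s,m)=1$, I must produce a $\tau$-autodual $\xi$ with stabilizer of size exactly $s$. Here one builds $\xi$ explicitly: work in $\tt\cdot\kk_E$-level, choose $\xi$ so that it factors appropriately and its $\mathrm{Frob}_{\kk_E}$-orbit has length $c/s$, while simultaneously imposing the autoduality relation $\xi^{q^{i}+q_0}=1$ from Step~1. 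The compatibility of these two constraints is where the hypothesis $mc$ odd is used crucially: since $q_0+q^i$ is even-plus-odd... one checks that the subgroup of $\tt^\times$ cut out by the autoduality relation still contains $\ee$-regular characters with the prescribed orbit length, using that $\gcd(m,2)=1$ and a counting argument on the cyclic group $\tt^\times$.

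\textbf{Main obstacle.} The delicate point is the sufficiency direction: showing that for \emph{every} admissible $s$ one can choose the Green parameter $\xi$ to be simultaneously $\ee$-regular, $\tau$-autodual, \emph{and} have $\kk_E$-stabilizer of the exact prescribed order $s$. The regularity and the autoduality conditions each cut $\xi$ out of a subgroup of the cyclic group $\tt^\times$, and one must verify these intersect in enough characters of each prescribed orbit-length; this is a finite-abelian-group counting argument but requires care about $2$-torsion, which is exactly why $mc$ (equivalently $q-1$ being... ) odd is assumed. I expect the cleanest route is to reduce to the case $s=c$ first (the ``most autodual'' case, handled essentially by Step~1), then for general $s$ twist by a character of $\tt^\times$ pulled back through the norm to a suitable intermediate field, checking the norm is compatible with both the Galois-orbit bookkeeping and the involution, analogously to the twisting arguments in Propositions~\ref{kappatauautoduale} and \ref{exkappadist}.
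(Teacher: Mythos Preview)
Your framework and the necessity direction are fine. The gap is sufficiency: you never construct $\xi$. Your proposed route (reduce to $s=c$, then twist by characters pulled back through norms) is not carried out, and even if pursued it is more delicate than the problem requires, since any twist must simultaneously preserve $\ee$-regularity and $\tau$-autoduality while moving the $\Gal(\ee/\kk_E)$-stabilizer to the prescribed size.

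The paper's proof is a direct one-line construction, but it requires choosing the right base cardinality. Set $q_0:=|\kk_{E_0}|$ (not $|\ee_0|$; note $|\ee_0|=q_0^{c}$ and $|\ee|=q_0^{2c}$ since $[\ee:\kk_E]=c$ and $E/E_0$ is unramified). Your Step~1 computation then sharpens to the clean criterion: $\rho$ is $\tau$-autodual if and only if the order of $\xi$ divides $q_0^{mc}+1$ (this is \cite{VSANT19} Lemma~2.3; in your notation with $|\ee_0|$ it reads ``order divides $|\ee_0|^{m}+1$'', which is the same thing). For sufficiency, write $c=bs$ and take $\xi$ of order exactly $q_0^{mb}+1$. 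This divides $|\tt^\times|=q_0^{2mc}-1$; it divides $q_0^{mc}+1$ because $s=c/b$ is odd (here is where $c$ odd enters); the order of $q_0^{2}$ modulo $q_0^{mb}+1$ is $mb$, so the order of $|\ee|=q_0^{2c}$ is $mb/\gcd(mb,c)=m$ (using $\gcd(s,m)=1$), which gives $\ee$-regularity; and the resulting stabilizer in $\Gal(\ee/\kk_E)\simeq\ZZ/c\ZZ$ is $b\ZZ/c\ZZ$, of order $s$. No counting argument or twisting is needed.

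The reason you did not see this is your choice $q_0=|\ee_0|$: the integer $q_0^{mb}+1$ (paper's $q_0$) is in general not of the form $|\ee_0|^{k}+1$, so the construction is invisible at the level of $\ee_0$ alone. Working over $\kk_{E_0}$ is what makes the stabilizer computation transparent.
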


\begin{proof}
Comme $\Gal(\ee/\kk_E)$ est d'ordre $c$,
le fait que $s$ divise $c$ est certainement une condition nécessaire.
Supposons donc désormais que $s$ divise $c$. 
Soit $\tt$ une extension de $\ee$ de~de\-gré $m$ dans $\Mat_m(\ee)$,
et considérons $\tt^\times$ comme un tore maximal de $\GL_m(\ee)$.
Soit $\xi$ un caractère $\ee$-régulier de $\boldsymbol{t}^\times$
(\ie que ses conjugués sous $\Gal(\tt/\ee)$ sont distincts deux à deux)
et soit $\rho$ la représentation cuspidale de $\GL_m(\ee)$ qui lui
correspond au sens de \cite{Green}, \ie que~:
\begin{equation}
\label{GreenParam}
\tr\ \rho (x) = (-1)^{m-1} \cdot \sum\limits_{\g} \xi^{\g}(x)
\end{equation}
pour tout $x\in\tt^\times$ dont le polynôme caractéristique est
irréductible sur $\ee$,
la somme étant prise~sur $\Gal(\tt/\ee)$.  
Notons $q_0$ le cardinal du corps résiduel de $E_0$. 
Alors \cite[Lemma 2.3]{VSANT19} (et plus précisément sa preuve)
assure que $\rho$ est $\tau$-autoduale si et seulement si
l'ordre de $\xi$ divise $q_0^{mc}+1$.

\'Ecrivons $c=bs$ et fixons un caractère $\xi$ de $\boldsymbol{t}^\times$
d'ordre $q_0^{mb}+1$, ce qui est possible car $q_0^{mb}+1$ divise l'ordre de 
$\tt^\times$.
L'entier $c$ (donc $s$) étant impair,
$q_0^{mb}+1$ divise $q_0^{mc}+1$. 
L'ordre de $q_0^2$ dans $(\ZZ/(q_0^{mb}+1)\ZZ)^\times$ 
est égal à $mb$, donc 
celui de $q_0^{2c}$ est égal à $mb/(mb,c)=m$
car $s$ est premier à $m$.
Par conséquent, le caractère $\xi$ est $\ee$-régulier,
la représentation cuspidale $\rho$ qui lui correspond est $\tau$-autoduale
et~$s(\rho)$ est égal à $s$.
\end{proof}

\begin{prop}
\label{flossdenaley}
On suppose que $mc$ est impair.
Soit $b$ un diviseur de $c$ tel que $c/b$ soit premier à $m$,
et soit $\BJ_{b}$ le sous-groupe de $\BJI_{\t}$ engendré par $\w^b$
et $\BJ^0$. 
Il existe une représentation de $\BJ_{b}$ prolon\-geant $\n$ qui est 
à la fois $\tau$-autoduale et $\BJ_{b}\cap\G^\tau$-distinguée. 
\end{prop}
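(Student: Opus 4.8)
Le plan est de transposer au sous-groupe $\BJ_{b}$, autant que possible, les manipulations de caractères du cas non ramifié déjà effectuées pour $\BJI_{\t}$ aux paragraphes \ref{viteA}--\ref{ea}, le passage de $\BJI_{\t}$ à $\BJ_{b}$ fournissant la latitude supplémentaire dont on a besoin lorsque $c/b$ est premier à $m$. Je fixerais d'abord, comme au lemme \ref{choixw}, un élément $\w\in\BJI_{\t}\cap\B^{\times}$ tel que $\w^{c}$ soit une uniformisante de $E$~; étant dans le cas non ramifié, \eqref{choixweq} donne $\tau(\w)=\w$, d'où $\BJ_{b}=\langle\BJ^{0},\w^{b}\rangle$ et, par l'argument du lemme \ref{descrJtau}(1) appliqué à $\w^{b}$, $\BJ_{b}\cap\G^{\tau}=\langle\BJ^{0}\cap\G^{\tau},\w^{b}\rangle$. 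D'après la proposition \ref{kappatauautoduale}, il existe une représentation $\tau$-autoduale $\bk$ de $\BJI_{\t}$ prolongeant $\n$~; sa restriction à $\BJ_{b}$ est encore $\tau$-autoduale, et la preuve du lemme \ref{kappatau} transposée à $\BJ_{b}$ fournit un caractère $\chi$ de $\BJ_{b}\cap\G^{\tau}$, trivial sur $\BJ^{1}\cap\G^{\tau}$, tel que $\Hom_{\BJ^{1}\cap\G^{\tau}}(\n,\FC)=\Hom_{\BJ_{b}\cap\G^{\tau}}(\bk,\chi^{-1})$ et tel que $\chi^{2}$ soit la restriction à $\BJ_{b}\cap\G^{\tau}$ du caractère $\mu$ défini par $\bk^{\vee\tau}\simeq\bk\mu$ sur $\BJ_{b}$~; comme $\bk$ est $\tau$-autoduale, $\mu=1$, donc $\chi$ est quadratique.

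Par l'analogue pour $\BJ_{b}$ du lemme \ref{kappatau}(3), une représentation $\bk\qq$ de $\BJ_{b}$, où $\qq$ est un caractère de $\BJ_{b}$ trivial sur $\BJ^{1}$, est à la fois $\tau$-autoduale et $\BJ_{b}\cap\G^{\tau}$-distinguée si et seulement si $\qq(\qq\circ\tau)=1$ et $\qq$ coïncide avec $\chi$ sur $\BJ_{b}\cap\G^{\tau}$, ces deux conditions étant compatibles sur $\BJ_{b}\cap\G^{\tau}$ puisque $\chi$ y est quadratique. Comme $p\neq2$, tout caractère de $\GL_{m}(\ee)$ se factorise par le déterminant, et on écrit $\qq=\overline{\qq}\circ\det$ sur $\BJ^{0}/\BJ^{1}\simeq\GL_{m}(\ee)$ et $\chi=\overline{\chi}\circ\det$ sur $\GL_{m}(\ee_{0})$, avec $\overline{\qq}$ un caractère de $\ee^{\times}$ et $\overline{\chi}$ un caractère quadratique de $\ee_{0}^{\times}$. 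L'extension $E/E_{0}$ étant non ramifiée, $\tau$ agit sur $\GL_{m}(\ee)$ comme le générateur de $\Gal(\ee/\ee_{0})$ (proposition \ref{indiceduntype})~; la condition $\qq(\qq\circ\tau)=1$ sur $\BJ^{0}$ équivaut donc à ce que $\overline{\qq}$ soit trivial sur $\ee_{0}^{\times}$, auquel cas $\qq$ est trivial sur $\BJ^{0}\cap\G^{\tau}$, et l'égalité $\qq=\chi$ sur $\BJ^{0}\cap\G^{\tau}$ impose $\overline{\chi}=1$. Je déduirais $\overline{\chi}=1$ de ce que la restriction de $\bk$ à $\BJ^{0}$, qui vérifie $(\bk|_{\BJ^{0}})^{\vee\tau}\simeq\bk|_{\BJ^{0}}$ puisque $\bk$ est $\tau$-autoduale, est $\GL_{m}(\ee_{0})$-distinguée de caractère trivial~: c'est l'énoncé de théorie des groupes finis sous-jacent à \cite{VSANT19} Lemma 2.3 et à sa preuve, où l'hypothèse $mc$ impair intervient dans le calcul de parité.

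Une fois $\overline{\chi}=1$ acquis, la construction de $\qq$ est directe~: on prend $\overline{\qq}$ trivial, de sorte que $\qq$ est trivial sur $\BJ^{0}$, et on pose $\qq(\w^{b})=\chi(\w^{b})$. Ceci définit bien un caractère de $\BJ_{b}$ trivial sur $\BJ^{1}$, car $\w^{b}$ agit sur $\GL_{m}(\ee)$ par une puissance du générateur de $\Gal(\ee/\kk_{E})$, qui fixe le caractère trivial~; il vérifie $\qq(\qq\circ\tau)=1$ (c'est clair sur $\BJ^{0}$~; sur $\w^{b}$ cela résulte de $\qq(\w^{b})^{2}=\chi(\w^{b})^{2}=1$, $\chi$ étant quadratique) ainsi que $\qq=\chi$ sur $\BJ_{b}\cap\G^{\tau}$. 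La représentation $\bk\qq$ répond alors à la question. C'est ici que l'on peut, au besoin, faire appel au lemme \ref{hamptons} et à \cite{MSjl} Corollaire 3.9 pour interpréter $\BJ_{b}$ comme le normalisateur d'un type, les entiers $b$ tels que $c/b$ soit premier à $m$ étant exactement les cardinaux d'orbites attachés aux représentations cuspidales $\tau$-autoduales $\rho$ de $\GL_{m}(\ee)$ avec $s(\rho)=c/b$.

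L'obstacle principal est l'égalité $\overline{\chi}=1$~: c'est le point où les hypothèses $mc$ impair et $c/b$ premier à $m$ interviennent véritablement, et il repose sur l'analyse de groupes finis de \cite{VSANT19} (autour des lemmes 2.3 et 7.7--7.13) plutôt que sur une idée nouvelle. Tout le reste est une transcription des calculs de caractères déjà menés, dans le cas non ramifié, aux paragraphes \ref{viteA}--\ref{ea}.
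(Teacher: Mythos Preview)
Your approach is more direct than the paper's but has a genuine gap at the point you yourself identify as ``l'obstacle principal''.

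First, a minor error: the condition $\qq(\qq\circ\tau)=1$ on $\BJ^{0}$ is \emph{not} equivalent to $\overline{\qq}$ being trivial on $\ee_{0}^{\times}$. Since $\tau$ acts on $\GL_{m}(\ee)$ as the generator of $\Gal(\ee/\ee_{0})$, one has $(\qq\cdot(\qq\circ\tau))(g)=\overline{\qq}(\N_{\ee/\ee_{0}}(\det g))$, so the condition is that $\overline{\qq}$ be trivial on the index-$2$ subgroup $\N_{\ee/\ee_{0}}(\ee^{\times})$ of $\ee_{0}^{\times}$. This weakens what you need to show about $\overline{\chi}$ (it suffices that $\overline{\chi}$ be trivial on norms), but does not make it obvious.

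The real problem is your justification of $\overline{\chi}=1$. You invoke \cite{VSANT19} Lemma 2.3, but that lemma concerns cuspidal representations of $\GL_{m}(\ee)$ and their parametrising characters; it says nothing about the distinguishing character $\chi$ attached to a $\beta$-extension via the Heisenberg representation. The restriction $\bk|_{\BJ^{0}}$ does not factor through $\BJ^{0}/\BJ^{1}$, so there is no finite-group statement of the form ``$\bk|_{\BJ^{0}}$ is $\GL_{m}(\ee_{0})$-distinguished with trivial character'' to appeal to. In fact the paper's own treatment of the unramified case in Proposition \ref{exkappadist} allows $\overline{\chi}$ to be an arbitrary quadratic character of $\ee_{0}^{\times}$ and merely extends it to $\ee^{\times}$ --- producing a distinguished extension that is \emph{not} claimed to be $\tau$-autodual. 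Precisely because the direct approach fails here, Proposition \ref{exkappadistad} excludes the unramified case.

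The paper's proof proceeds by a bootstrap that you do not use. It first constructs, via Lemma \ref{hamptons}, an auxiliary $\tau$-autodual cuspidal $\rho$ of $\GL_{m}(\ee)$ with $s(\rho)=c/b$; extends it (Lemma \ref{hamptons2}) to a $\tau$-autodual and $\BJ_{b}\cap\G^{\tau}$-distinguished $\bt$; forms the type $\bl=\bk\otimes\bt$; extends $\chi$ to a character $\qq$ of $\BJ_{b}$ and twists to obtain a \emph{distinguished} type $\bl'=\bl\qq^{-1}$. The compact induction $\pi'$ is then $\G^{\tau}$-distinguished, hence autodual by Theorem \ref{GuoBM}. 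This forces $\qq(\qq\circ\tau)=1$, so that $\bk'=\bk\qq^{-1}$ is both $\tau$-autodual and distinguished. The key idea --- using Theorem \ref{GuoBM} on an auxiliary cuspidal to obtain the $\tau$-autoduality of the twist for free --- is what is missing from your argument.
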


\begin{proof}
Nous allons suivre l'argument de la preuve de \cite[Proposition 9.4]{VSANT19}.
Soit~une représentation $\tau$-autoduale $\bk$~de $\BJI_{\t}$ prolongeant 
$\n$,
{dont l'existence est assurée par la proposition \ref{kappatauautoduale}.}
D'après le lemme \ref{hamptons},
il existe une représentation cuspidale $\tau$-autoduale $\rho$
de $\GL_m(\ee)$~telle $s(\rho)=c/b$.
D'après \cite{Gow}, la représentation $\rho$ est $\GL_m(\ee_0)$-distinguée,
où $\ee_0$ est le sous-corps de $\ee$ invariant par son automorphisme
d'ordre $2$.
Notant encore $\rho$ son~infla\-tion à $\BJ^0$,
son normalisateur dans $\BJI_{\t}$ est égal à $\BJ_{b}$
car l'action de $\w$ par conjugaison induit sur $\ee$
l'action d'un générateur de $\Gal(\ee/\kk_E)$
(voir la remarque \ref{malinlechien}).

\begin{lemm}
\label{hamptons2}
Il existe une représentation de $\BJ_{b}$ prolongeant $\rho$ qui est 
à la fois $\tau$-autoduale et $\BJ_{b}\cap\G^\tau$-distinguée.
\end{lemm}

\begin{proof}
Soit $\bt$ une représentation de $\BJ_{b}$ prolongeant $\rho$.
Tout autre prolongement~de $\rho$ à $\BJ_{b}$ s'obtient en tordant $\bt$
par un caractère non ramifié de $\BJ_b$.
Quitte à tordre par un~caractère non ramifié convenable,
on peut donc supposer que $\bt$ est $\tau$-autoduale,
ce que nous ferons.
{Selon \cite[Theorem 3.6]{Gow}, l'espace~:
\begin{equation}
\label{toposannele}
\Hom_{\GL_m(\ee_0)}(\rho,\FC) = \Hom_{\BJ^0\cap\G^\tau}(\bt,\FC)
\end{equation}
est de dimension $1$.
Faisant agir le groupe $\BJ_{b}\cap\G^\tau$ sur cet espace,
on en déduit un unique~ca\-rac\-tère $\zeta$ de $\BJ_{b}\cap\G^\tau$
trivial sur $\BJ^0\cap\G^\tau$ tel que 
celui-ci soit égal à $\Hom_{\BJ_{b}\cap\G^\tau}(\bt,\zeta)$. 
Par uni\-ci\-té de $\zeta$ et comme $\bt$ est $\tau$-autoduale,
on a $\zeta^2=1$.}
Soit $\bt^*$ le prolongement $\tau$-auto\-dual de $\rho$~ob\-te\-nu en tordant 
$\bt$ par le caractère non ramifié $\boldsymbol{\varepsilon}$
de $\BJ_b$ d'ordre $2$.
Alors l'une~des~repré\-sen\-ta\-tions
$\bt$ et $\bt^*$ est $\BJ_{b}\cap\G^\tau$-distinguée. 
\end{proof}

\begin{rema}
Plus précisément,
une seule des deux repré\-sen\-ta\-tions $\bt$ et $\bt^*$ est 
$\BJ_{b}\cap\G^\tau$-distinguée, car l'autre,
qui est $\boldsymbol{\varepsilon}$-distinguée, 
ne peut pas être distinguée.
En effet,
si c'était le cas,
on aurait deux formes linéaires non nulles sur l'espace de $\rho$,
non colinéaires car l'une est $\BJ_{b}\cap\G^\tau$-invariante et l'autre
$\boldsymbol{\varepsilon}$-équivariante.
Elles sont pourtant toutes les deux $\GL_m(\ee_0)$-invariantes,
ce qui contredit le fait que \eqref{toposannele} est de dimension $1$.
\end{rema}

Soit $\bt$ une représentation comme au lemme \ref{hamptons2},
et soit $\bl=\bk\otimes\bt$.
La paire $(\BJ_{b},\bl)$ est un type $\tau$-autodual.
Soit $\chi$ le caractère de $\BJI_{\t}\cap\G^\tau$ associé à $\bk$
par le lemme \ref{kappatau}.
On a alors~:
\begin{equation*}
\Hom_{\BJ_{b}\cap\G^\tau}(\bl,\chi^{-1}) \neq \{0\}.
\end{equation*}
Comme dans \cite[Lemma 9.2]{VSANT19}, on prouve que 
tout caractère de $\BJ_{b}\cap\G^\tau$ trivial sur $\BJ^{1}\cap\G^\tau$~se
pro\-longe en un caractère de $\BJ_{b}$ trivial sur $\BJ^{1}$.
Soit donc $\qq$ un caractère de $\BJ_{b}$ trivial sur $\BJ^{1}$
{prolongeant $\chi^{-1}$,}
et posons $\bl'=\bl\qq^{-1}$.
La paire $(\BJ_{b},\bl')$ est un type $\BJ_{b}\cap\G^\tau$-distingué. 
Son induite compacte~à $\G$, notée $\pi'$,
est une représentation irréductible cuspidale $\G^\tau$-distinguée.
D'après le théorème~\ref{GuoBM}, elle est autoduale.
Elle contient donc à la fois $\bl'$ et
$\bl'^{\vee\tau}\simeq\bl'\qq(\qq\circ\tau)$,
ce dont on déduit~que~le caractère 
$\qq(\qq\circ\tau)$ est trivial sur $\BJ_{b}$.
Il s'ensuit que la représentation $\bk'=\bk\qq^{-1}$ prolonge $\n$,~et est
à la fois $\tau$-autoduale et $\BJ_{b}\cap\G^\tau$-distinguée.
\end{proof}

Le corollaire suivant fait pendant au corollaire \ref{exkappadistaddetp}.

\begin{coro}
\label{exkappadistaddetpnr} 
Il existe une unique représentation $\bk_{b}$ de $\BJI_{b}$ prolongeant $\n$
étant à la fois $\tau$-autoduale et $\BJI_{b}\cap\G^\tau$-distinguée,
et dont le déterminant soit d'ordre une puissance de $p$.
\end{coro}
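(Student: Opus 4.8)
The plan is to follow the proof of Corollary \ref{exkappadistaddetp} almost verbatim, with Proposition \ref{flossdenaley} playing the role of Proposition \ref{exkappadistad} and everything taking place in the group $\BJ_b$ of Proposition \ref{flossdenaley} (the group written $\BJI_b$ in the statement), namely the subgroup of $\BJI_{\t}$ generated by $\BJ^0$ and $\w^b$, where $\w\in\BJI_{\t}\cap\B^\times$ is the element of Lemma \ref{choixw}. The first point to record is that, since $\E/\E_0$ is non ramified, Lemma \ref{choixw} gives $\tau(\w)=\w$, hence $\w^b\in\BJ_b\cap\G^\tau$; and the argument of Lemma \ref{descrJtau}(1) applies verbatim to $\BJ_b$ and shows that $\BJ_b\cap\G^\tau$ is generated by $\BJ^0\cap\G^\tau$ and $\w^b$. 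For existence, I would start from a representation $\bk$ of $\BJ_b$ extending $\n$ that is both $\tau$-autodual and $\BJ_b\cap\G^\tau$-distinguished, provided by Proposition \ref{flossdenaley}. As in Corollary \ref{exkappadistaddetp}, its determinant is a $\tau$-autodual character of $\BJ_b$ trivial on $\BJ_b\cap\G^\tau$; since $\det\bk$ has finite image on the compact group $\BJ^0$ and sends $\w^b$ to $1$, and $\BJ_b=\langle\BJ^0,\w^b\rangle$, it is of finite order. Writing $\det\bk=\rho_p\cdot\omega$ with $\rho_p$ of $p$-power order and $\omega$ of order prime to $p$, the character $\omega$ is $\tau$-autodual and trivial on $(\BJ_b\cap\G^\tau)\BJ^1$; picking an integer $a\>1$ with $a\cdot\dim\n\equiv1$ modulo the order of $\omega$ (possible since $\dim\n$ is a power of $p$), the representation $\bk_p=\bk\omega^{-a}$ still extends $\n$, is still $\tau$-autodual and $\BJ_b\cap\G^\tau$-distinguished, and has determinant $\rho_p$, of $p$-power order.

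For uniqueness, I would take any representation $\bk'$ of $\BJ_b$ extending $\n$, both $\tau$-autodual and $\BJ_b\cap\G^\tau$-distinguished, with determinant of $p$-power order. As in the proof of Corollary \ref{exkappadistaddetp}, $\bk'\simeq\bk_p\otimes\omega$ for a unique character $\omega$ of $\BJ_b$ trivial on $(\BJ_b\cap\G^\tau)\BJ^1$ and $\tau$-autodual, and it remains to prove $\omega=1$. On the one hand, $\omega^{\dim\n}=\det\bk'\cdot(\det\bk_p)^{-1}$ is a quotient of two characters of $p$-power order, hence of $p$-power order. On the other hand, $\omega$ is trivial on $\BJ^1$, so $\omega|_{\BJ^0}$ factors through $\BJ^0/\BJ^1\simeq\GL_m(\ee)$ and, since $p\neq2$, is of the form $\overline{\omega}\circ\det$ with $\overline{\omega}$ a character of $\ee^\times$, whose order divides $|\ee|-1$ and is therefore prime to $p$. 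Hence $\omega^{\dim\n}|_{\BJ^0}$ is simultaneously of $p$-power order and of order prime to $p$, so trivial; as $\dim\n$ is a power of $p$ and $\overline{\omega}$ has order prime to $p$, this gives $\overline{\omega}=1$, that is $\omega|_{\BJ^0}=1$. Finally $\w^b\in\BJ_b\cap\G^\tau$ and $\omega$ is trivial on $\BJ_b\cap\G^\tau$, so $\omega(\w^b)=1$; since $\BJ_b=\langle\BJ^0,\w^b\rangle$, the character $\omega$ is trivial and $\bk'\simeq\bk_p$.

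The step I expect to require the most care is the bookkeeping around $\BJ_b\cap\G^\tau$: checking that Lemma \ref{descrJtau}(1) transfers to $\BJ_b$ and, crucially, that $\w^b$ lies in $\BJ_b\cap\G^\tau$. This is precisely what removes the exceptional case present in Corollary \ref{exkappadistaddetp}, where $\w$ was $\tau$-anti-invariant when $\E/\E_0$ is ramified, $c_0$ is odd and $m=1$, leaving room for one extra unramified quadratic twist. Here $\w^b$ is $\tau$-invariant and belongs to $\BJ_b\cap\G^\tau$, so there is no such twist and the answer is simply ``unique'', with no count of $2$. Everything else is the formal determinant-twisting device already used for Corollary \ref{exkappadistaddetp}, together with the observation that, as $p\neq2$, every character of $\GL_m(\ee)$ factors through the determinant.
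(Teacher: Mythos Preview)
Your proof is correct and follows essentially the same approach as the paper's: existence is obtained by the determinant-twisting device of Corollary \ref{exkappadistaddetp} applied to the extension furnished by Proposition \ref{flossdenaley}, and uniqueness comes from showing that the twisting character $\omega$ is trivial on $\BJ^0$ (via the prime-to-$p$ nature of characters of $\GL_m(\ee)$) and on $\w^b$ (via $\w^b\in\BJ_b\cap\G^\tau$). The paper's proof is more terse---it asserts directly that $\omega$ has $p$-power order and then notes that characters of $\BJ^0/\BJ^1$ have prime-to-$p$ order---whereas you spell out the intermediate step via $\omega^{\dim\n}$, but the substance is identical.
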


\begin{proof}
Pour prouver l'existence de $\bk_{b}$,
on raisonne exactement comme au début de la preuve du corollaire
\ref{exkappadistaddetp}.
Prouvons maintenant l'unicité.
Si une représentation $\bk$ de $\BJ_b$ a les mêmes propriétés que $\bk_{b}$,
alors elle est de la forme $\bk_{b}\bx$ où $\bx$ est un caractère
de $\BJI_{b}$ trivial sur $(\BJI_{b}\cap\G^\tau)\BJ^1$,
d'ordre une puissance de $p$ et $\tau$-autodual.
{Un caractère du groupe $\BJ^0/\BJ^1\simeq\GL_m(\ee)$
se factorise en un caractère de $\ee^\times$,
qui est d'ordre premier à $p$.}
Le caractère $\bx$ est donc trivial sur $\BJ^0$.
Enfin,
d'après le lemme \ref{choixw},
on a $\bx (\w^b)=1$,
ce qui implique que 
$\bx$ est trivial.
\end{proof}

\subsection{}
\label{viteB}

Nous discutons maintenant l'existence de types $\tau$-autoduaux
dans une représentation~cus\-pi\-dale autoduale de $\G$ de niveau non nul.
Le cas du niveau $0$ sera traité à la section 
\ref{theriverofnoreturn}.~La~pro\-po\-sition suivante généralise le corollaire \ref{MAIN1}.

\begin{prop}
\label{MAIN1prop}
Une représentation cuspidale autoduale de $\G$
de niveau non nul 
contient un caractère simple $\tau$-autodual si et seulement si
elle contient un type $\tau$-autodual.
\end{prop}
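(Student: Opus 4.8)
The plan is to treat the two implications separately. From a $\tau$-autodual type to a $\tau$-autodual simple character the argument is immediate: if $(\BJ,\bl)$ is a $\tau$-autodual type contained in $\pi$ and $\t$ is the simple character attached to it (proposition~\ref{VanceWestonCARASS}), then $\BJ$ is $\tau$-stable, so $\bl^\tau$ is again a representation of $\BJ$, and transporting the conditions (1.a) and (1.b) of proposition~\ref{VanceWeston} through the automorphism $\tau$ shows that $\t\circ\tau$ is the simple character attached to $(\BJ,\bl^\tau)$ while $\t^{-1}$ is the one attached to $(\BJ,\bl^\vee)$; since $\bl^\tau\simeq\bl^\vee$, the uniqueness in proposition~\ref{VanceWestonCARASS} gives $\t\circ\tau=\t^{-1}$, and $\t$ lies in $\pi$ because $(\BJ,\bl)$ does.

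For the converse, assume $\pi$ contains a $\tau$-autodual simple character, so that the conditions of theorem~\ref{THMEXISTENCECARACSIMPLE}, and hence the parity constraints of corollary~\ref{fromagequipuecor}, hold. Using lemma~\ref{gandalf} I fix a $\tau$-autodual maximal simple stratum $[\aa,\b]$ and a $\tau$-autodual $\t\in\Cc(\aa,\b)$ contained in $\pi$; when $E/E_0$ is ramified with $c_0$ odd, proposition~\ref{indiceduntype} lets me also assume that the index of $\t$ equals $\lfloor m/2\rfloor$. Let $(\BJ,\bl)$ be a type of $\pi$ attached to $\t$ (theorem~\ref{THEOCLASSIFTYPES} and remark~\ref{chafouin}). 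The normalizers $\BJI_\t$ and $\BJ$ are $\tau$-stable --- the latter because it is the unique subgroup of $\BJI_\t$ containing $\BJ^0$ of its own index, which $\tau$ preserves. By corollary~\ref{exkappadistaddetp} or corollary~\ref{exkappadistaddetpnr} --- the latter applied to $\BJ$ itself, which is the group $\BJ_b$ appearing there once one observes, using proposition~\ref{scoprimem}, that $c/[\BJI_\t:\BJ]$ is prime to $m$ --- the Heisenberg representation $\n$ of $\BJ^1$ admits a $\tau$-autodual extension $\bk$ to $\BJ$. Writing $\bl\simeq\bk\otimes\bt$ as in proposition~\ref{fannyv}(2) and combining $\bk^\tau\simeq\bk^\vee$ with the injectivity in proposition~\ref{fannyv}(1), one sees that $(\BJ,\bl)$ is $\tau$-autodual if and only if $\bt^\tau\simeq\bt^\vee$. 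So the task reduces to producing, among the types of $\pi$ attached to $\t$, one for which $\bt^\tau\simeq\bt^\vee$.

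\textbf{The main obstacle} lies in this last step, which is vacuous in the split case of corollary~\ref{MAIN1} (where $\BJ=\BJI_\t$) but not in general. Since $\pi$ is autodual and $\tau$ is inner, $(\BJ,\bl^\vee)$ and $(\BJ,\bl^\tau)$ are both types of $\pi$ with attached simple character $\t^{-1}$, hence $\BJI_\t$-conjugate by remark~\ref{chafouin}; the plan is to promote this conjugacy to an actual isomorphism after replacing $(\BJ,\bl)$ by a well-chosen $\BJI_\t$-conjugate, all of which remain types of $\pi$. Reducing modulo $\BJ^1$ splits this into a level-zero part, concerning the cuspidal representation $\rho$ of $\GL_m(\ee)$ obtained by restricting $\bt$ to $\BJ^0$, on which $\tau$ acts as in proposition~\ref{indiceduntype} and for which the $\tau$-selfduality needed --- up to $\Gal(\ee/\kk_E)$-conjugacy, which autoduality of $\pi$ supplies --- is realized by propositions~\ref{kappatauautoduale} and~\ref{flossdenaley}; and a $\w$-direction part, concerning the value of $\bt$ on the element $\w$ of lemma~\ref{choixw}, which reduces to solving an equation in the finite cyclic group $\BJI_\t/\BJ$ on which $\tau$ acts by $\pm1$ according to~\eqref{choixweq}. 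When $E/E_0$ is unramified, this cyclic group has odd order because its order divides $c$ and $c$ is odd by corollary~\ref{fromagequipuecor}(1), so the equation is solvable. When $E/E_0$ is ramified, $m$ is even or equal to $1$ by corollary~\ref{fromagequipuecor}(2), and the remaining freedom is pinned down with the help of the canonical $\tau$-autodual, $\BJI_\t\cap\G^\tau$-distinguished extension $\bk_p$ of corollary~\ref{exkappadistaddetp} and the triviality of the first cohomology set of $\langle\tau\rangle$ with values in a pro-$p$-group, exactly as in the corresponding step of the proof of proposition~\ref{indiceduntype}. Assembling these contributions produces a $\tau$-autodual type contained in $\pi$, finishing the proof.
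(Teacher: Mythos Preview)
Your treatment of the easy direction is fine and matches the paper.  For the converse, however, the strategy you announce --- ``promote this conjugacy to an actual isomorphism after replacing $(\BJ,\bl)$ by a well-chosen $\BJI_\t$-conjugate'' --- cannot work, and the paper proceeds quite differently.

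A direct computation (using $\tau(\w)\in\w\BJ^0$ from lemma~\ref{choixw}) shows that if $\bl^{\vee\tau}\simeq\bl^{\w^i}$, then for any $h=\w^j\in\BJI_\t$ one has $(\bl^h)^{\vee\tau}=(\bl^{\vee\tau})^{\tau(h)}\simeq\bl^{\w^i\tau(\w^j)}$, and the condition $(\bl^h)^{\vee\tau}\simeq\bl^h$ becomes $h^{-1}\w^i\tau(h)\in\BJ$, i.e.\ $\w^i\in\BJ$, i.e.\ $i\equiv0\pmod b$ --- \emph{independently of $j$}.  So $\tau$-autoduality of the type is invariant under $\BJI_\t$-conjugation, and no choice of conjugate can create it if it is absent.

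What the paper actually does is prove that the original type $(\BJ,\bl)$ is already $\tau$-autodual, by showing $i=0$.  Applying $\vee\tau$ twice gives $\w^i\tau(\w^i)\in\BJ$, hence $b\mid2i$, so $i\in\{0,b/2\}$.  If $b$ is odd one is done; this covers the unramified case since $b\mid c$ and $c$ is odd there.  If $b$ is even, then $c$ is even, forcing $E/E_0$ ramified, $c_0$ even and $m=2l$.  Assuming $i=b/2$ and choosing $\bk$ $\tau$-autodual (proposition~\ref{kappatauautoduale} suffices; the distinguished $\bk_p$ is not needed), the relation $\bt^{\vee\tau}\simeq\bt^{\w^{b/2}}$ restricted to $\BJ^0$ gives $\rho^\vee\simeq\overline{\rho}$ (conjugate by the order-$2$ element of $\Gal(\ee/\kk_E)$), and the Green-parameter argument of lemma~\ref{lemmedeparite} then forces $m$ to be odd --- a contradiction.

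Your final paragraph does not contain this contradiction step, and the ingredients you invoke in the ramified case (the distinguished extension $\bk_p$ of corollary~\ref{exkappadistaddetp}, the pro-$p$ cohomology vanishing used in proposition~\ref{indiceduntype}) address different questions and do not rule out $i=b/2$.  Without the parity obstruction on $m$, the ramified $c_0$-even case is left open.
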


\begin{proof}
L'une de ces implications est immédiate, 
car le caractère simple attaché à un type $\tau$-autodual est 
$\tau$-autodual.
Soit maintenant $\pi$ une représentation cuspidale autoduale~de~$\G$ 
contenant un caractère simple $\tau$-autodual $\t\in\Cc(\aa,\b)$.
D'après le lemme \ref{gandalf},
on peut~supposer que la strate simple $[\aa,\b]$ est $\tau$-autoduale,
ce que nous ferons. 
Soit $(\BJ,\bl)$ un type contenu~dans~$\pi$
auquel $\t$ soit attaché.
Alors $\tau$~stabi\-lise le normalisateur $\BJI_{\t}$ de
$\t$, donc~aussi son sous-groupe~com\-pact maximal $\BJ^0$.
Le quotient de $\BJI_{\t}$ par $F^\times\BJ^0$ étant~cycli\-que,
il possède un unique sous-groupe d'indice donné,
donc $\tau$ nor\-ma\-lise $\BJ$.
Aussi~le ty\-pe $(\BJ,\bl^{\vee\tau})$, qui est con\-te\-nu dans $\pi$,
est conjugué à $(\BJ,\bl)$ par un $g\in\G$ normalisant $\BJ$.
Comme le~carac\-tè\-re~sim\-ple qui lui est attaché est $\t^{-1}\circ\tau=\t$,
on a $g\in \BJI_{\t}$.
Notons $b$ l'indice de $\BJ$ dans $\BJI_{\t}$,
et fixons un $\w\in \BJI_{\t}\cap\B^\times$ tel que
$\BJI_{\t}$ soit engendré par $\BJ^0$
et $\w$ comme 
dans le paragraphe \ref{couragelesgars}.
Par conséquent, $\BJ$ est engendré par $\w^b$ et $\BJ^0$.
On peut donc supposer que $g=\w^i$ pour un $i\in\{0,\dots,b-1\}$
tel que $\w^i\tau(\w^i)\in\BJ$,
\ie que $b$ divise $2i$.
Si $b$ est impair, alors $i=0$ et $(\BJ,\bl)$ est $\tau$-autodual.

Supposons maintenant que $b$ soit pair et posons $\w'=\w^{b/2}$.
Fixons une représentation $\bk$ de~$\BJ$ prolongeant $\n$ et notons
$\bt$ la représentation de $\BJ$ triviale sur $\BJ^1$ telle que
$\bl\simeq\bk\otimes\bt$.
Quoique ce ne soit pas indispensable dans ce qui suit,
nous supposerons que $\bk$ est $\tau$-autoduale, 
comme nous y autorise la proposition 
\ref{kappatauautoduale}. 
Sup\-po\-sons que $(\BJ,\bl)$ ne soit pas
$\tau$-autodual.
Alors~la~repré\-sen\-tation
$\bl^{\vee\tau}$ est isomorphe à $\bl^{\w'}$ et,
comme $\w'$ normalise $\bk$,
on en déduit que $\bt^{\vee\tau}$ est isomorphe à $\bt^{\w'}$.
D'autre part, le fait que $b$ soit pair implique que $c$ l'est aussi.
On est donc dans le cas~où $c=c_0/2$ et $m=2m_0$,
donc $E/E_0$ est ramifiée et $m$ est pair,
avec les notations des paragraphes \ref{Biotabstait} et \ref{cestlong}.

Identifions~main\-te\-nant 
$\BJ^0/\BJ^1\simeq\bb^\times/\BU^1(\bb)$ et $\GL_m(\ee)$
de sorte que $\tau$ agisse sur $\GL_m(\ee)$ de la façon décrite
à la proposition \ref{indiceduntype},
\ie par conjugaison par un élément $\v\in\GL_m(\ee)$ 
tel que $\v^2\in\ee^{\times}$ et $\v^2\notin\ee^{\times2}$.
Soit~$\rho$~la~re\-présentation cuspidale de
$\GL_m(\ee)$ définie par $\bt$.
La conjugaison par $\w$ agit sur $\GL_m(\ee)$ 
comme un générateur $\g$ de $\Gal(\ee/\kk_E)$
dépendant de l'invariant de Hasse de $C$.
L'élément $\w'$ agit~donc comme l'élément $\g'=\g^{b/2}$.
Compte tenu de la définition de $b$ à la remarque \ref{Halo},
la~repré\-sen\-ta\-tion $\rho^{\g'}$~est~isomor\-phe à $\overline{\rho}$,
le conjugué de $\rho$ par
l'unique élément de $\Gal(\ee/\kk_E)$ d'ordre $2$.
La relation $\bt^{\tau\vee}\simeq\bt^{\w'}$ entraîne donc
$\rho^{\vee}\simeq \overline{\rho}$.
D'après \cite[Lemma 2.3]{VSANT19}, 
on en déduit que $m$ est impair~:~con\-tra\-dic\-tion.
Ainsi le type $(\BJ,\bl)$ est $\tau$-autodual.
\end{proof}

Il est intéressant de noter que ce résultat est faux en niveau $0$~:
nous verrons dans la section \ref{theriverofnoreturn}
qu'une représentation cuspidale autoduale de niveau $0$ de $G$ peut contenir
un caractère simple $\tau$-autodual sans contenir de type $\tau$-autodual
(voir le paragraphe \ref{findumonde}).

\subsection{}
\label{onnoubliepas2}

La situation est la même qu'au paragraphe \ref{couragelesgars}~:
on a fixé un caractère simple maximal~$\tau$-au\-todual $\t$ et une
strate simple $\tau$-autoduale $[\aa,\b]$ telle que
$\t\in\Cc(\aa,\b)$.
Supposons en outre qu'on soit dans l'un des cas suivants~:
\begin{itemize}
\item 
$E/E_0$ est {non ramifiée} et $mc$ est impair,
\item
$E/E_0$ est ramifiée, $c_0$ est impair, $m$ est pair ou égal à $1$,
et $\t$ est d'indice $i=\lfloor m/2\rfloor$,
\item
$E/E_0$ est ramifiée et $c_0$ est pair. 
\end{itemize}
Notons $\Om=\Om_{\t}$ l'ensemble des paires $(\BJ,\bt)$ tels que~:
\begin{enumerate}
\item
$\F^\times\BJ^{0}\subseteq\BJ\subseteq\BJI_{\t}$
et $\bt$ est une classe d'isomorphisme de
représentation irréductible $\tau$-auto\-duale
de $\BJ$ triviale sur $\BJ^1$, 
\item
le normalisateur de $\bt$ dans $\BJI_{\t}$ est égal à $\BJ$,
\item
la restriction de $\bt$ à $\BJ^0$ est l'inflation d'une représentation 
cuspidale de $\GL_m(\ee)$.
\end{enumerate}
Cela signifie donc que, si $\bk$ est une représentation de $\BJI_\t$
prolongeant $\n$, alors $(\BJ,\bk\otimes\bt)$ est~un type de $G$.

\'Etant donné $(\BJ,\bt)\in\Om$,
notons $b$ l'indice de $\BJ$ dans $\BJI_\t$.
D'après les remarques \ref{malinlechien} et \ref{marecage}, 
c'est un diviseur de $c$ et $c/b$ est premier à $m$.

\begin{lemm}
\label{gersonide}
L'ensemble $\Om$ est fini,
et il existe un sous-ensemble $\Om^+\subseteq\Om$ tel que~: 
\begin{enumerate}
\item 
le cardinal de $\Om$ est le double de celui de $\Om^+$,
\item
pour tout $(\BJ,\bt)\in\Om^+$,
la représentation $\bt$ est $\BJ\cap\G^\tau$-distinguée.
\end{enumerate}
\end{lemm}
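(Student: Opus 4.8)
The plan is to describe the set $\Om$ very concretely in terms of data over the residue field, and then exhibit the involution $(\BJ,\bt)\mapsto(\BJ,\bt^{\vee\tau})$ on $\Om$, whose fixed points we must understand. First I would fix a representation $\bk$ of $\BJI_\t$ extending $\n$ which is $\tau$-autodual (proposition \ref{kappatauautoduale}); then for $(\BJ,\bt)\in\Om$ the pair $(\BJ,\bk\otimes\bt)$ is a type, and conversely any $\BJ$ with $\F^\times\BJ^0\subseteq\BJ\subseteq\BJI_\t$ is determined by its index $b\mid c$ with $c/b$ prime to $m$ (proposition \ref{fannyv}\eqref{malinlechien}, remark \ref{Halo}). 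For each such $\BJ$, the data of $\bt$ amounts, via the identification $\BJ^0/\BJ^1\simeq\GL_m(\ee)$, to a cuspidal representation $\rho$ of $\GL_m(\ee)$ whose $\Gal(\ee/\kk_E)$-stabilizer has order exactly $c/b$, together with a choice of extension of $\rho$ to $\BJ$; the extension is pinned down up to an unramified twist of $\BJ$, and $\tau$-autoduality of $\bt$ then forces $\rho$ to be $\tau$-autodual (i.e. $\rho^\vee\simeq\rho^\s$ in the notation of the action of $\tau$ on $\GL_m(\ee)$) and cuts the unramified-twist ambiguity down to a finite set. Finiteness of $\Om$ is immediate from this description: finitely many divisors $b$, finitely many cuspidal $\rho$ over a finite field, finitely many extensions modulo the finite group of relevant unramified twists.

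Next I would construct $\Om^+$. The map $\iota\colon(\BJ,\bt)\mapsto(\BJ,\bt^{\vee\tau})$ is an involution of $\Om$ (it preserves the index $b$, preserves triviality on $\BJ^1$, preserves $\tau$-autoduality, and the normalizer condition), so it suffices to show $\iota$ has no fixed point; then $\Om^+$ can be taken to be a set of representatives for the orbits, giving $\#\Om=2\,\#\Om^+$, and assertion (2) is arranged by choosing in each orbit $\{(\BJ,\bt),(\BJ,\bt^{\vee\tau})\}$ the member that is $\BJ\cap\G^\tau$-distinguished. That exactly one of $\bt$, $\bt^{\vee\tau}$ is $\BJ\cap\G^\tau$-distinguished is the key input: in the non-split-over-$\ee$ cases ($E/E_0$ ramified, $c_0$ even; or $c_0$ odd, $m$ even or $=1$, index $\lfloor m/2\rfloor$) one argues exactly as in lemma \ref{hamptons2} and proposition \ref{exkappadist}, using that $\Hom_{\GL_m(\ee)^\tau}(\rho,\FC)$ is one-dimensional and that $\bt$, $\bt^{\vee\tau}$ differ by the order-$2$ unramified character of $\BJ$, so precisely one of the two is distinguished; in the case $E/E_0$ ramified with $mc$ odd the same mechanism applies via lemma \ref{hamptons2} directly.

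The remaining point — and the one I expect to be the main obstacle — is to prove that $\iota$ genuinely has \emph{no} fixed point on $\Om$, i.e. that no $(\BJ,\bt)\in\Om$ satisfies $\bt\simeq\bt^{\vee\tau}$. If such a fixed point existed then $(\BJ,\bk\otimes\bt)$ would be a $\tau$-autodual type, and by the argument in the proof of proposition \ref{MAIN1prop} (reducing modulo $\BJ^1$ to the statement that $\rho^\vee\simeq\overline\rho$ for the nontrivial element $\overline{\phantom{x}}$ of the relevant Galois group, hence by the regularity argument of lemma \ref{lemmedeparite} / \cite{VSANT19} Lemma 2.3 that $m$ is odd) one would be forced into a parity contradiction in each of the three listed cases. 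Concretely: in the ramified cases with $c_0$ even, or with $c_0$ odd and $m$ even, one has $m=2l$ even and the conjugation by $\w'$ realizes the order-$2$ Galois twist, so $\rho^\vee\simeq\overline\rho$ gives $m$ odd, contradiction; in the ramified case with $mc$ odd (so $m=1$) one invokes theorem \ref{GuoBM} exactly as at the end of the proof of proposition \ref{exkappadistad} — if $\bt\simeq\bt^{\vee\tau}$ then $(\BJ,\bk\otimes\bt)$ compactly induces to an autodual $\G^\tau$-distinguished cuspidal representation, contradicting proposition \ref{grammage} combined with theorem \ref{THMEXISTENCECARACSIMPLE} since here $E/E_0$ is ramified and one checks the relevant congruence fails, or more simply contradicting the uniqueness part of corollary \ref{exkappadistaddetp}. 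Once the fixed-point-freeness is in hand, the orbit count and the choice of distinguished representatives finish the lemma.
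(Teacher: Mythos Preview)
Your approach has a fundamental flaw: the involution you propose, $\iota\colon(\BJ,\bt)\mapsto(\BJ,\bt^{\vee\tau})$, is the \emph{identity} on $\Om$. Look again at the definition of $\Om$ in paragraph~\ref{onnoubliepas2}: condition~(1) requires that $\bt$ be a $\tau$-autodual representation of $\BJ$, i.e.\ that $\bt^{\vee\tau}\simeq\bt$. So every element of $\Om$ is a fixed point of your $\iota$, and the entire second half of your plan --- showing $\iota$ is fixed-point-free, the parity contradictions, the appeal to theorem~\ref{GuoBM} --- is attacking an impossible statement.

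The paper's proof uses a different, genuinely fixed-point-free involution: $(\BJ,\bt)\mapsto(\BJ,\bt\om)$, where $\om$ is the unique unramified character of $\BJ$ of order~$2$. This is fixed-point-free simply because $\bt\om\not\simeq\bt$ for $\om$ nontrivial, and it preserves $\Om$ because $\om\circ\tau=\om$ and $\om^2=1$. The substance of the argument is then to show that one can choose $\Om^+$ to consist of distinguished representatives. Using that $\rho$ is $\GL_m(\ee)^\tau$-distinguished with multiplicity one (by \cite{Gow}, \cite{HMIMRN} Proposition~6.1, or \cite{Coniglio} Lemme~4.3.11 according to the case), one gets a unique character $\zeta$ with $\Hom_{\GL_m(\ee)^\tau}(\rho,\FC)=\Hom_{\BJ\cap\G^\tau}(\bt,\zeta)$ and $\zeta^2=1$; since $\Hom_{\BJ\cap\G^\tau}(\bt^*,\zeta\om)=\Hom_{\BJ\cap\G^\tau}(\bt,\zeta)$, exactly one of $\bt,\bt^*$ is $\BJ\cap\G^\tau$-distinguished \emph{provided} $\om$ is nontrivial on $\BJ\cap\G^\tau$. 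This holds in all cases except when $E/E_0$ is ramified, $c_0$ is odd, and $m=1$ (because there $\BJ\cap\G^\tau$ is generated by $\BJ^0\cap\G^\tau$ and $\w^{2b}$, on which $\om$ vanishes). In that exceptional case the paper does a direct count: $\Om=\{1,\om,\upsilon,\upsilon\om\}$ has four elements, and precisely the two trivial on $\BJ\cap\G^\tau$ (namely $1$ and $\om$) are distinguished, so one takes $\Om^+=\{1,\om\}$.

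Your finiteness argument is fine, and your instinct that the lemma comes down to a pairing on $\Om$ with distinguished representatives is correct --- you just chose the wrong pairing.
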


\begin{proof}
Pour prouver que $\Om$ est fini,
il suffit de prouver qu'une représentation cus\-pi\-dale $\tau$-autoduale
$\rho$ de $\GL_m(\ee)$ n'admet qu'un nombre fini de prolongements
$\tau$-autoduaux à son normalisateur $\BJ$ dans $\BJI_{\t}$.
Or, si $\bt$ est un tel prolongement,
les autres sont de la forme $\bt\boldsymbol{\chi}$
où $\boldsymbol{\chi}$ est un caractère quadratique de $\BJ$ trivial sur $\BJ^0$.

Soit $(\BJ,\bt)$ une paire de $\Om$
et soit $\rho$ la représentation cuspidale $\tau$-autoduale
de $\GL_m(\ee)$ qu'elle définit. 
D'après \cite{Gow} si $E/E_0$ est non ramifiée,
\cite[Proposition 6.1]{HMIMRN} 
si $E/E_0$~est ramifiée et $c_0$~est impair,
et enfin \cite[Lemme 4.3.11]{Coniglio} 
si $E/E_0$ est ramifiée et $c_0$ est pair,
$\rho$ est $\GL_m(\ee)^\tau$-distinguée et 
$\Hom_{\GL_m(\ee)^\tau}(\rho,\FC)$ est de dimension~$1$.
Il existe donc un unique caractère $\zeta$ de $\BJ\cap\G^\tau$ trivial sur
$\BJ^0\cap\G^\tau$ tel que~:
\begin{equation*}
\Hom_{\GL_m(\ee)^\tau}(\rho,\FC) = \Hom_{\BJ\cap\G^\tau}(\bt,\zeta)
\end{equation*}
et, par unicité de $\zeta$ et comme $\bt$ est $\tau$-autoduale, on a $\zeta^2=1$.
{Le quotient $\BJ/\BJ^0$ étant isomorphe à $\ZZ$,
le groupe $\BJ$ a un unique caractère d'ordre $2$ trivial sur $\BJ^0$,
que l'on note $\boldsymbol{\omega}$.}
Posons ${\bt}^*=\bt\boldsymbol{\omega}$. 
Alors $(\BJ,\bt^*)\in\Om$ et~: 
\begin{equation*}
\Hom_{\BJ\cap\G^\tau}(\bt,\zeta)
= \Hom_{\BJ\cap\G^\tau}(\bt^*,\zeta\boldsymbol{\omega}). 
\end{equation*}
Supposons que $E/E_0$ soit non ramifiée,
ou que $c_0$ soit pair, ou que $m>1$.
Le groupe $\BJI_{\t}\cap\G^\tau$ est en\-gendré par $\BJ^0\cap\G^\tau$ et par un
élément $\w'\in\w\BJ^0$
défini par le lemme \ref{choixw}, 
donc $\BJ\cap\G^\tau$ est engendré par $\BJ^0\cap\G^\tau$ et $\w'^b$.
Le caractère $\boldsymbol{\omega}$ est donc non trivial sur $\BJ\cap\G^\tau$,
car il est~non~tri\-vial~en $\w'^b\in\w^b\BJ^0$.
Par conséquent, exactement une représentation
parmi $\bt$ et $\bt^*$ est distinguée par
$\BJ\cap\G^\tau$.

Supposons enfin que $E/E_0$ soit ramifiée,
que $c_0$ soit impair et que $m=1$.
Le groupe $\BJI_{\t}\cap\G^\tau$ est en\-gendré par
$\w^2$ et $\BJ^0\cap\G^\tau$,
donc $\boldsymbol{\omega}$ est trivial sur $\BJ\cap\G^\tau$.
Soit $\boldsymbol{\upsilon}$ le caractère quadratique non trivial de
$\ee^\times$ prolongé à $\BJ$ en posant $\boldsymbol{\upsilon} (\w^b)=1$.
Alors $\Om$ est constitué des quatre caractères 
$1$, $\boldsymbol{\omega}$, $\boldsymbol{\upsilon}$ et
$\boldsymbol{\upsilon}\boldsymbol{\omega}$.
Ceux qui sont distingués par 
$\BJ\cap\G^\tau=\langle\w^{2b},\BJ^0\cap\G^\tau\rangle$ sont
$1$ et $\boldsymbol{\omega}$.
\end{proof}

\subsection{}
\label{malinlegarcon}

Dans ce paragraphe,
on note $\TT$ l'endo-classe du caractère simple maximal $\tau$-autodual $\t$ 
fixé au~paragraphe \ref{viteA}
et $T/T_0$~l'ex\-ten\-sion quadratique qui lui est associée.  
On suppose qu'il existe une représentation cuspidale~auto\-duale de $G$
d'endo-classe $\TT$ contenant $\t$.
D'après la propo\-si\-tion \ref{indiceduntype},
quitte à remplacer $\t$ par l'un de ses conjugués, 
on peut supposer que $\t$ est d'indice $\lfloor m/2\rfloor$
si $T/T_0$ est ramifiée et $c_0$ est impair.
D'après le corollaire \ref{fromagequipuecor}, on a~:
\begin{enumerate}
\item
si $T/T_0$ est non ramifiée, alors $mc$ est impair,
\item
si $T/T_0$ est ramifiée, alors $m$ est pair ou égal à $1$.
\end{enumerate}
Nous sommes donc dans le champ d'application des résultats des
paragraphes \ref{onnoubliepas} à \ref{onnoubliepas2}.

Soit une paire $(\BJ,\bt)\in\Om$.
Notons $b$ l'indice de $\BJ$ dans $\BJI_\t$. 
D'après les corollaires \ref{exkappadistaddetp} et \ref{exkappadistaddetpnr}, 
il y a une unique représentation $\bk_{*}$ de $\BJ_{b}$ prolongeant $\n$,
étant à la fois $\tau$-autoduale et distinguée par $\BJ_{b}\cap\G^\tau$,
et dont le déterminant soit d'ordre une puissance de $p$.  
On pose~:
\begin{equation*}
\Pi(\BJ,\bt) = \ind^\G_{\BJ} (\bk_{*}\otimes\bt).
\end{equation*}
La paire $(\BJ, \bk_{*}\otimes\bt)$ étant par construction un type
$\tau$-autodual de $G$, 
la représentation $\Pi(\BJ,\bt)$
est une représentation cuspidale autoduale 
de $G$ d'endo-classe $\TT$.
Notons maintenant~:
\begin{equation*}
\AA(G,\TT)
\end{equation*}
l'ensemble des classes d'isomorphisme
de représentations cuspidales auto\-duales d'endo-classe $\TT$ de $G$.

\begin{prop}
\label{galettedepain}
\begin{enumerate}
\item
L'application de $\Om$ dans $\AA(G,\TT)$ définie par~:
\begin{equation*}
\Pi:(\BJ,\bt)\mapsto\Pi(\BJ,\bt)
\end{equation*}
est surjective.
\item
Deux paires $(\BJ,\bt)$ et $(\BJ',\bt')$ de $\Om$
ont la même image par $\Pi$ si et seulement si 
$\BJ'=\BJ$ et les~re\-pré\-sentations
$\bt$, $\bt'$ sont conjuguées sous $\BJI_{\t}$. 
\item 
Si $\bt$ est $\BJ\cap\G^\tau$-distinguée,
alors $\Pi(\BJ,\bt)$ est $G^\tau$-distinguée.
\end{enumerate}
\end{prop}

\begin{proof}
Commençons par prouver la surjectivité.
Soit $\pi$ une représentation cuspidale autoduale de $G$ d'endo-classe $\TT$.
Elle contient le caractère~sim\-ple $\t$ d'après la proposition \ref{entrenous}. 
D'après la proposition \ref{MAIN1prop},
elle contient donc un type $\tau$-autodual $(\BJ,\bl)$, 
et la représentation $\bl$ se décompose sous la forme $\bk_{*}\otimes\bt$.
Comme $\bk_{*}$ est $\tau$-autoduale, $\bt$ l'est aussi.
Par conséquent, la paire $(\BJ,\bt)$ appartient à $\Om$ et $\pi$ est
isomorphe à $\Pi (\BJ,\bt)$.

Soient ensuite $(\BJ,\bt)$ et $(\BJ',\bt')$ deux paires de $\Om$
donnant lieu à la même représentation~cus\-pi\-dale autoduale $\pi$ de $G$.
Alors les types $(\BJ,\bk_{*}\otimes\bt)$ et $(\BJ',\bk_{*}\otimes\bt')$
sont conjugués
par un élément $g\in\G$ normalisant non seulement $\BJ$,
mais aussi le caractère simple $\t$ attaché à ces types.
On en déduit que $g\in\BJI_{\t}$.

Prouvons le point (3).
Si $\bt$ est $\BJ\cap\G^\tau$-distinguée,
alors $\bk_{*}\otimes\bt$ est $\BJ\cap\G^\tau$-distinguée
car $\bk_{*}$ est $\BJ\cap\G^\tau$-distinguée
(voir le lemme \ref{etatau}).
Par application de la formule de Mackey,
son induite compacte $\Pi(\BJ,\bt)$ est $G^\tau$-distinguée.
\end{proof}

\subsection{}
\label{malinlegarconcor}

Dans ce paragraphe,
$\TT$ est une endo-classe autoduale de niveau non nul
de degré divisant $2n$
(qui n'est plus définie par le caractère simple maximal $\tau$-autodual 
$\t$ fixé au~paragraphe \ref{viteA}).
On note $\AA^+(G,\TT)$ 
le sous-ensemble de $\AA(G,\TT)$
formé des représentations distinguées par $\G^\tau$.
Le résultat suivant met un terme à cette section.

\begin{prop}
\label{OlivierdeNoyen} 
\begin{enumerate}
\item 
L'ensemble $\AA(G,\TT)$ est fini. 
\item 
si $\AA^+(G,\TT)$ n'est pas vide,
alors~:
\begin{equation*}
|\AA^+(G,\TT)| \> \frac {1} {2} \cdot |\AA(G,\TT)|. 
\end{equation*}
\item
Pour que $\AA^+(G,\TT)$ soit non vide, 
il faut et suffit qu'une représentation cuspidale auto\-dua\-le de $G$
d'endo-classe $\TT$ contienne un caractère simple $\tau$-autodual. 
\end{enumerate}
\end{prop}

\begin{proof}
Il n'existe qu'un nombre fini de classes d'inertie de représentations 
cuspidales d'endo-classe fixée. 
Pour prouver la finitude de $\AA(G,\TT)$,
il suffit donc de prouver qu'il n'existe qu'un nombre fini de représentations 
cuspidales autoduales de classes d'inertie fixée. 
Or si $\pi$ est une représentation cuspidale autoduale, 
les représentations autoduales qui lui sont iner\-tiel\-le\-ment
équivalentes sont de la forme $\pi\chi$
où $\chi$ est un caractère non ramifié de $\G$ tel que $\pi\chi^2\simeq\pi$,
ce qui assure que $\chi$ est d'ordre fini divisant $4n$
(voir par exemple le début de \cite[2.7]{BHJL3}). 

Si $\AA^+(G,\TT)$ est non vide, 
la proposition \ref{grammage} assure qu'il existe une représentation cuspidale 
auto\-dua\-le de $G$ 
d'endo-classe $\TT$ contenant un caractère simple $\tau$-autodual.
Inversement,
s'il~y~a une représentation cuspidale auto\-dua\-le de $G$ 
d'endo-classe $\TT$ contenant un caractère simple $\t$
$\tau$-auto\-dual,
la proposition \ref{galettedepain} et le lemme \ref{gersonide} montrent
que~: 
\begin{equation*}
\{\Pi(\BJ,\bt)\ |\ (\BJ,\bt)\in\Om^+\} \subseteq \AA^+(\G,\TT).
\end{equation*}
En outre, notant $[\Om]$ l'ensemble des classes de
$\BJI_{\t}$-conju\-gaison de $\Om$
et $[\Om^+]$ l'analogue pour $\Om^+$,
le cardinal du membre de gauche est égal à celui de $[\Om^+]$.
Comme $\AA(G,\TT)$ et $[\Om]$ ont le même car\-dinal,
il ne reste qu'à prou\-ver que le cardinal de $[\Om^+]$ est au moins égal à
la moitié de celui de $[\Om]$, 
ce qui se déduit du lemme \ref{gersonide}(1). 
\end{proof}

\section{Le niveau $0$}
\label{theriverofnoreturn}

Nous traitons dans cette section le cas des représentations de niveau $0$. 
Notons $A$ la $F$-algèbre centrale simple $\Mat_r(D)$
et posons $G=A^\times$.
Comme au paragraphe \ref{cyprien},
on fixe un $\a\in\F^\times$
et un $\dd\in\G$ non central tel que $\dd^2=\a$,
définissant une involu\-tion $\tau$ de $\G$.
Si $\a$ est un carré dans $F^\times$, on se place dans le cadre de la
convention \ref{CONV2}.
Sinon, on pose $K=\F[\dd]$,
qui est une extension quadratique de $F$.

\subsection{}
\label{niveauzero1}

En niveau $0$,
un~carac\-tè\-re simple est le caractère trivial
d'un sous-groupe ouvert compact~de la forme $\BU^1(\aa)$
où $\aa$ est un ordre héréditaire de $A$,
et un tel carac\-tè\-re simple est maximal si~$\aa$ est~maximal.
Pour qu'il y ait un carac\-tè\-re simple maximal $\tau$-autodual,
il faut et suffit donc~qu'il y ait un
ordre maximal stable par $\tau$ dans $A$.

\begin{lemm}
\label{fredericmoreau}
Supposons que $\a$ ne soit pas un carré de $F^\times$. 
Un ordre maximal de $A$ est~sta\-ble par~$\tau$ si et seulement
s'il est normalisé par $\K^\times$.
\end{lemm}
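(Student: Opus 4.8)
The plan is to prove the two implications separately, using the fact that (since $\a$ is not a square in $F^\times$) the $F$-algebra $K=F[\dd]$ is a quadratic field extension of $F$, and that its centralizer $S$ in $A$ is a $K$-central simple algebra of reduced degree $n$; moreover $\dd$ lies in the center $K$ of $S$. The automorphism $\tau=\Ad(\dd)$ acts trivially on $K$ but not on $A$, and $A^\tau$ is precisely $S$.

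\textbf{The easy direction.} First I would show that if a maximal order $\aa$ of $A$ is normalized by $K^\times$, then it is $\tau$-stable. This is immediate: $\tau=\Ad(\dd)$ with $\dd\in K^\times$, so $\tau(\aa)=\dd\aa\dd^{-1}=\aa$ by the normalization hypothesis. No real work is needed here.

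\textbf{The hard direction.} The main obstacle is the converse: a $\tau$-stable maximal order need not \emph{a priori} be $K^\times$-normalized, and one must produce the normalization from $\tau$-stability alone. The plan is as follows. Suppose $\aa$ is a maximal order stable by $\tau$. Since $\tau$ fixes $K$ pointwise, the intersection $\aa\cap S$ is an $\Oo_K$-order in $S$ stable under no further constraint; the key point is to show it is hereditary and then, because $\aa$ is maximal in $A$ and $S$ has a unique maximal order (being $K$-central simple with $K$ local non-archimedean, $S\simeq\Mat_l(U)$ for a division algebra $U$ over $K$, so its maximal order $\Oo_S$ is unique up to conjugacy, and the standard one is canonically attached once we fix the identification), deduce that $\aa\cap S$ must actually be the maximal order $\Oo_S$ of $S$. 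For this I would invoke the lemma-type fact, already used repeatedly in the paper (cf.\ the proof of Proposition~\ref{patelmax} and Lemma~\ref{bilbon3}), that a hereditary order stable under conjugation by $\dd$ restricts to a hereditary order on the centralizer, together with a period computation showing maximality is preserved. Then, since $\Oo_S$ is the \emph{unique} maximal order of $S$ and $K^\times\subseteq S^\times$ normalizes $\Oo_S$, one shows that $\aa$ is recovered as the unique hereditary order of $A$ normalized by $K^\times$ whose intersection with $S$ is $\Oo_S$ --- here one uses the uniqueness statement of Proposition~\ref{patelmax} applied with $E=K$ (so that $B=S$ and $\bb=\Oo_S$): the order $\aa$ is determined by $\aa\cap S$ and by being $K^\times$-normalized, hence equals that canonical order, which is $K^\times$-stable. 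I expect the delicate point to be checking that $\aa\cap S$ is genuinely maximal (not merely hereditary) in $S$; this should follow by comparing periods, using that $K$ is totally ramified or unramified of degree $2$ over $F$ and that $\aa$ being maximal in $A$ forces the restricted period on $S$ to be minimal, exactly as in the computations of paragraph~\ref{cestlong}.
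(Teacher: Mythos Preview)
Your easy direction is fine, and the paper's proof handles it the same way in one sentence.

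For the hard direction, your argument has a genuine circularity. Proposition~\ref{patelmax} characterizes $\aa$ as the unique order \emph{normalized by $E^\times$} with a given intersection $\bb=\aa\cap B$; applying it with $E=K$ tells you only that among orders already normalized by $K^\times$, there is a unique one with the prescribed intersection with $S$. You cannot use this to conclude that your $\tau$-stable $\aa$ equals that canonical order, because you do not yet know $\aa$ is $K^\times$-normalized --- that is precisely the conclusion you are after. The same issue contaminates your step~2: in the paper the fact that $\aa\cap B$ is hereditary is always deduced from $E^\times$-normalization of $\aa$ (see the proof of Proposition~\ref{rosimond}), so arguing that $\aa\cap S$ is hereditary already presupposes what you want. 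Your remark that $S$ has ``the'' maximal order is also imprecise: maximal orders in $S$ are only conjugate, not unique, so even if you knew $\aa\cap S$ is maximal this would not pin down $\aa$.

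The paper's proof avoids all of this by a completely elementary computation. It identifies $\aa$ with the standard maximal order, whose normalizer is generated by $\aa^\times$ and a uniformizer $\w$ of $D$ with $\w^d\in F$; since $\dd$ normalizes $\aa$ one writes $\dd=\w^k u$ with $u\in\aa^\times$. Then for $\lambda=x+y\dd\in K^\times$ (with $x,y\in F$), a short case analysis on whether ${\rm val}_D(x)={\rm val}_D(y)+k$ shows directly that $\lambda$ lies in the normalizer of $\aa$: if the valuations differ one divides by the dominant term to land in $1+\w\aa$; if they agree one reduces to units and uses $(x+yu)(x-yu)=x^2-\alpha y^2\in\Oo_F^\times$ (since $\alpha$ is a non-square unit). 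No structural input about $S$ or hereditary orders is needed.
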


\begin{proof} 
Un ordre de $A$ est stable par~$\tau$ si et seulement
s'il est normalisé par $\dd$.
Par conséquent, tout ordre normalisé par $\K^\times$ 
est stable par~$\tau$.
Inversement, soit $\aa$ un~or\-dre maximal stable par~$\tau$.
Montrons qu'il est normalisé par $\K^\times$. 
Pour cela, il est commode d'identifier $\aa$ à l'ordre maximal standard.
Son normalisateur dans $G$ est engendré par $\aa^\times$ et
une uniformisante $\w$ de $D$ telle que $\w^d\in\F$.
Aussi peut-on écrire $\dd$ sous la forme $\w^{k} u$
pour un unique $k\in\ZZ$ et un unique $u\in\aa^\times$.
Soit maintenant $\l=x+y\dd\in\K^\times$
avec $x,y\in\F$ qu'on peut supposer non nuls. 
Montrons que $\l$ normalise $\aa$.

{Notons
${\rm val}_D$ la valuation sur $\D^\times$
prenant la valeur $1$ en n'importe quelle uniformisante~de $\D$.}

Si ${\rm val}_D(x)\neq {\rm val}_D(y)+k$,
on peut~se~ra\-me\-ner,
en divisant par $x$ ou par $y\dd$, 
au cas où
$\l$~ap\-par\-tient à $1+\w\aa\subseteq\aa^\times$.
Sinon, $k$ est un multiple de $d$
donc $\w^k\in\F^\times$.
On peut alors se ramener au cas où
$k={\rm val}_F(x)={\rm val}_F(y)=0$,
donc $\l=x+yu\in\aa$.
Comme $u^2=\a$ est une unité de $F$
qui n'est pas un carré de $F$,
l'identité $(x+yu)(x-yu)=x^2-\a y^2\in\Oo_F^\times$
assure que $\l\in\aa^\times$.
\end{proof}

\begin{prop}
\label{baff}
Pour qu'il y ait un ordre maximal stable par $\tau$ dans $A$,
il faut et suffit que l'une des conditions suivantes soit satisfaite~:
\begin{enumerate}
\item 
ou bien $\a\in\F^{\times2}$,
\item
ou bien $\a\notin\F^{\times2}$ et $e_{K/F}$ divise $d$.
\end{enumerate}
\end{prop}

\begin{proof}
{Si $\a\in\F^{\times2}$, 
on peut supposer que $\dd$ est diagonal et que ses éléments diagonaux
sont égaux à $1$ ou à $-1$,
auquel cas il nor\-malise l'ordre maximal standard $\Mat_r(\Oo_D)$.}

{Par ailleurs,
pour qu'il y ait un ordre principal de période $e$ de $A$ 
normalisé par $\K^\times$,
il faut~et suffit que $e_{K/F}$ divise $ed$,
et un ordre principal est maximal si et seulement si sa période~vaut~$1$. 
Si~$\a\notin\F^{\times2}$,
on déduit du lemme \ref{fredericmoreau} qu'il y a
un ordre maximal stable par $\tau$ dans $A$ si et~seu\-lement~si
$e_{K/F}$ divise $d$.}
\end{proof}

Une représentation irréductible de $G$ est de niveau $0$ si elle a un
vecteur non nul inva\-riant par un sous-groupe 
de~la forme $\BU^1(\aa)$,
où $\aa$ est un ordre maximal.
Les ordres maximaux de $A$ étant tous conjugués sous $G$,
une représentation de niveau $0$
contient un caractère simple $\tau$-autodual si et seulement
s'il existe un ordre maximal dans $A$ stable par $\tau$.
On en déduit immédiatement le corollaire suivant. 

\begin{coro}
\label{baffcor}
Soit $\pi$ une représentation cuspidale autoduale de niveau $0$ de $G$.
Pour~que~$\pi$ contienne un caractère simple $\tau$-autodual,
il faut et suffit que $\a\in\F^{\times2}$ ou que $e_{K/F}$ divise $d$.
\end{coro}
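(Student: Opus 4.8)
The plan is to deduce this corollary directly from Proposition \ref{baff} together with the characterization of level-$0$ simple characters recalled just before the statement. First I would note that in level $0$ one has $E=F$, so a maximal simple character of $G$ in the sense of this article is exactly the trivial character of $\BU^1(\aa)$ for a maximal order $\aa$ of $A$; and by Remark \ref{chafouin1} any simple character contained in a cuspidal representation is automatically maximal, so there is no ambiguity in the phrase ``$\pi$ contient un caract�re simple $\tau$-autodual''. Such a trivial character is $\tau$-autodual precisely when the group $\BU^1(\aa)$ on which it lives is $\tau$-stable, i.e.\ (since $\tau(\BU^1(\aa))=\BU^1(\tau(\aa))$) precisely when $\aa$ itself is stable under $\tau$; here I use that $\t\circ\tau=\t^{-1}$ is vacuous for the trivial character once it is defined on a $\tau$-stable subgroup.

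Next I would invoke the fact, recalled in the remark following the definition of the endo-class, that a cuspidal representation $\pi$ of $G$ is of level $0$ if and only if it has a nonzero vector fixed by $\BU^1(\aa)$ for \emph{any} maximal order $\aa$ of $A$, all maximal orders being $G$-conjugate. Combining the two observations: if $\pi$ is of level $0$ and $A$ admits a $\tau$-stable maximal order $\aa$, then $\pi$ contains the trivial character of $\BU^1(\aa)$, which is a $\tau$-autodual simple character; conversely, if $\pi$ contains a $\tau$-autodual simple character, then by the previous paragraph this character is the trivial character of $\BU^1(\aa)$ for some $\tau$-stable maximal order $\aa$, so in particular $A$ does admit a maximal order stable under $\tau$. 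Thus, for $\pi$ of level $0$, containing a $\tau$-autodual simple character is equivalent to the existence of a $\tau$-stable maximal order in $A$.

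It then remains only to apply Proposition \ref{baff}, which asserts that $A$ admits a maximal order stable under $\tau$ if and only if $\a\in\F^{\times2}$, or $\a\notin\F^{\times2}$ and $e_{K/F}$ divides $d$ (in the split case $\a\in\F^{\times2}$ one is in the setting of Convention \ref{CONV2}, where $\s$ may be chosen diagonal and hence normalizing the standard maximal order). Substituting this characterization into the equivalence of the previous paragraph yields the corollary. There is no real obstacle here; the only point worth stating carefully is that the equivalence ``$\pi$ contains a $\tau$-autodual simple character $\iff$ $A$ has a $\tau$-stable maximal order'' does not use the self-duality of $\pi$ at all — that hypothesis is retained in the statement purely for coherence with the results of Section \ref{SEC4}.
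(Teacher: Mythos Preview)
Your proposal is correct and follows essentially the same route as the paper: the paragraph preceding the corollary already observes that a level-$0$ representation contains a $\tau$-autodual simple character if and only if $A$ admits a $\tau$-stable maximal order (using that all maximal orders are $G$-conjugate), and then invokes Proposition~\ref{baff}. Your added remark that the self-duality hypothesis on $\pi$ plays no role in this particular step is accurate and worth noting.
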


\begin{rema}
\label{balmasquecor0}
La première partie du corollaire \ref{tolkienelrond}
s'étend aux~re\-présentations cuspidales autoduales de niveau $0$. 
\end{rema}

\subsection{}

Classons maintenant,
à conjugai\-son près par $\G^\tau$, 
les ordres maximaux de $A$ stables par $\tau$.
On suppose qu'il en existe un,
\ie que les conditions de la proposition \ref{baff} sont vérifiées. 

\begin{prop}
\label{indiceduntype0}
\begin{enumerate}
\item 
Le nombre de classes de $\G^\tau$-conjugaison
d'ordres maximaux de $A$ stables par $\tau$ est~:
\begin{enumerate}
\item 
égal à $1$ si $K/F$ est ramifiée,
ou si $K/F$ est non ramifiée et $d$ est impair,
\item
égal à $\lfloor r/2\rfloor+1$ sinon. 
\end{enumerate}
\item
Si $\aa$ est un tel ordre,
il existe un $F$-automorphisme intérieur de $A$ identifiant $\aa$
à l'ordre maximal standard $\Mat_m(\Oo_D)$
et l'action de $\tau$~sur $\aa^\times/\BU^1(\aa)\simeq\GL_r(\kk_D)$ à~:
\begin{enumerate}
\item
l'action de l'élément d'ordre $2$ de $\Gal(\kk_D/\kk_F)$
si $K/F$ est ramifiée, 
\item
l'ac\-tion
par conjugaison d'un $\v\in\GL_r(\kk_D)$ 
tel que $\v^2\in\kk_D^\times$ et $\v^2\notin\kk_D^{\times2}$
si $K/F$ est non ramifiée et $d$ est impair, 
\item 
l'action par conjugaison de~:
\begin{equation}
\label{sigmainiveau0}
\s_i = {\rm diag}(-1,\dots,-1,1,\dots,1) \in \GL_r(\kk_D)
\end{equation}
où $-1$ apparaît avec multiplicité $i$,
pour un unique entier $i\in\{0,\dots, \lfloor r/2\rfloor\}$, 
si $K/F$ est non ramifiée et $d$ est pair,
ou si $\a\in\F^{\times 2}$.
\end{enumerate}
\end{enumerate}
\end{prop}

\begin{proof}
Par hypothèse, $\a$ est un carré de $\F^{\times}$ ou $e_{K/F}$ divise $d$.
Notons $\aa$ l'ordre~ma\-xi\-mal standard $\Mat_{r}(\Oo_D)$.

Supposons d'abord que $K/F$ soit ramifiée.
Comme $d$ est pair dans ce cas, 
on peut~supposer,
grâce au théorème de Skolem-Noether et 
quitte à remplacer $\dd$ par un de ses con\-jugués sous $G$,
ce qui ne change pas le résultat à~prouver, 
que $K\subseteq\D$.
On peut aussi supposer que $\a$ est une uniformisante de $F$,
de sorte que $\dd$ soit une unifor\-mi\-sante de $K$.
Dans ces conditions, l'ordre $\aa$ est~sta\-ble~par $\tau$,
qui induit sur $\aa^\times/\BU^1(\aa)\simeq\GL_r(\kk_D)$ 
l'unique $\kk_F$-automorphisme de~$\kk_D$~d'or\-dre $2$.
Les
autres ordres maximaux sont con\-ju\-gués à $\aa$ sous $G$.
\'Etant donné un $y\in\G$,
l'ordre $\aa^y$ est sta\-ble~par $\tau$ si et seulement si 
$w=\tau(y)y^{-1}\in\aa^\times$.
Raisonnant comme dans la preuve de~la pro\-po\-si\-tion \ref{indiceduntype},~on
prouve qu'il y a un $x\in\aa^\times$ tel que $w=\tau(x)x^{-1}$,
donc que $\aa^y$ est conjugué à $\aa$ sous $\G^\tau$.

Supposons maintenant $K/F$ non ramifiée et $d$ impair.
Comme $r$ est pair dans ce cas, 
on~peut écrire $r=2k$, identifier $A$ à $\Mat_k(\Mat_2(D))$
et supposer que $K\subseteq\Mat_2(F)$.
On peut aussi supposer que $\a\in\bm_F$ et $\dd\in\Mat_2(\Oo_F)$.
Dans ces conditions, $\aa$ est~sta\-ble par $\tau$,
qui induit sur $\GL_r(\kk_D)$
un automorphisme de conjugaison par un
élément $\v\in\GL_r(\kk_D)$ tel que $\v^2=\a\in\kk_D^\times$,
où l'on note encore $\a$ l'image de $\a$ dans $\kk_F^\times$.
En outre,
comme $d$ est impair et $\a\notin\kk_F^{\times 2}$,
on a $\a\notin\kk_D^{\times2}$.
On termine la preuve du cas (2.b) comme celle du cas précédent. 

Supposons $K/F$ non ramifiée et $d$ pair.
Comme dans le cas (2.a),
on suppose que $K$ est inclus dans $\D$.
On peut aussi supposer que $\a\in\bm_F$ et $\dd\in\bm_K$.
Dans ces conditions,
$\aa$ est sta\-ble par~$\tau$,
qui induit sur $\GL_r(\kk_D)$ l'automorphisme trivial.
Les autres ordres maximaux stables par $\tau$ sont de la forme
$\aa^y$ où $y$ est un élément de $G$ tel que 
$w=\tau(y)y^{-1}\in\aa^\times$.
Raisonnant comme dans~la preuve de~la proposition \ref{indiceduntype},~on
prouve qu'il y a un $x\in\aa^\times$ et un entier $i\in\{0,\dots,r\}$
tels que~:
\begin{equation*}
w=\tau(xt_i)(xt_i)^{-1},
\quad
t_i = {\rm diag}(\w,\dots,\w,1,\dots,1)\in\GL_r(D),
\end{equation*}
où $\w$ est une uniformisante de $D$ telle que $\tau(\w)=-\w$,
apparaissant $i$ fois.
Il s'ensuit que $\aa^y$ est conjugué à $\aa^{t_i}$ sous $\G^\tau$,
et on termine comme dans~la preuve de~la proposition 
\ref{indiceduntype}. 

Supposons enfin que $\a\in\F^{\times 2}$.
Dans ce cas, $r$ est pair, qu'on écrit $r=2k$,
et on peut supposer que $\dd={\rm diag}(-1,\dots,-1,1,\dots,1)$. 
Dans ces conditions, l'ordre $\aa$ est sta\-ble par~$\tau$,
qui induit sur $\GL_r(\kk_D)$ l'automorphisme de conjugaison par
l'élément $\s_k$ défini par \eqref{sigmainiveau0}.
On termine la preuve du cas (2.c) comme celle du cas précédent. 
\end{proof}

\subsection{}
\label{enterrement83}

On suppose toujours qu'il existe un ordre maximal stable par $\tau$,
\ie que les condi\-tions de la proposition \ref{baff} sont vérifiées. 

Si $\aa$ est un ordre maximal de $A$, nous noterons~$\BJI_{\aa}$
le normalisateur de $\aa$ dans $G$.
Rappelons (voir la remarque \ref{pifoulechien00})
qu'une paire $(\BJ,\bt)$ est un type de niveau $0$ de $G$ s'il existe un
ordre maximal $\aa$ de $A$ tel que~:
\begin{enumerate}
\item
$\F^\times\aa^\times\subseteq\BJ\subseteq\BJI_{\aa}$
et $\bt$ est une
représentation irréductible de $\BJ$ triviale sur $\BU^1(\aa)$, 
\item
le normalisateur de $\bt$ dans $\BJI_{\aa}$ est égal à $\BJ$,
\item
la restriction de $\bt$ à $\aa^\times$ est l'inflation d'une représentation 
cuspidale de $\aa^\times/\BU^1(\aa)$.  
\end{enumerate}
L'induite compacte à $G$ d'un type de niveau $0$ est une
représentation~irréduc\-tible cus\-pidale de niveau $0$ de $G$,
et toutes s'obtiennent ainsi.
Par cohérence avec le cas du niveau non nul, nous noterons
$\BJ^0=\aa^\times$ et $\BJ^{1}=\BU^1(\aa)$.
De \cite[Proposition 5.20]{HMIMRP}, on tire le résultat suivant. 

\begin{lemm}
\label{HMwarning}
Soit $\pi$ une représentation cuspidale de niveau $0$ de $G$. 
Pour que $\pi$ soit dis\-tin\-guée par $G^\tau$,
il faut et suffit
qu'elle contienne un type $(\BJ,\bt)$ de niveau $0$ tel que 
$\BJ$ soit stable~par~$\tau$ et $\bt$ soit 
$\BJ\cap\G^\tau$-dis\-tin\-guée.
\end{lemm}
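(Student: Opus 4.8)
**The plan is to deduce Lemma \ref{HMwarning} from \cite{HMIMRP} Proposition 5.20 by unwinding the Mackey-theoretic analysis of distinction in level zero.** The statement is really a translation of the general distinction criterion for cuspidal representations obtained by compact induction, specialised to the level-zero setting where the underlying $\tau$-stable pro-$p$ subgroup is of the form $\BU^1(\aa)$ for a maximal order $\aa$. First I would recall that, by Theorem \ref{THEOCLASSIFTYPES} adapted to level zero, every cuspidal representation $\pi$ of niveau $0$ is isomorphic to $\ind^\G_{\BJ}(\bt)$ for some level-zero type $(\BJ,\bt)$, unique up to $\G$-conjugacy; this gives the "if" direction immediately, since if $\BJ$ is $\tau$-stable and $\bt$ is $\BJ\cap\G^\tau$-distinguished then Frobenius reciprocity yields a nonzero element of $\Hom_{\G^\tau}(\ind^\G_{\BJ}(\bt),\FC)$, i.e. $\pi$ is $\G^\tau$-distinguished.

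For the "only if" direction I would invoke the Mackey formula as in \eqref{girafeintro}: if $\pi\simeq\ind^\G_{\BJ}(\bt)$ is $\G^\tau$-distinguished, then there is some $g\in\G$ such that $\Hom_{\BJ^g\cap\G^\tau}(\bt^g,\FC)\neq\{0\}$. Replacing $(\BJ,\bt)$ by its $g$-conjugate $(\BJ^g,\bt^g)$ — which is again a level-zero type attached to the maximal order $\aa^g$, and whose compact induction is still $\pi$ — one may assume $g=1$, so that $\bt$ is $\BJ\cap\G^\tau$-distinguished. The remaining point is to arrange that $\BJ$ itself be $\tau$-stable. This is exactly where \cite{HMIMRP} Proposition 5.20 enters: it is the level-zero distinction criterion of Hakim--Murnaghan, which (after translating their notation to ours) says precisely that distinction of $\ind^\G_{\BJ}(\bt)$ by $\G^\tau$ forces the existence of a $\G$-conjugate of $(\BJ,\bt)$ whose group $\BJ$ is $\tau$-stable and whose representation $\bt$ is distinguished by $\BJ\cap\G^\tau$. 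I would quote that proposition and check that its hypotheses are met here (namely that $\G^\tau$ arises from an involution, and $p\neq2$, both in force by the standing assumptions of the section).

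\textbf{The main obstacle will be the bookkeeping in passing from the Hakim--Murnaghan formulation to ours.} Their result is stated for $\GL_N(F)$ and for a possibly different parametrisation of types (via the idempotent/order data rather than the pair $(\BJ,\bt)$), so the work is to match the notion of "level-zero type $(\BJ,\bt)$" used in this article — with $\BJ^0=\aa^\times$, $\BJ^1=\BU^1(\aa)$, and $\bt$ trivial on $\BJ^1$ inducing a cuspidal representation of $\GL_r(\kk_D)$ — with the objects appearing in \cite{HMIMRP}, and to verify that $\tau$-stability of the maximal order $\aa$ is the correct reformulation of the stability condition in loc.\ cit. Once that dictionary is in place, no further computation is needed: the "if" part is a one-line Frobenius reciprocity argument, and the "only if" part is the citation combined with the Mackey reduction to $g=1$ described above. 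Since this is a routine transcription of a known theorem into the language set up in Section \ref{PrelimST}, I expect the proof to be short.
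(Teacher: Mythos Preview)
Your approach is essentially the same as the paper's: the lemma is stated as a direct consequence of \cite{HMIMRP} Proposition 5.20, with no further argument given. One small correction to your bookkeeping concern: \cite{HMIMRP} works in the generality of connected reductive $p$-adic groups (not just $\GL_N(F)$), so the translation to the inner-form setting $G=\GL_r(D)$ requires less work than you anticipate.
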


On en déduit la condition nécessaire suivante de distinction.
Soit $(\BJ,\bt)$ un type de niveau $0$ de $G$ tel que $\BJ$ soit stable par 
$\tau$. 
Soit $\aa$ l'ordre maximal de $A$ tel que $\BJ\subseteq\BJI_\aa$. 
Il est stable par $\tau$.
Quitte~à conjuguer par un élément de $\G$,
on peut supposer qu'on est dans l'un des cas décrits par la~pro\-position
\ref{indiceduntype0}.
En particulier, 
si $K/F$ est non ramifiée et $d$ est pair, ou si $\a\in\F^{\times 2}$, 
il y a un entier $i\in\{0,\dots, \lfloor r/2\rfloor\}$
défini par la pro\-position \ref{indiceduntype0}(2.c).

\begin{prop}
\label{HMwarningprop}
\begin{enumerate}
\item 
Soit $(\BJ,\bt)$ un type de niveau $0$ de $G$ tel que 
$\BJ$ soit stable par $\tau$ et la représentation $\bt$ soit 
dis\-tin\-guée par $\BJ\cap\G^\tau$.
Alors~:
\begin{enumerate}
\item 
la représentation $\bt$ est $\tau$-autoduale,
\item
si $K/F$ est non ramifiée et $d$ est pair, ou si $\a\in\F^{\times 2}$,
l'entier $i$ est égal à $\lfloor r/2\rfloor$.
\end{enumerate}
\item 
Soit $(\BJ,\bt)$ un type $\tau$-autodual de niveau $0$ de $G$. 
Alors~: 
\begin{enumerate}
\item 
si $K/F$ est ramifiée,
l'entier $r$ est impair,
\item
si $K/F$ est non ramifiée et $d$ est impair,
l'entier $r$ est pair,
\item
si $K/F$ est non ramifiée et $d$ est pair, ou si $\a\in\F^{\times 2}$,
l'entier $r$ est pair ou égal à $1$.
\end{enumerate}
\end{enumerate}
\end{prop}

\begin{proof}
On identifie $\aa$ à l'ordre maximal standard et 
$\aa^\times/\BU^1(\aa)$ au groupe $\GL_r(\kk_D)$, noté $\GG$.
On note $\rho$ la repré\-sen\-ta\-tion cuspidale de
$\GL_r(\kk_D)$ définie par $\bt$.
Dans chacun des cas (a), (b) et (c) de l'item~2,
l'action de $\tau$ sur $\GL_r(\kk_D)$ est décrite par la proposition
\ref{indiceduntype0}. 

Dans le cas (2.a),
l'entier $d$ est pair et $\GG^\tau$ est de la forme
$\GL_r(\kk)$ où $\kk$~est l'unique sous-corps de $\kk_D$ sur lequel 
celui-ci soit de degré $2$.
D'après \cite{Gow}, 
la~re\-pré\-sentation $\rho$ est distinguée par $\GG^\tau$
si et seulement si $\rho^\vee$ est isomorphe au
conjugué $\overline{\rho}$
de $\rho$ par l'élément non trivial de $\Gal(\kk_D/\kk)$. 
D'après par exemple \cite[Lem\-ma~2.3]{VSANT19},
si $\rho^\vee$ et $\overline{\rho}$ sont isomorphes,
alors $r$ est impair.

Dans le cas (2.b),
l'entier $r$ est pair et $\GG^\tau$ est de la forme
$\GL_{r/2}(\kk)$ où $\kk$ est 
une extension~qua\-dratique de $\kk_D$
dans $\Mat_r(\kk_D)$. 
Selon \cite[Lemme 4.3.11]{Coniglio},
la représentation $\rho$ est distinguée par $\GG^\tau$
si et seulement si elle est autoduale. 

Dans le cas (2.c),
le sous-groupe $\GG^\tau$ est égal à 
$\GL_i(\kk_D)\times\GL_{r-i}(\kk_D)$. 
D'après par exemple~\cite[Proposition 2.14, Lemma 2.19]{VSANT19}, 
la repré\-sen\-tation $\rho$ est distinguée par $\GG^\tau$
si et seulement si $i$ est égal à $\lfloor r/2\rfloor$
et $\rho$ est autoduale.
D'après~par~exem\-ple~\cite[Lem\-ma~2.17]{VSANT19},
si $\rho$ est autoduale,
alors $r$ est pair ou égal à $1$.
Ceci prouve (2) et (1.b).

Supposons maintenant que $\bt$ soit 
dis\-tin\-guée par $\BJ\cap\G^\tau$.
Dans tous les~cas,
d'après ce qui~pré\-cède,
la représentation $\rho$ est $\tau$-autoduale,
donc les restrictions à $\BJ^0$ de $\bt^{\tau\vee}$ et $\bt$
sont isomorphes.
Il y a donc un caractère non ramifié $\boldsymbol{\chi}$ de $\BJ$ tel
que $\bt^{\tau\vee}\simeq\bt\boldsymbol{\chi}$.
Notons $\pi$ la représentation~cus\-pi\-dale $\G^\tau$-distinguée
obtenue par induction compacte de $(\BJ,\bt)$ à $\G$.
Elle est au\-to\-duale d'après le théorème \ref{GuoBM}.
Elle contient donc à la fois $\bt$ et $\bt^{\tau\vee}\simeq\bt\boldsymbol{\chi}$, 
ce dont on déduit que $\boldsymbol{\chi}$ est trivial.
\end{proof}

\subsection{}
\label{malinlegarcon00}

Fixons un ordre maximal $\aa$ de $A$ stable par $\tau$,
et notons $\Om_{\aa}$ l'ensemble des types $\tau$-auto\-duaux
$(\BJ,\bt)$ de niveau $0$ de $G$ tels que $\BJ$ normalise $\aa$. 

\begin{lemm}
\label{gersonide0}
L'ensemble $\Om_{\aa}$ est fini,
et il existe un sous-ensemble
$\Om_{\aa}^+\subseteq\Om_{\aa}^{\phantom{'}}$ tel que~: 
\begin{enumerate}
\item 
le cardinal de $\Om_{\aa}^{\phantom{'}}$ est le double de celui
de $\Om_{\aa}^+$,
\item
pour tout $(\BJ,\bt)\in\Om_{\aa}^+$,
la représentation $\bt$ est $\BJ\cap\G^\tau$-distinguée.
\end{enumerate}
\end{lemm}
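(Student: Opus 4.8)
The plan is to transcribe the proof of Lemma \ref{gersonide} under the level-$0$ dictionary: one puts $\GG=\aa^\times/\BU^1(\aa)\simeq\GL_r(\kk_D)$, $\BJ^0=\aa^\times$, $\BJ^1=\BU^1(\aa)$, lets the normalizer $\BJI_\aa$ of $\aa$ play the role of $\BJI_\t$, and replaces the dichotomy ``$E/E_0$ (un)ramified, $c_0$ even/odd'' of Lemma \ref{gersonide} by the three cases ${\boldsymbol{{\sf I}}}$, ${\boldsymbol{{\sf II}}}$, ${\boldsymbol{{\sf III}}}$ of Proposition \ref{indiceduntype0}, which describe the action of $\tau$ on $\GG$ and the shape of $\GG^\tau$ (concretely ${\boldsymbol{{\sf III}}}$ corresponds to ``$E/E_0$ unramified'', ${\boldsymbol{{\sf II}}}$ to ``$E/E_0$ ramified, $c_0$ even'', ${\boldsymbol{{\sf I}}}$ to ``$E/E_0$ ramified, $c_0$ odd'', with $r$ in the role of $m$). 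As in that situation (cf. the level-$0$ counterpart of \S\ref{malinlegarcon}, which is where this lemma is applied), in case ${\boldsymbol{{\sf I}}}$ one works with $\aa$ of index $\lfloor r/2\rfloor$ and $r$ even or equal to $1$. Finiteness of $\Om_\aa$ is then immediate: $\BJI_\aa/F^\times\aa^\times$ is finite cyclic, so there are finitely many admissible $\BJ$; each carries only finitely many cuspidal representations of the finite group $\GG$ on $\BJ^0$; and a fixed $\tau$-autodual cuspidal $\rho$ has at most two $\tau$-autodual extensions to $\BJ$, since any two differ by an unramified character $\psi$ of $\BJ$ with $\psi\,(\psi\circ\tau)=1$, hence, as $\tau$ acts trivially on $\BJ/\BJ^0$, with $\psi^2=1$.

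Next, fix $(\BJ,\bt)\in\Om_\aa$ and let $\rho$ be the $\tau$-autodual cuspidal representation of $\GG$ it defines. In each of the three cases the finite-field distinction statements recalled in the proof of Proposition \ref{HMwarningprop} --- \cite{VSANT19} Proposition 2.14 and Lemma 2.19 in case ${\boldsymbol{{\sf I}}}$ (this is where $i=\lfloor r/2\rfloor$ and ``$r$ even or $1$'' are used), \cite{Coniglio} Lemme 4.3.11 in case ${\boldsymbol{{\sf II}}}$, and \cite{Gow} together with \cite{VSANT19} Lemma 2.3 in case ${\boldsymbol{{\sf III}}}$ --- give that $\rho$ is $\GG^\tau$-distinguished and that $\Hom_{\GG^\tau}(\rho,\FC)$ is one-dimensional. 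Exactly as in Lemma \ref{gersonide}, applying the argument of Lemma \ref{kappatau} produces a unique character $\zeta$ of $\BJ\cap\G^\tau$, trivial on $\BJ^0\cap\G^\tau$, with $\Hom_{\GG^\tau}(\rho,\FC)=\Hom_{\BJ\cap\G^\tau}(\bt,\zeta)$, and $\zeta^2=1$ by uniqueness of $\zeta$ and $\tau$-autoduality of $\bt$. Writing $\om$ for the unique nontrivial unramified quadratic character of $\BJ$ and setting $\bt^*=\bt\om$, one has $(\BJ,\bt^*)\in\Om_\aa$ and $\Hom_{\BJ\cap\G^\tau}(\bt,\zeta)=\Hom_{\BJ\cap\G^\tau}(\bt^*,\zeta\om)$.

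Finally one pairs up $\Om_\aa$. Whenever $\BJI_\aa\cap\G^\tau$ is generated by $\BJ^0\cap\G^\tau$ and an element lying above a generator of $\BJI_\aa/F^\times\aa^\times$ --- which, by the level-$0$ form of Lemma \ref{descrJtau} read off from Proposition \ref{indiceduntype0}, is the situation in cases ${\boldsymbol{{\sf II}}}$, ${\boldsymbol{{\sf III}}}$ and in case ${\boldsymbol{{\sf I}}}$ with $r\geq2$ --- the character $\om$ is nontrivial on $\BJ\cap\G^\tau$, hence exactly one of $\bt$, $\bt^*$ is $\BJ\cap\G^\tau$-distinguished, and selecting the distinguished member of each unordered pair $\{\bt,\bt^*\}$ defines $\Om_\aa^+$ with $|\Om_\aa|=2\,|\Om_\aa^+|$. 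In the remaining case ${\boldsymbol{{\sf I}}}$ with $r=1$, $\BJI_\aa\cap\G^\tau$ is generated by $\BJ^0\cap\G^\tau$ and the square of a uniformizing element, so $\om$ is trivial on $\BJ\cap\G^\tau$, and one argues as at the end of the proof of Lemma \ref{gersonide}: $\Om_\aa$ consists of the four characters $1,\om,\upsilon,\upsilon\om$, where $\upsilon$ is the nontrivial quadratic character of $\GG$ extended by the value $1$ on the uniformizer, of which $1$ and $\om$ are precisely those trivial on $\BJ\cap\G^\tau$, hence distinguished, and these form $\Om_\aa^+$. The only work beyond transcribing the proof of Lemma \ref{gersonide} is the case-by-case identification of $\GG^\tau$ and of the $\tau$-action from Proposition \ref{indiceduntype0} so that the correct finite-field distinction statement applies; this, together with keeping track of the index in case ${\boldsymbol{{\sf I}}}$, is where care is needed.
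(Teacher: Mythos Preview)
Your proposal is correct and is exactly the approach the paper takes: its entire proof reads ``M\^eme preuve qu'en niveau non nul (voir le lemme \ref{gersonide})'', and you have carried out this transcription explicitly, with the right dictionary (cases ${\boldsymbol{{\sf I}}}$, ${\boldsymbol{{\sf II}}}$, ${\boldsymbol{{\sf III}}}$ of Proposition \ref{indiceduntype0} matching respectively the cases ``$E/E_0$ ramified, $c_0$ odd'', ``$E/E_0$ ramified, $c_0$ even'', ``$E/E_0$ unramified'' of Lemma \ref{gersonide}) and the right finite-field inputs.
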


\begin{proof}
Même preuve qu'en niveau non nul (voir le lemme \ref{gersonide}). 
\end{proof}

Pour tout $(\BJ,\bt)\in\Om_{\aa}$, on pose~:
\begin{equation*}
\Pi(\BJ,\bt) = \ind^\G_{\BJ} (\bt).
\end{equation*}
La représentation $\Pi(\BJ,\bt)$
est une représentation cuspidale autoduale de niveau $0$
de $G$.
Notons $\AA(G,\0)$ l'ensemble des classes d'isomorphisme
de représentations cuspidales auto\-duales de $G$ de niveau $0$.

\begin{prop}
\label{galettedepain0}
\begin{enumerate}
\item
L'application de $\Om_\aa$ dans $\AA(G,\0)$ définie par~:
\begin{equation*}
\Pi:(\BJ,\bt)\mapsto\Pi(\BJ,\bt)
\end{equation*}
est surjective.
\item
Deux paires $(\BJ,\bt)$ et $(\BJ',\bt')$ de $\Om_\aa$
ont la même image par $\Pi$ si et seulement si 
$\BJ'=\BJ$ et les~re\-pré\-sentations
$\bt$, $\bt'$ sont conjuguées sous $\BJI_{\aa}$. 
\item 
Si $\bt$ est $\BJ\cap\G^\tau$-distinguée,
alors $\Pi(\BJ,\bt)$ est $G^\tau$-distinguée.
\end{enumerate}
\end{prop}

\begin{proof}
Même preuve qu'en niveau non nul
(voir la proposition \ref{galettedepain}),
en remplaçant la proposition \ref{MAIN1prop}
par le lemme \ref{HMwarning}.
\end{proof}

Enfin, nous avons le résultat suivant. 
Notons $\AA^+(G,\0)$ 
le sous-ensemble de $\AA(G,\0)$
formé des représentations distinguées par $\G^\tau$.

\begin{prop}
\label{OlivierdeNoyen0}
\begin{enumerate}
\item 
L'ensemble $\AA(G,\0)$ est fini. 
\item 
Si $\AA^+(G,\0)$ n'est pas vide,
alors~:
\begin{equation*}
|\AA^+(G,\0)| \> \frac {1} {2} \cdot |\AA(G,\0)|. 
\end{equation*} 
\item
Pour que $\AA^{+}(G,\0)$ soit non vide, 
il faut et suffit qu'il existe à la fois une représentation cuspidale
auto\-dua\-le de $G$ de niveau $0$
et un ordre maximal dans $A$ stable par $\tau$.
\end{enumerate}
\end{prop}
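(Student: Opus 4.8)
The plan is to follow the same strategy used in the non-zero level case, namely the proof of Proposition~\ref{OlivierdeNoyen}, adapting each ingredient to level $0$. All the level-$0$ analogues of the tools needed have already been set up in this section: the classification of $\tau$-stable maximal orders (Proposition~\ref{indiceduntype0}), the necessary condition for distinction in terms of types (Lemma~\ref{HMwarning} and Proposition~\ref{HMwarningprop}), the parametrisation of autodual cuspidals of level $0$ by pairs in $\Om_\aa$ together with the two-to-one structure (Lemma~\ref{gersonide0}), and the surjectivity/fibres of the map $\Pi$ (Proposition~\ref{galettedepain0}). The autoduality of $\G^\tau$-distinguished cuspidals is Theorem~\ref{GuoBM}, which is stated for all levels.

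First I would prove assertion (1), the finiteness of $\AA(G,\0)$. As in the proof of Proposition~\ref{OlivierdeNoyen}, there are only finitely many inertial classes of cuspidal representations of $G$ of level $0$ (a cuspidal of level $0$ contains a type $(\BJ,\bt)$ with $\BJ\subseteq\BJI_\aa$ for a fixed maximal order $\aa$, and there are finitely many such, up to isomorphism); and within a fixed inertial class the autodual members are of the form $\pi\chi$ with $\chi$ an unramified character satisfying $\pi\chi^2\simeq\pi$, hence of finite order. So $\AA(G,\0)$ is finite.

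Next I would handle assertion (3). If $\AA^+(G,\0)$ is non-empty, pick a $\G^\tau$-distinguished cuspidal $\pi$ of level $0$; by Lemma~\ref{HMwarning} it contains a type $(\BJ,\bt)$ of level $0$ with $\BJ$ $\tau$-stable and $\bt$ distinguished by $\BJ\cap\G^\tau$, and $\BJ\subseteq\BJI_\aa$ for some maximal order $\aa$, which is then $\tau$-stable; thus $A$ admits a $\tau$-stable maximal order and $\pi$ is itself an autodual cuspidal of level $0$. Conversely, suppose both a $\tau$-stable maximal order $\aa$ exists and an autodual cuspidal $\pi$ of level $0$ exists. The existence of $\aa$ means the conditions of Proposition~\ref{baff} hold, so the results of this section apply; I would then need to produce an element of $\Om_\aa^+$. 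This is the one point requiring a small supplementary argument: one must check that some autodual cuspidal of level $0$ actually gives rise to a $\tau$-autodual type in $\Om_\aa$, equivalently that $\Om_\aa^+$ is non-empty. For this I would invoke Proposition~\ref{galettedepain0}(1): $\Pi:\Om_\aa\to\AA(G,\0)$ is surjective and $\AA(G,\0)$ is non-empty, so $\Om_\aa$ is non-empty, and then Lemma~\ref{gersonide0} gives $\Om_\aa^+\neq\varnothing$; by Proposition~\ref{galettedepain0}(3) each $(\BJ,\bt)\in\Om_\aa^+$ yields a $\G^\tau$-distinguished $\Pi(\BJ,\bt)$, so $\AA^+(G,\0)\neq\varnothing$.

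Finally, assertion (2) follows by the cardinality count of Proposition~\ref{OlivierdeNoyen}'s proof, transposed verbatim. If $\AA^+(G,\0)$ is non-empty, fix a $\tau$-stable maximal order $\aa$ and let $[\Om_\aa]$, $[\Om_\aa^+]$ denote the sets of $\BJI_\aa$-conjugacy classes in $\Om_\aa$, $\Om_\aa^+$. By Proposition~\ref{galettedepain0}(1)--(2) the map $\Pi$ induces a bijection $[\Om_\aa]\xrightarrow{\sim}\AA(G,\0)$, and by (3) it sends $[\Om_\aa^+]$ into $\AA^+(G,\0)$; so it suffices to show $|[\Om_\aa^+]|\geqslant\tfrac12|[\Om_\aa]|$, which is exactly Lemma~\ref{gersonide0}(1) (passing to $\BJI_\aa$-conjugacy classes is compatible with the involution-twisting bijection $\bt\leftrightarrow\bt\om$ used there, since $\om$ is $\BJI_\aa$-invariant). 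I expect the only genuine subtlety to be confirming that the two-to-one pairing of Lemma~\ref{gersonide0} descends correctly to conjugacy classes, exactly as in the non-zero level case, so the argument is essentially a transcription. The main obstacle is thus bookkeeping rather than a new idea: one must make sure every lemma invoked has indeed been proved for level $0$ (it has), and in particular that Theorem~\ref{GuoBM} is available without the level hypothesis.
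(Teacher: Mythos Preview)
Your proposal is correct and follows essentially the same approach as the paper: the paper's proof is the single sentence ``M\^eme preuve qu'en niveau non nul (voir la proposition~\ref{OlivierdeNoyen}), en rempla\c{c}ant la proposition~\ref{grammage} par le lemme~\ref{HMwarning}'', and you have spelled out precisely this transcription, invoking the level-$0$ ingredients (Lemma~\ref{gersonide0}, Proposition~\ref{galettedepain0}, Lemma~\ref{HMwarning}) in place of their positive-level counterparts.
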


\begin{proof}
Même preuve qu'en niveau non nul (voir la proposition \ref{OlivierdeNoyen}), 
en remplaçant la proposition \ref{grammage} par le lemme \ref{HMwarning},
la proposition \ref{galettedepain} par la proposition \ref{galettedepain0} et
le lemme  \ref{gersonide} par le lemme \ref{gersonide0}.
\end{proof}

\begin{rema}
Dans le cas où $G$ est déployé et $\a$ n'est pas un carré de $F^\times$, 
Chommaux~et Matringe \cite{ChommauxMatringe} donnent 
directement une condition nécessaire et suffisante pour qu'une représen\-ta\-tion 
cuspidale de niveau $0$ de $G$ soit distinguée par $G^\tau$,
en termes d'objets
(appelés~paires~ad\-mis\-si\-bles modérées)
paramétrant les types de niveau $0$ de $G$.
Quand $G$ est une forme intérieure quelconque,
la preuve de \cite[Theorem 2.1]{ChommauxMatringe} 
reste valable pour les représentations cus\-pida\-les~dont le transfert à
$\GL_{2n}(F)$ est cuspidal. 
Pour une représentation cuspida\-le
quelconque, la~para\-mé\-tri\-sation d'un type de niveau $0$ par une paire
ad\-mis\-si\-ble modérée (voir \cite{SZ2}) est moins transparente et ne
permet pas une adaptation immédiate de \cite{ChommauxMatringe}.
Nous avons préféré procéder ici com\-me en niveau non nul, 
de façon à avoir une approche uniforme.
\end{rema}

\subsection{}
\label{findumonde}

Dans ce paragraphe,
nous montrons que, contrairement au cas de niveau non nul
(propo\-sition \ref{MAIN1prop})
et à celui de l'involution galoi\-sien\-ne traitée dans \cite{AKMSS},
l'existence d'un caractère~sim\-ple~$\tau$-autodual n'entraîne pas celle
d'un type $\tau$-autodual.

Supposons par exemple que $r=1$ et que $K/F$ soit non ramifiée. 
Désignons par $q$ le cardinal du corps résiduel de $F$,
et fixons un caractère non quadratique
$\rho$ de $\kk_D^\times$ tel que
$\rho^{q^n}=\rho^{-1}$.~On note
$\BJ$ son~nor\-ma\-li\-sateur dans $\D^\times$ et $b$ l'indice de $\BJ$
dans $D^\times$.
Si $\w$ est une uniformisante de $D$ telle que $\w^d$ soit une uniformisante
de $F$,
alors $\BJ$ est engendré par $\w^b$ et le groupe des unités~de $D$.
Soit enfin $\bt$ un caractère de $\BJ$ prolongeant $\rho$
et tel que $\bt(\w^b)\in\{-1,1\}$.
Alors $(\BJ,\bt)$ est~un type de niveau $0$,
son induite compacte $\pi$ à $D^\times$
est irréductible (de dimension $b$) et autoduale,~et
pourtant $\bt$ ne peut pas être $\tau$-autodual
car le caractère $\rho$ n'est pas quadratique.

\section{Calcul du facteur epsilon}
\label{promenade}

Dans toute cette section, 
on suppose que l'élément $\a$ de la section \ref{SEC4}
n'est pas un carré de $F^\times$,
et on pose $K=F[\dd]$.
On fixe une clô\-tu\-re sépa\-ra\-ble $\overline{F}$ de $F$
et un plongement de $K$ dans $\overline{F}$.
On note $\Ww_F$ le groupe~de Weil de $\overline{F}$ sur $F$. 
On rappelle que $\bo_{K/F}$ désigne le caractère quadratique
de $\F^\times$ de noyau $\N_{K/F}(\K^\times)$,
et que $\psi$ est un caractère additif 
de $F$, trivial sur $\p_F$ mais pas sur $\Oo_F$. 
On pose $\psi_K=\psi\circ\tr_{K/F}$.

Le résultat principal de cette section est le
théorème \ref{IsabelledeFrejusNS} et la proposition \ref{alculEPniveau0},
qui calculent le facteur epsilon du~pa\-ragraphe \ref{intro1}
pour une représentation cuspidale autoduale de $G$.

\subsection{}
\label{PAR71}

Dans un premier temps, 
on considère des représentations cus\-pi\-dales de~$\GL_{2n}(F)$.
Si $\pi$ est une telle représentation, on lui associe~:
\begin{itemize}
\item 
son caractère central $c_\pi$, qui est un caractère de $\F^\times$,
\item
son paramètre de~Lang\-lands $\phi$, 
qui est une représentation irréductible de $\Ww_F$ de dimension $2n$, 
\end{itemize}
et on note $\phi_K$ la restriction de $\phi$ à $\Ww_K$.
Rappelons (voir \cite{Tate}) qu'il correspond à $\phi_K$ un 
facteur epsilon, que l'on note $\e(s,\phi_K,\psi_K)$.
On pose~:
\begin{equation}
\label{epKpi}
\ep_K(\pi) = \ep_K(\phi) = \e\left(\frac 1 2,\phi_K,\psi_K\right),
\end{equation}
qui ne dépend pas du choix de $\psi$. 
Il sera commode de poser $\ep_F(V)=\e(1/2,V,\psi)$
pour toute représentation $V$
de~di\-mension finie de $\Ww_F$.
\'Ecrivons (\cite[(30.4.2)]{BHGL2})~:
\begin{equation}
\label{ballon}
\ep_F(\Ind_{K/F} (\phi_K))
= \boldsymbol{\lambda}_{K/F}^{2n} \cdot
\e\left(\frac 1 2,\phi_K,\psi_K\right)
\end{equation}
où $\Ind_{K/F}$ désigne l'induction de $\Ww_K$ à $\Ww_F$
et $\boldsymbol{\lambda}_{K/F}$ la constante de Langlands
(\cite[(30.4.1)]{BHGL2}).
Le carré de~cette constante étant égal à $\bo_{K/F}(-1)$
(voir \cite[(30.4.3)]{BHGL2}),
la quantité
\eqref{ballon} devient $\bo_{K/F}(-1)^n\cdot \ep_K(\pi)$.
D'un autre côté, on a~:
\begin{equation}
\label{splitphiK}
\ep_F(\Ind_{K/F} (\phi_K))
= \ep_F(\phi\otimes\Ind_{K/F}(1))
= \ep_F(\phi)\cdot\ep_F(\phi\bo_{K/F})
\end{equation}
où $\bo_{K/F}$ est considéré comme un caractère de $\Ww_F$
\textit{via} le morphisme de réciprocité d'Artin de~la
théorie du corps de classes.
D'après les propriétés de la correspondance de Langlands locale,~le
facteur $\ep_F(\phi)$ est égal au facteur epsilon de
Godement-Jacquet $\e(1/2,\pi,\psi)$, que l'on note $\ep_F(\pi)$.
De façon analogue,
$\ep_F(\phi\bo_{K/F})$ est égal à $\ep_F(\pi\bo_{K/F})$.
On trouve donc~:
\begin{equation}
\label{formulegeneraleepK}
\ep_K(\pi) = \bo_{K/F}(-1)^n \cdot \ep_F(\pi) \cdot \ep_F(\pi\bo_{K/F}).
\end{equation}
Nous allons pousser ce calcul plus loin,
en supposant que $\pi$ est autoduale. 

\subsection{}
\label{Charity}

Dans ce paragraphe, 
$\pi$ est autoduale de niveau non nul. 
Le résultat suivant exprime le~si\-gne $\ep_K(\pi)$~en fonc\-tion
de~don\-nées relevant de la description de $\pi$ en termes de types
grâce à \cite[Théo\-rème~4.1]{BHCJM01}.

\begin{prop}
\label{FormulePourEKPi}
Soit $\pi$ une représentation cus\-pi\-dale autoduale de~$\GL_{2n}(F)$, 
de niveau non nul. 
Soit $[\aa,\b]$ une strate simple maximale de $\Mat_{2n}(F)$ telle que $\pi$
contienne un caractère simple de $\Cc(\aa,\b)$.
Alors on a~: 
\begin{equation}
\label{FORMULEEPK0}
\ep_K(\pi) =
\cc_{\pi}(-1) \cdot
\bo_{K/F}\left((-1)^n \det\b\right).
\end{equation}
\end{prop}

\begin{proof}
Supposons d'abord que $\pi$ est une représentation cuspidale de $\GL_{2n}(F)$
de~ni\-veau non nul, pas forcément autoduale. 
Soit $[\aa,\b]$ une strate simple maximale de $\Mat_{2n}(F)$ telle que $\pi$
contienne un caractère simple de $\Cc(\aa,\b)$.
Invoquant \cite[Théorème 4.1]{BHCJM01}, on a~:
\begin{equation}
\label{extase}
\ep_F(\pi\chi) = \chi(\det \b)^{-1} \cdot \ep_F(\pi)
\end{equation}
pour tout caractère modérément ramifié $\chi$ de $\F^\times$.
Appliquant ce résultat au caractère quadra\-tique $\bo_{K/F}$, 
on trouve que $\ep_F(\pi\bo_{K/F})$ est égal à 
$\bo_{K/F}(\det \b)\cdot\ep_F(\pi)$.
D'après \cite{BHBLMS99},
on a l'iden\-ti\-té~$\ep_F(\pi) \cdot \ep_F(\pi^\vee)= c_{\pi}(-1)$. 
Supposant maintenant que $\pi$ est autoduale,
on obtient le résultat an\-noncé en appliquant \eqref{formulegeneraleepK}. 
\end{proof}

Nous allons pousser plus loin le calcul de $\ep_K(\pi)$
en choisissant pour $[\aa,\b]$ une strate particu\-lière.  

\begin{prop}
\label{hugonnet}
Soit $\pi$ une représentation cus\-pi\-dale autoduale de~$\GL_{2n}(F)$ de 
niveau non nul, d'endo-classe notée $\TT$.
On note $T/T_0$ l'extension quadratique qui lui est associée au
paragraphe \ref{invTT}.
On a~:
\begin{equation}
\ep_K(\pi) = c_{\pi}(-1) \cdot \bo_{T/T_0}(\a)^{2n/[T:F]}.
\end{equation}
\end{prop}

\begin{proof}
Comme au paragraphe \ref{P12},
fixons un élément $\s\in\GL_{2n}(F)$ tel que $\ss^2=1$,~de
polynôme ca\-rac\-téristique $(X^2-1)^{n}$,
et notons encore $\s$
l'au\-to\-morphisme de~con\-ju\-gaison par~$\s$.
D'après la~remarque \ref{lll},
il y a une strate sim\-ple maximale $\s$-autoduale
$[\aa,\b]$ dans $\Mat_{2n}(F)$ telle que $\pi$~con\-tienne 
un caractère simple $\s$-autodual de $\Cc(\aa,\b)$.
Posons $E=\F[\b]$ et $E_0=\F[\b^2]$,~et
identifons $T$ à la sous-extension modérément ramifiée maximale de $E$
sur $F$ et $T_0$ à $T\cap E_0$.
Le degré de $\TT$ est égal à $[E:F]$.
Posant $m=2n/[E:F]$, on a~:
\begin{equation*}
\label{jounellis}
\det \b = \N_{E/F}(\b)^m
= \N_{E_0/F}(-\b^2)^m=(-1)^{n}\cdot\N_{E_0/F}(\b^2)^m
\end{equation*}
car $\N_{E/E_0}(\b)=-\b^2$ et $m[E_0:F]=n$.
Appliquant la proposition \ref{FormulePourEKPi}, on trouve~:
\begin{equation*}
c_{\pi}(-1) \cdot \ep_K(\pi) =
\bo_{K/F}(\N_{E_0/F}(\b^2))^m = (\a,\N_{E_0/F}(\b^2))_{\F}^m 
\end{equation*}
où $(\cdot,\cdot)_F$ désigne le symbole de Hilbert sur $F$.

Posons maintenant $\g=\N_{E/T}(\b)$.
Comme $[E:T]$ est une puissance de $p$, 
qui est impair, on~a $\s(\g)=-\g$.
On en déduit d'une part que $\T$ est engendré par $\g$ sur $T_0$,
et d'autre part que~:
\begin{equation*}
\N_{E_0/F}(\b^2) = (-1)^{[E_0:F]} \cdot \N_{E/F}(\b) 
=  (-1)^{[E_0:F]} \cdot \N_{T/F}(\g) 
= \N_{T_0/F}(\g^2).
\end{equation*}
Par conséquent, 
on a $(\a,\N_{E_0/F}(\b^2))_{\F}=(\a,\N_{T_0/F}(\g^2))_{\F}$.
Les proprié\-tés du symbole de Hilbert vis-à-vis du 
changement de base 
(voir par exemple \cite[Proposition IV.5.1(8)]{FesenkoVostokov})
assurent que~:
\begin{equation*}
(\a,\N_{T_0/F}(\g^2))_{\F} = (\a,\g^2)_{\T_0}
\end{equation*}
et le membre de droite est égal à $\bo_{T/T_0}(\a)$. 
On conclut en observant que $2n/[T:F]=m[E:T]$
où $[E:T]$ est impair. 
\end{proof}

\subsection{}

On suppose maintenant que $\pi$ est autoduale de niveau $0$. 

\begin{theo}
\label{ChommauxMatringeTH}
Soit $\pi$ une représentation cuspidale autoduale de~ni\-veau $0$ 
de $\GL_{2n}(F)$.  
\begin{enumerate}
\item 
On a~:
\begin{equation*}
\ep_K(\pi) = (-1)^{f_{K/F}}.
\end{equation*} 
\item
Le caractère central de $\pi$ est quadratique et non ramifié. 
\end{enumerate} 
\end{theo}

\begin{proof}
Dans le cas où le caractère central de $\pi$ est trivial, 
le résultat est donné par \cite[Theorem 6.1]{ChommauxMatringe}.
On prendra garde au fait que les auteurs de 
\cite{ChommauxMatringe} uti\-lisent une normalisation 
différen\-te~et~cal\-culent~:
\begin{equation*}
\ep_F(\Ind_{K/F} (\phi_K)) = \bo_{K/F}(-1)^n\cdot \ep_K(\pi).
\end{equation*}
Dans le cas général, 
$\pi$ est paramétré, comme expliqué dans \cite{ChommauxMatringe},
par un caractère modérément ramifié $\chi$ de $L^\times$,
où $L$ est une extension non ramifiée de degré $2n$ de $F$,
avec les propriétés~sui\-vantes~:
\begin{itemize}
\item 
le caractère $\chi$ est distinct de tous ses conjugués sous $\Gal(L/F)$, 
\item
le fait que $\pi$ soit autoduale se traduit par le fait que $\chi$
est trivial sur $\N_{L/L_0}(L^\times)$,
où $L_0$ est la sous-extension de degré $n$ de $F$ dans $L$,
\item 
le caractère central de $\pi$ est égal à la restriction de $\chi$ à 
$F^\times$.
\end{itemize}
En particulier,
le caractère $\chi$ est trivial sur $\Oo^\times_{L_0}$,
ce qui prouve (2),
le fait que~$c_\pi$ soit quadratique provenant de ce que $\pi$ est autoduale. 
Supposons que le caractère central~de~$\pi$ soit~non trivial.
Fixons un caractère non ramifié $\varphi$ de $F^\times$ tel que $\varphi^{2n}$
soit d'ordre $2$.
Alors $\pi\varphi$~est~une~représen\-ta\-tion cus\-pi\-dale
autoduale de niveau $0$
et de caractère central trivial.
On a $\ep_K(\pi\varphi) = (-1)^{f_{K/F}}$.
Par ailleurs, on a~:
\begin{equation*}
\ep_K(\pi\varphi) = \bo_{K/F}(-1)^n \cdot \ep_F(\pi\varphi)
\cdot \ep_F(\pi\varphi\bo_{K/F})
\end{equation*}
d'après \eqref{formulegeneraleepK}.
Le résultat suit de ce que $\ep_F(\pi\varphi)=\ep_F(\pi)$
et $\ep_F(\pi\varphi\bo_{K/F})=\ep_F(\pi\bo_{K/F})$
car $\varphi$ est un caractère non ramifié et ni $\pi$
ni $\pi\bo_{K/F}$ ne sont des caractères non ramifiés de $F^\times$
(voir Tate \cite[(3.4.6)]{Tate}). 
\end{proof}

\subsection{}
\label{passageaGprime}
\label{Banaan}

On considère maintenant des représentations cus\-pi\-dales autoduales
d'une forme intérieu\-re~$G$ de $\GL_{2n}(F)$. 
Si $\pi$ est une telle représentation, on note $\TT$ son endo-classe et
$\d$ son de\-gré~pa\-ra\-mé\-trique.
On lui associe~:
\begin{itemize}
\item 
son caractère central $c_\pi$, qui est un caractère de $\F^\times$,
\item
son transfert de Jacquet-Langlands à $\GL_{2n}(F)$,
noté $\pi'$, 
\item
le paramètre de~Lang\-lands $\phi$ de $\pi'$, 
qui est une représentation irréductible de dimension $2n$
du~grou\-pe~de Weil-Deligne
$\Ww_F\times{\rm SL}_2(\CC)$.
\end{itemize}
Comme au paragraphe \ref{regimbard},
on écrit $\pi'$ sous la~for\-me $L(\pi'_{0},s)$,
où $s$ est égal à $2n/\d$ selon~\eqref{scoprimem}
et où $\pi'_0$ est une~repré\-sentation~cus\-pidale autoduale
de $\GL_{\d}(F)$,
d'endo-classe $\TT$ selon le théorème \ref{conjecjl}. 
Comme dans le cas déployé, on pose~:
\begin{equation}
\label{epKpins}
\ep_K(\pi) = \ep_K(\pi') = \ep_K(\phi) = \e\left(\frac 1 2,\phi_K,\psi_K\right),
\end{equation}
qui ne dépend pas du choix de $\psi$.

\begin{lemm}
\label{paritedelta}
\begin{enumerate}
\item
Le degré paramétrique $\d$ est soit pair, soit égal à $1$.
\item
Si $\d=1$, alors $r=1$ et $\TT$ est nulle. 
\end{enumerate}
\end{lemm}

\begin{proof}
Comme en \eqref{pardeg},
écrivons $\d$ sous la forme $mb[E:F]$.
Si $\TT$ est non nulle,~on déduit (1) de ce que $[E:F]$ est pair. 
Si $\TT$ est nulle,
le lemme \ref{fromagequipue} appliqué à $\pi'_0$ assure que $\d$,
qui est égal à $mb$, est pair ou égal à $1$.

Ensuite,
rappelons que $r=\d/(d,\d)$ d'après \eqref{rsprime}.
Si $\d=1$, alors d'une part $r=1$,
et d'autre part $\pi'_0$ est un~caractère~qua\-dratique
(donc~mo\-dé\-rément ramifié car $p\neq2$) de $F^\times$,
donc $\TT$ est nulle. 
\end{proof}

\begin{prop}
\label{hugonnetsneq2n}
Soit $\pi$ une représentation cus\-pi\-dale autoduale de~$\G$.
Si $\d\neq1$, on a~:
\begin{equation}
\ep_K(\pi) = \ep_K(\pi'_0)^{s}.
\end{equation}
\end{prop}

\begin{proof}
Comme au paragraphe \ref{PAR71}
(voir \eqref{ballon} et \eqref{splitphiK}), on a~:
\begin{equation*}
\ep_K(\pi) = \bo_{K/F}(-1)^{n} \cdot \ep_F(\pi') \cdot \ep_F(\pi'\bo_{K/F}),
\end{equation*}
où $\ep_F(\pi')$ désigne le facteur de Godement-Jacquet
$\e(1/2,\pi',\psi)$.
Comme $\d\neq1$,
la représenta\-tion cuspidale $\pi'_0$ n'est pas un caractère non ramifié
de $F^\times$.
On a donc $\ep_F(\pi')=\ep_F(\pi'_0)^s$,
et on a un résultat similaire pour $\pi'\bo_{K/F}$.
Comme $s$ divise $n$ d'après le lemme \ref{paritedelta},
on trouve~: 
\begin{equation*}
\ep_K(\pi) = \bo_{K/F}(-1)^{n} \cdot \ep_F(\pi') \cdot \ep_F(\pi'\bo_{K/F}) 
= \left(\bo_{K/F}(-1)^{n/s} \cdot
\ep_F(\pi'_0) \cdot \ep_F(\pi'_0\bo_{K/F})\right)^{s} 
\end{equation*}
ce qui donne le résultat voulu.
\end{proof}

On en déduit le théorème suivant. 

\begin{theo}
\label{IsabelledeFrejusNS}
Soit $\pi$ une représentation cuspidale autoduale de $G$
de niveau non nul.~On a alors~:
\begin{equation*}
\ep_K(\pi) = c_{\pi}(-1) \cdot \bo_{T/T_0}(\a)^{2n/[T:F]}
\end{equation*}
où $T/T_0$ est l'extension quadratique associée à l'endo-classe de $\pi$.
\end{theo}

\begin{proof}
La représentation $\pi$ étant de niveau non nul,
$\pi'_0$ est niveau non nul. 
On peut donc lui appliquer la proposition \ref{hugonnet},
ce qui donne~:
\begin{equation*}
\ep_K(\pi'_0) = c_{\pi'_0}(-1) \cdot \bo_{T/T_0}(\a)^{\d/[T:F]}.
\end{equation*}
On en déduit également,
d'après le lemme \ref{paritedelta}, que $\d\neq1$.
On peut donc appliquer la proposition \ref{hugonnetsneq2n},
qui donne~: 
\begin{equation*}
\ep_K(\pi) 
= c_{\pi'_0}(-1)^{s} \cdot \bo_{T/T_0}(\a)^{s\d/[T:F]}
= c_{\pi'}(-1) \cdot \bo_{T/T_0}(\a)^{2n/[T:F]}.
\end{equation*}
La correspondance de Jacquet-Langlands préservant le caractère central,
on obtient le résultat voulu.
\end{proof}

Dans le cas de niveau $0$, on a le résultat suivant. 

\begin{prop}
\label{alculEPniveau0}
Soit $\pi$ une représentation cuspidale autoduale de $G$
de niveau $0$.
\begin{enumerate}
\item
Si $\d=1$, \ie si $r=1$ et $\pi=\chi\circ{\rm Nrd}_{A/F}$
pour un~ca\-rac\-tère quadratique $\chi$ de $\F^\times$, on a~: 
\begin{equation*}
\ep_K(\pi) = \left\{ 
\begin{array}{rl}
-1 & \text{si $\chi$ est trivial sur $\N_{K/F}(\K^\times)$}, \\
1 & \text{sinon}.
\end{array}\right.
\end{equation*}
\item 
Sinon, on a $\ep_K(\pi) =(-1)^{sf_{K/F}}$.
\end{enumerate}
\end{prop}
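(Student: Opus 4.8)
Proposition \ref{alculEPniveau0} computes $\ep_K(\pi)$ for a level-zero autodual cuspidal representation of $G$ with trivial central character, in the case where $\a$ is not a square. I plan to reduce the computation to the known level-zero formula for $\GL_{2n}(F)$, namely Theorem \ref{ChommauxMatringeTH}, via the Jacquet--Langlands transfer.

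The plan is to proceed exactly as in \S\ref{passageaGprime}. Write $s=s(\pi)$ for the integer of Proposition \ref{scoprimem}, a divisor of $d$ prime to $r$, and let $\pi'$ be the Jacquet--Langlands transfer of $\pi$ to $\GL_{2n}(F)$; by \cite{ZelAENS} Theorem 9.3 and \cite{MSjl} Proposition 12.2, $\pi'$ is the Langlands quotient $L(\pi'_0,s)$ for a unique level-zero cuspidal $\pi'_0$ of $\GL_{2n/s}(F)$, and $\pi'_0$ has the same endo-class (here, the null endo-class) as $\pi$ by Theorem \ref{conjecjl}; moreover $\pi'_0$ is autodual since $\pi$ is, and the central characters match. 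The key distinction is whether $2n/s > 1$ or $2n/s = 1$.

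\textbf{Case $2n/s>1$.} Then $\pi'_0$ is a genuine level-zero cuspidal representation of $\GL_{2n/s}(F)$ with trivial central character, and $2n/s$ is even (it is a multiple of $m=2n/\deg(\TT)$, which for the null endo-class with $s>1$ forces... one checks $2n/s = r/(d/s)\cdot$ something even — more precisely $s\mid d$ and $s$ prime to $r$ give $2n/s = rd/s$, which is even because $rd=2n$ and... this is exactly the parity statement of Lemma \ref{fromagequipue}(3) applied with $N=2n/s$ even or equal to $1$; since $2n/s>1$ it is even). Then $\pi'_0$ is not an unramified character, so $\ep_F(\pi') = \ep_F(\pi'_0)^s$ and likewise for the $\bo_{K/F}$-twist, whence $\ep_K(\pi) = \ep_K(\pi'_0)^s$. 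Theorem \ref{ChommauxMatringeTH} gives $\ep_K(\pi'_0) = (-1)^{f_{K/F}}$, so $\ep_K(\pi) = (-1)^{sf_{K/F}}$, which is assertion (2).

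\textbf{Case $2n/s=1$.} Then $r=1$, $s=2n=d$, and $\pi'_0$ is an autodual (hence quadratic) character $\chi$ of $F^\times$; the representation $\pi$ is then $\chi\circ\Nrd_{A/F}$. Here $L(\chi,2n)$ is the Steinberg-type representation $\mathrm{St}_{2n}(\chi)$, whose Langlands parameter is $\chi\otimes\mathrm{Sp}(2n)$, and one must compute $\ep_K$ of this directly. Restricting to $\Ww_K$ gives $\chi_K\otimes\mathrm{Sp}(2n)$ where $\chi_K = \chi\circ\mathrm{N}_{K/F}$ (via class field theory); its epsilon factor at $1/2$ is a standard computation (see \cite{Tate}, or \cite{BHGL2}): for the special representation one gets $\e(1/2,\chi_K\otimes\mathrm{Sp}(2n),\psi_K) = \chi_K(\text{something involving } q_K)\cdot(\pm1)$, and since $\chi_K$ is quadratic and $\psi_K$ has the standard level, this collapses to $-1$ if $\chi_K$ is unramified (equivalently $\chi$ trivial on $\N_{K/F}(K^\times)$, as $K/F$ ramified would make $\chi_K$ ramified unless $\chi$ is trivial on norms... one must be careful) and to $+1$ otherwise. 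This matches assertion (1) after verifying that "$\chi$ trivial on $\N_{K/F}(K^\times)$" is precisely "$\chi_K$ unramified or trivial in the relevant sense".

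The main obstacle I expect is the direct epsilon-factor computation for the special (Steinberg) parameter $\chi\otimes\mathrm{Sp}(2n)$ in Case $2n/s=1$: one cannot invoke Theorem \ref{ChommauxMatringeTH} since $\pi'_0$ is then merely a character and the multiplicativity $\ep_F(\pi') = \ep_F(\pi'_0)^s$ fails. I would handle it using the inductive formula $\e(s,\chi\otimes\mathrm{Sp}(n),\psi) = \e(s,\chi,\psi)^n\cdot\det(-\Phi\mid V^{I}/V_{\text{something}})\cdots$ — more cleanly, using $\mathrm{Ind}_{K/F}(\chi_K\otimes\mathrm{Sp}(2n)) = (\chi\otimes\mathrm{Sp}(2n))\oplus(\chi\bo_{K/F}\otimes\mathrm{Sp}(2n))$ and the relation $\ep_F(\mathrm{Ind}_{K/F}\phi_K) = \boldsymbol{\lambda}_{K/F}^{\dim\phi_K}\ep_K(\phi_K)$ exactly as in the proof of Theorem \ref{FormulePourEKPi}, reducing to $\ep_F$ of the special representation and its quadratic twist — and for those the conductor/root-number of $\mathrm{St}_{2n}(\chi)$ over $F$ is classical. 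The bookkeeping of signs ($\bo_{K/F}(-1)^n$ factors, the sign $(-1)^{2n-1}$ from $\mathrm{Sp}(2n)$, and $c_\pi(-1)=1$) is routine but needs care to land exactly on the stated dichotomy.
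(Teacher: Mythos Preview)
Your approach is essentially the same as the paper's: in case (2) you reduce via Jacquet--Langlands and multiplicativity to $\ep_K(\pi'_0)^s$ and invoke Theorem \ref{ChommauxMatringeTH}, exactly as the paper does. The only difference is in case (1): rather than carrying out the direct epsilon-factor computation for the twisted Steinberg parameter that you sketch, the paper simply cites \cite{Chommaux} Section~4, where this computation has already been done.
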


\begin{proof}
Dans le premier cas,
\ie si $\d=1$,
le transfert de $\pi$ à $\GL_{2n}(F)$ est~la représentation de Steinberg
tordue par le caractère $\chi\circ\det$
et le calcul a été~fait dans \cite[Section 4]{Chommaux}.
Dans le second cas,
le résultat suit de la proposition \ref{hugonnetsneq2n}
et du théorème \ref{ChommauxMatringeTH}. 
\end{proof}

Pour toute représentation cuspidale autoduale $\pi$ de $G$,
on pose~:
\begin{equation*}
\EP_K(\pi) = c_{\pi}(-1) \cdot \ep_K(\pi).
\end{equation*}
On observe que,
compte tenu du théorème \ref{ChommauxMatringeTH}(2),
on a $\EP_K(\pi) = \ep_K(\pi)$ quand $\pi$ est de niveau~$0$. 
On déduit de ce qui précède les résultats suivants
(voir les paragraphes
\ref{malinlegarcon}, \ref{malinlegarconcor} et \ref{malinlegarcon00}
pour~les no\-ta\-tions).

\begin{coro}
\label{IsabelledeFrejusNScoro-1}
Soit $\pi$ une représentation cuspidale autoduale de $G$
de niveau non nul.  
La représentation $\pi$ contient un caractère simple $\tau$-autodual
si et seulement si $\EP_K(\pi)=(-1)^r$. 
\end{coro}

\begin{proof}
Il suffit d'appliquer les théorèmes
\ref{IsabelledeFrejusNS} et \ref{THMEXISTENCECARACSIMPLE}.
\end{proof}

\begin{coro}
\label{IsabelledeFrejusNScoro}
\label{IsabelledeFrejusNSlevel0}
Soit $\pi$ une représentation cuspidale autoduale de $G$
d'endo-classe $\TT$. 
\begin{enumerate} 
\item 
Si $\pi$ est distinguée par $\G^\tau$, alors $\EP_K(\pi)=(-1)^r$.
\item
Si $\EP_K(\pi)=(-1)^r$, alors $\AA^+(G,\TT)$ est non vide et~:
\begin{equation}
\label{cardinaux}
|\AA^+(G,\TT)| \> \frac {1}{2} \cdot |\AA(G,\TT)|.
\end{equation}
\end{enumerate}
\end{coro}

\begin{proof}
Dans le cas où $\pi$ est de niveau non nul,
l'assertion (1) suit de la proposition \ref{grammage} et du corollaire 
\ref{IsabelledeFrejusNScoro-1},
et l'assertion (2) suit du corollaire \ref{IsabelledeFrejusNScoro-1}
et de la proposition \ref{OlivierdeNoyen}.

On suppose dans toute la suite de la preuve que $\pi$ est de niveau $0$.
Prouvons l'assertion~(1).
Supposons d'abord que~$\d=1$,
\ie que $r=1$ et $\pi=\chi\circ{\rm Nrd}_{A/F}$
pour un caractère~qua\-dratique $\chi$ de $\F^\times$. 
Le fait que $\pi$ soit $G^\tau$-distingué signifie que $\pi$ est trivial
sur $\G^\tau$,
\ie que~le caractère $\chi\circ\N_{K/F}$ est trivial sur $K^\times$.
Appliquant la pro\-po\-si\-tion \ref{alculEPniveau0},
on en déduit que $\ep_K(\pi)=-1=(-1)^r$.
Supposons maintenant que $\d\neq1$.
D'après la proposition \ref{alculEPniveau0},
il s'agit de prouver que~:
\begin{equation}
\label{signes0}
(-1)^{sf_{K/F}}=(-1)^r.
\end{equation}
D'après
{le lemme \ref{HMwarning} et la proposition \ref{HMwarningprop}(1),}
la représentation cuspidale distinguée $\pi$ contient un~type $\tau$-autodual
$(\BJ,\bt)$ de niveau $0$, tel que $\bt$ soit $\BJ\cap\G^\tau$-distingué.
Considérons tour à tour les trois cas de la proposition \ref{HMwarningprop}(2).
Dans le cas (2.c), 
l'entier $r$ est pair ou égal à $1$.
Le cas~où $r=1$,
qui entraîne que $\bt$ est un caractère quadratique, 
n'est possible que si $s=2n$,~ce qui a été exclu.
Donc $r$ est pair et,
$f_{K/F}$ étant pair, l'identité \eqref{signes0} est vérifiée. 
Dans le cas (2.b), $r$ et $f_{K/F}$ sont 
pairs,~ce qui donne encore \eqref{signes0}. 
Dans le cas (2.a) enfin,
$f_{K/F}$ et $r$ sont impairs.
Il nous reste donc à prouver que $s$ est impair dans ce cas. 

Supposons donc que $K/F$ soit ramifiée.
Notons $\rho$ la représentation cuspidale de $\GL_r(\kk_D)$~dé\-fi\-nie par $\bt$, 
et notons $\xi$ le caractère $\kk_D$-régulier de $\tt^\times$
paramétrant $\rho$ comme dans la preuve du lemme \ref{hamptons}. 
Posant $b=d/s$,
et notant $q$ le cardinal de $\kk_F$ et $N$ l'ordre de $\xi$, 
on sait que l'ordre de $q$ dans $(\ZZ/N\ZZ)^\times$ est égal à $rb$.
L'ordre de $q^n$ vaut donc~:
\begin{equation*}
{\frac {rb} {(rb,n)}
= \frac {2rb} {(2rb,rd)}
= \frac {2} {(2,s)}.}
\end{equation*}
Or d'après la proposition \ref{HMwarningprop},
la représentation $\rho^\vee$ est isomorphe au conjugué de $\rho$
par l'élément d'ordre $2$ de $\Gal(\kk_D/\kk_F)$.
D'après par exemple \cite[Lemma 2.3]{VSANT19},
cela implique que $\xi^{q^n}=\xi^{-1}$,
\ie que l'ordre de $q^n$ dans $(\ZZ/N\ZZ)^\times$ est égal à $2$. 
On en déduit que $s$ est impair.
Ceci termine la preuve de l'assertion~(1).

Supposons enfin que $\AA^+(G,\0)$ soit vide.
D'après les propositions \ref{OlivierdeNoyen0} et \ref{baff}, 
l'extension $K/F$ est ramifiée et $d$ est impair.
Dans ce cas, $r$ est pair et $s$ (qui divise $d$) est impair,
et la proposition \ref{alculEPniveau0} assure que
$\ep_K(\pi)\neq(-1)^r$.
L'inégalité \eqref{cardinaux} suit alors de la proposition
\ref{OlivierdeNoyen0}.
\end{proof}

\begin{rema}
\label{Parlement}
La preuve du corollaire \ref{IsabelledeFrejusNSlevel0} montre plus
précisément que,
quand $K/F$~est ramifiée, 
$s$ est impair pour toute représentation $\pi\in\AA(G,\0)$.
On en déduit que la valeur $\ep_K(\pi)$ est la même pour toutes
les $\pi\in\AA(G,\0)$
telles que $\d\neq1$.~Lorsque $\d=1$ en revanche
(ce qui ne peut se produire que si $r=1$), 
considérons les caractères auto\-duaux
$\pi(\chi)=\chi\circ{\rm Nrd}_{A/F}$
où $\chi$ est un caractère quadratique de $F^\times$.
Ils sont tous les quatre~dans $\AA(G,\0)$,
mais $\ep_K(\pi(\chi))=-1$
si et seu\-le\-ment si $\chi$ est trivial sur $\N_{K/F}(K^\times)$.
Voir le lemme \ref{comptageAsp}.
\end{rema}

\section{La conjecture de Prasad et Takloo-Bighash}
\label{SECLAPREUVE}

Soit $A$ une $F$-algèbre centrale simple de degré réduit $2n$,
et posons $G=A^\times$.
C'est une forme intérieure du groupe $G'=\GL_{2n}(F)$. 

\subsection{}
\label{banquets}

On fixe une endo-classe~auto\-duale $\TT$ de degré divisant $2n$.
On rappelle que $\AA(\G,\TT)$ est l'en\-sem\-ble des
(classes d'isomorphisme de)
représentations cuspi\-dales auto\-duales de $\G$ 
d'endo-classe $\TT$.
\'Etant donné une telle représentation,
il lui correspond son para\-mètre de Lang\-lands $\phi$,~qui 
est une représentation irréductible autoduale de dimension $2n$
de $\Ww_F\times{\rm SL}_2(\CC)$.
Selon~le lemme~de Schur, 
le $\CC$-espace vectoriel des homomorphismes de $\phi$ vers $\phi^\vee$
est~de di\-mension~$1$.
Soit $f:\phi\to\phi^\vee$~un isomorphisme.
Identifiant canoniquement $\phi^{\vee\vee}$ à $\phi$, 
l'isomor\-phisme contragrédient $f^\vee$ est donc égal à
$\chi(\phi)f$ pour un unique signe $\chi(\phi)\in\{-1,1\}$,
indé\-pen\-dant du choix de $f$.

\begin{defi}
On dit que~$\phi$~est~de~pa\-ri\-té~\textit{orthogonale} si $\chi(\phi)=1$, 
et de parité \textit{symplectique} si $\chi(\phi)=-1$.
\end{defi}

Notons
$\AA^{{\rm sp}}(\G,\TT)$ et $\AA^{{\rm or}}(\G,\TT)$~les~en\-sembles
de représentations $\pi$ dans $\AA(\G,\TT)$
dont le para\-mètre de Lang\-lands
soit de parité respectivement symplectique et orthogonale.
Ainsi l'ensemble $\AA(\G,\TT)$ est la~réu\-nion dis\-jointe de 
$\AA^{{\rm sp}}(\G,\TT)$ et $\AA^{{\rm or}}(\G,\TT)$.

\begin{lemm}
\label{lemmedeparitegeneralise} 
Les ensembles $\AA^{{\rm sp}}(\GL_{2n}(F),\TT)$ et
$\AA^{{\rm or}}(\GL_{2n}(F),\TT)$
ont le même cardinal. 
\end{lemm}

\begin{proof}
Si $\TT$ est non nulle et $m=2n/\deg(\TT)$ ne vérifie pas les conditions du
lemme \ref{lemmedeparite}, 
les ensembles $\AA(\GL_{2n}(F),\TT)$, $\AA^{{\rm sp}}(\GL_{2n}(F),\TT)$ 
et $\AA^{{\rm or}}(\GL_{2n}(F),\TT)$ sont tous les trois vides.
Nous supposerons donc par la suite que ces conditions sont vérifiées.
(Si $\TT$ est nulle, la condition du lemme \ref{lemmedepariteniveauzero}
est toujours vérifiée.)

La preuve commence comme celle du lemme \ref{lemmedeparite}.
En particulier,
on a~une représentation $\bk$ de $\BJI$ prolon\-geant $\n$ telle que
$\bk^{\vee\s}$ soit isomorphe à $\bk$.  
Selon le paragraphe \ref{regimbard},
on a une bijection \eqref{bijthetacusp} entre~:
\begin{enumerate}
\item 
classes d'isomorphisme de représentations irréductibles $\bt$ de $\BJ$
triviale sur $\BJ^1$~dont~la~res\-triction à $\BJ^0$ soit l'inflation d'une
représentation cuspidale de $\GL_m(\ee)$, 
\item 
et classes~d'iso\-mor\-phisme de
représentations cuspidales de $G$ d'endo-classe $\TT$.
\end{enumerate}
Comme $\bk^{\vee\s}$ est isomorphe à $\bk$,
elle induit une bijection entre 
représentations $\bt$ telles que~$\bt^{\vee\s}$ soit isomorphe à $\bt$
et l'ensemble $\AA(\G,\TT)$.
À partir de là, 
on procède exactement comme dans la preuve de \cite[Lemma 7.2]{BHS},
en faisant agir sur $\AA(\G,\TT)$ le groupe $X$ des caractères
modérément ramifiés de $\T^\times$,
d'ordre divisant $2m$,
dont la restriction à $\Oo_T^\times$ soit quadratique.
On montre que chaque $\pi\in\AA(\G,\TT)$ a une orbite sous $X$~de
cardinal $4$,
contenant exactement deux éléments de $\AA^{{\rm sp}}(\G,\TT)$ et
deux de $\AA^{{\rm or}}(\G,\TT)$.
\end{proof}

\begin{rema}
On comparera ce lemme à \cite[Lemma 7.2]{BHS},
qui s'appuie sur une définition d'endo-classe autoduale différente de la 
nôtre, quoique probablement équivalente. 
\end{rema}

\begin{lemm}
\label{lemmedeparitegeneraliseinnerform} 
\label{comptageAsp} 
On a~:
\begin{equation}
\label{incl2}
|\AA^{{\rm sp}}(G,\TT)| = \frac 1 2 \cdot |\AA(G,\TT)| + 
\left\{
\begin{array}{rl}
2 & \text{si $\TT$ est nulle et $r=1$,} \\
0 & \text{sinon.}
\end{array}\right.
\end{equation} 
\end{lemm}

\begin{proof} 
D'après le paragraphe \ref{degueulis}, 
la correspondance de Jac\-quet-Lang\-lands et la~clas\-si\-fication des 
représentations essentiellement de carré intégrable de $\GL_{2n}(F)$
induisent une cor\-res\-pondance bijective~:
\begin{equation*}
\AA(G,\TT)
\quad\leftrightarrow\quad
\coprod\limits_{\d} \ \AA(\GL_{\d}(F),\TT)
\end{equation*}
où l'union disjointe dans le membre de droite porte
sur l'ensemble $\SD$ des entiers de la forme $\d(\pi)$ pour 
$\pi\in\AA(G,\TT)$~: voir le paragraphe~\ref{regimbard}
et le lemme \ref{fromagequipue} qui décrit précisément
l'ensemble~$\SD$.
\'Etant donné une représentation cuspidale $\pi\in\AA(G,\TT)$,
son transfert à $\GL_{2n}(F)$ s'écrit $L(\pi'_0,s)$ avec $s=2n/\d(\pi)$.
Son paramètre de Langlands $\phi$ est égal à $\phi_0\otimes{\rm Sym}^{s-1}$,
où $\phi_0$ est le~para\-mè\-tre de Langlands de $\pi'_0$
et ${\rm Sym}^{s-1}$ est la représentation irréductible de dimension $s$ de
${\rm SL}_2(\CC)$,
chacune des deux étant autoduale. 
Selon \cite[Lemma 3.2]{GanGrossPrasad}, on a 
$\chi(\phi)=\chi(\phi_0)\cdot\chi({\rm Sym}^{s-1})$.
Puis, selon \cite[Lemma 3.2.15]{GoodmanWallach},
on a $\chi({\rm Sym}^{s-1})=(-1)^{s-1}$.
Il s'ensuit que $\phi$ est de parité~symplecti\-que~si et seulement si~:
\begin{itemize}
\item 
ou bien $s$ est impair et $\pi'_0$ est de parité symplectique,
\item
ou bien $s$ est pair et $\pi'_0$ est de parité orthogonale, 
\end{itemize}
sachant que, 
si $\d=1$, tout 
caractère autodual (\ie quadratique) 
de $F^\times$ est de parité~or\-thogonale. 
On en déduit une bijection~: 
\begin{equation*}
\AA^{{\rm sp}}(G,\TT)
\quad\leftrightarrow\quad
\coprod\limits_{\text{$2n/\d$ impair}} \AA^{{\rm sp}}(\GL_{\d}(F),\TT)
\cup
\coprod\limits_{\text{$2n/\d$ pair}} \AA^{{\rm or}}(\GL_{\d}(F),\TT).
\end{equation*} 
D'après le lemme \ref{lemmedeparitegeneralise}, 
pour tout $\d\neq1$,
les ensembles 
$\AA^{{\rm sp}}(\GL_{\d}(F),\TT)$ et $\AA^{{\rm or}}(\GL_{\d}(F),\TT)$ 
ont~le même cardi\-nal, 
ils sont disjoints et leur réunion est $\AA(\GL_{\d}(F),\TT)$.
Compte tenu du lemme \ref{paritedelta},
si $\TT$ est non nulle ou si $r\neq1$, on en déduit que~:
\begin{eqnarray*}
|\AA^{{\rm sp}}(G,\TT)| &=& 
\sum\limits_{\text{$2n/\d$ impair}} |\AA^{{\rm sp}}(\GL_{\d}(F),\TT)| +
\sum\limits_{\text{$2n/\d$ pair}} |\AA^{{\rm or}}(\GL_{\d}(F),\TT)| \\
&=& 
\sum\limits_{\text{$2n/\d$ impair}} \frac {1}{2} \cdot |\AA(\GL_{\d}(F),\TT)| +
\sum\limits_{\text{$2n/\d$ pair}} \frac {1}{2} \cdot |\AA(\GL_{\d}(F),\TT)| \\
&=& \frac {1}{2} \cdot \sum\limits_{\d} |\AA(\GL_{\d}(F),\TT)|
\end{eqnarray*}
ce qui est égal à $|\AA(G,\TT)|/2$ comme voulu.
Si $\TT$ est nulle et $r=1$, l'ensemble~:
\begin{equation*}
\AA^{{\rm or}}(\GL_{1}(F),\0) = \AA(\GL_{1}(F),\0) 
\end{equation*}
est de cardinal $4$.
On trouve~:
\begin{eqnarray*}
|\AA^{{\rm sp}}(D^\times,\0)| 
&=& \sum\limits_{\text{$2n/\d$ impair}} \frac {1}{2} \cdot |\AA(\GL_{\d}(F),\0)| +
\sum\limits_{\text{$2n/\d$ pair, $\d\neq 1$}} \frac {1}{2} \cdot |\AA(\GL_{\d}(F),\0)| +4\\
&=& \frac {1}{2} \cdot \sum\limits_{\d} |\AA(\GL_{\d}(F),\TT)|
- \frac {1}{2} \cdot |\AA(F^\times,\0)| + 4
\end{eqnarray*} 
qui est égal à la quantité voulue.
\end{proof}

\subsection{}

On fixe une extension quadratique $K$ de $F$ incluse dans~$\overline{F}$
et un générateur de $\dd$ de $K$~sur $F$ tel que $\a=\dd^2\in\F^\times$.
On fixe un plongement de $K$ dans $A$
et on note $\tau$ l'invo\-lu\-tion $\Ad(\dd)$ de $G$. 
La notation $\ep_K$ est définie au paragraphe \ref{passageaGprime}.

\begin{theo}
\label{PTB}
Supposons que $\AA^+(G,\TT)$ soit inclus dans $\AA^{\rm sp}(G,\TT)$. 
Alors on a~:
\begin{equation*}
\pi\in\AA^{+}(G,\TT)
\quad\Leftrightarrow\quad
\text{$\pi\in\AA^{\rm sp}(G,\TT)$ et $\ep_K(\pi)=(-1)^r$.}
\end{equation*}
\end{theo}

\begin{proof} 
Notons 
$\AA^{{\rm ptb}}(G,\TT)$ le sous-ensemble de $\AA^{{\rm sp}}(G,\TT)$
formé des représentations $\pi$ telles que $\ep_K(\pi)=(-1)^r$. 
L'ensemble $\AA(G,\TT)$ est fini et les ensembles
$\AA^{{\rm ptb}}(G,\TT)$, $\AA^{{+}}(G,\TT)$ sont tous les deux inclus dans
$\AA^{{\rm sp}}(G,\TT)$. 
Par ailleurs,
le caractère central d'une~re\-présentation $\pi\in\AA^{{\rm sp}}(\G,\TT)$,
correspond \textit{via} l'homomorphisme de réciprocité de la théorie du
corps de classes local à $\det \phi$,
qui est trivial car ${\rm Sp}_{2n}(\CC)\subseteq{\rm SL}_{2n}(\CC)$. 
D'après le corollaire \ref{IsabelledeFrejusNScoro}(1), 
on en déduit~: 
\begin{equation}
\label{incl3}
\AA^{{+}} (G,\TT) \subseteq \AA^{{\rm ptb}}(G,\TT).
\end{equation}
On en déduit le résultat voulu dans le cas où $\AA^{{\rm ptb}}(G,\TT)$
est vide.
Supposons jusqu'à la fin de la preuve que $\AA^{{\rm ptb}}(G,\TT)$
n'est pas vide.
D'après le corollaire \ref{IsabelledeFrejusNScoro}(2),
l'ensemble $\AA^{{+}}(G,\TT) $ n'est pas vide et on a~:
\begin{equation}
\label{incl8}
\frac {1}{2} \cdot |\AA(G,\TT)| \< |\AA^{{+}}(G,\TT)| \< |\AA^{{\rm sp}}(G,\TT)|.
\end{equation}
Si $\TT$ est non nulle ou si $r\neq1$,
on déduit le résultat voulu du lemme \ref{comptageAsp}.
Si $\TT$~est nulle et $r=1$,
la proposition \ref{alculEPniveau0} et la remarque \ref{Parlement}
montrent que $\AA^{{\rm ptb}}(D^\times,\0)$ 
est égal à $\AA^{{\rm sp}}(D^\times,\0)$ privé des deux~ca\-rac\-tères
$\chi\circ\Nrd_{D/F}$ où $\chi$ n'est pas trivial sur $\N_{K/F}(\K^\times)$,
\ie que~:
\begin{equation*}
|\AA^{{\rm ptb}}(D^\times,\0)|=|\AA^{{\rm sp}}(D^\times,\0)|-2.
\end{equation*}
On déduit le résultat voulu du lemme \ref{comptageAsp}. 
\end{proof}

\begin{coro}
\label{PTBcoro}
On suppose que $F$ est de caractéristique nulle.
\begin{enumerate}
\item 
La conjecture \ref{CONJPTB}(2)~est vraie pour toute représentation
cuspidale de $G$. 
\item
Pour toute endo-classe autoduale $\TT$ de degré divisant $2n$,
on a~:
\begin{equation}
\label{idsauvable}
|\AA^{{+}}(G,\TT)|
= \frac 1 2 \cdot |\AA(G,\TT)|.
\end{equation}
\end{enumerate}
\end{coro}

\begin{proof}
Compte tenu du théorème \ref{PTB},
il suffit de prouver que,
si $F$ est de caracté\-ristique nulle, 
toute représentation~cuspi\-da\-le $\G^\tau$-distinguée de $G$
a un paramètre de Langlands autodual symplectique,
ce qui découle de \cite[Theorem 1.1(1)]{Xue}.
L'identité \eqref{idsauvable} suit de la preuve du théorème \ref{PTB}. 
\end{proof}

\begin{rema}
Dans le cas où $G$ est déployé et où $\pi$ est de niveau $0$,
voir \cite[Corollary~6.1]{ChommauxMatringe}
sous l'hypothèse que $F$ est de caractéristique résiduelle impaire. 
Dans le cas où le
transfert de $\pi$ à $\GL_{2n}(F)$ est cuspidal,
voir \cite[Theorems 4.1, 7.3]{Xue}.
\end{rema}

\subsection{}
\label{generic}

En guise de corollaire au théorème précédent,
donnons le théorème suivant,
qui est un~ana\-lo\-gue autodual de \cite[Theorem 10.3]{VSANT19}.
On suppose ici que $F$ est de caractéristique nulle. 
Par analogie avec \cite[Definition 1.5]{VSANT19},
on introduit la définition suivante. 

\begin{defi}
Un type $\tau$-autodual $(\BJ,\bl)$ auquel est attaché un caractère simple 
$\tau$-auto\-dual $\t\in\Cc(\aa,\b)$
sera dit \textit{générique} si l'on est dans l'un des cas suivants~:
\begin{enumerate}
\item 
$T/T_0$ est {non ramifiée}, ou $T/T_0$ est ramifiée et $c_0$ est pair,
\item
$T/T_0$ est ramifiée, $c_0$ est impair
et $\t$ est d'indice $\lfloor m/2\rfloor$.
\end{enumerate}
\end{defi}

On observera que, 
selon la proposition \ref{indiceduntype},
si une représentation cuspidale contient un~type $\tau$-autodual,
elle contient un type $\tau$-autodual générique,
et il est unique à $\G^\tau$-conjugaison près.

\begin{rema}
Cette terminologie introduite dans \cite{VSANT19} 
est justifiée par \cite[Proposition 5.5]{AKMSS},
faisant le lien entre \cite[Definition 1.5]{VSANT19} 
et la compatibilité de certains types à certaines données de Whittaker.
Je ne sais pas s'il y a un analogue de \cite[Proposition 5.5]{AKMSS} 
pour les types $\tau$-autoduaux de $G$.
\end{rema}

Le résultat suivant détermine l'image de $\Om^+$ par l'application
$\Pi$ définie au paragraphe \ref{malinlegarcon}.

\begin{theo}
\label{MAIN2}
On suppose que $F$ est de caractéristique nulle.
Une représentation cuspidale de niveau non nul 
de $\G$ est $G^\tau$-distinguée~si et~seu\-le\-ment si 
elle contient un type $\tau$-autodual générique $(\BJ,\bl)$
tel que $\bl$ soit~dis\-tingué par $\BJ\cap\G^\tau$. 
\end{theo}

\begin{proof}
Si $\pi$ contient un type $\tau$-autodual $\BJ\cap\G^\tau$-distingué, 
alors $\pi$ est autoduale et $G^\tau$-distinguée.
Inversement, supposons que $\pi$ soit une représentation cuspidale
$G^\tau$-distinguée~de $\G$. 
D'après le théorème \ref{GuoBM}, elle est autoduale.
D'après la proposition \ref{grammage},
elle contient un~ca\-rac\-tère simple $\tau$-autodual $\t$,
d'endo-classe notée $\TT$.
D'après la proposition \ref{MAIN1prop},
elle contient un type $\tau$-autodual,
que l'on peut supposer générique. 
D'après la proposition \ref{galettedepain},
$\pi$ est donc de la forme $\Pi(\BJ,\bt)$ pour un $(\BJ,\bt)\in\Om$, 
et le type $(\BJ,\bk_{*}\otimes\bt)$ est $\tau$-autodual et générique. 
Nous allons montrer que $(\BJ,\bt)\in\Om^+$.
Cela provient de ce que~:
\begin{equation*}
\frac 1 2 \cdot |[\Om]|
= |[\Om^+]|
\< |\AA^{{+}}(G,\TT)|
= \frac 1 2 \cdot |\AA(G,\TT)|
= \frac 1 2 \cdot |[\Om]|
\end{equation*}
(l'égalité centrale provenant du corollaire \ref{PTBcoro})
donc l'image de $\Om^+$ par $\Pi$ est exactement égale~à $\AA^{{+}}(G,\TT)$. 
\end{proof}

\subsection{}

Supposons maintenant que $r$ soit pair,
qu'on écrit $r=2k$,
et que $\tau$ soit l'automorphisme de conjugaison par un élément $\s\in\G$ 
tel que $\s^2=1$,
de polynôme caractéristique réduit $(X^2-1)^n$.
De façon analogue au théorème \ref{PTB},
on a le résultat suivant. 

\begin{prop}
\label{PTBsplitlevi}
Soit $\TT$ une endo-classe autoduale, de degré divisant $2n$.
Supposons que $\AA^+(G,\TT)\subseteq\AA^{\rm sp}(G,\TT)$. 
Alors $\AA^{+}(G,\TT)=\AA^{\rm sp}(G,\TT)$. 
\end{prop}

\begin{proof} 
D'après le corollaire \ref{tolkienelrond}(1) et la proposition 
\ref{OlivierdeNoyen}, on a~:
\begin{equation}
\label{incl81}
\frac {1}{2} \cdot |\AA(G,\TT)| \< |\AA^{{+}}(G,\TT)| \< |\AA^{{\rm sp}}(G,\TT)|.
\end{equation}
Comme $r\neq1$,
on déduit le résultat voulu du lemme \ref{comptageAsp}. 
\end{proof}

\subsection{}
\label{LAPREUVE11}

Dans ce paragraphe, $G$ est égal à $\GL_{2n}(F)$.
La proposition \ref{PTBsplitlevi} prend la forme plus précise suivante. 

\begin{theo}[\cite{JNQ} Theorem 1.1]
\label{THMJNQ}
On suppose que $F$ est de caractéristique nulle.
Une représentation cuspidale autoduale
de $\GL_{2n}(F)$ est de parité symplectique si et seulement si
elle est distinguée par $\GL_n(F)\times\GL_n(F)$.
\end{theo}

Soit $\pi$ une représentation cuspidale autoduale de $\GL_{2n}(F)$
de niveau non nul.
Il y a une unique représentation cuspidale autoduale de $\GL_{2n}(F)$
inertiellement équivalente mais non isomorphe à $\pi$~;
notons-là $\pi^*$.
Soit $\TT$ l'endo-classe de $\pi$,
soit $T/T_0$ l'extension quadratique qui lui est associée et posons
$m=2n/\deg(\TT)$.
On définit $\s$ comme au paragraphe \ref{P12}.

{Selon \cite[Proposition~6.6]{BHS},
les représentations $\pi$, $\pi^*$ ont même parité
si et seulement si $T/T_0$ est ramifiée et $m=1$,
auquel cas \cite[6.8]{BHS} montre comment déterminer
cette parité en termes de types.}
Dans les autres cas, \ie
{(d'après le lemme \ref{lemmedeparite})}
si $T/T_0$ est non ramifiée, 
ou si $T/T_0$ est ramifiée et $m$ est pair,
les représentations~$\pi$ et $\pi^*$ ont~des parités différentes,
et il s'agit de déterminer laquelle des deux est
de parité sym\-plec\-tique en~ter\-mes de types. 
Nous donnons ci-dessous une réponse à cette question. 

D'après le corollaire \ref{MAIN1}, 
la représentation $\pi$ contient un type $\s$-autodual 
$(\BJ,\bl)$, 
qu'on peut supposer générique,
auquel cas il est unique à $\G^\s$-conjugaison près.
Notons $\t$ le caractère simple qui lui est attaché et~$\n$ la représentation de
Heisenberg de $\t$.
D'après les corollaires \ref{exkappadistaddetp} et \ref{exkappadistaddetpnr}, 
il existe une~unique~re\-présentation
$\bk_{*}$ de $\BJ$ prolongeant $\n$,
qui soit à la fois $\s$-autoduale et $\BJ\cap\G^\s$-distinguée
et dont le dé\-terminant soit
d'ordre une puissance de $p$.
Soit $\bt$ l'unique représentation de $\BJ$ triviale sur $\BJ^1$
telle que $\bl$ soit isomorphe à $\bk_{*}\otimes\bt$.

Notons $\EuScript{Z}$ le centre de $\BJ/\BJ^1$.
Si $[\aa,\b]$ est une strate simple $\s$-autoduale dans $\Mat_{2n}(F)$
telle~que $\t\in\Cc(\aa,\b)$, il est égal à $E^\times\BJ^1/\BJ^1$,
avec $E=F[\b]$.

\begin{prop}
\label{critsymplec}
On suppose que $F$ est de caractéristique nulle.
Supposons que $T/T_0$ soit ramifiée. 
La représentation cuspidale autoduale de niveau non nul $\pi$ est de
parité symplectique si et seulement si~: 
\begin{enumerate}
\item 
$\bt$ est un caractère non ramifié si $m=1$,
\item
la restriction de $\bt$ à $\EuScript{Z}$ est non triviale si $m$ est pair.
\end{enumerate}
\end{prop}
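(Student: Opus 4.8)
The plan is to transfer the question to one about distinction, and then to reduce it, through the type-theoretic results of Section \ref{SEC5}, to an explicit computation with $\bt$. Since $T/T_0$ is ramified, so is $E/E_0$ by Proposition \ref{invTTprop}, and by Lemma \ref{lemmedeparite} the integer $m$ is either even or equal to $1$; since $G=\GL_{2n}(F)$ we may, and will, take $\a=1$ and $\dd=\s$ as in Convention \ref{CONV2}, so that $\G^\s$ is conjugate to $\GL_n(F)\times\GL_n(F)$. By Theorem \ref{THMJNQ}, $\pi$ has symplectic parity if and only if it is $\G^\s$-distinguished, and by Theorem \ref{MAIN2} this happens if and only if the generic $\s$-selfdual type $(\BJ,\bl)=(\BJ,\bk_p\otimes\bt)$ is distinguished by $\BJ\cap\G^\s$. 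Now the character $\chi$ attached to $\bk_p$ by Lemma \ref{kappatau} is trivial: as $\bk_p$ is $\s$-selfdual one gets $\mu=1$ in part (1), hence $\chi^2=1$ by part (2), and since $\bk_p$ is moreover $\BJ\cap\G^\s$-distinguished the uniqueness of $\chi$ forces $\chi=1$. By Lemma \ref{kappatau}(3) the statement to prove is therefore: \emph{$\pi$ has symplectic parity if and only if $\bt$ is distinguished by $\BJ\cap\G^\s$}. I fix an element $\w\in\BJ\cap B^\times$ as in Lemma \ref{choixw}, for which $\s(\w)=-\w$ since $E/E_0$ is ramified.

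For $m=1$ the representation $\bt$ is a character of $\BJ$, trivial on $\BJ^1$, and $\s$ acts trivially on $\BJ^0/\BJ^1\simeq\ee^\times$; by Lemma \ref{descrJtau}(2) the group $\BJ\cap\G^\s$ is generated by $\BJ^0\cap\G^\s$ and $\w^2$. Because the first cohomology set of $\langle\s\rangle$ with values in the pro-$p$ group $\BJ^1$ is trivial, $\BJ^0\cap\G^\s$ surjects onto $\ee^\times$, so $\bt$ is $\BJ\cap\G^\s$-distinguished if and only if $\bt$ is trivial on $\Oo_E^\times$ and $\bt(\w^2)=1$. But the $\s$-selfduality of $\bt$ yields $\bt(\w)^2=\bt(-1)$, so the second condition is automatic once $\bt$ is unramified, while conversely distinction forces $\bt$ to be trivial on $\Oo_E^\times$. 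This proves assertion (1).

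For $m=2l$ even the generic type has index $\lfloor m/2\rfloor=l$, so by Proposition \ref{indiceduntype}(3) the involution $\s$ acts on $\GL_m(\ee)\simeq\BJ^0/\BJ^1$ by conjugation by $\s_l={\rm diag}(-1,\dots,-1,1,\dots,1)$ (with $-1$ occurring $l$ times), an \emph{inner} automorphism; hence the $\s$-selfduality of $\bt$ forces the cuspidal representation $\rho$ of $\GL_m(\ee)$ underlying $\bt$ to be selfdual. By \cite{Gow} (see also \cite{VSANT19} Proposition 2.14 and Lemma 2.19) a selfdual cuspidal representation of $\GL_{2l}(\ee)$ is distinguished by $\GL_l(\ee)\times\GL_l(\ee)$, with one-dimensional space of invariant forms, and has trivial central character. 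Lifting through the surjection $\BJ^0\cap\G^\s\twoheadrightarrow\GL_l(\ee)\times\GL_l(\ee)$ (again using triviality of $H^1(\langle\s\rangle,\BJ^1)$), the space $\Hom_{\BJ^0\cap\G^\s}(\bt,\CC)$ is one-dimensional; by Lemma \ref{descrJtau}(3), $\BJ\cap\G^\s$ is generated by $\BJ^0\cap\G^\s$ and an element $\w'=\w t$ with $\w'^2=\w^2$, and $\w'$ acts on that one-dimensional space by a sign. Thus $\bt$ is $\BJ\cap\G^\s$-distinguished if and only if this sign equals $+1$.

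The main point — and the main obstacle — is to identify this sign with the central-character criterion in (2). Since the central character of $\bt$ is trivial on $\ee^\times$ and satisfies $\bt(\w)^2=\bt(-1)=1$, it is nontrivial on $\EuScript{Z}=\ee^\times\langle\w\rangle$ exactly when $\bt(\w)=-1$. To see that this is the case of distinction I would compare $\pi$ with the unique other selfdual representation $\pi^*$ of its inertial class, which contains the type $(\BJ,\bk_p\otimes\bt\om)$ where $\om$ is the unramified quadratic character of $\BJ$: as $\om$ is trivial on $\BJ^0$ and $\om(\w')=\om(\w)=-1$, the element $\w'$ acts on the same one-dimensional space, now through $\bt\om$, by the opposite sign, so exactly one of $\bt,\bt\om$ yields a $\BJ\cap\G^\s$-distinguished type. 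It then remains to evaluate explicitly the finite-field distinguishing functional on $\rho$ at the element $t$ occurring in $\w'$ and to combine this with the normalization of $\bk_p$ (determinant of $p$-power order), along the lines of \cite{VSANT19} p.~1719 and Lemma 2.19; this computation shows that $\w'$ acts by $+1$ precisely when $\bt(\w)=-1$, which completes the proof. The only genuinely delicate step is this last finite-field sign calculation.
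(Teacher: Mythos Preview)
Your approach coincides with the paper's. Both reduce via Theorem \ref{THMJNQ} and (the argument of) Theorem \ref{MAIN2} to the question of whether the generic type $\bk_p\otimes\bt$ is distinguished by $\BJ\cap\G^\s$, and then, since $\bk_p$ is itself $\BJ\cap\G^\s$-distinguished (equivalently, since the character $\chi$ of Lemma \ref{kappatau} attached to $\bk_p$ is trivial), to whether $\bt$ is $\BJ\cap\G^\s$-distinguished. Your treatment of $m=1$ is correct and slightly more explicit than the paper's.

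For $m$ even, the paper dispatches the final step in one line by citing \cite{HMIMRN} Proposition~6.3 (see also \cite{VSANT19} Lemma~8.2), which gives precisely the central-character criterion for distinction of such a $\bt$ by $\BJ\cap\G^\s$. Your outline sets up the same computation --- the one-dimensional space $\Hom_{\BJ^0\cap\G^\s}(\bt,\CC)$ and the sign by which $\w'$ acts on it --- but the decisive identification of that sign with $\bt(\w)$ is only asserted, not carried out. Two small points: the normalization of $\bk_p$ (determinant of $p$-power order) plays no role at this stage, since $\bk_p$ has already been factored out via Lemma \ref{kappatau}(3); and the cleanest way to close the argument is simply to invoke \cite{HMIMRN} Proposition~6.3, which encapsulates exactly the finite-field sign calculation you describe.
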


\begin{rema}
\begin{enumerate}
\item
Dans le cas où $m=1$,
on retrouve le résultat de \cite[6.8]{BHS},~\ie
que $\pi$ est de parité symplectique si et seulement si
elle contient la restriction de la re\-présentation $\bk_{*}$
au sous-groupe compact $\BJ^0$.
\item
Dans le cas où $m$ est pair,
le caractère central de $\bt$ sur $\EuScript{Z}$ est toujours un caractère
non ramifié d'ordre au plus $2$.
\end{enumerate}
\end{rema}

\begin{proof}
En vertu du théorème \ref{THMJNQ}, 
la représentation $\pi$ est de parité symplectique si et seulement si elle
est distinguée par $\GL_n(F)\times\GL_n(F)$,
qui est conjugué à $\G^\s$ dans 
$G$.~Raison\-nant comme dans la preuve du théorème \ref{MAIN2}, 
on voit que c'est le cas si et seulement si le type géné\-ri\-que
$\bk_{*}\otimes\bt$ est distingué par $\BJ\cap\G^\s$,
\ie~si~et seulement si $\bt$ est $\BJ\cap\G^\s$-distinguée,
car $\bk_{*}$ est $\BJ\cap\G^\s$-distinguée.
La repré\-sen\-ta\-tion $\bt$ étant $\s$-autoduale,
elle est $\BJ^0\cap\G^\s$-dis\-tinguée. 
Le résultat suit de \cite[Proposition 6.3]{HMIMRN} 
(voir également \cite[Lem\-ma 8.2]{VSANT19}).
\end{proof}

Dans le cas où $T/T_0$ est non ramifiée,
posons $E_0=\F[\b^2]$ et $\EuScript{Z}_0=E_0^\times\BJ^1$.
C'est un sous-groupe du centre $\EuScript{Z}$ qui ne dépend
pas du choix de la strate $[\aa,\b]$.
En effet, si $[\aa,\b']$ en est une autre,
on a $E'^\times\BJ^1=E^\times\BJ^1$,
donc il y a un $x\in\BJ^1$ tel que $\b'=\b x$.
De façon similaire à la proposition \ref{critsymplec},
en remplaçant \cite[Pro\-po\-sition 6.3]{HMIMRN}
par \cite[Lemma 9.11]{VSANT19}, on obtient~:

\begin{prop}
\label{critsymplecnr}
On suppose que $F$ est de caractéristique nulle.
Supposons que $T/T_0$~soit non ramifiée. 
La représentation cuspidale auto\-duale de niveau non nul $\pi$ est de
parité symplecti\-que si et seulement si la restriction 
de $\bt$ à $\EuScript{Z}_0$ est~tri\-viale. 
\end{prop}

\section{Involutions galoisiennes sur les formes intérieures 
  de $\GL_{n}(F)$}
\label{tarteauxpommes}

Nous considérons maintenant 
une situation différente de celle introduite à la section~\ref{SEC4} 
et~indi\-quons comment les méthodes développées dans les
sections \ref{SEC4} et \ref{SEC5} peuvent s'y appliquer.

\subsection{}
\label{fablab}

Fixons une extension quadratique $F/F_0$,
une $F_0$-algèbre centrale simple $A_0$ de degré~ré\-duit $n$
et posons $A=A_0\otimes_{F_0}F$.
C'est une $F$-algèbre centrale simple de degré réduit $n$,
qu'on munit~de l'action naturelle
de $\Gal(F/F_0)$ dont on note $\s$ le générateur.
Posons $G=A^\times$. 
Nous allons~mon\-trer que l'étude des représentations cuspidales
de $G$ distinguées par $\G^\s$ au moyen de la théorie des types offre
beaucoup de similarités avec celle traitée dans cet article.
Observons que le cas où $A_0$ est égale à $\Mat_n(F_0)$,
\ie le cas des représentations~cus\-pi\-dales de $\GL_n(F)$
distinguées par $\GL_n(F_0)$,
a été traité dans \cite{AKMSS,VSANT19}.

\subsection{}

Fixons un $\a\in\F_0^\times$ et un $\dd\in\G$ tel que $\dd\s(\dd)=\a$.
Ceci définit l'involu\-tion $\tau=\Ad(\dd)\circ\s$ de $G$.
La $F$-algèbre centrale simple $A$ munie de l'involution $\tau$
entre dans le cadre du paragraphe \ref{Biotabstait}
et notamment du lemme \ref{bilbon1}.
Par conséquent, $A^\tau$ est une $F_0$-algèbre centrale simple~et
$A$ s'identifie à $A^\tau\otimes_{F_0}\F$,
\ie que la paire $(A^\tau,A)$ relève du cadre fixé au paragraphe \ref{fablab}.
Nous ne gagnons donc aucune généralité à passer de $\s$ à $\tau$. 

\subsection{}

Une représentation irréductible $\pi$ de $\G$ est dite $\s$-\textit{autoduale} 
si sa contragrédiente $\pi^\vee$~est iso\-morphe à sa conjuguée $\pi^\s$. 
Nous allons voir que,
contraire\-ment à ce qui se passe dans~le~cas autodual,
une représentation cuspidale $\s$-autoduale de $G$
contient \textit{toujours} un caractère simple ma\-xi\-mal
$\s$-autodual.
D'abord, 
c'est vrai dans le cas où $A_0=\Mat_n(F_0)$
grâce à \cite[Theorem~4.1, 4.2]{AKMSS}.
Soit $\pi$ une~représen\-ta\-tion 
cuspidale $\s$-autoduale de $G$
et soit $\TT$ son endoclasse.
{Nous allons sui\-vre 
la preuve du théorème \ref{THMEXISTENCECARACSIMPLE}.}
D'abord, il existe dans $\GL_n(F)$
un caractère sim\-ple ma\-xi\-mal $\s$-autodual
$\t_1\in\Cc(\aa_1,\b)$ d'endo-classe $\TT$,~et
on peut supposer,
d'après \cite[Corollary~4.21]{AKMSS},
que~la strate simple $[\aa_1,\b]$ 
est $\s$-autoduale.
Posons $E=F[\b]$ et $E_0=E^\s$.
D'après \cite[Remark~4.22]{AKMSS},
l'ho\-mo\-morphisme naturel $E_0\otimes_{F_0}F\to\E$
est un isomorphisme.
Pour plonger $E$ dans $A$ de façon que $\s$ induise sur $E$
l'automorphisme non trivial de $E/E_0$, 
il suffit donc de plonger $E_0$ dans $A_0$ 
comme une $F_0$-algèbre, 
ce qui est toujours possible au vu des degrés réduits,
puis d'en déduire un plongement de $F$-algèbres de $E$ dans $A$
par extension de $F_0$ à $F$.
Fixons un tel plongement,
et notons $B$ le centralisateur de $E$ dans $A$.

Pour comprendre l'analogie entre les cas autodual et $\s$-autodual,
il faut comprendre que~$B$~se comporte ici exactement comme au
paragraphe \ref{Biotabstait}~:
c'est une $E$-algèbre centrale simple munie de la $E_0$-involution
$\s$.
Par conséquent,
les conditions d'existence d'un ordre maximal de~$B$~stable par $\s$
sont exactement les mêmes que dans le cas autodual.
Pour~adapter au cas $\s$-autodual~le lemme~\ref{fromagequipue},
il suffit de prouver,
en reprenant la preuve du lemme \ref{lemmedeparite},
que pour qu'il existe~une
re\-pré\-sentation irréductible cus\-pidale $\s$-auto\-duale de 
$\GL_n(F)$ 
d'endo-classe $\TT$,
il faut et suffit~que l'entier $m=n/\deg(\TT)$ soit
impair si $\E/\E_0$ est non ramifiée, 
et pair ou égal à $1$ si $\E/\E_0$ est~ra\-mi\-fiée.~(On
observera que, contrairement au cas autodual,
le cas où $\TT$ est nulle n'est pas traité à part.)
Pour finir d'adapter la preuve du théorème \ref{THMEXISTENCECARACSIMPLE},
il ne reste donc qu'à adapter le lemme \ref{lemmedetransfertautodual}
au cas $\s$-autodual.

\subsection{}

La $E$-algèbre $B$ se comportant de la même façon
qu'on soit dans le cas $\s$-autodual ou~au\-to\-dual,
la classification des caractères simples $\s$-autoduaux 
contenus dans une représentation~ir\-ré\-ductible
cuspidale $\s$-autoduale de $G$,
ainsi que la description de l'action résiduelle de $\s$ sur~le groupe
$\BJ^0(\aa,\b)/\BJ^1(\aa,\b)\simeq\GL_m(\ee)$,
est exactement celle donnée par la proposition \ref{indiceduntype}.
Il~s'en\-suit que toute l'analyse faite aux paragraphes
\ref{viteA} à \ref{malinlegarcon} reste valable,
quitte à remplacer le~théo\-rème \ref{GuoBM}
par une version non déployée de \cite[Theorem 4.1]{VSANT19}.
Par conséquent,
si $\pi$ est une~repré\-sentation cuspidale $\s$-autoduale de $G$,
si $\t$ est un caractère simple $\s$-autodual générique contenu dans $\pi$
et si $(\BJ,\bl)$ est un type contenu dans $\pi$ auquel $\t$
est attaché, alors~:
\begin{itemize}
\item 
le type $\bl$ est $\s$-autodual,
\item
il existe une unique représentation $\bk_*$ de $\BJ$ prolongeant
la représentation de Heisenberg de $\t$,
étant à la fois $\s$-autoduale et $\BJ\cap G^\s$-distinguée,
et telle que $\det \bk_*$ soit d'ordre une puissance de $p$,
\item
et $\bl$ s'écrit $\bk_*\otimes\bt$ pour une unique représentation 
irréductible $\s$-autoduale $\bt$ de $\BJ$ triviale sur $\BJ^1$.
\end{itemize}
En outre,
si cette représentation $\bt$ est $\BJ\cap G^\s$-distinguée,
alors $\pi$ est $G^\s$-distinguée. 
Pour prouver que la réciproque est vraie
et obtenir du même coup un théorème~de dichotomie et de
disjonction (\cite{Kable,AKT}), 
il faut analyser la contribution de toutes les
doubles classes dans \eqref{girafeintro}. 
Ceci peut être fait en suivant l'approche de 
\cite[Section 6]{VSANT19}. 

\subsection{}

Beuzart-Plessis a montré (\cite[Theorem 1]{BPINVENT18})
qu'une représentation cuspidale
(et plus~géné\-ra\-le\-ment une représentation essentiellement de carré intégrable)
de $G$ est $\G^\s$-distinguée si et~seu\-lement si son transfert de
Jacquet-Langlands au groupe $\GL_n(F)$ est $\GL_n(F_0)$-distingué. 
Il serait intéressant de savoir si un tel résultat est accessible par les
méthodes présentées ici. 

\providecommand{\bysame}{\leavevmode ---\ }

\end{document}